\newtheorem{theorem}{Theorem}[section]
\newtheorem{lemma}[theorem]{Lemma}
\newtheorem{proposition}[theorem]{Proposition}
\newtheorem{corollary}[theorem]{Corollary}
\newcommand{\cC}{\mathcal C}
\def\Pn{\ensuremath{\mathcal{P}'_n}\xspace}    
\def\Pij{\ensuremath{\mathcal{P}'_{ij}}\xspace}
\def\P{\ensuremath{\mathcal{P}'}\xspace}
\def\Bn{\ensuremath{\mathcal{B}'_n}\xspace}    
\def\Bij{\ensuremath{\mathcal{B}'_{ij}}\xspace}
\def\B{\ensuremath{\mathcal{B}'}\xspace}
\def\nBn{\ensuremath{\mathcal{B}_n}\xspace}   
\def\nBij{\ensuremath{\mathcal{B}_{ij}}\xspace}
\def\mBij{\ensuremath{\mathcal{\bar B}_{ij}}\xspace} 
\def\cBijb{\ensuremath{\mathcal{B}_{ij}^{\bullet}}\xspace}
\def\cBijw{\ensuremath{\mathcal{B}_{ij}^{\circ}}\xspace}
\def\Dn{\ensuremath{\mathcal{D}'_n}\xspace}    
\def\Dij{\ensuremath{\mathcal{D}'_{ij}}\xspace}
\def\D{\ensuremath{\mathcal{D}'}\xspace}
\def\nDn{\ensuremath{\mathcal{D}_n}\xspace}   
\def\Uij{\ensuremath{\mathcal{U}'_{ij}}\xspace}
\def\U{\ensuremath{\mathcal{U}'}\xspace}
\def\cP{\mathcal{P}}
\def\cR{\mathcal{R}}
\def\cS{\mathcal{S}}
\def\cC{\mathcal{C}}
\def\cT{\mathcal{T}}
\def\cSsy{\mathcal{S}^{\mathrm{sym}}}
\def\cSas{\mathcal{S}^{\mathrm{asy}}}
\def\cSsym{\mathcal{S}^{'\mathrm{sym}}}
\def\cSasy{\mathcal{S}^{'\mathrm{asy}}}
\def\xb{x_{\bullet}}
\def\xw{x_{\circ}}
\def\eleft{e_{\mathrm{l}}}
\def\eright{e_{\mathrm{r}}}
\def\Qn{\ensuremath{\mathcal{Q}'_n}\xspace}    
\def\Qij{\ensuremath{\mathcal{Q}'_{ij}}\xspace}
\def\Q{\ensuremath{\mathcal{Q}'}\xspace}
\def\noeud(#1,#2)#3#4{\rput[B](#1,#2){\rnode{#3}{#4}}}
\def\grosnoeud(#1,#2)#3#4{\rput[B](#1,#2){\rnode{#3}{\parbox{12\psunit}{\centering #4}}}}
\def\subsubsection{\@ucheadfalse
  \@startsection{subsubsection}{3}{\z@}{6pt plus 
    1pt}{-8pt}{\reset@font\normalsize\sffamily}} 
\def\paragraph{\subsubsection*}
\title{Dissections, orientations, and trees,\\ with applications to
  optimal mesh encoding\\ and to random sampling}
\author{\'ERIC FUSY, DOMINIQUE POULALHON and GILLES SCHAEFFER\\
\'E.F and G.S: LIX, \'Ecole Polytechnique. D.P: Liafa, Univ. Paris 7. France 
}
\begin{abstract}
  We present a bijection between some quadrangular dissections of an
  hexagon and unrooted binary trees, with
  interesting consequences for enumeration, mesh compression and
  graph sampling.

  Our bijection yields an efficient uniform random sampler for
  3-connected planar graphs, which turns out to be determinant for the
  quadratic complexity of the current best known uniform random
  sampler for labelled planar graphs [{\bf Fusy, Analysis of Algorithms
      2005}].

  It also provides an encoding for the set $\mathcal{P}(n)$
  of $n$-edge 3-connected planar graphs that matches the entropy bound
  $\frac1n\log_2|\mathcal{P}(n)|=2+o(1)$ bits per edge (bpe). This
  solves a theoretical problem recently raised in mesh compression, as
  these graphs abstract the combinatorial part of meshes with
  spherical topology. We also achieve the {optimal parametric
  rate} $\frac1n\log_2|\mathcal{P}(n,i,j)|$ bpe for graphs of
  $\mathcal{P}(n)$ with $i$ vertices and $j$ faces, matching in
  particular the optimal rate for triangulations.

  Our encoding relies on a linear time algorithm to
  compute an orientation associated to the minimal Schnyder wood of a
  3-connected planar map. This algorithm is of independent interest,
  and it is for instance a key ingredient in a recent straight line
  drawing algorithm for 3-connected planar graphs [\bf Bonichon et
  al., Graph Drawing 2005].

\end{abstract}
\keywords{Bijection, Counting, Coding, Random generation}
\begin{document}
\maketitle

\section{Introduction}

One origin of this work can be traced back to an article of Ed Bender
in the \emph{American Mathematical Monthly} \cite{B87}, where he asked for a
simple explanation of the remarkable asymptotic formula
\begin{equation}\label{eq:bender}
|\mathcal{P}(n,i,j)| ~\sim~ \frac1{3^52^4ijn}{2i-2\choose
j+2}{2j-2\choose i+2}
\end{equation}
%
for the cardinality of the set of 3-connected (unlabelled) planar
graphs with $i$ vertices, $j$ faces and $n=i+j-2$ edges, $n$ going to
infinity.  By a theorem of \citeN{Whitney33},
these graphs have essentially a
unique embedding on the sphere up to homeomorphisms, so that their
study amounts to that of \emph{rooted 3-connected maps}, where a map is a
graph embedded in the plane and \emph{rooted} means with a marked oriented
edge.

\subsection{Graphs, dissections and trees}
Another known property of 3-connected planar graphs with $n$
edges is the fact that they are in direct one-to-one correspondence
with dissections of the sphere into $n$ quadrangles that have no
non-facial 4-cycle. The heart of our paper lies in a further
one-to-one correspondence.

\begin{theorem}\label{thm:intro}
  There is a one-to-one correspondence between unrooted binary trees
  with $n$ nodes and unrooted quadrangular dissections of an hexagon
  with $n$ interior vertices and no non-facial 4-cycle.
\end{theorem}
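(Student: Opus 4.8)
The plan is to first establish a bijection between \emph{rooted} versions of the two families and then descend to the unrooted setting by a symmetry argument. An unrooted binary tree with $n$ internal (trivalent) nodes carries $n+2$ leaves and $2n+1$ edges; by Euler's relation a quadrangular dissection of the hexagon with $n$ interior vertices has exactly $n+2$ quadrangles, $2n+7$ edges and six boundary vertices. The parameter match --- $n$ internal nodes $\leftrightarrow$ $n$ interior vertices and $n+2$ leaves $\leftrightarrow$ $n+2$ faces --- dictates what the map must do. I would root a tree by marking one leaf and a dissection by marking one oriented edge of the hexagonal boundary, and design the map to match these marks.

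The forward map is a \emph{closure}. Starting from a rooted binary tree, I would turn each trivalent node into a local gadget emitting dangling half-edges (stems), then repeatedly scan the outer contour counterclockwise, attaching each stem to the appropriate following corner so as to create one quadrangular face at a time; the six stems that remain unmatched at the end delimit the hexagonal outer boundary. First I would verify that this greedy procedure terminates, that every bounded face it produces is a quadrangle, and that the $n$ internal nodes of the tree become precisely the $n$ interior vertices of the dissection, so that the parameters come out as required.

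The crucial and hardest step is to show that the closure lands \emph{exactly} in the dissections with no non-facial $4$-cycle, and that the map is invertible. For invertibility I would use the minimal Schnyder wood of the resulting dissection --- equivalently the canonical $3$-orientation whose linear-time computation is announced in the abstract: the edges added during closure should be precisely those distinguished by this canonical orientation, so that deleting them recovers the tree and defines the \emph{opening} map. Proving $\mathrm{closure}\circ\mathrm{opening}=\mathrm{id}$ and $\mathrm{opening}\circ\mathrm{closure}=\mathrm{id}$ then reduces to checking that the stem-matching reproduces exactly the canonical orientation, and that acyclicity and minimality of that orientation forbid separating $4$-cycles. This equivalence between ``no non-facial $4$-cycle'' and the existence of the canonical wood is where the real work lies, and I expect it to be the main obstacle of the whole argument.

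Finally, to pass from rooted to unrooted objects I would argue that the rooted bijection is equivariant under changes of root: the action relating the admissible root positions on the hexagonal boundary corresponds to the action on the admissible root leaves of the tree, and an object with a nontrivial automorphism is sent to one with an isomorphic automorphism group. A standard orbit-counting (quotient) argument then shows that the number of rooted preimages is preserved, so the induced map on isomorphism classes of unrooted objects is a bijection. I anticipate this last descent to be routine once the rooted statement and its compatibility with symmetries are in hand, with all the difficulty concentrated in the Schnyder-wood characterization of the preceding paragraph.
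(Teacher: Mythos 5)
Your overall architecture (greedy closure, a canonical orientation to define the opening, and the identification of the existence/uniqueness of that orientation as the crux) matches the paper's. But the framing of the bijection itself contains a genuine gap: the rooted-first strategy cannot work with the rootings you chose. An unrooted binary tree admits $n+2$ leaf-rootings, while an unrooted dissection admits only $6$ rootings at an oriented hexagon edge; the rooted families have cardinalities $\frac{1}{n+1}\binom{2n}{n}$ and $\frac{6}{(n+2)(n+1)}\binom{2n}{n}$ respectively, which differ for every $n\neq 4$. So there is no bijection between leaf-rooted trees and hexagon-edge-rooted dissections for your descent to start from, and the orbit-counting argument has nothing to quotient. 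The correct rooted statement is the $(n+2)$-to-$6$ correspondence $\Bn\times\{1,\ldots,6\}\equiv\Dn\times\{1,\ldots,n+2\}$ of Theorem~\ref{theorem:bijectionRooted}, which the paper derives \emph{from} the unrooted bijection, not the other way around.

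The paper sidesteps the descent entirely by making both directions canonical at the unrooted level: the partial closure is independent of the order of local closures (a parenthesis-matching argument), the complete closure is determined up to rotation of the hexagon, and the opening is determined by the unique tri-orientation without clockwise circuit (Theorem~\ref{theorem:uniquenessexistence}), so no root is ever chosen and no symmetry argument is needed. If you insist on a rooted formulation, you must pass to bi-rooted objects --- a dissection rooted at a hexagon edge together with a marked simply oriented edge, matched against a tree with a marked stem together with a choice among the six hexagon edges --- which is exactly how the paper proves the rooted correspondences. Your remaining steps are sound and essentially those of the paper: termination and quadrangularity of the closure, irreducibility of the image (which the paper gets from a short outdegree/Euler argument, Lemma~\ref{lemma:closureirreducible}, rather than from minimality of the Schnyder wood), and the equivalence of the closure orientation with the canonical one, whose existence and uniqueness is indeed where all the difficulty is concentrated.
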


The mapping from binary trees to dissections, which we call the
\emph{closure}, is easily described and resembles constructions that
were recently proposed for simpler kinds of maps
\cite{Sc97,BdFG02,PS03b}.  The proof that the mapping is a bijection is instead
rather sophisticated, relying on new properties of constrained
orientations \cite{Oss}, related to Schnyder woods of triangulations
and 3-connected planar maps \cite{S90,DiTa,F01} .

Conversely, the reconstruction of the tree from the dissection relies
on a linear time algorithm to compute the minimal Schnyder woods of a
3-connected map (or equivalently, the minimal $\alpha_0$-orientation of the
associated derived map, see Section~\ref{section:compute}). This
problem is of independant interest and our algorithm has for example
applications in the graph drawing context~\cite{BFM04}. It
is akin to Kant's canonical ordering \cite{Ka96,CGHKL98,BGH03,CD04}, but
again the proof of correctness is quite involved.

Theorem~\ref{thm:intro} leads directly to the implicit representation
of the numbers $|\cP_n'|$ ---counting rooted 3-connected maps with $n$
edges--- due to~\citeN{Tu63}), and its refinement as discussed in
Section~\ref{section:counting} yields that of $|\cP_{ij}'|$ the number
of rooted 3-connected maps with $i$ vertices and $j$ faces (due
to~\citeN{Mu}) from which Formula~(\ref{eq:bender}) follows.  It
partially explains the combinatorics of the occurrence of the cross
product of binomials, since these are typical of binary tree
enumerations.  Let us mention that the one-to-one correspondence
specializes particularly nicely to count plane triangulations (i.e.,
3-connected maps with all faces of degree~ 3), leading to the first
bijective derivation of the counting formula for \emph{unrooted plane
  triangulations} with $i$ vertices, originally found by~\citeN{Br}
using algebraic methods.
\subsection{Random sampling}
A second byproduct of Theorem~\ref{thm:intro} is an efficient uniform
random sampler for rooted 3-connected maps, i.e.,  an
algorithm that, given $n$, outputs a
random element in the set $\mathcal{P}_n'$ of rooted 3-connected maps
with $n$ edges with equal chances for all
elements. The same principles yield a uniform sampler for $\cP_{ij}'$.

The uniform random generation of classes of maps like triangulations
or 3-connected graphs was first considered in mathematical physics
(see references in \cite{ABBJP94,PS03b}), and various types of random
planar graphs are commonly used for testing graph drawing algorithms
(see \citeA{taxiplan}).

The best previously known algorithm \cite{Sc99} had expected
complexity $O(n^{5/3})$ for $\cP_n'$, and was much less
efficient for $\cP_{ij}'$, having even exponential complexity
for $i/j$ or $j/i$ tending to~2 (due to Euler's formula these ratio
are bounded above by 2 for 3-connected maps).  In
Section~\ref{section:sampling}, we show that our generator for
$\mathcal{P}_n'$ or $\cP_{ij}'$ performs in linear time
except if $i/j$ or $j/i$ tends to~2 where it becomes at most cubic.

From the theoretical point of view, it is also desirable to work with
the uniform distribution on planar graphs. However, random (labelled)
planar graphs appear to be challenging mathematical objects
\cite{OPT03,MdStWe03}.  A Markov chain converging to the uniform
distribution on planar graphs with $i$ vertices was given by
\citeN{DVW96}, but it resists known approaches for perfect sampling
\cite{W98}, and has unknown mixing time.  As opposed to this, a
recursive scheme to sample planar graphs was proposed by \citeN{BGK03},
with amortized complexity $O(n^{6.5})$. This result is based on a
recursive decomposition of planar graphs: a planar graph can be
decomposed into a tree-structure whose nodes are occupied by rooted
3-connected maps. Generating a planar graph reduces to computing
branching probabilities so as to generate the decomposition tree with
suitable probability; then a random rooted 3-connected map is
generated for each node of the decomposition tree. \citeN{BGK03} use
the so-called recursive method~\cite{NiWi78,FZVC94,Wi97} to take
advantage of the recursive decomposition of planar graphs.  Our new
random generator for rooted 3-connected maps reduces their amortized
cost to~$O(n^3)$. Finally a new uniform random generator for planar
graphs was recently developped by one of the authors~\cite{Fusy}, that
avoids the expensive preprocessing computations of~\cite{BGK03}. The
recursive scheme is similar to the one used in~\cite{BGK03}, but the
method to translate it to a random generator relies on Boltzmann
samplers, a new general framework for the random generation recently
developed in~\cite{DuFlLoSc}.  Thanks to our random generator for
rooted 3-connected maps, the algorithm of \cite{Fusy} has a
time-complexity of $\mathcal{O}(n^2)$ for exact size uniform sampling
and even performs in linear time for approximate size uniform
sampling.
 
\subsection{Succinct encoding} 
A third byproduct of Theorem~\ref{thm:intro} is the possibility to
encode in linear time a 3-connected planar graph with $n$ edges by a
binary tree with $n$ nodes. In turn the tree can be encoded by a
balanced parenthesis word of $2n$ bits. This code is \emph{optimal} in
the information theoretic sense: the entropy per edge
 of this class of graphs, i.e., the quantity $\frac1n\log_2|\mathcal{P}(n)|$, tends to~2
when $n$ goes to infinity, so that a code for $\mathcal{P}(n)$ cannot
give a better guarantee on the compression rate.

Applications calling for compact storage and fast transmission of 3D
geometrical meshes have recently motivated a huge literature on
compression, in particular for the combinatorial part of the meshes.
The first compression algorithms dealt only with triangular faces
\cite{R99,TG98}, but many meshes include larger faces, so that
polygonal meshes have become prominent (see \cite{AlGo03} for a
recent survey).

The question of optimality of coders was raised in relation with
exception codes produced by several heuristics when dealing
with meshes with spherical topology \cite{Go03,KADS02}. Since these
meshes are exactly triangulations (for triangular meshes) and
3-connected planar graphs (for polyhedral ones), the coders in
\cite{PS03b} and in the present paper respectively prove that
traversal based algorithms can achieve optimality. 

On the other hand, in the context of succinct data structures,
almost optimal algorithms have been proposed \cite{HKL00,L02}, that
are based on separator theorems.  However these algorithms are not
truly optimal (they get $\varepsilon$ close to the entropy but at the
cost of an uncontrolled increase of the constants in the linear
complexity). Moreover, although they rely on a sophisticated recursive
structure, they do not support efficient adjacency requests.

As opposed to that, our algorithm shares with \cite{HKL99a,BGH03} the
property that it produces essentially the code of a spanning
tree. More precisely it is just the balanced parenthesis code of a
binary tree, and adjacencies of the initial dissection that are not
present in the tree can be recovered from the code by a simple
variation on the interpretation of the symbols.  Adjacency queries can
thus be dealt with in time proportional to the degree of
vertices~\cite{Ca} using the approach of \cite{MR97,HKL99a}.

Finally we show that the code can be modified to be optimal on the
class $\mathcal{P}(n,i,j)$. Since the entropy of this class is
strictly smaller than that of $\mathcal{P}(n)$ as soon as $|i-n/2|\gg
n^{1/2}$, the resulting parametric coder is more efficient in this
range. In particular in the case $j=2i-4$ our new algorithm
specializes to an optimal coder for triangulations.

\subsection{Outline of the paper}

The paper starts with two sections of preliminaries: definitions of
the maps and trees involved (Section~\ref{sec:defs}), and some basic
correspondences between them (Section~\ref{sec:twocorres}). Then comes
our main result (Section~\ref{sec:bijection}), the mapping between
binary trees and some dissections of the hexagon by quadrangular
faces. The fact that this mapping is a bijection follows
from the existence and uniqueness of a certain tri-orientation of our
dissections.  The proof of this auxiliary theorem, which requires the
introduction of the so-called derived maps and their
$\alpha_0$-orientations, is delayed to Section~\ref{section:proof},
that is, after the three sections dedicated to applications of our
main result: in these sections we successively discuss counting
(Section~\ref{section:counting}), sampling
(Section~\ref{section:sampling}) and coding
(Section~\ref{section:codingAlgo}) rooted 3-connected maps. The third
application leads us to our second important result: in
Section~\ref{section:compute} we present a linear time algorithm to
compute the minimal $\alpha_0$-orientation of the  
derived map of a 3-connected
planar map (which also corresponds to the minimal Schnyder woods
alluded to above). Finally, Section~\ref{section:proofCorrect} is
dedicated to the correctness proof of this orientation
algorithm. Figure~\ref{fig:relations} summarizes the connections
between the different families of objects we consider.

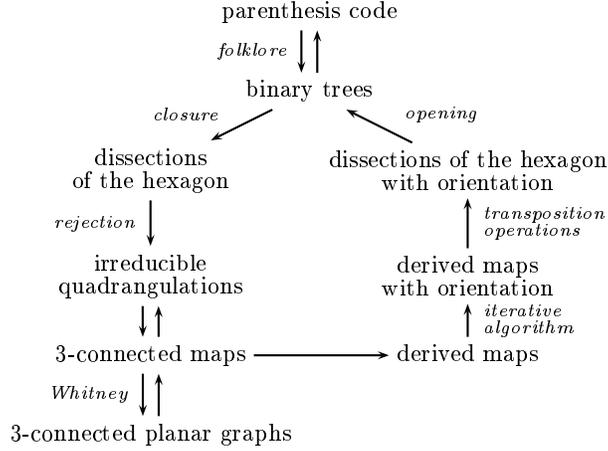
\begin{figure}
  \centering
  \begin{pspicture}(0,-3)(20,14)\def\baselinestretch{.8}\small
    \noeud(4,-3){3cg}{3-connected planar graphs}
    \noeud(4,0){3c}{3-connected maps} 
    \noeud(16,0){dm}{derived maps} 
    \grosnoeud(4,3){iq}{irreducible\\ quadrangulations}
    \grosnoeud(16,3){odm}{derived maps\\ with orientation}
    \grosnoeud(4,7){dh}{dissections\\ of the hexagon}
    \grosnoeud(16,7){odh}{dissections of the hexagon with
      orientation} 
    \noeud(10,10){bt}{binary trees}
    \noeud(10,13){pc}{parenthesis code} \em\scriptsize
    \fleche{3c}{dm}
    \fleche{dm}{odm}\Bput{\parbox{4\psunit}{iterative\\ algorithm}}
    \fleche{odm}{odh}\Bput{\parbox{4\psunit}{transposition\\
        operations}} 
    \fleche{odh}{bt}\bput(0){opening}
    \fleche{bt}{dh}\bput(1){closure} 
    \fleche{dh}{iq}\Bput{rejection}
    \fleche[offset=-3pt]{iq}{3c}
    \fleche[offset=-3pt]{3c}{iq}
    \fleche[offset=-3pt]{3c}{3cg}\Bput{Whitney}
    \fleche[offset=-3pt]{3cg}{3c} \fleche[offset=-3pt]{bt}{pc}
    \fleche[offset=-3pt]{pc}{bt}\Bput{folklore}
  \end{pspicture}
  \caption{Relations between involved objects.}\label{fig:relations}
\end{figure}

\section{Definitions}\label{sec:defs}

\subsection{Planar maps} 
A \emph{planar map} is a proper embedding of an unlabelled connected
graph in the plane, where \emph{proper} means that edges are smooth simple
arcs that do not meet but at their endpoints. A planar map is said to be
\emph{rooted} if one edge of the outer face, called the
\emph{root-edge}, is marked and oriented such that the outer face lays
on its right. The origin of the root-edge is called
\emph{root-vertex}. Vertices and edges are said to be \emph{outer} or \emph{inner}
depending on whether they are incident to the outer face or not.

A planar map is \emph{3-connected} if it has at least 4 edges and 
can not be disconnected
by the removal of two vertices. The first 3-connected planar map is
the tetrahedron, which has 6 edges.  
We denote by \Pn (respectively \Pij) the set
of rooted 3-connected planar maps with $n$ edges (resp. $i$ vertices
and $j$ faces). A 3-connected planar map is
\emph{outer-triangular} if its outer face is triangular.

\subsection{Plane trees, and half-edges} 
\emph{Plane trees} are planar maps with a single face ---the outer
one. A vertex is called a \emph{leaf} if it has degree~1, and
\emph{node} otherwise.  Edges incident to a leaf are called
\emph{stems}, and the other are called \emph{entire edges}.
Observe that plane trees are
\emph{unrooted} trees.  

\emph{Binary trees} are plane trees whose nodes have degree~3. By
convention we shall require that a \emph{rooted binary tree} has a
root-edge that is a stem. The root-edge of a rooted binary tree thus
connects a node, called the \emph{root-node}, to a leaf,
called the \emph{root-leaf}. With this definition of rooted binary
tree, upon drawing the tree in a top down manner starting with the
root-leaf, every node (including the root-node) has a father, a left son
and a right son. This (very minor) variation on the usual definition
of rooted binary trees will be convenient later on. For $n\geq 1$, we denote
respectively by \nBn and \Bn the sets of binary and rooted binary
trees with $n$ nodes (they have $n+2$ leaves, as proved by induction
on $n$).
These rooted trees are well known to be counted by the Catalan
numbers: $|\Bn|=\frac1{n+1}{2n\choose n}$.

The vertices of a binary tree can be greedily bicolored ---say in
black or white--- so that adjacent vertices have distinct colors.  The
bicoloration is unique up to the choice of the color of the first
node.  As a consequence, rooted bicolored binary trees are either
\emph{black-rooted} or \emph{white-rooted}, depending on the color of
the root node.  The sets of black-rooted (resp. white-rooted) binary
trees with $i$ black nodes and $j$ white nodes is denoted by $\cBijb$
(resp. by $\cBijw$); and the total set of rooted bicolored binary
trees with $i$ black nodes and $j$ white nodes is denoted by
$\mathcal{B}_{ij}'$.


It will be convenient to view each entire edge of a tree as a pair of
opposite \emph{half-edges} ---each one incident to one extremity of the
edge--- and to view each stem as a single half-edge ---incident to the
node holding the stem. More generally we shall consider maps that
have \emph{entire edges} (made of two half-edges) and \emph{stems}
 (made of only one
half-edge). It is then also natural to associate one face to each
half-edge, say, the face on its right. In the case of trees, there is
only the outer face, so that all half-edges get the same associated
face.


\subsection{Quadrangulations and dissections}
A \emph{quadrangulation} is a planar map whose faces (including the
outer one) have degree~4. A \emph{dissection of the hexagon by
quadrangular faces} is a planar map whose outer face has degree~6 and
inner faces have degree~4.


%

Cycles that do not delimit a face are said to be \emph{separating}. A
quadrangulation or a dissection of the hexagon by quadrangular faces
is said to be \emph{irreducible} if it has at least 4 faces and 
has no separating 4-cycle. The first irreducible quadrangulation is the 
cube, which has 6 faces. We denote
by $\Qn$ the set of rooted irreducible quadrangulations 
with $n$ faces,
including the outer one.  Euler's relation ensures that these
quadrangulations have $n+2$ vertices. We denote by $\nDn$ 
($\Dn$) the set
of  (rooted, respectively) 
irreducible dissections of the hexagon with $n$ inner
vertices. These have $n+2$ quadrangular faces, according to Euler's
relation.  From now on, irreducible dissections of the hexagon by
quadrangular faces will simply be called \emph{irreducible
dissections}. The classes of rooted irreducible quadrangulations and of 
rooted irreducible dissections are respectively denoted by 
$\mathcal{Q}'=\cup_n\mathcal{Q}_n'$ and $\mathcal{D}'=\cup_n\mathcal{D}_n'$. 

As faces of dissections and quadrangulations have even degree, the 
vertices of these maps can be greedily bicolored, say, in black and white,
so that each edge connects a black vertex to a white one. Such a
bicoloration is unique up to the choice of the colors. We denote by
\Qij the set of rooted bicolored irreducible
quadrangulations with $i$ black vertices and $j$ white vertices and
such that the root-vertex is black; and by \Dij the set
of rooted bicolored irreducible dissections with $i$ black
\emph{inner} vertices and $j$ white \emph{inner} vertices and such
that the root-vertex is black.

A bicolored irreducible dissection is \emph{complete} if the
three outer white vertices of the hexagon have degree
exactly~2. Hence, these three vertices are incident to two 
adjacent edges on the hexagon.

\section{Correspondences between families of planar maps}
\label{sec:twocorres}

This section recalls a folklore bijection between
irreducible quadrangulations and 3-connected maps, hereafter called
angular mapping, see~\cite{Mu}, and its adaptation to
outer-triangular 3-connected maps.

\subsection{3-connected maps and irreducible quadrangulations}

Let us first recall how the angular mapping works.
Given a rooted quadrangulation $Q \in \Qn$ endowed with its vertex
bicoloration, let $M$ be the rooted map obtained by
linking, for each face $f$ of $Q$ (even the outer face), the two
diagonally opposed black vertices of $f$; the root of $M$ is chosen to
be the edge corresponding to the outer face of $Q$, oriented so that
$M$ and $Q$ have same root-vertex, see Figure~\ref{figure:tutte}. The
map $M$ is often called the \emph{primal map} of $Q$. A similar
construction using white vertices instead of black ones would
give its \emph{dual map} (i.e., the map with a vertex in each face of $M$ 
and edge-set corresponding to the adjacencies between vertices and 
faces of $M$).

\begin{figure}
  \centering
  \subfigure[\scriptsize A
    quadrangulation]{\includegraphics[height=10em]{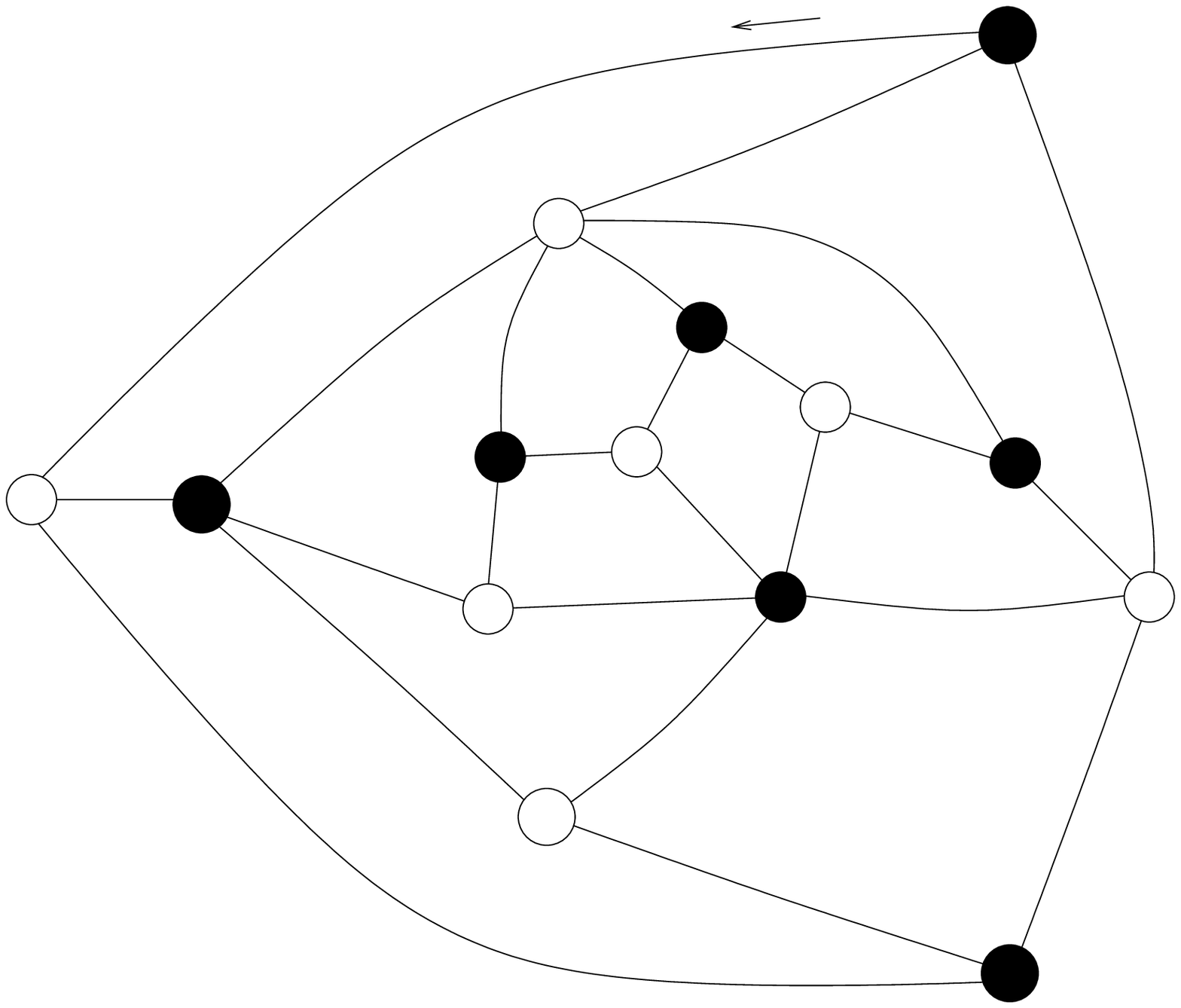}}\quad
  \subfigure[\scriptsize with its black
    diagonals]{\quad\includegraphics[height=10em]{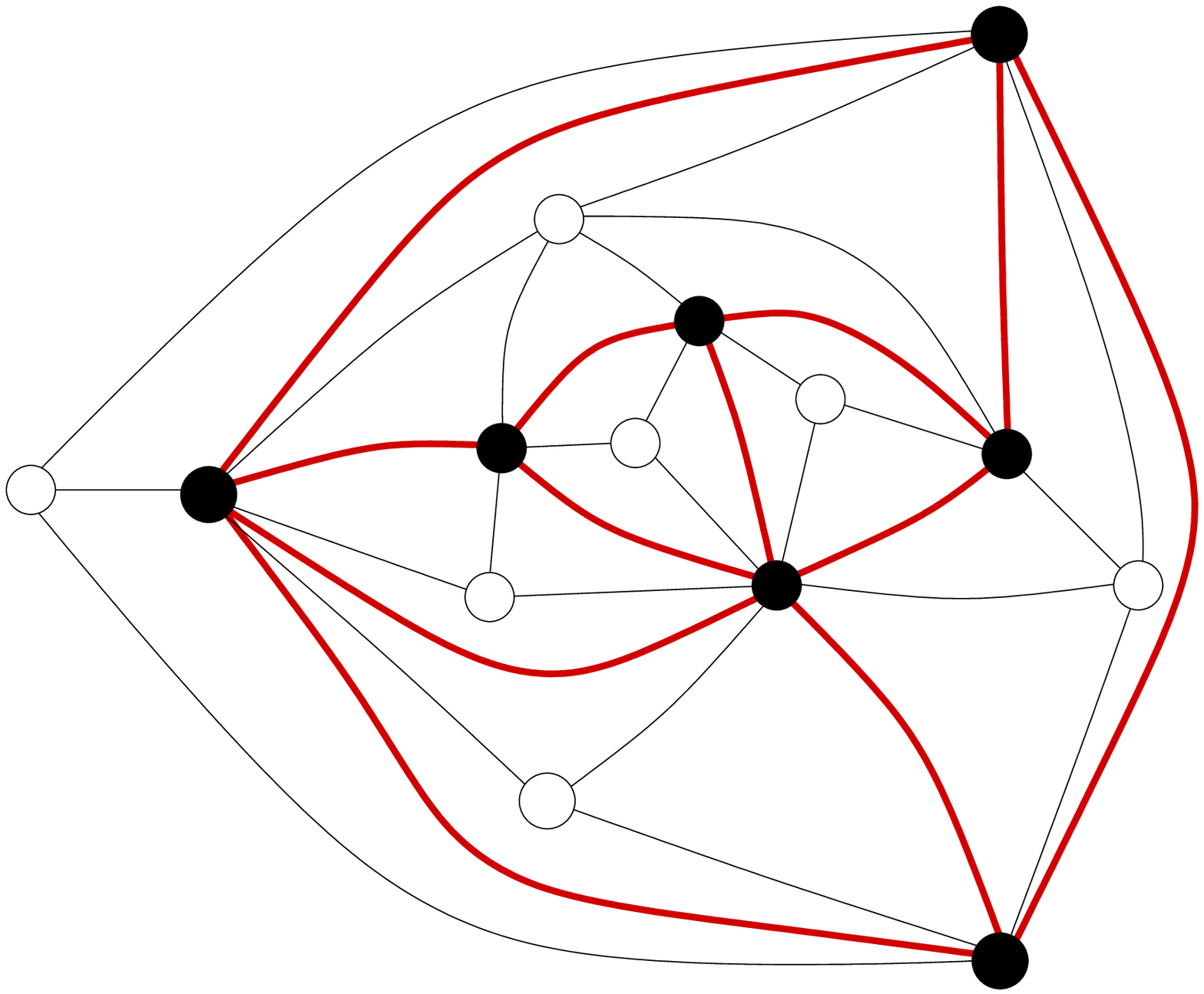}\quad}\quad
  \subfigure[\scriptsize gives a planar
    map.]{\;\includegraphics[height=10em]{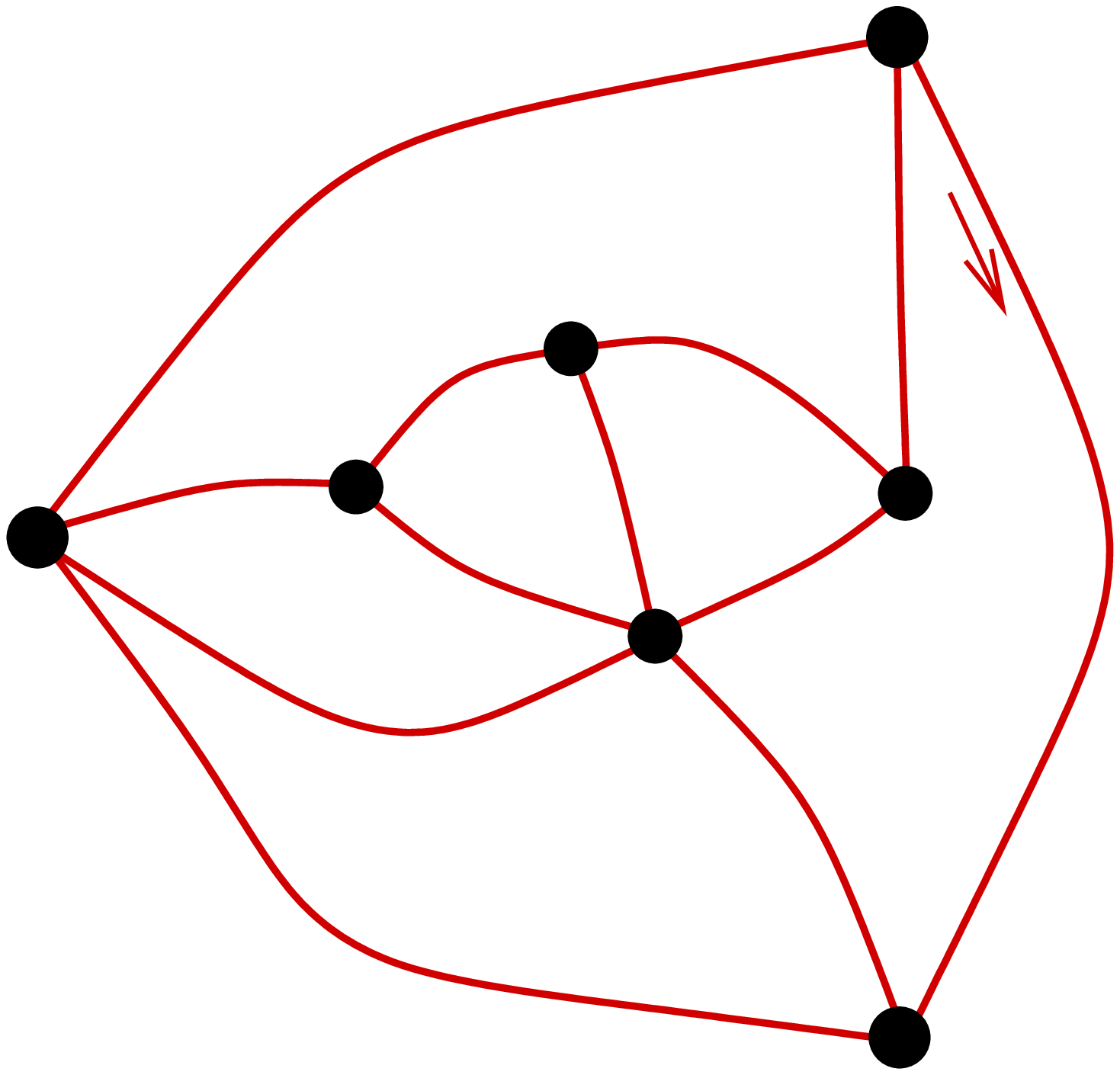}\quad}
  \caption{The angular mapping: from a rooted irreducible quadrangulation
    to a rooted 3-connected planar map.}
  \label{figure:tutte}
\end{figure}

The construction of the primal map is easily invertible. Given any
rooted map $M$, the inverse construction consists in adding a vertex
called a \emph{face-vertex} in each face (even the outer one) of $M$
and linking a vertex $v$ and a face-vertex $v_f$ by an edge if $v$ is
incident to the face $f$ corresponding to $v_f$. Keeping only these
face-vertex incidence edges yields a quadrangulation. The root is
chosen as the edge that follows the root of $M$ in counter-clockwise
order around its origin.

The following theorem is a classical result in the theory of maps.

\begin{theorem}[(Angular mapping)]
  \label{theorem:tutte}
  The angular mapping is a bijection between \Pn and \Qn and more
  precisely a bijection between \Pij and \Qij.
\end{theorem}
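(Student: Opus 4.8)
The plan is to establish the bijection by constructing the primal map and inverse maps explicitly and then verifying they are mutually inverse, while tracking the combinatorial parameters to get the refined statement.

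\medskip

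\noindent\textbf{Well-definedness of the forward map.}
First I would verify that the primal construction indeed lands in \Pn when applied to an element of \Qn. Given a rooted irreducible quadrangulation $Q$ with its black/white bicoloration, the primal map $M$ is obtained by joining, inside each quadrangular face, the two diagonally opposite black vertices. I would check three things. First, $M$ is a planar map: since each face of $Q$ is a quadrangle with exactly two black and two white vertices (alternating around the face), the black diagonal is well-defined and the diagonals drawn in distinct faces do not cross, so the embedding is proper. Second, the number of edges of $M$ equals the number of inner faces-plus-outer-face of $Q$, i.e. $M$ has one edge per face of $Q$; combined with Euler's relation this will fix the incidence numbers. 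Third, and this is the crucial structural point, $M$ is $3$-connected. Here I expect to use the irreducibility hypothesis: a separating $2$-cut in $M$ would have to correspond to a short separating cycle in $Q$, and the absence of separating $4$-cycles in $Q$ is exactly what rules this out. I would make this correspondence precise by relating vertex cuts of $M$ to $4$-cycles of $Q$ passing through the relevant white vertices.

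\medskip

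\noindent\textbf{Well-definedness of the inverse map.}
Symmetrically, starting from $M \in \Pn$, the inverse construction inserts a face-vertex (colored white) in every face of $M$ and connects it to each incident vertex (colored black), keeping only these incidence edges. I would check that the result is a quadrangulation: every face of the resulting map corresponds to a corner of $M$, i.e. a pair (edge, incident face), and each such face is bounded by four incidence edges, hence has degree $4$. The bicoloration is automatic since black (original) vertices are adjacent only to white (face) vertices. Then I must argue irreducibility, i.e. that no separating $4$-cycle appears; this again translates back into the $3$-connectivity of $M$, being the reverse direction of the cut/cycle correspondence established above.

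\medskip

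\noindent\textbf{Mutual inverseness and root/parameter tracking.}
With both maps landing in the correct classes, I would verify that composing them in either order recovers the original object. Applying the inverse construction to the primal map $M$ reinserts exactly the white vertices of $Q$ as face-vertices, since the faces of $M$ are in natural bijection with the white vertices of $Q$ and the incidences match the original edges of $Q$; the reverse composition is analogous. Throughout I would check that the chosen rooting conventions are compatible: the root-edge of $M$ is the diagonal of the outer face, oriented to share the root-vertex of $Q$, and the inverse rule selects the edge following the root in counter-clockwise order, so the two conventions are inverse to each other by construction. Finally, for the refined statement \Pij{} versus \Qij, I would observe that the map $v \mapsto v$ on black vertices and $\text{face} \mapsto \text{white vertex}$ identifies the $i$ black vertices of $Q$ with the $i$ vertices of $M$ and the $j$ white vertices of $Q$ with the $j$ faces of $M$, so the parameters transfer exactly, and the requirement that the root-vertex be black matches the root-vertex of $M$ being an ordinary vertex.

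\medskip

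\noindent I expect the main obstacle to be the $3$-connectivity versus irreducibility equivalence: making the correspondence between small vertex-cuts in $M$ and separating $4$-cycles in $Q$ fully rigorous (including the boundary/outer-face cases and the minimum-size conditions) is the delicate part, whereas the planarity, degree counting, and root-compatibility checks are essentially routine.
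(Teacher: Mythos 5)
The paper offers no proof of this statement: it is recorded as ``a classical result in the theory of maps'' with the refinement attributed to~\cite{Mu}, and the companion Theorem~\ref{thm:bijOuter} is proved only by reference to it. So the benchmark is the standard proof of the Tutte/angular correspondence, and your outline does follow its standard architecture (well-definedness in both directions, mutual inverseness, parameter tracking); there is nothing non-standard or wasteful in the plan.

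As a proof, however, the proposal has a genuine gap exactly where you yourself locate the difficulty, and that point is the entire content of the theorem beyond the elementary correspondence between arbitrary rooted maps with $n$ edges and arbitrary rooted quadrangulations with $n$ faces. What must be proved is the equivalence: $Q$ has no separating $4$-cycle (and at least $4$ faces) if and only if $M$ is simple and $3$-connected (and has at least $4$ edges). You assert that a $2$-cut of $M$ ``would have to correspond to a short separating cycle in $Q$'' and that you ``would make this correspondence precise,'' but no argument is supplied in either direction. Moreover the correspondence is not only with $2$-cuts: the general angular construction can produce loops and multiple edges in $M$, so you must also show that a loop or a pair of parallel edges forces a separating $4$-cycle of $Q$ (two faces of $Q$ sharing their black diagonal pair $\{b_1,b_2\}$ yield a $4$-cycle through $b_1$, $b_2$ and one white vertex of each face), and conversely that every separating $4$-cycle $b_1wb_2w'$ of $Q$ produces either a multiple edge or a $2$-cut $\{b_1,b_2\}$ of $M$; the direction ``$2$-cut $\Rightarrow$ separating $4$-cycle'' needs a genuine planarity argument locating two faces of $M$ incident to both cut vertices on either side of the cut. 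A smaller slip in a part you call routine: the faces of the quadrangulation obtained from $M$ by the face-vertex construction are in bijection with the \emph{edges} of $M$ (the edge $uv$ with incident faces $f,f'$ becomes the quadrangle $u,v_f,v,v_{f'}$ once $uv$ is erased), not with pairs (edge, incident face), of which there are $2n$; as stated your degree count is off by a factor of two.
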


\subsection{Outer-triangular 3-connected maps and bicolored complete
  irreducible dissections} 
\label{section:bijectionViaHexagon}

The same principle yields a bijection, also called
angular mapping, between outer-triangular 3-connected
maps and bicolored complete irreducible dissections, which will prove
very useful in Sections~\ref{section:codingAlgo}
and~\ref{section:proof}. This mapping is very similar to the angular
mapping: given a complete dissection $D$, associate to $D$ the 
map $M$ obtained by linking the two black vertices of each \emph{inner face}
of $D$ by a new edge, see
Figure~\ref{figure:bijectionOuterComplete}. The map $M$ is called the 
\emph{primal map} of $D$.

\begin{theorem}[(Angular mapping with border)]
\label{thm:bijOuter}
The angular mapping, formulated for complete dissections, is a bijection 
between bicolored complete irreducible dissections with $i$ black
vertices and $j$
white vertices and outer-triangular 3-connected maps with $i$ vertices
and $j-3$ inner faces.
\end{theorem}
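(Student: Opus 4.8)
The cleanest route is to \emph{reduce} the statement to the classical angular mapping of Theorem~\ref{theorem:tutte} by a local surgery that opens the outer boundary. Write the outer hexagon of a complete dissection $D$ as $B_1W_1B_2W_2B_3W_3$ in cyclic order, the $B_k$ being black and the $W_k$ white; completeness means each $W_k$ has degree exactly $2$, so it is incident only to the two boundary edges $B_kW_k$ and $W_kB_{k+1}$ (indices modulo $3$), and the inner quadrangle through $W_k$ has the form $B_kW_kB_{k+1}X_k$ for some inner white vertex $X_k$. I would first check that the three faces $B_kW_kB_{k+1}X_k$ together with the hexagon fill a disk bounded by the $6$-cycle $B_1X_1B_2X_2B_3X_3$. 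The surgery $\Phi$ replaces this disk by three quadrangles $B_kW_\infty B_{k+1}X_k$ meeting at a single new white vertex $W_\infty$ of degree $3$; its inverse splits a degree-$3$ white vertex back into the boundary hexagon. I expect $\Phi$ to be a bijection between complete irreducible dissections with $i$ black and $j$ white vertices and irreducible quadrangulations with $i$ black and $j-2$ white vertices carrying a marked degree-$3$ white vertex, the marking recording $W_\infty$.

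The plan is then to compose $\Phi$ with Theorem~\ref{theorem:tutte}. Under the classical angular mapping a white vertex of degree $d$ corresponds to a face of degree $d$, so the marked degree-$3$ vertex $W_\infty$ corresponds exactly to a \emph{triangular} face of the primal $3$-connected map $M_0$; declaring that face to be the outer one turns $M_0$ into an outer-triangular map $M$, and conversely every outer-triangular $3$-connected map arises this way from its outer triangle. This already forces the completeness hypothesis to be equivalent to outer-triangularity, and it delivers $3$-connectivity of $M$ for free from Theorem~\ref{theorem:tutte}. A short Euler-characteristic bookkeeping confirms the parameters: a complete dissection with $i$ black and $j$ white vertices has $i+j-4$ inner quadrangles, hence $M$ has $i$ vertices and $i+j-4$ edges, hence $j-2$ faces in total, i.e. $j-3$ inner faces, as claimed. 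One also checks directly that linking the two black vertices of each inner face of $D$ is the same as first applying $\Phi$ and then the classical construction, so the bordered angular mapping really is $\Phi$ followed by the map of Theorem~\ref{theorem:tutte}.

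The main obstacle is the \emph{irreducibility transfer}: one must show that $D$ has no non-facial $4$-cycle if and only if $\Phi(D)$ is an irreducible quadrangulation. Away from the modified disk this is immediate, since $\Phi$ is the identity there, so the real work is a local case analysis of the $4$-cycles meeting $W_\infty$ (equivalently, the configurations around the boundary hexagon): a separating $4$-cycle through $W_\infty$ in $\Phi(D)$ must pass through two of its neighbours $B_a,B_b$ and a common neighbour $Y$, and one has to match such cycles bijectively with the forbidden configurations of $D$ while verifying that the surgery neither creates nor destroys any others, the $6$-cycle of the hexagon causing no trouble because it is longer than $4$. Finally I would reconcile the roots and colour conventions on the two sides, using that the outer triangle of $M$ and the outer hexagon of $D$ carry compatible markings, so that the correspondence is exactly size- and parameter-preserving; mutual inverseness then follows from that of $\Phi$ and of the classical angular mapping.
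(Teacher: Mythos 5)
Your proposal is correct in outline but takes a genuinely different route from the paper, which in fact gives no argument at all: its ``proof'' is the single remark that the statement follows the same lines as the classical angular mapping (Theorem~\ref{theorem:tutte}), i.e.\ a direct adaptation in which one links the black diagonals of the inner faces and inverts by inserting a face-vertex in each inner face together with three degree-$2$ white vertices along the outer triangle. You instead \emph{reduce} the bordered statement to the closed one via the boundary surgery $\Phi$ that contracts the hexagon and its three incident quadrangles onto a single degree-$3$ white vertex $W_\infty$, and then identify the outer triangle of the primal map with the face dual to $W_\infty$. This buys you $3$-connectivity and the irreducibility equivalence for free from Theorem~\ref{theorem:tutte}, and your Euler bookkeeping of the parameters is right; the price is that all the content is concentrated in the local analysis at the boundary, which you have only sketched.

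Three points there need to be nailed down. (i) Simplicity of the $6$-cycle $B_1X_1B_2X_2B_3X_3$: if two of the $X_k$ coincide, the repeated vertex $X$ is adjacent to all of $B_1,B_2,B_3$, and either all three $X_k$ coincide (which forces $D$ to be the hexagon with a single inner vertex) or, taking $c$ with $X_c\neq X$, the $4$-cycle $B_cW_cB_{c+1}X$ is separating in $D$; so distinctness of the $X_k$ is a consequence of irreducibility, not an extra assumption, and your write-up should say so. (ii) The one-inner-vertex dissection is complete and irreducible, yet $\Phi$ sends it to a three-face quadrangulation whose primal map is a bare triangle, not $3$-connected under the paper's conventions; this degenerate case must be excluded explicitly (the theorem is already problematic there as stated, so the defect is shared, but your reduction applies Theorem~\ref{theorem:tutte} outside its range unless you flag it). (iii) Theorem~\ref{theorem:tutte} is stated for rooted maps, whereas you need it for unrooted maps carrying a marked degree-$3$ white vertex, respectively a marked triangular face; since the angular mapping is canonical on sphere embeddings and commutes with re-rooting this is harmless, but the transfer should be stated. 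With these in place, your matching of separating $4$-cycles works: any $4$-cycle through $W_\infty$ has the form $B_aW_\infty B_bY$ and corresponds to $B_aW_mB_bY$ in $D$ for the outer white vertex $W_m$ between $B_a$ and $B_b$ (each $W_k$ has degree $2$, so these are the only $4$-cycles through the $W_k$), and the surgery changes the number of faces on the hexagon side of any such cycle only from four to three, so it never changes whether a side consists of a single face.
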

\begin{proof} 
  The proof follows similar lines as that of
  Theorem~\ref{theorem:tutte}, see~\cite{Mu}.\hfill$\qed$
\end{proof}

\begin{figure}
\subfigure[{\scriptsize A dissection,}\label{figure:bijectionOuterComplete1}]{\includegraphics[height=.21\linewidth]{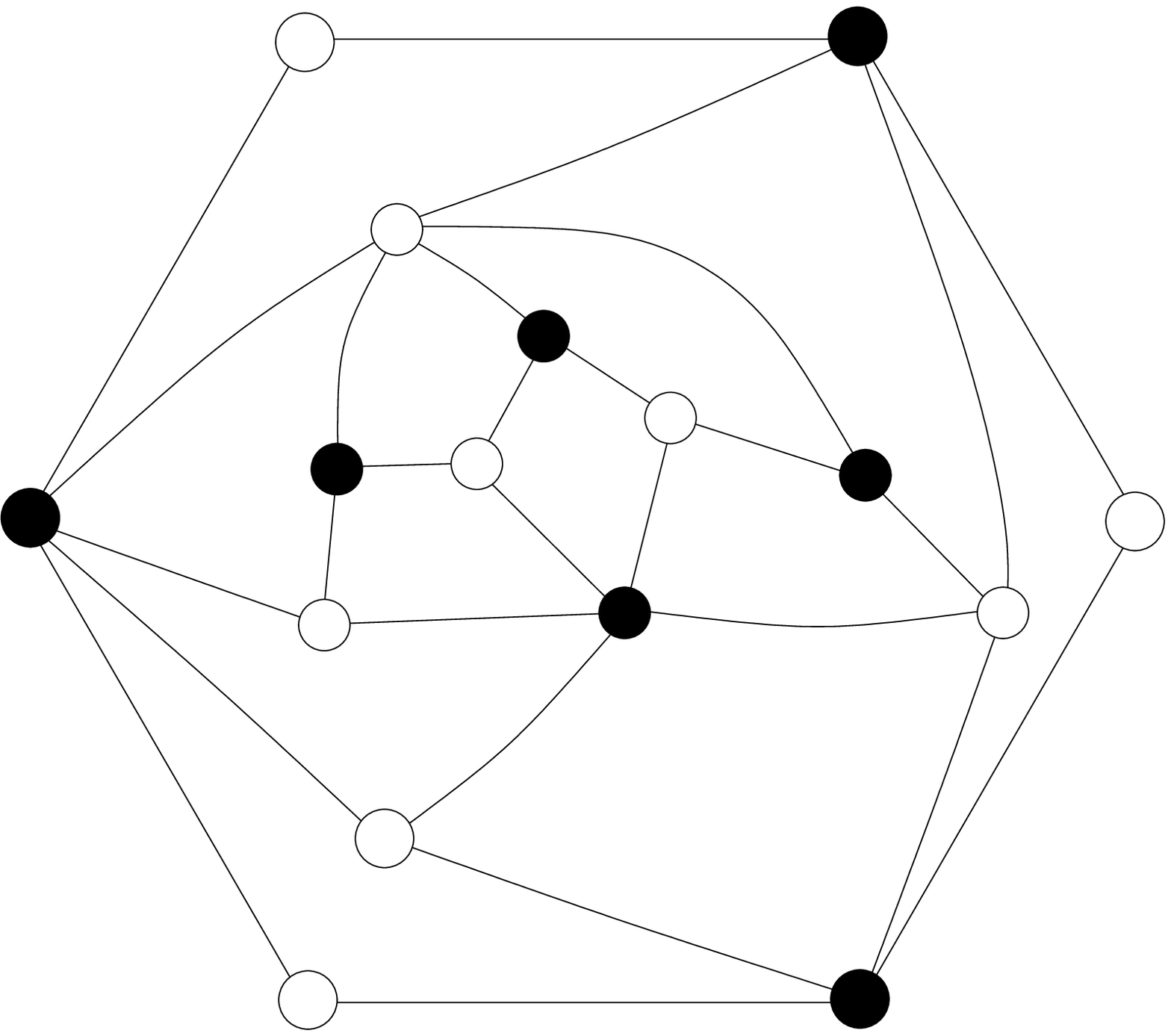}}\quad\;
\subfigure[\scriptsize black diagonals,\label{figure:bijectionOuterComplete2}]{\includegraphics[height=.21\linewidth]{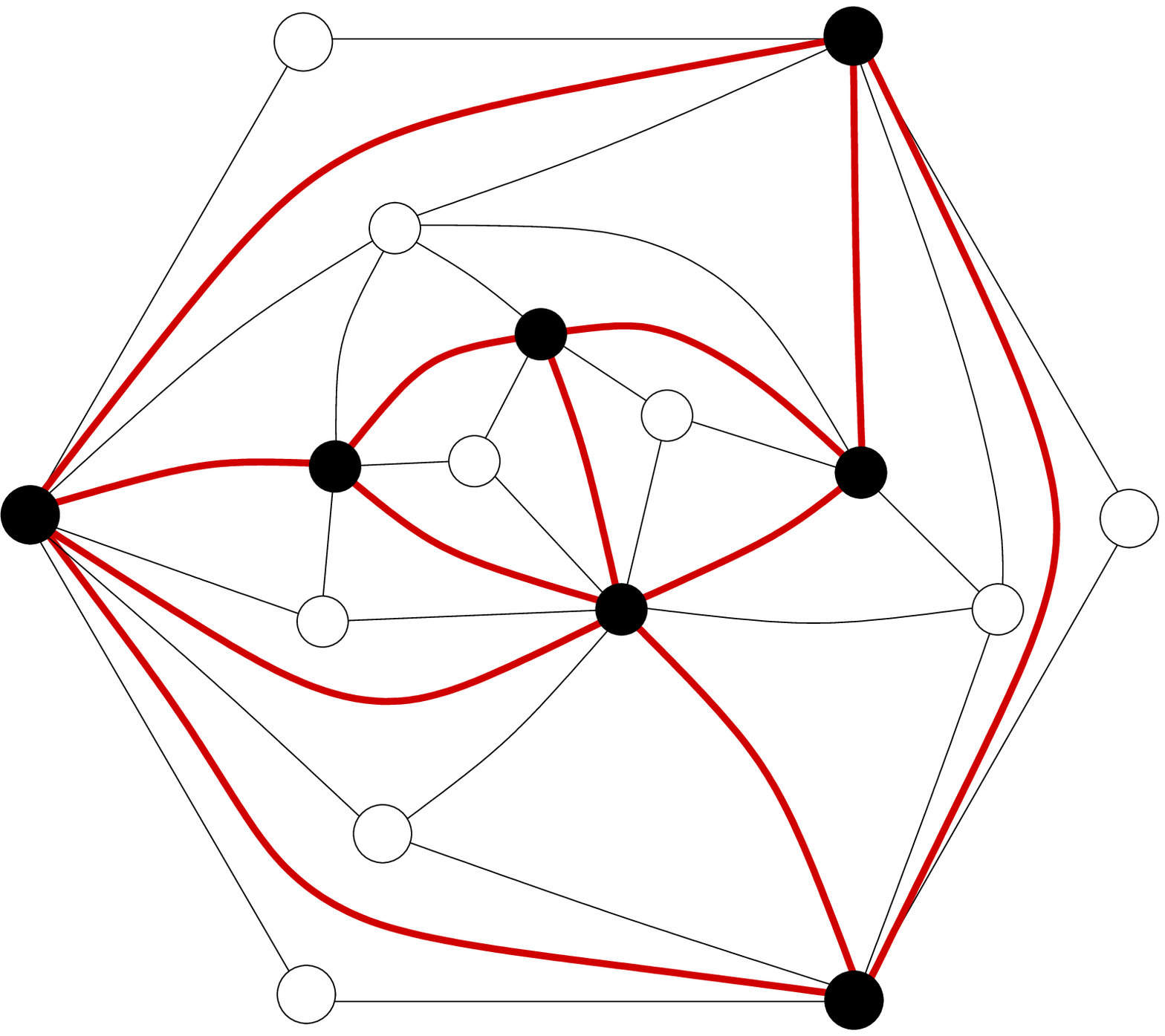}}\;
\subfigure[\scriptsize the 3-connected map,\label{figure:bijectionOuterComplete3}]{\quad\includegraphics[height=.21\linewidth]{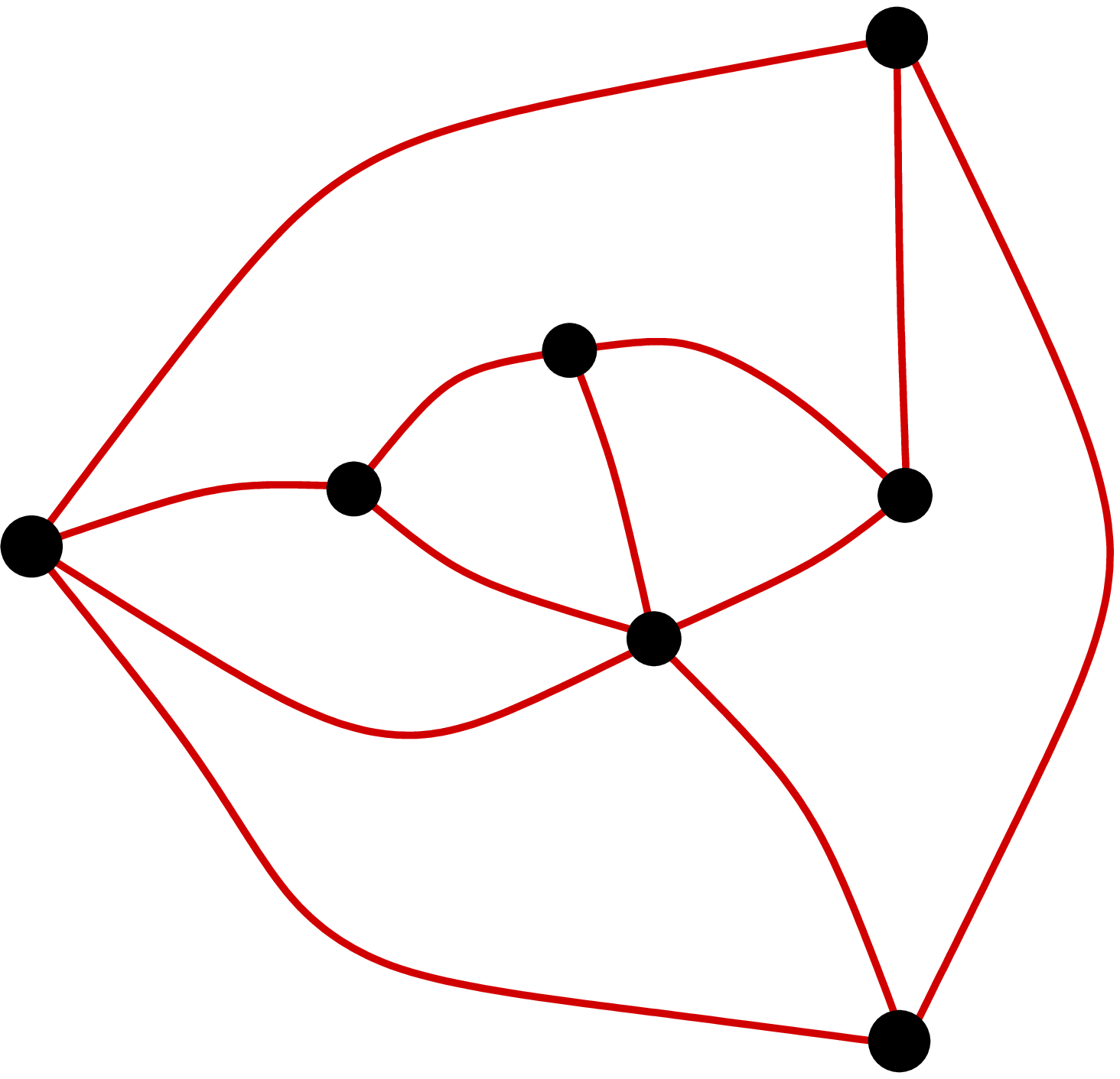}\quad}\;
\subfigure[\scriptsize the derived map.\label{figure:bijectionOuterComplete4}]{\includegraphics[height=.21\linewidth]{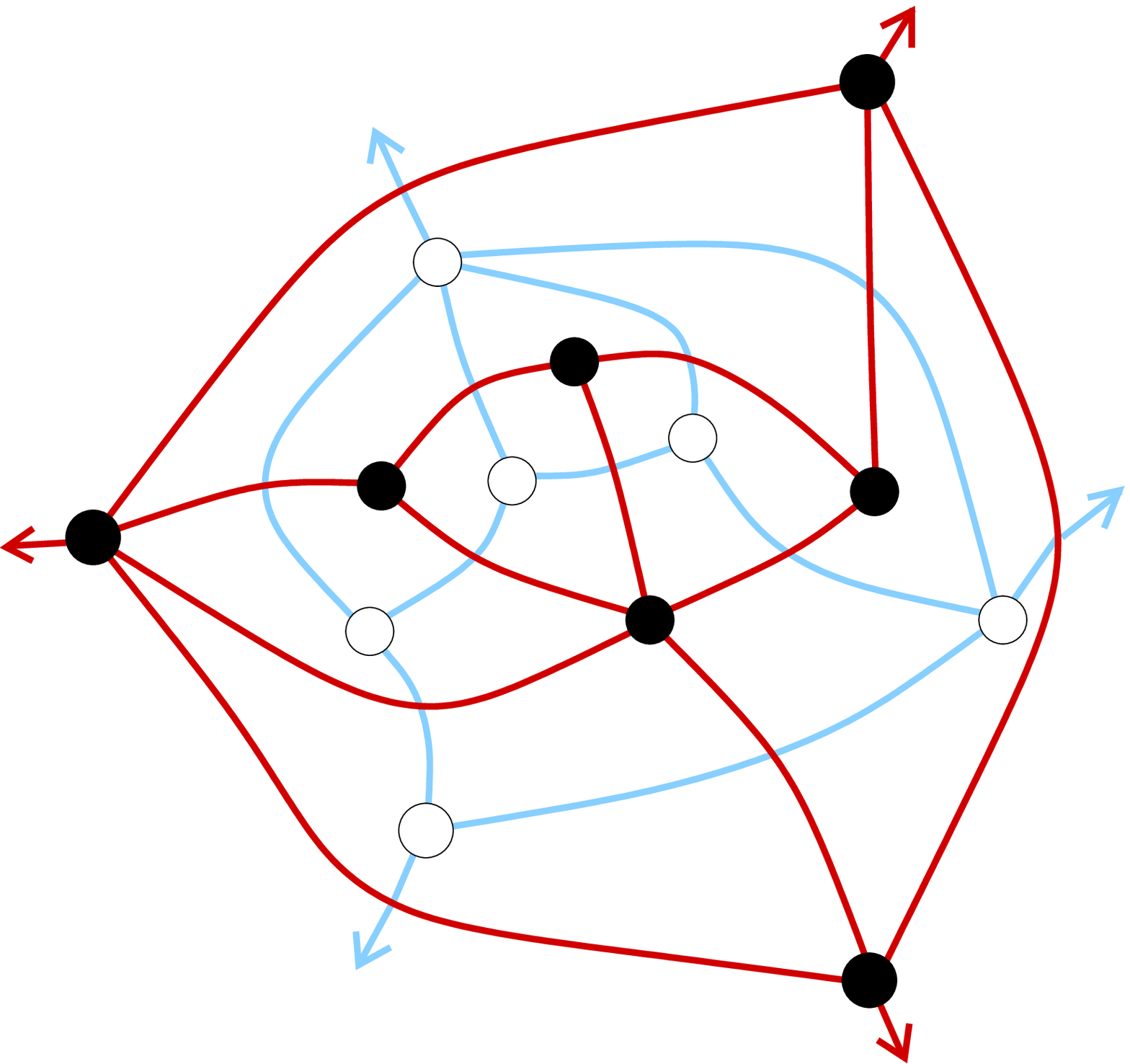}}
\caption{The angular mapping with border: from a bicolored complete
       irreducible dissection (a) to an outer-triangular 3-connected map (c). The common derived map is shown in (d).}
     \label{figure:bijectionOuterComplete}
\end{figure}

\subsection{Derived maps}
\label{section:derivatedmap}
In its version for complete dissections, the angular mapping can also be
formulated using the concept of derived map, which will be very useful 
throughout this article (in particular when dealing with orientations).

Let $M$ be an outer-triangular 3-connected map, and let $M^*$ be the
map obtained from the dual of $M$ by removing the dual vertex
corresponding to the outer face of $M$.  Then the \emph{derived map}
$M'$ of $M$ is the superimposition of $M$ and $M^*$, where each outer
vertex receives an additional half-edge directed toward the outer
face.  For example, Figure~\ref{figure:bijectionOuterComplete4} shows
the derived map of the map given in
Figure~\ref{figure:bijectionOuterComplete3}. The map $M$ is called the
\emph{primal map} of $M'$ and the map $M^*$ is called the \emph{dual
  map} of $M'$. Observe that the superimposition of $M$ and $M^*$
creates a vertex of degree 4 for each edge $e$ of $M$, due to the
intersection of $e$ with its dual edge. These vertices of $M'$ are
called \emph{edge-vertices}. An edge of $M'$ either corresponds to an
half-edge of $M$ when it connects an edge-vertex and a primal vertex,
or to an half-edge of $M^*$ when it connects an edge-vertex and a dual
vertex.

Similarly, one defines derived maps of complete irreducible
dissections.  Given a bicolored complete irreducible dissection $D$,
the derived map $M'$ of $D$ is constructed as follows; for each inner
face $f$ of $D$, link the two black vertices incident to $f$ by a
\emph{primal edge}, and the two white ones by a \emph{dual
  edge}. These two edges, which are the two diagonals of~$f$,
intersect at a new vertex called an \emph{edge-vertex}. The derived
map is then obtained by keeping the primal and dual edges and all
vertices except the three outer white ones and their incident
edges. Finally, for the sake of regularity, each of the six outer
vertices of $M'$ receives an additional half-edge directed toward the
outer face.  For example, the derived map of the dissection of
Figure~\ref{figure:bijectionOuterComplete1} is shown in
Figure~\ref{figure:bijectionOuterComplete4}.
Black vertices are called \emph{primal vertices} and white vertices
are called \emph{dual vertices} of the derived map $M'$. The submap
$M$ ($M^*$) of $M'$ consisting of the primal vertices and primal edges
(resp. the dual vertices and dual edges) is called the \emph{primal
  map} (resp. the \emph{dual map}) of the derived map. Clearly, $M$
has a triangular outer face; and, by construction, a bicolored
complete irreducible dissection and its primal map have the same
derived map.

\section{Bijection between binary trees and irreducible dissections}
\label{sec:bijection}

\subsection{Closure mapping: from trees to dissections}
\label{sec:closures}



\paragraph{Local and partial closure} 
Given a map with entire edges and stems (for instance a tree), we
define a \emph{local closure} operation, which is based on a
counter-clockwise walk around the map: this walk alongside the boundary
of the outer map visits a succession of stems and entire edges, or more
precisely, a sequence of half-edges having the outer face on their
right-hand side.
When a stem is immediately followed in this walk by three entire edges,
its local closure consists in the creation of an opposite half-edge
for this stem, which is attached to farthest endpoint of the third
entire edge: this amounts to completing the stem into an entire edge, so
as to create ---or \emph{close}--- a quadrangular face. This operation is
illustrated in Figure~\ref{figure:localClosure}.

\begin{figure}
  \def\etalon{.28\linewidth}
  \centering
  \subfigure[\scriptsize A binary
    tree,\label{figure:binaryTree}]{\includegraphics[width=\etalon]{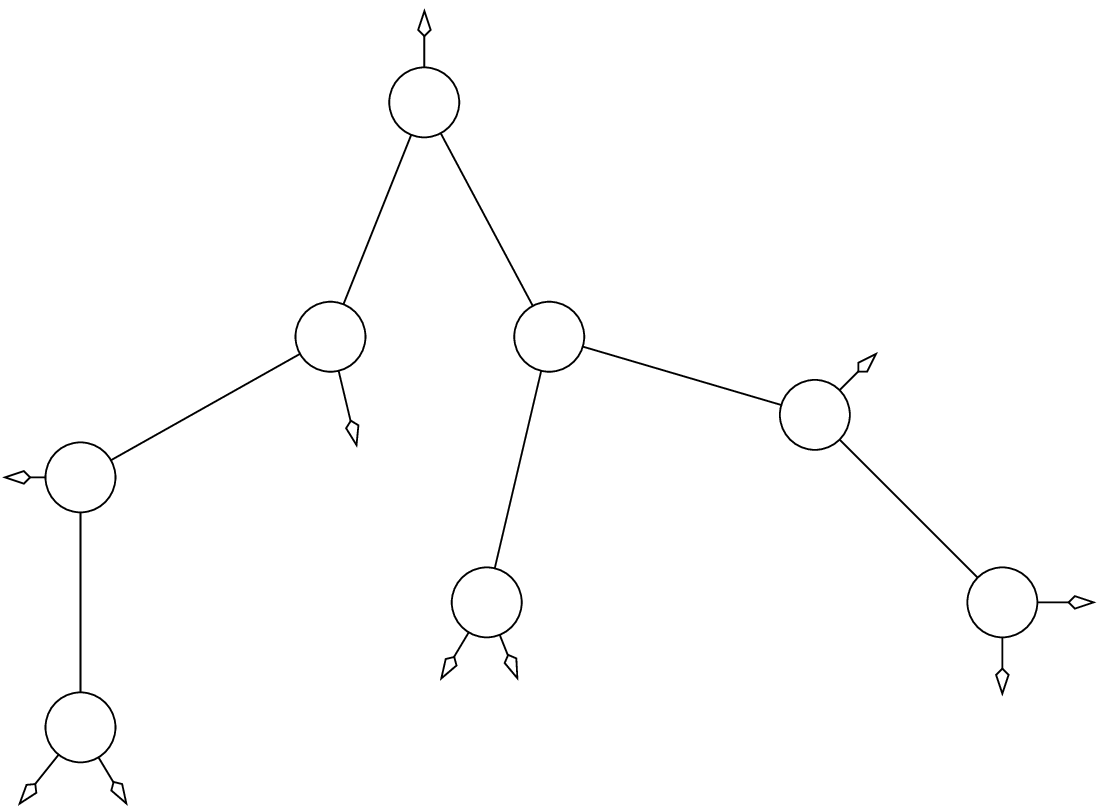}} 
  \qquad
  \subfigure[\scriptsize a local
    closure,\label{figure:localClosure}]{\includegraphics[width=\etalon]{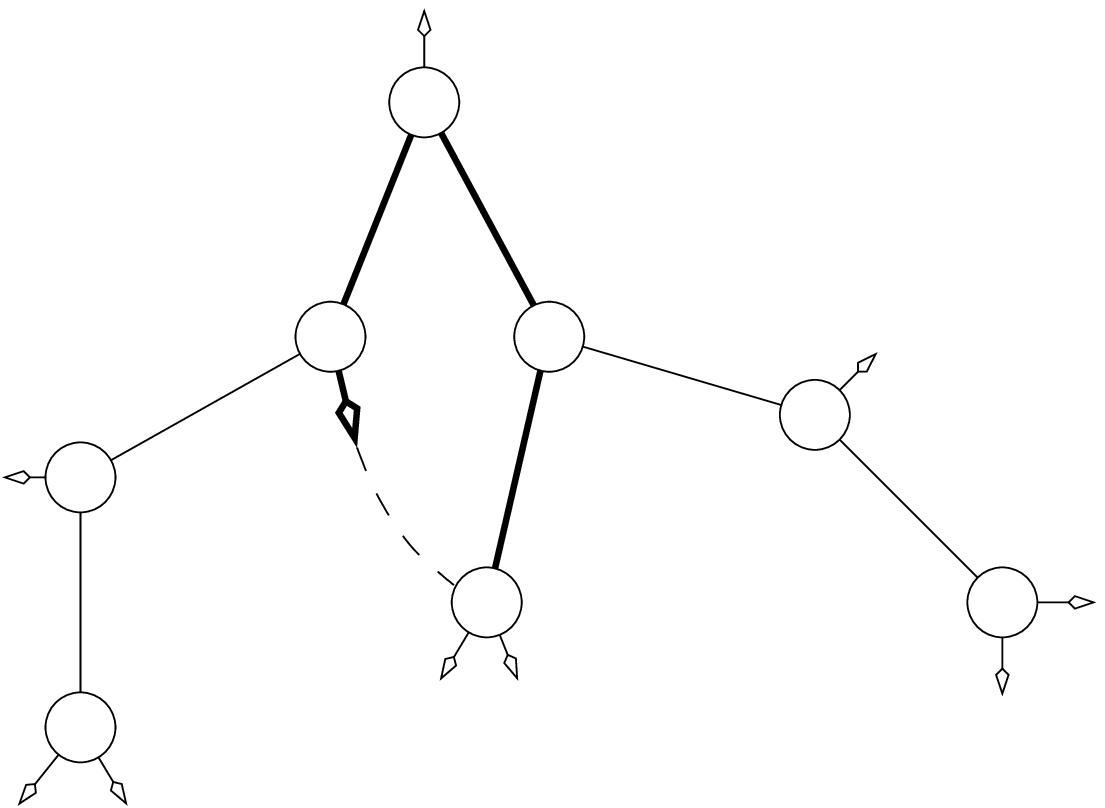}} 
  \quad
  \subfigure[\scriptsize and the partial
    closure.\label{figure:partialClosure}]{\quad\includegraphics[width=\etalon]{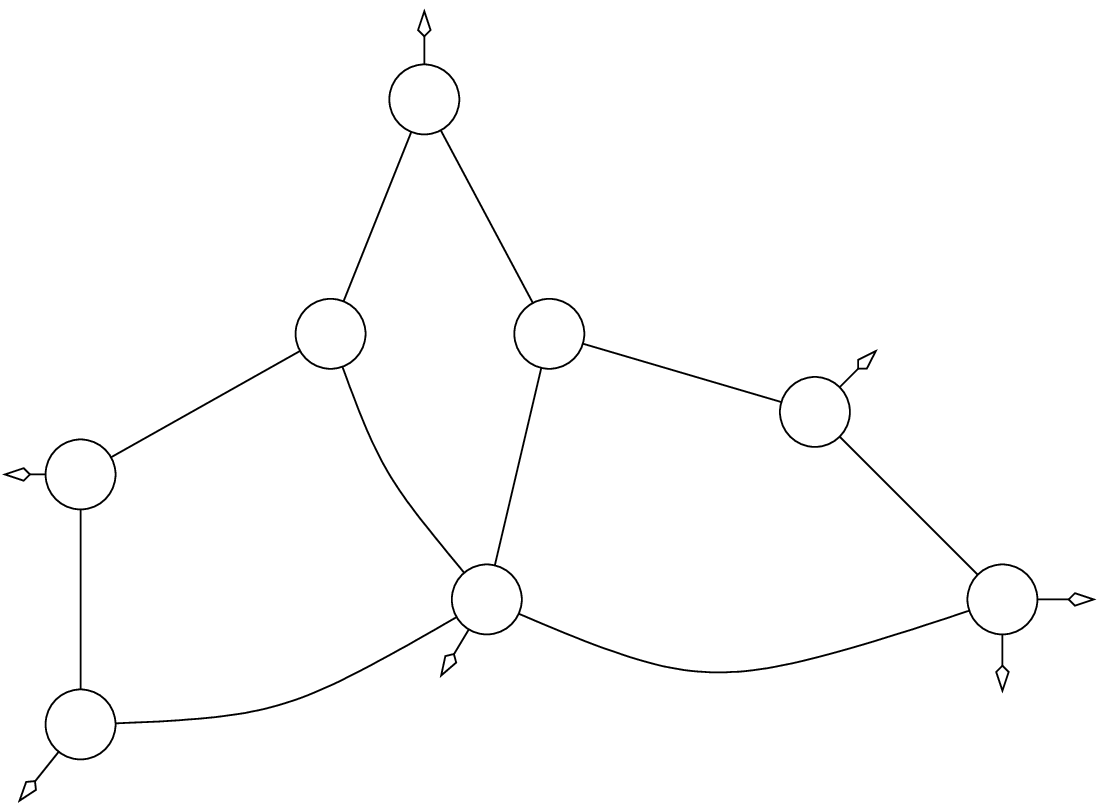}\quad} 
  \caption{The partial closure.}
\end{figure}

Given a binary tree $T$, the local closure can be performed greedily
until no more local closure is possible. Each local closure creates a
new entire edge, maybe making a new local closure possible. It is
easy to see that the final map, called the \emph{partial closure}
of $T$, does not depend on the order of the local closures. Indeed, a
cyclic parenthesis word is associated to the counter-clockwise boundary
of the tree, with an opening parenthesis of weight~3 for a stem and
a closing parenthesis for a side of entire edge; then the future local
closures correspond to matchings of the parenthesis word. An
example of partial closure is shown in
Figure~\ref{figure:partialClosure}.

\paragraph{Complete closure}
Let us now complete the partial closure operation to obtain a
dissection of the hexagon with quadrangular faces. An \emph{outer
  entire half-edge} is an half-edge belonging to an entire edge and
incident to the outer face.  Observe that a binary tree $T$ with $n$
nodes has $n+2$ stems and $2n-2$ outer entire half-edges. Each local
closure decreases by~1 the number of stems and by~2 the number of
outer entire half-edges.  Hence, if $k$ denotes the number of
(unmatched) stems in the partial closure of $T$, there are $2k-6$
outer entire half-edges. Moreover, stems delimit intervals of inner
half-edges on the contour of the outer face; these intervals have
length at most~2, otherwise a local closure would be possible.  Let
$r$ be the number of such intervals of length~1 and $s$ be the number
of such intervals of length~0 (that is, the number of nodes incident
to two unmatched stems). Then $r$ and $s$ are clearly related by the
relation $r+2s=6$.

\begin{figure}
    \subfigure[Generic case when $r=2$ and $s=2$.\label{figure:exampleClosureCompleted}]{%
      \qquad\includegraphics[height=.33\linewidth]{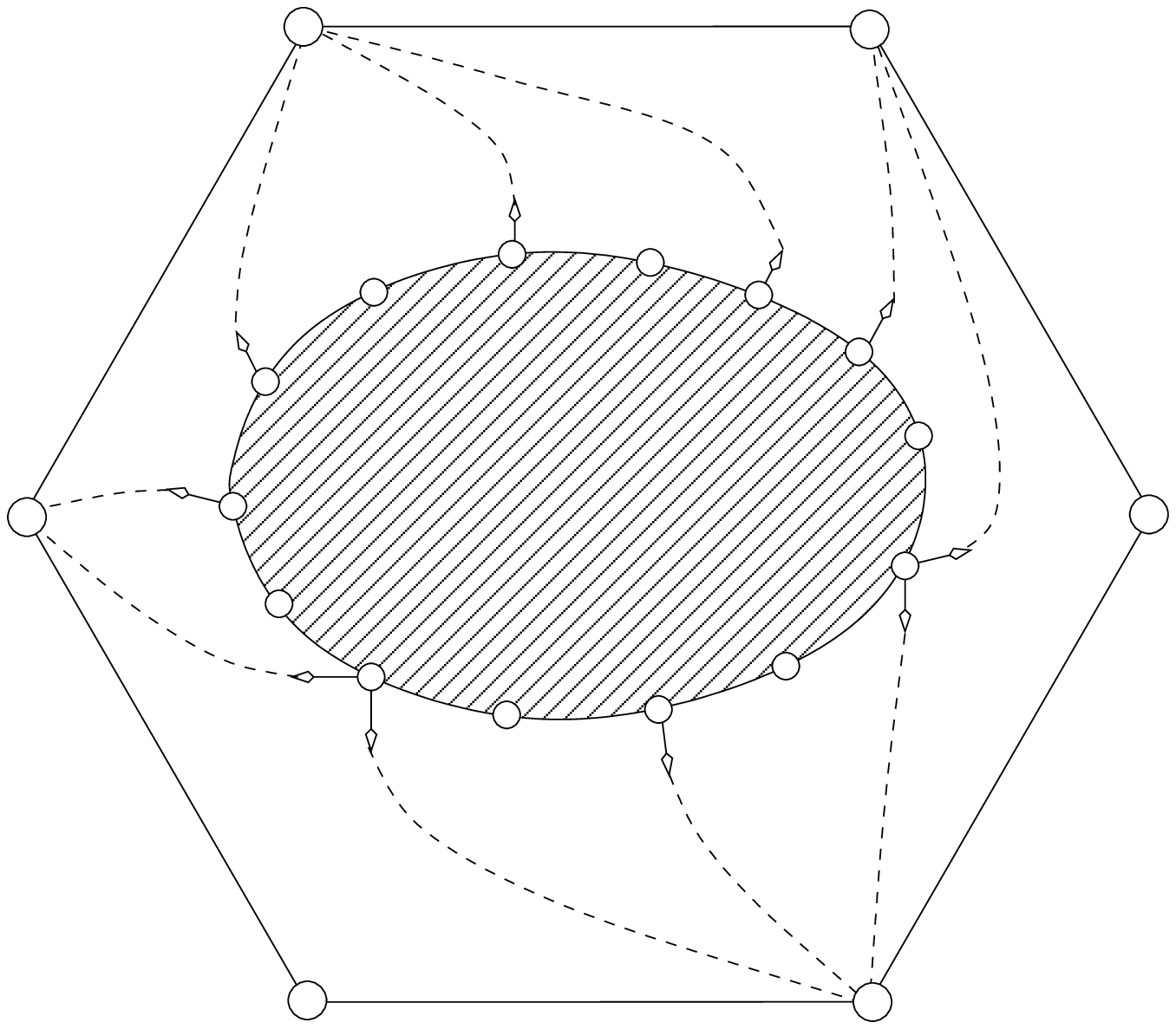}\qquad}
    \quad   \vline 
    \subfigure[Case of the binary tree of
      Figure~\ref{figure:binaryTree}.\label{figure:completeClosure}]{%
      \quad \qquad\includegraphics[height=.33\linewidth]{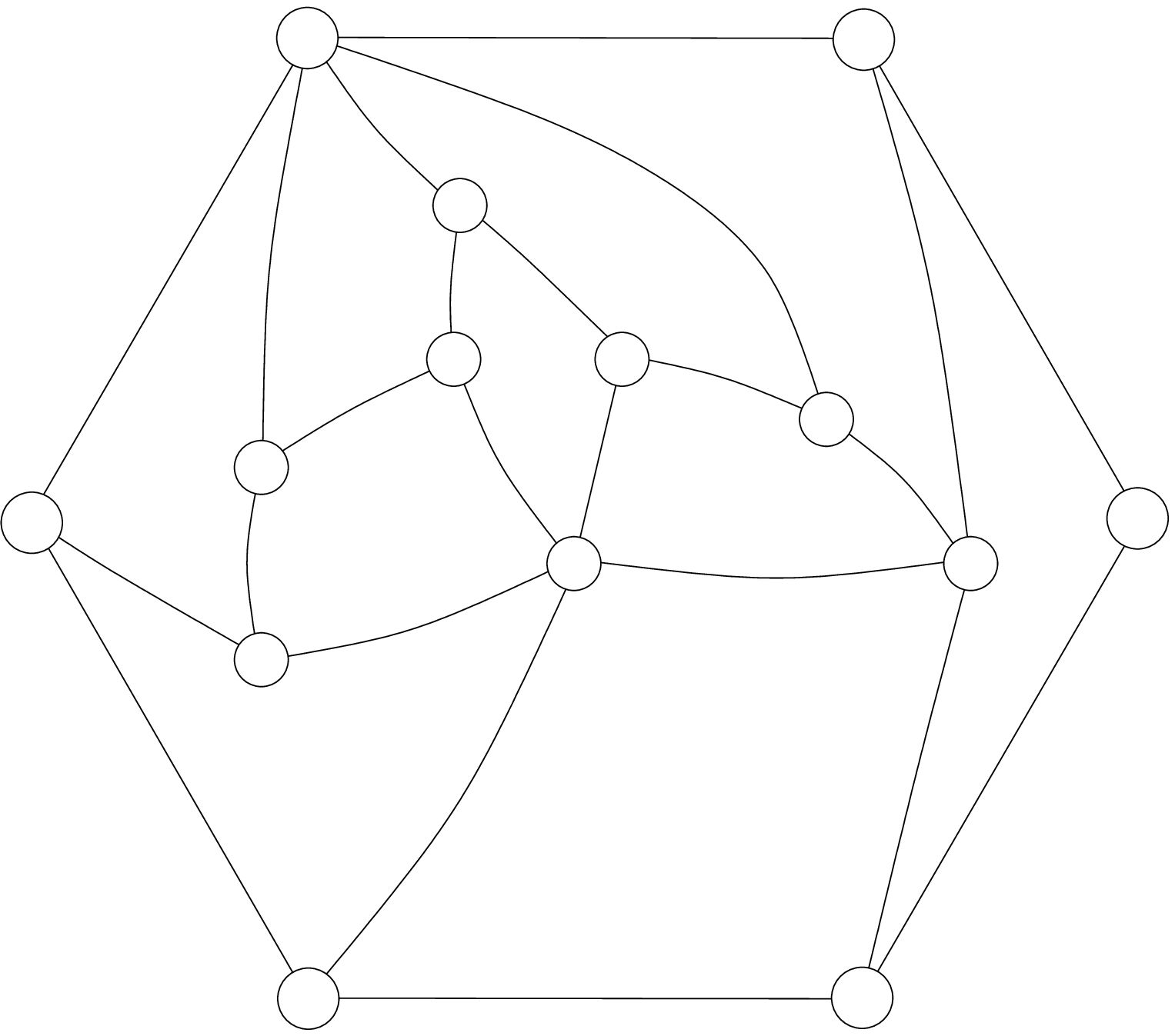}\qquad\quad} 
    \caption{The complete closure.}
\end{figure}

The \emph{complete closure} consists in completing all unmatched stems
with half-edges incident to vertices of the hexagon in the unique way
(up to rotation of the hexagon) that creates only quadrangular bounded
faces.  Figure~\ref{figure:exampleClosureCompleted} illustrates the
complete closure for the case $(r=2,s=2)$, and a particular example is given
in Figure~\ref{figure:completeClosure}.

\begin{lemma}
  \label{lemma:closureirreducible}
  The closure of a binary tree is an irreducible dissection of the hexagon.
\end{lemma}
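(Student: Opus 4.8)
The statement splits into two claims: that the closure of a binary tree $T$ is a dissection of the hexagon by quadrangular faces, and that it is \emph{irreducible}, i.e.\ has at least four faces and no separating $4$-cycle. The plan is to dispatch the first claim by bookkeeping on the construction and to spend the real effort on the second.

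\emph{The closure is a quadrangular dissection of the hexagon.} By definition each local closure completes one stem into an edge and bounds exactly one quadrangular face, and the partial closure is well defined (confluent) as already observed; hence every bounded face produced by the partial closure is a quadrangle, and the remaining boundary is a simple closed walk carrying the $k$ unmatched stems, separated by intervals of $0$, $1$ or $2$ outer entire half-edges, with the numbers $r$ of length-$1$ and $s$ of length-$0$ intervals satisfying $r+2s=6$. I would then check that the complete closure attaches the $k$ stems to six new outer vertices in the unique way leaving only quadrangular bounded faces: the content is that the $k$ stems close into exactly $k$ new quadrangles while six new boundary edges form a degree-$6$ outer face, and the identity $r+2s=6$ is precisely what forces the number of new outer vertices to be six. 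As a consistency check, with $n$ inner vertices one obtains $n+6$ vertices, $n+2$ quadrangular inner faces and $2n+7$ edges, in agreement with Euler's relation.

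\emph{Reduction of irreducibility to an enclosure statement.} A $4$-cycle of a bipartite quadrangular map is non-facial precisely when it strictly encloses a vertex: if such a cycle bounds a disk containing $p$ interior vertices and $q$ bounded quadrangles, counting face–edge incidences ($2E=4q+4$) and applying Euler's relation to the disk gives $q=p+1$, so the cycle is separating iff $q\ge 2$ iff $p\ge 1$. Thus it suffices to prove that no $4$-cycle of the closure strictly encloses a vertex, equivalently that every $4$-cycle bounds a single face. The map has $n+3\ge 4$ faces, which settles the remaining requirement in the definition of irreducibility.

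\emph{Induction over the closures, and the main obstacle.} I would establish the enclosure statement by induction on the sequence of closure steps (local closures first, then the complete closure), maintaining the invariant that the current map has no separating $4$-cycle. The key geometric point is that each newly created edge is, immediately after its closure, incident to the outer face; hence adding it cannot turn a previously facial cycle into a separating one, and any new separating $4$-cycle must run through the new edge $e=da$. Writing the just-closed quadrangle as $a\,b\,c\,d$, such a cycle is $e$ together with a second length-$3$ path from $a$ to $d$ in the previous map. Bipartiteness and simplicity dispose of shorter alternatives ($a$ and $d$ receive opposite colours, so every $a$--$d$ path has odd length, and there is no pre-existing edge $ad$). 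The crux, and the step I expect to be the main obstacle, is to rule out a \emph{second} length-$3$ path $a\,x\,y\,d$ avoiding $\{b,c\}$: since $a,b,c,d$ are consecutive on the outer boundary before the closure, such a path would have to route ``around'' the boundary vertices $b$ and $c$, and I would derive a contradiction from planarity together with the greedy (confluent) nature of the partial closure, all closable stems having already been completed forbidding the boundary configuration such a detour would require. Adapting this argument to the quadrangles created by the complete closure, where hexagon vertices and edges enter the cycle, is the last case to verify.
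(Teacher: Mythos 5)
Your overall strategy differs from the paper's: you propose an induction over the individual closure steps, maintaining the invariant that no separating $4$-cycle has yet appeared, whereas the paper gives a single global argument on the finished closure. The reduction you perform is sound as far as it goes (a new separating $4$-cycle must use the freshly created edge $e=da$, hence must come from a second path of length $3$ from $a$ to $d$ in the pre-closure map), but the proof has a genuine gap exactly at the step you flag as the main obstacle: you never actually rule out that second path $a\,x\,y\,d$. The difficulty is real, because your inductive invariant is too weak to do it for you: before the edge $ad$ exists, a pair of internally disjoint length-$3$ paths between $a$ and $d$ does not violate ``no separating $4$-cycle,'' so nothing in the induction hypothesis forbids it. The sketch you offer in its place is also not reliable: at an intermediate stage of the induction it is false that ``all closable stems have already been completed'' (that holds only for the finished partial closure, and confluence does not rescue the statement at each step), and planarity alone does not exclude the detour --- a $6$-cycle $a\,b\,c\,d\,y\,x$ enclosing $p$ interior vertices and $p+2$ quadrangles is perfectly consistent with Euler's relation, so some additional structural input about the tree is indispensable. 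As written, the crux of the lemma is asserted, not proved.

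For comparison, the paper's proof supplies precisely that missing input, and does so in one stroke on the final map: assuming a separating $4$-cycle $\mathcal{C}$ with $m\geq 1$ interior vertices, Euler's relation forces exactly $2m$ edges strictly inside $\mathcal{C}$; on the other hand, orienting the edges of $T$ away from a vertex $v$ lying inside $\mathcal{C}$ gives every node outdegree $2$ except $v$, which has outdegree $3$, and this orientation transfers to the closure, so the $m$ interior vertices emit at least $2m+1$ distinct interior edges --- a contradiction. If you want to salvage your step-by-step approach, you would need an analogous counting-plus-orientation argument applied to the region bounded by the two candidate length-$3$ paths at the moment of each local closure (and again for the complete closure); at that point you have essentially rederived the paper's argument in a more laborious, case-ridden form. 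I would recommend adopting the global argument.
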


\begin{proof}
  Assume that there exists a separating 4-cycle $\mathcal{C}$ in the
  closure of $T$. Let $m\geq 1$ be the number of vertices in the
  interior of $\mathcal{C}$. Then there are $2m$ edges in the interior
  of $\mathcal{C}$ according to Euler's relation.  Let $v$ be a vertex
  of $T$ that belongs to the interior of $\mathcal{C}$ after the
  closure. Consider the orientation of edges of $T$ away from $v$
  (only for the sake of this proof). Then nodes of $T$ have
  outdegree~2, except~$v$, which has outdegree~3. This orientation
  naturally induces an orientation of edges of the closure-dissection
  with the same property (except that vertices of the hexagon have
  outdegree~0). Hence there are at least $2m+1$ edges in the interior
  of $\mathcal{C}$, a contradiction.  \hfill $\qed$\end{proof}

\subsection{Tri-orientations and opening} \label{section:tri-orientations}


\paragraph{Tri-orientations}
In order to define the mapping inverse to the closure, we need a
better description of the structure induced on the closure map by the
original tree. Let us consider orientations of the \emph{half-edges}
of a map (in contrast to the usual notion of orientation, where edges
are oriented). An half-edge is said to be \emph{inward} if it is
oriented toward its origin and \emph{outward} if it is oriented out of
its origin.  If a map is endowed with an orientation of its
half-edges, the \emph{outdegree} of a vertex $v$ is naturally defined
as the number of its incident half-edges oriented outward.  The
(unique) \emph{tri-orientation} of a binary tree is defined as the
orientation of its half-edges such that any node has outdegree~3, see
Figure~\ref{figure:binaryTreeOriented} for an example.  A
\emph{tri-orientation} of a dissection is an orientation of its inner
half-edges (i.e., half-edges belonging to inner edges) such that outer
and inner vertices have respectively outdegree 0 and~3, and such that
two half-edges of a same inner edge can not both be oriented inward,
see Figure~\ref{figure:dissectionOriented}.
An edge is said to be \emph{simply oriented} if its two 
half-edges have same direction
(that is, one is oriented inward and the other one outward), and
\emph{bi-oriented} if they are both oriented outward.

\begin{figure}
  \centering
  \subfigure[A tri-oriented binary
    tree,\label{figure:binaryTreeOriented}]{\qquad \includegraphics[height=.33\linewidth]{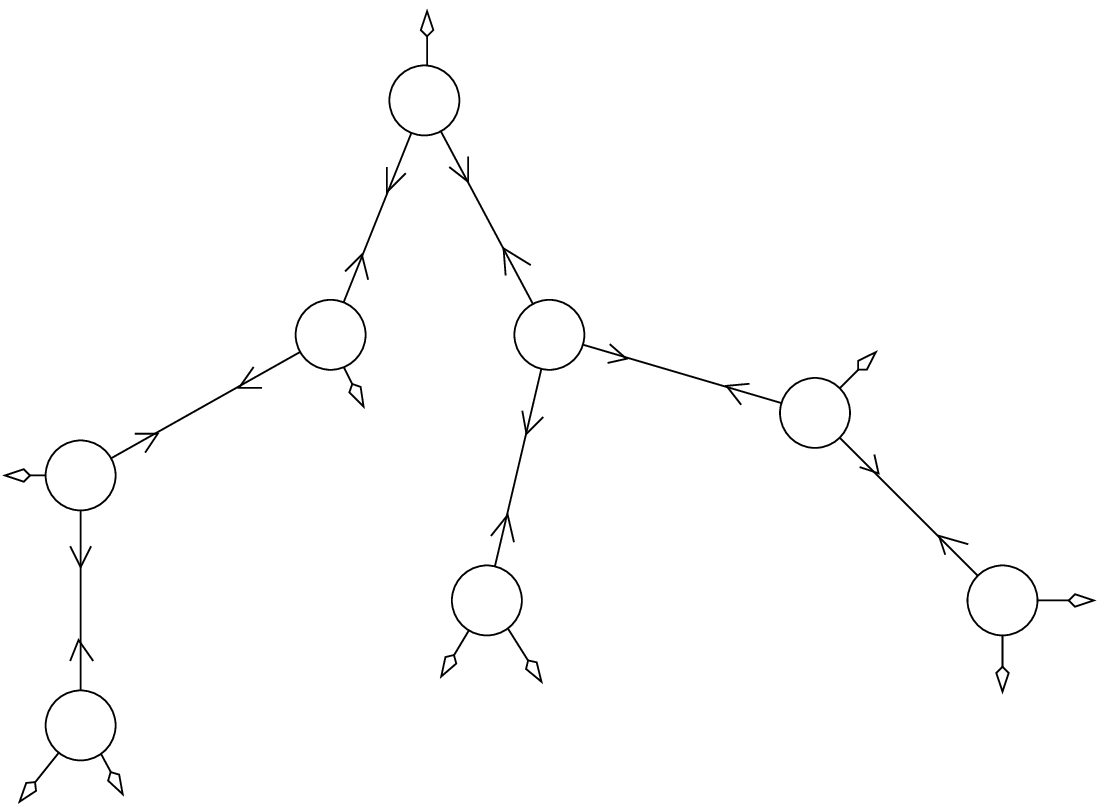}\qquad }
  \qquad \subfigure[and its tri-oriented
    closure.\label{figure:dissectionOriented}]{\includegraphics[height=.33\linewidth]{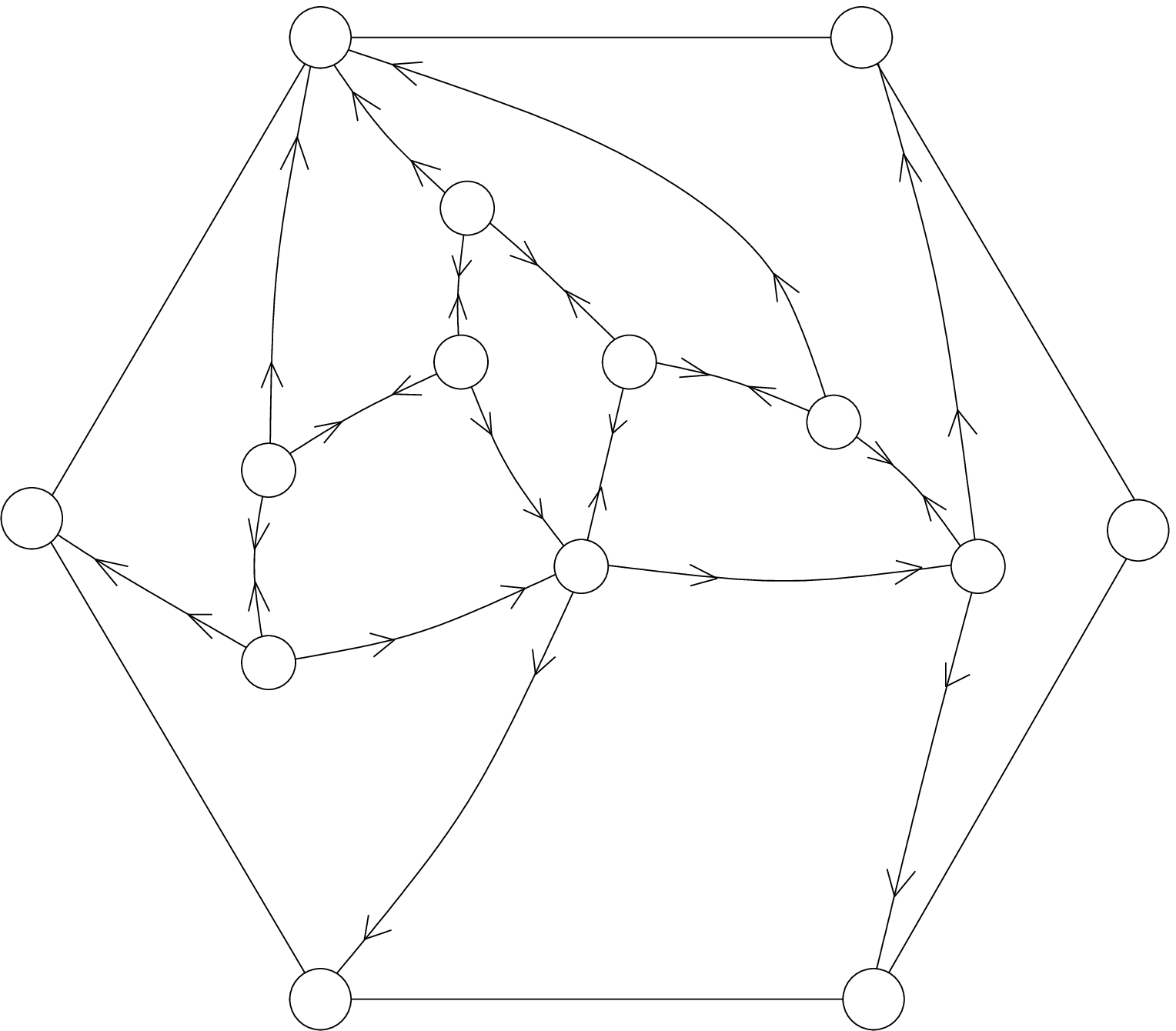}}
  \caption{Examples of tri-orientations.}
\end{figure}


Let $D$ be an irreducible dissection endowed with a tri-orientation. A
\emph{clockwise circuit} of $D$ is a simple cycle $\mathcal{C}$
consisting of edges that are either bi-oriented or simply oriented
with the interior of $\mathcal{C}$ on their right.

\begin{lemma}  \label{proposition:triOrientationDissection}
  Let $D$ be an irreducible dissection with $n$ inner vertices.  Then
  a tri-orientation of $D$ has $n-1$ bi-oriented edges and $n+2$
  simply oriented edges.
  
  If a tri-orientation of a dissection has no clockwise circuit,
  then its bi-oriented edges form a tree spanning the inner
  vertices of the dissection.
\end{lemma}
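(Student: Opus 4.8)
The plan is to treat the two assertions separately: the counting statement follows from Euler's relation together with a double count of outward half-edges, and the spanning-tree statement then follows from that count combined with the no-clockwise-circuit hypothesis.

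First I would establish the edge count. Since $D$ is a dissection of the hexagon with $n$ inner vertices, it has $n+6$ vertices ($6$ on the hexagon and $n$ inner) and $n+3$ faces ($n+2$ quadrangles plus the outer hexagon), so Euler's relation gives $2n+7$ edges, of which $6$ lie on the hexagon; hence there are exactly $2n+1$ inner edges. By the definition of a tri-orientation, every inner edge is either simply oriented or bi-oriented, the case ``both half-edges inward'' being excluded; so if $b$ and $s$ denote the numbers of bi-oriented and simply oriented inner edges, then $b+s=2n+1$. Counting outward half-edges in two ways---each bi-oriented edge contributes $2$, each simply oriented edge contributes $1$, and the total equals the sum of outdegrees, namely $3n$ (the $n$ inner vertices contribute $3$ each and the outer vertices contribute $0$)---yields $2b+s=3n$. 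Solving gives $b=n-1$ and $s=n+2$, which is the first assertion. This step is entirely routine.

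For the second assertion, I would first observe that a bi-oriented edge has both its half-edges outward and therefore cannot be incident to an outer vertex, whose outdegree is $0$; thus every bi-oriented edge joins two inner vertices. The key point is then to show that the bi-oriented edges are acyclic, and here I would argue by contradiction: if they contained a cycle, it would contain a simple cycle $\mathcal{C}$ all of whose edges are bi-oriented. But a simple cycle consisting entirely of bi-oriented edges satisfies the definition of a clockwise circuit verbatim, since the positional ``interior on the right'' constraint only concerns simply oriented edges; this contradicts the hypothesis. Hence the bi-oriented edges form a forest on the set of $n$ inner vertices.

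Finally I would combine the count with acyclicity. A forest on a vertex set of size $n$ has at most $n-1$ edges, with equality precisely when it is a spanning tree (a forest with $c$ components on $n$ vertices has $n-c$ edges). Since there are exactly $n-1$ bi-oriented edges, the forest has a single component and touches every inner vertex, i.e., it is a tree spanning the inner vertices, as claimed. The only place requiring genuine care is the reduction of a hypothetical bi-oriented cycle to a clockwise circuit; once one checks that an all-bi-oriented simple cycle meets the definition, everything else is forced by the edge count from the first part.
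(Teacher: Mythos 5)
Your proposal is correct and follows essentially the same route as the paper: the same Euler-relation count giving $2n+1$ inner edges combined with the outdegree count $2b+s=3n$, and the same observation that an all-bi-oriented cycle would itself be a clockwise circuit, so the $n-1$ bi-oriented edges form an acyclic subgraph on the $n$ inner vertices and hence a spanning tree. Your explicit remark that bi-oriented edges avoid outer vertices (since those have outdegree $0$) is a slightly more careful phrasing of the paper's ``incident to at most $n$ vertices'' step, but the argument is the same.
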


\begin{proof} 
  Let $s$ and $r$ denote the numbers of simply and bi-oriented edges
  of $D$.  According to Euler's relation (using the degrees of the
  faces), $D$ has $2n+1$ inner edges, i.e., $2n+1=r+s$. Moreover, as
  all inner vertices have outdegree~3, $3n=2r+s$.  Hence $r=n-1$ and
  $s=n+2$.

  If the tri-orientation has no clockwise circuit, the subgraph $H$
  induced by the bi-oriented edges has $r=n-1$ edges, no cycle
  (otherwise the cycle could be traversed clockwise, as all its edges
  are bi-oriented), and is incident to at most $n$ vertices, which are
  the inner vertices of $D$. According to a classical result of graph
  theory, $H$ is a tree spanning the $n$ inner vertices of~$D$.
  \hfill $\qed$\end{proof}

\paragraph{Closure-tri-orientation of a dissection}
Let $D$ be a dissection obtained as the closure of a binary tree
$T$. The tri-orientation of $T$ clearly induces \emph{via} the closure
a tri-orientation of $D$, called \emph{closure-tri-orientation}. On
this tri-orientation, bi-oriented edges correspond to inner edges of
the original binary tree, see Figure~\ref{figure:dissectionOriented}.

\begin{lemma}
  \label{lemma:noclock}
  A closure-tri-orientation has no clockwise circuit.
\end{lemma}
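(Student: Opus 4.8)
The plan is to argue by induction on the sequence of local closures that produces the (partial, then complete) closure of $T$, maintaining the invariant that the induced tri-orientation of the current map has no clockwise circuit. In this tri-orientation the edges coming from entire edges of $T$ are bi-oriented, while each completed stem becomes a simply-oriented edge whose tail is the node $u$ that carried the stem and whose head is the endpoint $w$ reached after reading three consecutive edge-sides along the counter-clockwise contour. The base case is immediate: $T$ itself is acyclic, so it has no clockwise circuit. For the inductive step I add one edge $e=uw$ (oriented $u\to w$) and must show no clockwise circuit is created. Since adding $e$ changes neither the orientation of any other edge nor the interior/right relationship of any cycle avoiding $e$, any new clockwise circuit $\mathcal{C}$ must use $e$; as $e$ is simply oriented, it is traversed forward, i.e.\ from $u$ to $w$, with the interior of $\mathcal{C}$ on its right.

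The key move is an \emph{edge-replacement} argument. The local closure of the stem at $u$ creates one new quadrangular face, bounded by $e$ together with the three entire edges $up$, $pq$, $qw$ that were read along the contour. I claim this quadrangle lies on the \emph{left} of the oriented edge $u\to w$, hence on the side \emph{opposite} to the interior of $\mathcal{C}$; consequently the three-edge path $u\to p\to q\to w$ lies outside $\mathcal{C}$. Replacing the single edge $e$ in $\mathcal{C}$ by this path yields a cycle $\mathcal{C}'$ whose interior is that of $\mathcal{C}$ enlarged by the new quadrangle. The edges of $\mathcal{C}$ other than $e$ keep their directions and still have this slightly larger interior on their right, since the added region touches only the path; and the three new path edges have the quadrangle, hence the interior of $\mathcal{C}'$, on their right. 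Because every edge of $\mathcal{C}'$ already existed before $e$ was added (the $up,pq,qw$ are either entire edges of $T$ or stems completed earlier, and the remainder of $\mathcal{C}$ is older than $e$), $\mathcal{C}'$ is a clockwise circuit present at the previous stage, contradicting the induction hypothesis.

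To finish, $\mathcal{C}'$ must be verified to be a genuine clockwise circuit. Its edges are clockwise-compatible only if each path edge is bi-oriented or simply oriented in the traversed direction $u\to p\to q\to w$. Bi-oriented (tree) edges impose no constraint, so the point is to show that whenever the contour reads a \emph{simply oriented} edge as one of the three edge-sides of a local closure, it reads it from tail to head. This follows from a small auxiliary observation maintained throughout the induction: once a stem is completed, the new edge $u\to w$ has the newly created bounded quadrangle on its left, so the outer region lies on its right precisely when it is traversed $u\to w$; since that bounded face stays bounded forever, only this (forward) side of the edge can ever lie on the contour, and any later local closure reading it does so forward. Thus the path $u\to p\to q\to w$ is clockwise-compatible, and a routine check (using that $p,q$ and the new quadrangle sit outside $\mathcal{C}$) shows $\mathcal{C}'$ is simple, or else contains a simple clockwise subcircuit. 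I expect the main obstacle to be exactly this handedness bookkeeping---pinning down that the new quadrangle lies on the left of $u\to w$ and that completed stems are always traversed forward by the contour; once these orientation facts are established, the replacement induction closes immediately.
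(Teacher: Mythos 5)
Your argument takes a genuinely different, and considerably longer, route than the paper's. The paper's proof consists of two observations: (i) the hexagon vertices have outdegree~$0$, so no circuit can pass through them, hence the edges added during the complete closure are harmless and a clockwise circuit could only be created by a local closure; (ii) a local-closure edge $e$, simply oriented $u\to w$, has the outer face on its right at the moment of its creation, so a circuit through $e$ (necessarily traversing it forward) would have to contain the unbounded face in its interior, which is impossible. Your replacement induction is a valid alternative for the local-closure steps: the two facts you isolate (the new quadrangle lies to the left of $u\to w$; the contour always reads a simply oriented closure edge tail-to-head because its quadrangle side is permanently bounded) are both correct, and substituting the three-edge path $u\to p\to q\to w$ for $e$ does produce a clockwise closed walk in the earlier map, from which a simple clockwise circuit can be extracted by the same standard move the paper uses elsewhere (Figure~\ref{figure:extractSimpleCycle}). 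Note, though, that the orientation fact you establish already finishes the proof on the spot: since the quadrangle is on the left of $u\to w$, the face on the right of $u\to w$ is the outer face, so no bounded interior can sit there and no clockwise circuit through $e$ can exist at all. The entire replacement machinery is a detour around a one-line contradiction.

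The one genuine gap is the complete closure. Your induction is announced as running over ``the sequence of local closures that produces the (partial, then complete) closure,'' but the inductive step as written does not apply to the stems completed against the hexagon: the quadrangular faces created at that stage are bounded in part by hexagon edges, which carry no orientation in a tri-orientation, so the substitute path is not a path of oriented edges and $\mathcal{C}'$ is not a circuit. The repair is exactly the paper's observation (i): every edge added in the complete closure is simply oriented toward a hexagon vertex, and hexagon vertices have outdegree~$0$, so no circuit can use such an edge. Without some remark of this kind your invariant is not carried through the last stage of the construction.
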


\begin{proof} 
  Since vertices of the hexagon have outdegree~0, they can not belong
  to any circuit.  Hence clockwise circuits may only be created during
  a local closure.  However closure edges are simply oriented with the
  outer face on their right, hence may only create counterclockwise
  circuits.  \hfill $\qed$\end{proof}

This property is indeed quite strong: the following theorem ensures
that the property of having no clockwise circuit characterizes the
closure-tri-orientation and that a tri-orientation without clockwise
circuit exists for any irreducible dissection. The proof of this
theorem is delayed to Section~\ref{section:proof}.

\begin{theorem}
  \label{theorem:uniquenessexistence}\label{THEOREM:UNIQUENESSEXISTENCE}
  Any irreducible dissection has a unique tri-orientation without
  clockwise circuit.
\end{theorem}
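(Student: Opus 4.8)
The plan is to reduce the statement to the well-understood theory of $\alpha$-orientations of a fixed planar map --- namely the derived map $M'$ of $D$ --- and then to read off both existence and uniqueness from the distributive-lattice structure that such orientations carry. The first and central step is to encode tri-orientations of $D$ faithfully as ordinary (edge-)orientations of $M'$. The point is that the inner half-edges of $D$ are in natural bijection with the edges of $M'$: to a half-edge $h$ carried by an edge $e$, incident to a vertex $u$ and having an inner face $f$ on its right, one associates the edge of $M'$ joining $u$ to the edge-vertex $v_f$ at the centre of $f$; since each quadrangle has four corners, this matches the four half-edges seeing $f$ on their right with the four edges of $M'$ incident to $v_f$. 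Declaring $h$ outward (resp. inward) at $u$ orients the associated $M'$-edge out of (resp. into) $u$, and I would verify that the defining constraints of a tri-orientation translate into a \emph{fixed} outdegree function $\alpha_0$ on the vertices of $M'$: value $3$ at the inner primal and dual vertices (as inner vertices of $D$ have outdegree $3$), value $0$ at the outer vertices, and the value at each edge-vertex forced by the \emph{no-doubly-inward-edge} condition. In this dictionary, tri-orientations of $D$ become precisely the $\alpha_0$-orientations of $M'$.

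Existence then amounts to exhibiting one $\alpha_0$-orientation of $M'$. I would obtain it from Felsner's flow criterion for $\alpha$-orientations: such an orientation exists as soon as, for every set of vertices, the number of internal edges does not exceed the sum of the prescribed outdegrees, the critical subsets being governed by separating cycles --- and irreducibility (absence of a separating $4$-cycle, which holds by hypothesis) is exactly what rules these out. Alternatively, when $D$ is complete, one may invoke the existence of Schnyder woods of the outer-triangular $3$-connected primal map supplied by the angular mapping with border (Theorem~\ref{thm:bijOuter}), since Schnyder woods correspond bijectively to $\alpha_0$-orientations of the shared derived map.

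For uniqueness I would invoke the theorem of Felsner and of Ossona de Mendez--Propp: once nonempty, the set of $\alpha_0$-orientations of the planar map $M'$ forms a distributive lattice whose covering relation is the reversal of a directed essential cycle of one fixed chirality, so that there is a \emph{unique} $\alpha_0$-orientation containing no clockwise directed cycle, namely the minimum of the lattice. It then remains to check that the dictionary of the first step is cycle-faithful, i.e.\ that clockwise circuits of a tri-orientation of $D$ (in the sense defined just before Lemma~\ref{proposition:triOrientationDissection}) correspond exactly to clockwise directed cycles of the associated $\alpha_0$-orientation of $M'$. Granting this, the tri-orientations of $D$ without clockwise circuit are precisely the minimal $\alpha_0$-orientations of $M'$, of which there is exactly one.

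I expect the first step to be the main obstacle. Pinning down the correct function $\alpha_0$ and proving that the asymmetric tri-orientation constraints --- inner outdegree $3$ \emph{together with} the ban on doubly-inward edges, and the presence of bi-oriented edges --- match a genuine fixed-outdegree orientation of $M'$ requires care. The most delicate point is the treatment of bi-oriented edges in the cycle-faithfulness claim: since a bi-oriented edge of $D$ may be traversed in either direction inside a clockwise circuit, I must ensure that the encoding preserves the chirality of cycles, so that ``no clockwise circuit in $D$'' and ``no clockwise cycle in $M'$'' really are the same condition. Only once this is established does the lattice theory deliver existence and uniqueness simultaneously.
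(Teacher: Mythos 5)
Your overall strategy --- transfer tri-orientations to $\alpha_0$-orientations of the derived map and invoke existence and uniqueness of the minimal $\alpha_0$-orientation --- is the same as the paper's, but the step you yourself identify as ``the main obstacle'' conceals a genuine error rather than a technicality. The claimed dictionary ``tri-orientations of $D$ are precisely the $\alpha_0$-orientations of $M'$'' is false: under your half-edge/edge correspondence, the outdegree of the edge-vertex $v_f$ equals the number of inward half-edges among the four half-edges of $D$ seeing the face $f$ on their right, and the no-doubly-inward condition does not force this number to be $1$ (the two half-edges of a given edge see different faces, so that condition says nothing about the four half-edges around one face; outdegrees $0,2,3,4$ at $v_f$ are all possible for a general tri-orientation). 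What is true, and what the paper proves in Lemma~\ref{lemma:orientationDerivated}, is that a tri-orientation \emph{without clockwise circuit} maps to an $\alpha_0$-orientation: one needs $n_f\ge 1$ (otherwise the boundary of $f$ is itself a clockwise circuit) together with the Euler count $\sum_f n_f=n+2$ and the pigeonhole principle. Symmetrically, in the reverse direction the no-doubly-inward condition is not automatic either; it holds only for the \emph{minimal} $\alpha_0$-orientation, because a doubly-inward edge forces a clockwise face boundary in $M'$ (Lemma~\ref{lemma:invRulesTrans}). So the correspondence exists only between the circuit-free objects on each side, and establishing even that already consumes the hypotheses you were hoping to exploit afterwards via the lattice structure.

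Two further pieces are missing. First, the derived map is defined in the paper only for \emph{complete} bicolored dissections (outer white vertices of degree~$2$); for a general irreducible dissection you must first perform the completion and show that circuit-free tri-orientations of $D$, of $D^c$, and circuit-free complete-tri-orientations of $D^c$ (in which the outer black vertices get outdegree~$3$, not~$0$) all correspond --- this is the content of Proposition~\ref{proposition:complete}, and your prescription ``$\alpha_0=0$ at outer vertices'' is not the outdegree function for which Theorem~\ref{theorem:felsner} guarantees feasibility via Schnyder woods. Second, the ``cycle-faithfulness'' you defer is not a routine verification: a clockwise circuit of the tri-orientation does not map edge-by-edge to a clockwise circuit of $M'$, and the direction needed for existence (no clockwise circuit in the minimal $\alpha_0$-orientation implies none in the induced tri-orientation, Lemma~\ref{lemma:derivatedtodis}) requires the iterative straight-path construction that is the technical heart of Section~\ref{section:proof}. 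Flagging these points as delicate is honest, but as written the proposal does not contain the arguments that close them.
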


\penalty 10000

\paragraph{Recovering the tree: the opening mapping}\label{section:opening}
Lemma~\ref{proposition:triOrientationDissection} and the present
section give all necessary elements to describe the inverse mapping of
the closure, which is called the \emph{opening}: let $D$ be an
irreducible dissection endowed with its (unique by
Theorem~\ref{theorem:uniquenessexistence}) tri-orientation without
clockwise circuit. The \emph{opening} of $D$ is the binary tree
obtained from $D$ by deleting outer vertices, outer edges, and all
inward half-edges.

\subsection{The closure is a bijection}
\label{section:statementtheo}
In this section, we show that the opening is inverse to the
closure. By construction of the opening, the following lemma is
straightforward:

\begin{lemma}
  \label{lemma:closopen}
  Let $D$ be an irreducible dissection obtained as the closure of a
  binary tree $T$. Then the opening of $D$ is~$T$.
\end{lemma}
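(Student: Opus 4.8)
The plan is to reduce everything to the identification of the tri-orientation against which the opening is performed, and then to match half-edges one by one. First I would observe that, since $D$ is obtained as the closure of $T$, it carries the closure-tri-orientation inherited from the (unique) tri-orientation of $T$ in which every node has outdegree~3. By Lemma~\ref{lemma:noclock} this closure-tri-orientation has no clockwise circuit, and by the uniqueness part of Theorem~\ref{theorem:uniquenessexistence} it is therefore \emph{the} tri-orientation without clockwise circuit used to define the opening of $D$. This step is what licenses me to compute the opening directly from the combinatorics of the closure rather than from an abstractly given orientation.

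Next I would set up the bookkeeping dictionary between the closure-tri-orientation of $D$ and the construction of the closure. In the tri-orientation of $T$ every half-edge incident to a node is outward, so the entire edges of $T$ are bi-oriented and each stem is a lone outward half-edge. The closure only adds material: each local closure completes a stem to an entire edge by attaching one \emph{opposite} half-edge, which must be inward because its endpoint is a node already having outdegree~3; the complete closure adds the six outer (hexagon) vertices together with the outer boundary edges, and completes the remaining stems by inward half-edges attached to outer vertices (which have outdegree~0). Consequently, in the closure-tri-orientation the bi-oriented edges are exactly the entire edges of $T$, while the inward half-edges are exactly the opposite half-edges created during the local and complete closures; this is consistent with the count of $n-1$ bi-oriented edges in Lemma~\ref{proposition:triOrientationDissection}, matching the $n-1$ entire edges of $T$.

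Finally I would carry out the opening and check that it strips away precisely the added material. Deleting the outer vertices and outer edges removes the hexagon introduced by the complete closure; deleting all inward half-edges removes every opposite half-edge, so that each simply oriented closure edge collapses back to the single outward half-edge (the original stem of $T$) it was built from, while each bi-oriented edge (an entire edge of $T$) keeps both of its outward half-edges and survives intact. What is left is the set of inner vertices (the nodes of $T$), the entire edges of $T$, and the stems of $T$, that is, exactly $T$.

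The argument is almost entirely bookkeeping, and the only genuine point is the first paragraph: one must invoke the uniqueness in Theorem~\ref{theorem:uniquenessexistence} to be sure that the opening, defined via the abstract tri-orientation without clockwise circuit, is really performed with respect to the closure-tri-orientation. The remaining subtlety I would be careful about is that the stem-completing edges of the complete closure are \emph{inner} edges (they meet the outer face only at a single outer vertex, not along an edge), so that deleting outer edges does not accidentally remove the outward half-edges that encode the surviving stems; once this is checked, the matching of half-edges is immediate.
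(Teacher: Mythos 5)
Your proposal is correct and is essentially a fleshed-out version of the paper's argument: the paper simply remarks that the lemma is ``straightforward by construction of the opening,'' and the details you supply (identifying the closure-tri-orientation as the unique one without clockwise circuit via Lemma~\ref{lemma:noclock} and Theorem~\ref{theorem:uniquenessexistence}, then checking that deleting outer vertices, outer edges and inward half-edges strips away exactly the material added by the closure) are precisely the intended justification.
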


Conversely, the following also holds:

\begin{lemma}
  \label{lemma:openclos}
  Let $T$ be a binary tree obtained as the opening of an
  irreducible dissection $D$. Then the closure of $T$ is~$D$.
\end{lemma}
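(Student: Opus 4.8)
The plan is to fix an irreducible dissection $D$ together with its unique tri-orientation $\mathcal{O}$ without clockwise circuit (Theorem~\ref{theorem:uniquenessexistence}), to set $T=\mathrm{opening}(D)$, and to prove that $D':=\mathrm{closure}(T)$ coincides with $D$. First I would record how $T$ sits inside $D$: by the opening, the nodes of $T$ are the inner vertices of $D$, its entire edges are exactly the bi-oriented edges of $\mathcal{O}$ --- which form a spanning tree of the inner vertices by Lemma~\ref{proposition:triOrientationDissection} --- and its stems are exactly the outward half-edges of the simply-oriented edges of $D$, all carrying the cyclic orders around vertices inherited from $D$. Since the closure depends only on this embedded tree (through its contour, i.e.\ its parenthesis word), $D'$ is built on the very same embedded copy of $T$ that occurs in $D$. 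Thus $D$ and $D'$ share $T$ verbatim, and it remains only to check that the two ways of completing the stems of $T$ agree: the inward half-edges prescribed by $\mathcal{O}$ on one side, and the edges added by the closure on the other.

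The core step is to match these two completions. I would classify the $n+2$ simply-oriented edges of $D$ into \emph{interior} ones (both endpoints inner) and \emph{boundary} ones (one endpoint on the hexagon, where outdegree $0$ forces the incident half-edge to be inward), and claim that the partial closure of $T$ reconstructs precisely the interior simply-oriented edges, while the complete closure reconstructs the boundary ones together with the hexagon. To prove the first half I would peel faces of $D$: each interior simply-oriented edge $u\to v$ is the missing fourth side of a quadrilateral face whose other three sides, traversed counter-clockwise, run from $u$ to $v$; deleting the inward half-edge at $v$ re-opens this edge into a stem at $u$, which is exactly the reverse of one local closure. The no-clockwise-circuit hypothesis on $\mathcal{O}$ is what guarantees that these openings can be linearly ordered (there is always a face whose three not-yet-reconstructed sides are already present), so that $D$ minus its hexagon is reachable from $T$ by a valid sequence of local closures; conversely, the local closures of $T$ add only simply-oriented edges with the outer face on their right (Lemma~\ref{lemma:noclock}), which pins each match to the counter-clockwise direction and forbids reconstructing an edge ``the wrong way.'' Invoking the order-independence of the partial closure, it then suffices to exhibit this one peeling order.

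Finally I would close the argument at the boundary. After the partial closure the unmatched stems are in bijection with the boundary simply-oriented edges of $D$, and a short Euler count (using that $D$ has $2n+1$ inner edges, of which $n-1$ are bi-oriented) recovers the numerics forced in Section~\ref{sec:closures}, namely $r+2s=6$. Since $D$ is itself a genuine quadrangular dissection of the hexagon, its boundary exhibits one valid completion of these stems into quadrangular faces, so by the uniqueness built into the complete closure it must be \emph{the} completion produced by the algorithm; together with the previous paragraph this gives $D'=D$. I expect the main obstacle to be the peeling step of the second paragraph: showing that the contour of the partially closed tree locally looks like ``a stem followed by three entire edges'' exactly at the faces of $D$ that still have to be closed, and that no deadlock occurs. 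This is precisely the point where the absence of clockwise circuits in $\mathcal{O}$, rather than any cruder counting argument, has to be used.
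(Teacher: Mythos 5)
Your setup is the right one and matches the paper's: you realize $T$ inside $D$ via the tri-orientation without clockwise circuit (entire edges of $T$ $=$ bi-oriented edges, stems $=$ outward halves of simply oriented edges), you separate the hexagon-incident simply oriented edges (handled by the complete closure, whose rigidity you correctly invoke) from the interior ones, and you propose to recover the interior ones by exhibiting a reverse sequence of local closures. But the decisive step --- that this peeling never deadlocks, i.e.\ that as long as some interior inward half-edge remains there is always one whose removal is the inverse of a local closure --- is only asserted (``the no-clockwise-circuit hypothesis is what guarantees that these openings can be linearly ordered''), and you yourself flag it as the expected obstacle. That assertion \emph{is} the lemma; without an argument for it the proof is not complete.

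The paper closes exactly this gap, and does so by combining the ``cruder counting argument'' you set aside with the no-clockwise-circuit property. Let $M_k$ be the submap induced by the half-edges remaining after $k$ removals. By Lemma~\ref{proposition:triOrientationDissection} the tri-orientation has $n-1$ bi-oriented edges, so as long as some inward half-edge remains, $M_k$ has at least $n$ entire edges on the $n$ inner vertices and therefore contains a cycle; one extracts a simple cycle $\cC$ from the boundary of the outer face of $M_k$. If every edge of $\cC$ were bi-oriented or simply oriented with the interior of $\cC$ on its right, $\cC$ would be a clockwise circuit, contradicting the hypothesis on the tri-orientation. Hence some edge of $\cC$ is simply oriented with the interior of $\cC$ on its left; its inward half-edge has the outer face of $M_k$ on its right and can be removed next, which is precisely the inverse of a local closure. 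You should replace your face-by-face ``there is always a closable quadrangle'' claim by this cycle-extraction argument (or supply an equivalent proof); as it stands, your version presupposes the conclusion at the one point where the hypothesis of no clockwise circuit actually has to do work.
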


\begin{proof}
  The proof relies on the definition of an order for removing inward
  half-edges.  Start with the half-edges incident to outer vertices
  (that are all oriented inward): this clearly inverses the
  completion step of the closure. Each further removal must correspond
  to a local closure, that is, the removed half-edge must have the
  outer face on its right. 
 
  Let $M_k$ be the submap of the dissection induced by remaining
  half-edges after $k$ removals. Then $M_k$ covers the $n$ inner
  vertices, and, as long as some inward half-edge remains, it has at
  least $n$ entire edges (see
  Lemma~\ref{proposition:triOrientationDissection}). Hence, there is
  at least one cycle, and a simple one $\cC$ can be extracted from the
  boundary of the outer face of $M_k$. Since there is no clockwise
  circuit, at least one edge of $\cC$ is simply oriented with the
  interior of $\cC$ on its left; the corresponding inward half-edge
  can be selected for the next removal.  \hfill $\qed$\end{proof}

Assuming Theorem~\ref{theorem:uniquenessexistence}, the bijective
result follows from Lemmas~\ref{lemma:closopen}
and~\ref{lemma:openclos}:

\begin{theorem}
  \label{theorem:bijection}
  For each $n\geq 1$, the closure mapping is a bijection between the
  set $\nBn$ of binary trees with $n$ nodes and the set $\nDn$ of
  irreducible dissections with $n$ inner vertices.

  For each integer pair $(i,j)$ with $i+j\geq 1$, the closure mapping
  is a bijection between the set $\mathcal{B}_{ij}$ of bicolored
  binary trees with $i$ black nodes and $j$ white nodes, and the set
  $\mathcal{D}_{ij}$ of bicolored irreducible dissections with $i$
  black inner vertices and $j$ white inner vertices.

  The inverse mapping of the closure is the opening.
\end{theorem}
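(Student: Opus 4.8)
The plan is to read the theorem as the assertion that closure and opening are mutually inverse maps, so that almost all of the content is already isolated in the lemmas above; the only steps that remain are to check that each map is well defined and then to invoke the two ``converse'' lemmas.

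First I would verify that closure is a well-defined map $c\colon \nBn \to \nDn$ and that opening is a well-defined map $o\colon \nDn \to \nBn$. The forward direction is immediate: Lemma~\ref{lemma:closureirreducible} already guarantees that the closure of a binary tree with $n$ nodes is an irreducible dissection, and the closure procedure is unambiguous since the partial closure is order-independent and the complete closure is forced up to rotation of the hexagon. For the opening direction I would first apply Theorem~\ref{theorem:uniquenessexistence} to equip an arbitrary $D\in\nDn$ with its unique tri-orientation having no clockwise circuit, and then argue that deleting the outer vertices, outer edges, and all inward half-edges returns a binary tree on the $n$ inner vertices. Here Lemma~\ref{proposition:triOrientationDissection} does the essential bookkeeping: its first part counts $n-1$ bi-oriented and $n+2$ simply oriented inner edges, and its second part (applicable precisely because there is no clockwise circuit) identifies the bi-oriented edges as a spanning tree of the $n$ inner vertices. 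Upon opening, each bi-oriented edge survives as an entire edge and each simply oriented edge leaves behind its single outward half-edge as a stem, so every inner vertex retains exactly its three outward half-edges; thus the result is a tree on $n$ nodes, each of degree three, that is, a binary tree in $\nBn$.

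Once both maps are known to be well defined, the two compositions are identities by the already-stated lemmas: Lemma~\ref{lemma:closopen} gives $o\circ c = \mathrm{id}$ on $\nBn$, and Lemma~\ref{lemma:openclos} gives $c\circ o = \mathrm{id}$ on $\nDn$. Hence $c$ and $o$ are mutually inverse bijections, which is the first statement. For the refined bicolored statement I would simply track the bicoloration through both maps: adjacency of inner vertices is preserved, and since the two-coloring of a binary tree (respectively of the inner vertices of a dissection) is determined up to a global swap by requiring adjacent vertices to receive opposite colors, black nodes correspond to black inner vertices and white to white. Therefore $c$ restricts to a bijection $\nBij \to \nDij$.

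The genuinely hard part of the whole development, namely the existence and uniqueness of a tri-orientation without clockwise circuit (Theorem~\ref{theorem:uniquenessexistence}), is deferred to Section~\ref{section:proof}; granting it, the only non-routine verification inside the present proof is the claim that opening truly outputs a binary tree of the correct size, which is exactly the point where the spanning-tree description of Lemma~\ref{proposition:triOrientationDissection} is indispensable. I therefore expect that step, rather than the composition argument, to be where care is needed.
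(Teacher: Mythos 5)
Your proposal is correct and follows essentially the same route as the paper, which likewise derives the theorem directly from Lemma~\ref{lemma:closopen}, Lemma~\ref{lemma:openclos}, and Theorem~\ref{theorem:uniquenessexistence}. The extra care you take in checking that the opening is a well-defined map into $\nBn$ (via the spanning-tree count of Lemma~\ref{proposition:triOrientationDissection}) is a detail the paper leaves implicit in its definition of the opening, and your treatment of it is sound.
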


We can state three analogous versions of Theorem~\ref{theorem:bijection}
for rooted objects:

\begin{theorem}
  \label{theorem:bijectionRooted}
  The closure mapping induces the following correspondences
  between sets of rooted objects:
  \begin{eqnarray*}
    \Bn\times\{1,\ldots,6\} & \equiv &
    \Dn\times\{1,\ldots ,n+2\},\\
    \Bij\times\{1,2,3\}  &\equiv &
    \Dij\times\{1,\ldots ,i+j+2\},\\
    \cBijb\times\{1,2,3\}  &\equiv &
    \Dij\times\{1,\ldots ,2i-j+1\}.
  \end{eqnarray*}
\end{theorem}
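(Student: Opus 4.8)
The plan is to deduce all three rooted identities from the unrooted bijection of Theorem~\ref{theorem:bijection} by a root-transfer argument, the point being that rooting is a purely local choice that the canonical closure/opening correspondence carries back and forth. Two elementary bookkeeping facts drive everything. First, a binary tree with $n$ nodes has exactly $n+2$ stems, and each completion of a stem --- whether by a local closure or in the final completion step --- creates exactly one bounded quadrangular face; since an irreducible dissection with $n$ inner vertices has $n+2$ quadrangular faces, the closure induces a canonical bijection between the stems of $T$ and the bounded faces of its closure $D$. Second, rooting a tree amounts to selecting one of its stems (the root-stem), whereas rooting a dissection of the hexagon amounts to selecting one of the $6$ outer edges, the orientation being fixed by the convention that the outer face lie on the right.

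For the first identity I would introduce the set $\mathcal{Z}$ of isomorphism classes of dissections in $\nDn$ carrying two independent marks: a distinguished outer edge and a distinguished stem of the opening tree. Counting $\mathcal{Z}$ by using the outer-edge mark realizes each element as a rooted dissection of $\Dn$ --- which is rigid, i.e.\ has trivial automorphism group, so its $n+2$ opening-stems are canonically distinguishable --- together with a label in $\{1,\dots,n+2\}$; this is a bijection $\mathcal{Z}\equiv\Dn\times\{1,\dots,n+2\}$, whence $|\mathcal{Z}|=(n+2)\,|\Dn|$. Counting $\mathcal{Z}$ by using instead the stem mark realizes each element, via opening, as a rooted binary tree of $\Bn$ --- again rigid, so its closure's $6$ outer edges are canonically labelled --- together with a label in $\{1,\dots,6\}$, giving a bijection $\mathcal{Z}\equiv\Bn\times\{1,\dots,6\}$. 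Composing the two yields $\Bn\times\{1,\dots,6\}\equiv\Dn\times\{1,\dots,n+2\}$. Equivalently, one may argue by the averaging identities $|\Bn|=(n+2)\sum_{T}1/|\mathrm{Aut}(T)|$ and $|\Dn|=6\sum_{D}1/|\mathrm{Aut}(D)|$, the automorphism groups matching term by term because the closure is canonical.

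The two refinements run along the same lines once colours are tracked, using that the closure respects the bicoloration (Theorem~\ref{theorem:bijection}, coloured version). Because every face of a dissection has even degree, the map is bipartite and the outer hexagon is $2$-coloured as three black and three white vertices in alternation; hence, in the fixed root orientation, exactly $3$ of the $6$ outer edges emanate from a black vertex, which accounts for the factor $3$ attached to $\Bij$ and to $\cBijb$ (the root-vertex being required black). For the second identity the stem mark ranges over all $i+j+2$ stems, giving $3\,|\Bij|=(i+j+2)\,|\Dij|$. For the third identity the stem mark is restricted to stems incident to a black node, so that the opened rooted tree lies in $\cBijb$; a degree count gives their number, since the $i$ black nodes carry $3i$ half-edges of which $i+j-1$ are absorbed by the entire edges (one black end each), leaving $3i-(i+j-1)=2i-j+1$ black stems, the announced factor.

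The routine parts are the two degree/colour counts and the observation that a rooted map is rigid. The step carrying the real content, and which I expect to be the main obstacle to state cleanly, is the stem-to-face correspondence together with its compatibility with rooting: one must check that completing a stem always closes a unique new quadrangular face, that this correspondence is invariant under isomorphism, and that rooting one side --- via a stem, respectively an outer edge --- canonically pins down the hexagon, respectively labels the faces, on the other. In other words, although the closure is only defined up to rotation of the hexagon on unrooted objects, it transports a root faithfully, and it is this faithfulness that legitimizes the two countings of $\mathcal{Z}$.
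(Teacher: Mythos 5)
Your proposal is correct and is essentially the paper's own argument: both proofs double-count the same doubly-marked set (a rooted dissection together with a distinguished stem of its opening tree, which the paper phrases as a marked simply oriented edge of the canonical tri-orientation), obtaining fiber sizes $6$ resp.\ $n+2$ (and $3$ resp.\ $i+j+2$ or $2i-j+1$ in the bicolored cases, with the same degree count $3i-(i+j-1)=2i-j+1$ for black stems). The only cosmetic difference is that you mark the stem on the tree side while the paper marks the corresponding simply oriented edge on the dissection side; these are identified by the opening, so the two arguments coincide.
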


\begin{proof}
  We define a \emph{bi-rooted irreducible dissection} as a rooted
  irreducible dissection endowed with its tri-orientation without
  clockwise circuit and where a simply oriented edge is marked. We
  write $\mathcal{D}_n''$ for the set of bi-rooted irreducible
  dissections with $n$ inner vertices.  Opening and rerooting on the
  stem corresponding to the marked edge defines a surjection from
  $\mathcal{D}_n''$ onto \Bn, for which each element of \Bn has
  clearly six preimages, since the dissection could have been rooted at
  any edge of the hexagon.  Moreover, erasing the mark clearly defines
  a surjection from $\mathcal{D}_n''$ to \Dn, for which each element
  of \Dn has $n+2$ preimages according to
  Lemma~\ref{proposition:triOrientationDissection}.  Hence, the
  closure defines a $(n+2)$-to-6 mapping between \Bn and \Dn. The
  proof of the $(i+j+2)$-to-3 correspondence between $\Bij$ and $\Dij$
  is the same.

  The $(2i-j+1)$-to-3 correspondence between \cBijb
  and \Dij induced by the closure can be proved similarly, with the 
  difference that the marked simply oriented edge has
  to have a black vertex as origin.  Then the result follows from the fact
  that an object of \Dij endowed with its tri-orientation without
  clockwise circuit has $(2i-j+1)$ simply oriented edges whose origin
  is a black vertex.
\hfill $\qed$\end{proof}
Let us mention that the $(i+j+2)$-to-3 correspondence between \Bij and 
$\Dij$ is a key ingredient to the planar graph generators
presented in~\cite{Fusy}.

The coefficient $|\Bn|$ is well-known to be the $n$-th Catalan number
$\frac1{n+1}{2n\choose n}$, and refinements of the standard proofs
yield $|\cBijb|=\frac{1}{2j+1}{2j+1\choose i}{2i\choose j}$, as detailed
below in Section~\ref{sec:countbic}. 
Theorem~\ref{theorem:bijectionRooted} thus implies the following
enumerative results:

\begin{corollary}
  \label{corollary:counting}
The coefficients counting rooted irreducible dissections have the
following expressions,
  \begin{equation}
  |\Dn|
  ~=~\frac{6}{n+2}|\Bn|
  ~=~ \frac6{(n+2)(n+1)}\binom{2n}{n},\end{equation}
  \begin{equation}
   \label{eq:coefDij}
  |\Dij|
  ~=~\frac{3}{2i-j+1}|\cBijb|
  ~=~\frac{3}{(2i+1)(2j+1)}\binom{2j+1}{i}\binom{2i+1}{j}.\end{equation}
\end{corollary}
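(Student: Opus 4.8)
The plan is to read both formulas off directly from the rooted correspondences established in Theorem~\ref{theorem:bijectionRooted}, combined with the known enumerations of rooted binary trees and of black-rooted bicolored binary trees. The bijective content is already carried by that theorem, so what remains is purely a substitution; the only genuine computation is a one-line binomial rearrangement needed to cast the second formula into its symmetric form.

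For the first identity I would start from the correspondence $\Bn\times\{1,\ldots,6\}\equiv\Dn\times\{1,\ldots,n+2\}$ of Theorem~\ref{theorem:bijectionRooted}. Taking cardinalities of both sides gives $6\,|\Bn|=(n+2)\,|\Dn|$, hence $|\Dn|=\frac{6}{n+2}|\Bn|$. Substituting the Catalan value $|\Bn|=\frac{1}{n+1}\binom{2n}{n}$ then yields the closed form $|\Dn|=\frac{6}{(n+2)(n+1)}\binom{2n}{n}$, which is exactly the first line of the statement.

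For the second identity I would use the third correspondence, $\cBijb\times\{1,2,3\}\equiv\Dij\times\{1,\ldots,2i-j+1\}$. Counting both sides gives $3\,|\cBijb|=(2i-j+1)\,|\Dij|$, so $|\Dij|=\frac{3}{2i-j+1}|\cBijb|$. Now I would insert the value $|\cBijb|=\frac{1}{2j+1}\binom{2j+1}{i}\binom{2i}{j}$ (established in Section~\ref{sec:countbic}) and rewrite the factor involving $i$: since $\binom{2i+1}{j}=\frac{2i+1}{2i+1-j}\binom{2i}{j}$ and $2i+1-j=2i-j+1$, one has the identity $\frac{1}{2i-j+1}\binom{2i}{j}=\frac{1}{2i+1}\binom{2i+1}{j}$. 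Substituting this turns $\frac{3}{2i-j+1}\cdot\frac{1}{2j+1}\binom{2j+1}{i}\binom{2i}{j}$ into $\frac{3}{(2i+1)(2j+1)}\binom{2j+1}{i}\binom{2i+1}{j}$, giving the second line.

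I expect no substantial obstacle here. The only point demanding a moment of care is the binomial symmetrization above, whose role is to replace the asymmetric quotient $\frac{1}{2i-j+1}$, which arises from the count of simply oriented edges having a \emph{black} origin, by the symmetric pair of factors $\frac{1}{(2i+1)(2j+1)}$. One should of course also confirm the stated value of $|\cBijb|$, but that computation is deferred to Section~\ref{sec:countbic} and may be assumed here.
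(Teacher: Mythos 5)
Your derivation is correct and follows exactly the route the paper takes: the corollary is obtained by taking cardinalities in the two relevant correspondences of Theorem~\ref{theorem:bijectionRooted} and substituting the Catalan number $|\Bn|=\frac{1}{n+1}\binom{2n}{n}$ and the value $|\cBijb|=\frac{1}{2j+1}\binom{2j+1}{i}\binom{2i}{j}$ from Section~\ref{sec:countbic}. The binomial rearrangement $\frac{1}{2i-j+1}\binom{2i}{j}=\frac{1}{2i+1}\binom{2i+1}{j}$ you spell out is the only computation involved, and it is valid.
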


These enumerative results have already been obtained by~\citeN{Mu}
using algebraic methods. Our method provides a direct bijective proof.

Notice that the cardinality of \Dn is $\frac12S(n,2)$ where
$S(n,m)=\frac{(2n)!(2m)!}{n!m!(n+m)!}$ is the $n$-{th} super-Catalan
number of order $m$. (These numbers are discussed by~\citeN{Ge}.)
Our bijection gives an interpretation of these numbers for $m=2$.

\subsection{Specialization to triangulations}

A nice feature of the closure mapping is that it specializes to a
bijection between plane triangulations and a simple subfamily of
binary trees. In this way, we get the first bijective proof for the
formula giving the number of unrooted plane triangulations with $n$
vertices, found by~\citeN{Br}, and recover the counting formula
for rooted triangulations, already obtained by~\citeN{T62} and
by~\citeN{PS03b} using a different bijection.

\begin{theorem}
The closure mapping is a bijection between the set $\cT_n$ of
(unrooted) plane triangulations with $n$ inner vertices and the set
$\cS_n$ of bicolored binary trees with $n$ black nodes and no stem
(i.e., leaf) incident to a black node.

The closure mapping induces the following correspondence between the
set $\cT_n'$ of rooted triangulations with $n$ inner vertices and the
set $\cS_n'$ of trees in $\cS_n$ rooted at a stem:
\[
\cS_n'\times\{1,2,3\}\equiv \cT_n'\times \{1,\ldots,3n+3\}.
\]
\end{theorem}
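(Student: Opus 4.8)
The plan is to obtain this bijection by \emph{specializing} the general closure bijection of Theorem~\ref{theorem:bijection} and composing it with the angular mapping with border of Theorem~\ref{thm:bijOuter}. Under the latter, a bicolored complete irreducible dissection with $i$ black and $j$ white vertices corresponds to an outer-triangular $3$-connected map with $i$ vertices and $j-3$ inner faces, and the degree of an inner white vertex of the dissection equals the degree of the associated inner face of the map. Hence a plane triangulation with $n$ inner vertices (an outer-triangular $3$-connected map all of whose faces have degree~$3$) corresponds to a complete irreducible dissection that is \emph{triangular}, meaning that all its inner white vertices have degree~$3$; such a dissection has $n+3$ black vertices ($3$ outer and $n$ inner) and $2n+4$ white vertices ($3$ outer and $2n+1$ inner). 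So the first task is to characterize, among all binary trees, those whose closure is complete and triangular, and to verify that this class is exactly $\cS_n$.

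The heart of the argument is a structural lemma relating the condition ``no stem is incident to a black node'' to the geometry of the closure. I would start from the closure-tri-orientation: by Lemma~\ref{proposition:triOrientationDissection} and its description, the bi-oriented edges of the closure $D$ are exactly the entire edges of $T$, while the simply oriented edges of $D$ are exactly the completed stems; consequently a stem borne by a node $v$ becomes a simply oriented edge of $D$ whose origin (its outward endpoint) is $v$. For an inner white vertex $w$ one then has $\deg_D(w)=3+\mathrm{indeg}_D(w)$, and each unit of $\mathrm{indeg}_D(w)$ arises from a simply oriented edge pointing into $w$ whose other, outward endpoint is a black node carrying the corresponding stem (it cannot be bi-oriented, since bi-oriented edges have $w$ outward, and cannot be doubly inward). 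Thus an inner white vertex has degree $>3$ exactly when some black node bears a stem that closes onto it by a local closure. Applying the same analysis to the three outer white vertices, and using that in the completion step a stem at a black node can only be attached to a white hexagon vertex (by the proper $2$-coloring of the quadrangular faces), an outer white vertex has degree $>2$ exactly when a black stem is completed onto the hexagon. Combining the two cases, $T$ has no stem on a black node if and only if $D$ is simultaneously complete and triangular; equivalently, \emph{all} simply oriented edges of $D$ have white origin.

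Granting this equivalence, the unrooted statement is immediate: the closure (Theorem~\ref{theorem:bijection}) restricts to a bijection between $\cS_n$ and the triangular complete irreducible dissections with $n$ inner black vertices, and the angular mapping (Theorem~\ref{thm:bijOuter}) restricts to a bijection between the latter and $\cT_n$; the composite is the bijection referred to as the closure mapping in the statement. A short degree count confirms the parameters: since the $n$ black nodes carry no stems, their $3n$ edges are all entire and reach white nodes, which forces $2n+1$ white nodes and $3n+3$ leaves, and likewise a triangulation with $n$ inner vertices has $3n+3$ edges, the source of the index set $\{1,\ldots,3n+3\}$.

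For the rooted refinement I would mirror the proof of Theorem~\ref{theorem:bijectionRooted} (its third line). Introduce the set of \emph{bi-rooted} triangular dissections: a rooted triangular complete dissection equipped with its unique tri-orientation (Theorem~\ref{theorem:uniquenessexistence}) and a marked simply oriented edge. Opening and rerooting at the stem of the marked edge is a surjection onto $\cS_n'$; since the objects are bicolored and the dissection could be rooted at any of the three hexagon edges compatible with the bicoloring, each element of $\cS_n'$ has exactly three preimages, giving $\cS_n'\times\{1,2,3\}$. Erasing the mark is a surjection onto rooted triangular complete dissections, which the (rooted) angular mapping identifies with $\cT_n'$; by the equivalence above every simply oriented edge has white origin, so there are exactly $3n+3$ of them and each rooted triangulation has $3n+3$ preimages, giving $\cT_n'\times\{1,\ldots,3n+3\}$. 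Identifying the two descriptions of the bi-rooted set yields the stated correspondence. The main obstacle, I expect, is the structural lemma of the second paragraph---in particular tracking the completion step carefully and verifying, via the $2$-coloring, that black stems are precisely what destroys completeness and triangularity; the rooting bookkeeping (the factor~$3$ rather than~$6$, and the count $3n+3$) is then routine given the analogous computations already made for Theorem~\ref{theorem:bijectionRooted}.
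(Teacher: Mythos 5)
Your proposal is correct and follows essentially the same route as the paper: translate via the angular mapping with border to complete dissections whose inner (resp.\ outer) white vertices have degree~$3$ (resp.~$2$), observe through the tri-orientation that this degree condition is equivalent to white vertices having indegree~$0$, hence to the absence of stems at black nodes, and then carry out the rooted count exactly as in Theorem~\ref{theorem:bijectionRooted} using the $3n+3$ leaves (equivalently, simply oriented edges, all with white origin). The only cosmetic difference is that you phrase the key equivalence from the tree side while the paper phrases it from the dissection side; the content is identical.
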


\begin{proof}
  Plane triangulations are exactly 3-connected planar maps where all
  faces have degree~3.  Hence, the angular mapping with border
  (Theorem~\ref{thm:bijOuter}) induces a bijection between $\cT_n$ and
  the set of complete bicolored irreducible dissections with $n$ inner
  black vertices and all inner white vertices of degree~3.  In a
  tri-orientation, the indegree of each inner white vertex $v$ is
  $\mathrm{deg}(v)-3$ and the indegree of each outer white vertex $v$
  is $\mathrm{deg}(v)-2$, hence the dissections considered here have
  no ingoing half-edge incident to a white vertex.  Hence the opening
  of the dissection (by removing ingoing half-edges) is a binary tree
  with no stem incident to a black node.  Conversely, starting from
  such a binary tree, the half-edges created during the closure
  mapping are opposite to a stem.  As all stems are incident to white
  vertices, the half-edges created are incident to black vertices.
  Hence the degree of each white vertex does not increase during the
  closure mapping, i.e., remains equal to~3 for inner white vertices
  and equal to~2 for outer white vertices. This concludes the proof of
  the bijection $\cS_n\equiv\cT_n$.

  The bijection $\cS_n'\times\{1,2,3\}\equiv \cT_n'\times
  \{1,\ldots,3n+3\}$ follows easily (see the proof of
  Theorem~\ref{theorem:bijectionRooted}), using the fact that a tree of
  $\cS_n$ has $3n+3$ leaves.  
  \hfill $\qed$
\end{proof}

\begin{figure}
  \def\etalon{.38\linewidth}
  \centering
  \subfigure[]{\includegraphics[width=\etalon]{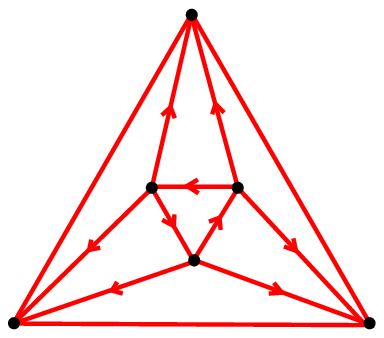}\par\mbox{}\par}\qquad\qquad
  \subfigure[]{\includegraphics[width=\etalon]{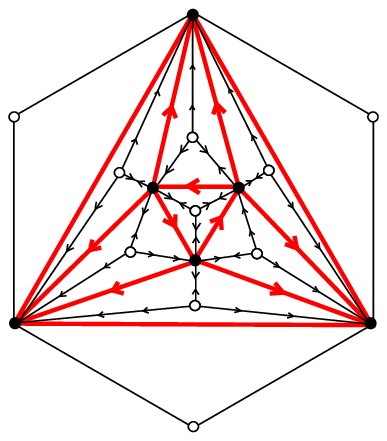}}

  \subfigure[]{\includegraphics[width=\etalon]{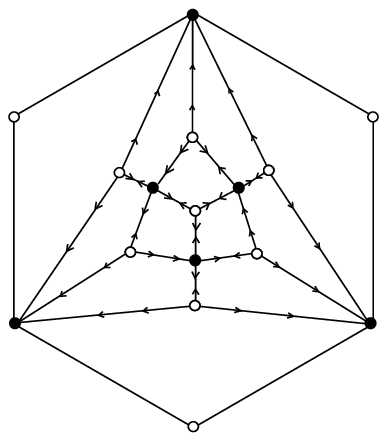}}\qquad\qquad
  \subfigure[]{\includegraphics[width=\etalon]{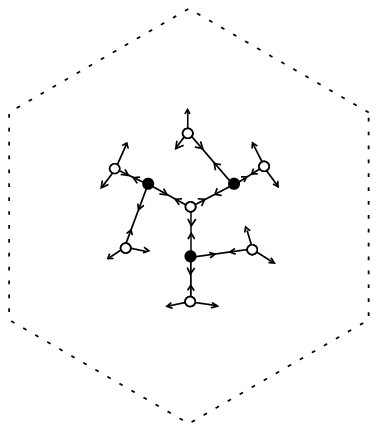}}
  \caption{The bijection between triangulations and bicolored binary
    trees with no leaf incident to a black node.}
  \label{fig:bij_triang}
\end{figure}

This bijection, illustrated in Figure~\ref{fig:bij_triang}, makes it
possible to count plane unrooted and rooted triangulations, as the
subfamily of binary trees involved is easily enumerated.

\begin{corollary}
For $n\geq 0$, the number of rooted triangulations with $n$ inner
vertices is
\[
|\cT_n'|=2\frac{(4n+1)!}{(n+1)!(3n+2)!}.
\]
The number of unrooted plane triangulations with $n$ inner vertices is

\begin{eqnarray*}
|\cT_n|&=&\frac{2}{3}\frac{(4n+1)!}{(n+1)!(3n+2)!} \mathrm{\ \ \ \ \ \ \ \ \ \ \ \ \ \ \ \ \ \ \ \ \ \ \ \ \ \ \! if}\ n\equiv 2\!\!\mod 3,\\[0.2cm]
|\cT_n|&=&\frac{2}{3}\frac{(4n+1)!}{(n+1)!(3n+2)!}+\frac{4}{3}\frac{(4k+1)!}{k!(3k+2)!} \mathrm{\ \ \ \ \  \! \ \ if}\ n\equiv 1\!\!\mod 3\ \ [n=3k+1],\\[0.2cm]
|\cT_n|&=&\frac{2}{3}\frac{(4n+1)!}{(n+1)!(3n+2)!}+\frac{2}{3}\frac{(4k)!}{k!(3k+1)!} \mathrm{\ \ \ \ \ \ \ \! if}\ n\equiv 0\!\!\mod 3\ \ [n=3k].
\end{eqnarray*}
\end{corollary}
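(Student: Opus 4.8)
The plan is to prove the rooted formula first, by transporting the enumeration through the bijection to the tree side, and then to obtain the unrooted formula by an orbit-counting (symmetry) argument.

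\textbf{Rooted triangulations.} First I would feed the rooted correspondence $\cS_n'\times\{1,2,3\}\equiv\cT_n'\times\{1,\ldots,3n+3\}$ into a cardinality count: it gives $3|\cS_n'|=(3n+3)|\cT_n'|$, hence $|\cT_n'|=|\cS_n'|/(n+1)$, so it suffices to count $\cS_n'$, the trees of $\cS_n$ rooted at a stem. Since stems are incident only to white nodes, the root-node is white, and I would decompose such trees recursively: let $W$ be the generating function (with $x$ marking black nodes) of the subtree hanging below a white node through its two free slots, and $B=xW^2$ that of the subtree hanging below a black node (a black node carries two white subtrees and, by the defining constraint of $\cS_n$, no stem). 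Each slot of a white node is either a stem or an entire edge to a black subtree, so $W=(1+B)^2=(1+xW^2)^2$, and a stem-rooted tree is exactly one white subtree, i.e. $|\cS_n'|=[x^n]W$. Setting $U=\sqrt{W}$ turns this into $U=1+xU^4$, and with $T=U-1$ (so $T=x(1+T)^4$) Lagrange inversion gives, for $n\geq 1$, $[x^n]W=[x^n](1+T)^2=\frac2n\binom{4n+1}{n-1}=\frac{2(4n+1)!}{n!(3n+2)!}$ (the case $n=0$ being checked by hand). Dividing by $n+1$ yields the announced value of $|\cT_n'|$.

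\textbf{Unrooted triangulations.} Here I would use the unrooted bijection $\cS_n\equiv\cT_n$, so that $|\cT_n|=|\cS_n|$, and recover $|\cS_n|$ from $|\cS_n'|$ by accounting for symmetries. Viewing the elements of $\cS_n$ as plane trees up to orientation-preserving homeomorphism, their automorphisms are rotations about the center of the tree; the center is a node or an edge, but an edge cannot be reversed since its endpoints have distinct colors, so the center is a node, of degree $3$, and any nontrivial automorphism is a rotation of order $3$. Thus $|\mathrm{Aut}(t)|\in\{1,3\}$ for every $t\in\cS_n$, and since an order-$3$ rotation fixes no stem, each $t$ produces exactly $(3n+3)/|\mathrm{Aut}(t)|$ distinct stem-rootings. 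Writing $A$ and $S$ for the numbers of trees with trivial and with order-$3$ automorphism group, this gives $|\cS_n'|=(3n+3)A+(n+1)S=(n+1)(3A+S)$; comparing with $|\cS_n'|=(n+1)|\cT_n'|$ yields $|\cT_n'|=3A+S$, whence $|\cT_n|=A+S=\tfrac13|\cT_n'|+\tfrac23 S$.

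\textbf{Counting the symmetric trees.} It remains to count $S$. A $3$-symmetric tree is determined by a single branch repeated cyclically around its central node, which must be black when $n\equiv 1$ and white when $n\equiv 0\pmod 3$, no such tree existing when $n\equiv 2$. For $n=3k+1$ the central black node carries three copies of a white subtree with $k$ black nodes, so $S=[x^k]W=\frac{2(4k+1)!}{k!(3k+2)!}$; for $n=3k$ the central white node carries three copies of a black subtree with $k$ black nodes, so $S=[x^k]B=[x^{k-1}]W^2=\frac{(4k)!}{k!(3k+1)!}$ (again by Lagrange inversion, the star $n=0$ being the boundary case that the closed form already covers); and $S=0$ for $n\equiv 2\pmod 3$. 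Substituting these three values into $|\cT_n|=\tfrac13|\cT_n'|+\tfrac23 S$ produces exactly the three announced formulas.

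The main obstacle is the symmetry analysis of the second step: one must argue rigorously that no order-$2$ automorphism can occur (here the bicoloration, which forbids reversing any edge, is essential) and that every nontrivial symmetry is a rotation of order $3$ fixing a unique central node, and then carry out the orbit count and the case split modulo $3$ without error. Everything else---the algebraic equation for $W$ and the two Lagrange-inversion extractions---is routine once the correspondence of the previous theorem is in hand.
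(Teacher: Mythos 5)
Your proposal is correct and follows essentially the same route as the paper: the same recursive system ($W=(1+B)^2$, $B=xW^2$, i.e. the paper's $S$ and $R$), the same orbit count splitting $\cS_n$ into asymmetric trees and trees with an order-$3$ rotation, and the same case analysis on the colour of the centre; the only cosmetic difference is that you extract coefficients by Lagrange inversion where the paper invokes the cycle lemma on \L{}ukasiewicz words. Your explicit remarks that the bicoloration rules out an edge-centred involution and that the $n=0$ star is a boundary case are slightly more careful than the paper's treatment, but the argument is the same.
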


\begin{proof}
  Let $\cS'=\cup_n\cS_n'$ be the class of rooted binary trees with no
  leaf incident to a black node and let $\cR'=\cup_n\cR_n'$ be the
  class of rooted binary trees where the root leaf is incident to a
  black node and all other leaves are incident to white nodes.  Let
  $S(x)$ and $R(x)$ be the generating functions of $\cS'$ and $\cR'$
  with respect to the number of black nodes.  Clearly the two subtrees
  pending from the (white) root node of a tree of $\cS'$ are either
  empty or in $\cR'$.  Hence $S(x)=(1+R(x))^2$.  Similarly, a tree in
  $\cR'$ decomposes at the root node into two trees in $\cS'$, so that
  $R(x)=xS(x)^2$. Hence, $R(x)=x(1+R(x))^4$ is equal to the generating
  function of quaternary trees, and $S(x)=(1+R(x))^2$ is equal to the
  generating function of pairs of quaternary trees (the empty tree
  being allowed). Using a Lukaciewicz encoding and the cyclic lemma,
  the number of pairs of quaternary trees with a total of $n$ nodes is
  easily shown to be $\frac{2}{4n+2}\frac{(4n+2)!}{n!(3n+2)!}$.  This
  expression of $|\cS_n'|$ and the $(3n+3)$-to-3 correspondence
  between $\cS_n'$ and $\cT_n'$ yield the expression of $|\cT_n'|$.

  Let us now prove the formula for $|\cT_n|=|\cS_n|$. Clearly, the
  only possible symmetry for a bicolored binary tree is a rotation of
  order~3. Let $\cSsy_n$ be the set of trees of $\cS_n$ with a
  rotation symmetry and let $\cSas_n$ be the set of trees of $\cS_n$
  with no symmetry.  Let $\cSasy_n$ and $\cSsym_n$ be the sets of
  trees of $\cSas_n$ and $\cSsy_n$ that are rooted at a leaf. It is
  easily shown that a tree of $\cS_n$ has $3n+3$ leaves. Clearly the
  tree gives rise to $3n+3$ rooted trees if it is asymmetric and gives
  rise to $n+1$ rooted trees if it is symmetric. Hence
  $|\cSas_n|=|\cSasy_n|/(3n+3)$ and
  $|\cSsy_n|=|\cSsym_n|/(n+1)$. Using $|\cS_n|=|\cSas_n|+|\cSsy_n|$
  and $|\cS_n'|=|\cSasy_n|+|\cSsym_n|$, we obtain
  \[
  |\cS_n|=\frac{1}{3n+3}|\cS_n'|+\frac{2}{3}|\cSsy_n|.
  \]
  The centre of rotation of a tree in $\cSsy_n$ is either a black
  node, in which case $n=3k+1$ for some integer $k\geq 0$, or is a
  white node, in which case $n=3k$ for some integer $k\geq 0$.  In the
  first case, a tree $\tau\in\cSsy_n$ is obtained by attaching to a
  black node 3 copies of a tree in $\cS_k'$.  Hence
  $|\cSsy_{3k+1}|=|\cS_k'|=2\frac{(4k+1)!}{k!(3k+2)!}$.  In the second
  case, a tree $\tau\in\cSsy_n$ is obtained by attaching to a white
  node 3 copies of a tree in $\cR_k'$.  Hence
  $|\cSsy_{3k}|=|\cR_k'|=\frac{(4k)!}{k!(3k+1)!}$.  The result
  follows.
\hfill $\qed$
\end{proof}

\subsection{Counting, coding and sampling rooted bicolored binary trees}
\label{sec:countbic}
\begin{figure}
  \centering
  \subfigure[$\mathcal{A}_{\bullet\circ}$,\label{figure:alph_noir_blanc}]{\input{Figures/Alphabet_noir_blanc.pstex_t}}\qquad\qquad
  \subfigure[$\mathcal{A}_{\bullet}$,\label{figure:alph_noir}]{\qquad\includegraphics{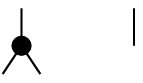}\qquad}\qquad
  \subfigure[$\mathcal{A}_{\circ}$.\label{figure:alph_blanc}]{\qquad\includegraphics{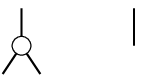}\qquad}
  \caption{The three alphabets for words associated to bicolored
    binary trees.}
  \label{figure:Alphabets}
\end{figure}

\subsubsection{From a bicolored tree to a pair of words}\label{section:BicoloredTreesCode}
There exist general methods to encode a family of trees specified by
several parameters. This section makes such methods explicit for the
family of bicolored binary trees.  Let $T$ be a black-rooted bicolored
binary tree with $i$ black nodes and $j$ white nodes.  Doing a
depth-first traversal of $T$ from left to right, we obtain a word
$w_{\bullet\circ}$ of length $(2j+1)$ on the alphabet
$\mathcal{A}_{\bullet\circ}$ represented in
Figure~\ref{figure:alph_noir_blanc}, see
Figure~\ref{figure:BicoloredTreeToWord} for an example, the mapping
being denoted by $\Psi$. Classically, the sum of the weights of the
letters of any strict prefix of $w_{\bullet\circ}$ is nonnegative and
the sum of the weights of the letters of $w_{\bullet\circ}$ is equal
to~-1. In addition, $w_{\bullet\circ}$ is the unique word in its
cyclic equivalence-class that has these two properties.

The second step is to map $w_{\bullet\circ}$ to a pair
$(w_{\bullet},w_{\circ}):=\Phi(w_{\bullet\circ})$ of words such that:
\begin{longitem}
\item $w_{\bullet}$ is a word of length $(2j+1)$ on the alphabet
  $\mathcal{A}_{\bullet}$ shown in
  Figure~\ref{figure:alph_noir} with $i$ black-node-letters.
\item $w_{\circ}$ is a word of length $2i$ on the alphabet
  $\mathcal{A}_{\circ}$ shown in
  Figure~\ref{figure:alph_blanc} with $j$ white-node-letters.
\end{longitem}
Figure~\ref{figure:WordToWordPair} illustrates the
mapping $\Phi$ on an example.

\begin{figure}
  \centering
  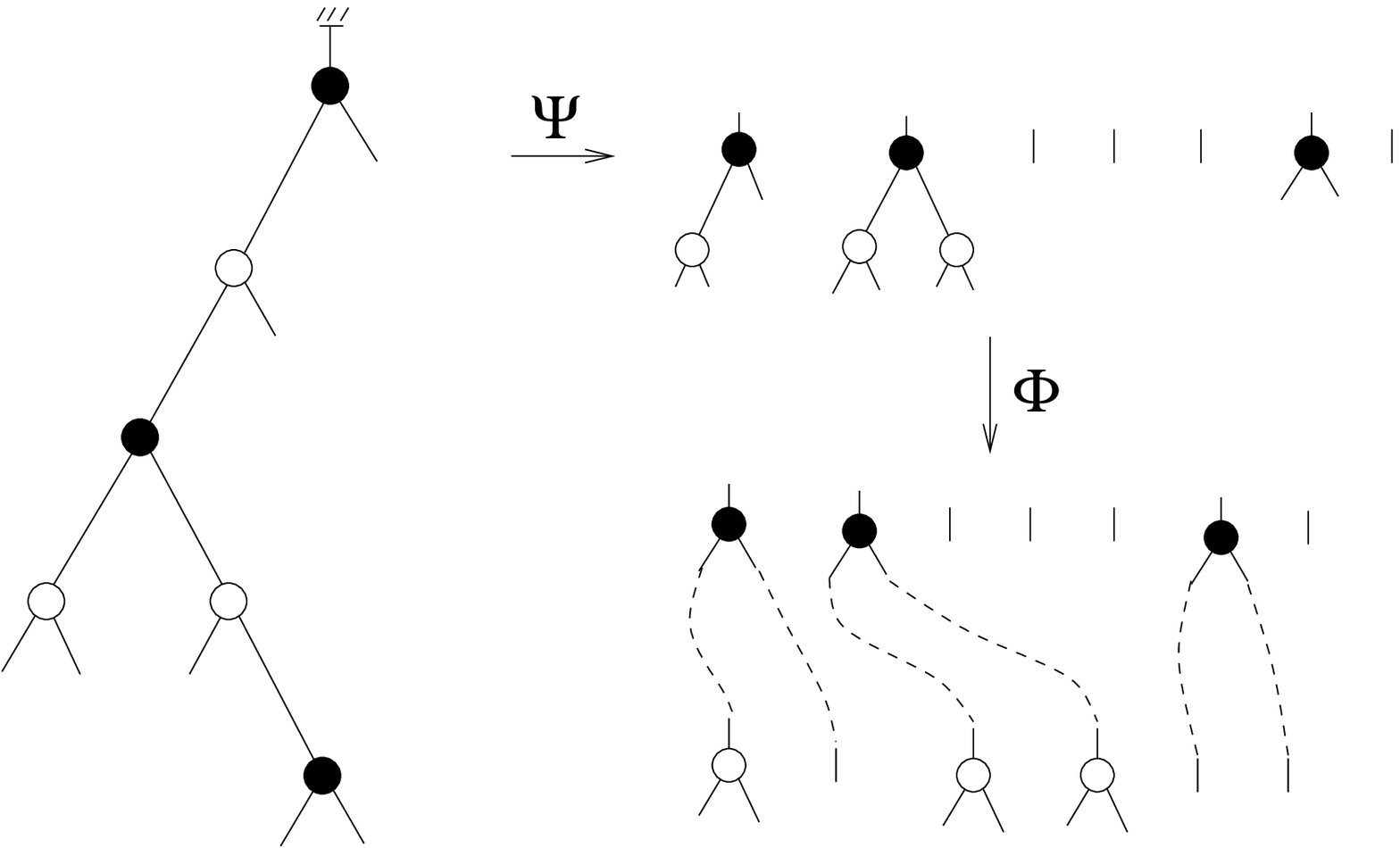
  \caption{A bicolored rooted binary tree, and the corresponding
    words $w_{\bullet\circ}$, $w_{\bullet}$, and $w_{\circ}$.}
  \label{figure:BicoloredTreeToWord}\label{figure:WordToWordPair}
\end{figure}

\subsubsection{Inverse mapping: from a pair of words to a tree}
\label{section:BicoloredTreesSample}

Conversely, let $(w_{\bullet},w_{\circ})$ be a pair of words such that 
$w_{\bullet}$ is of length $(2j+1)$ on
$\mathcal{A}_{\bullet}$ and has $i$ black-node-letters, and $w_{\circ}$ is
of length $2i$ on $\mathcal{A}_{\circ}$ and has $j$ white-node-letters.
First, to the pair $(w_{\bullet},w_{\circ})$ we associate a word 
$\widetilde{w}_{\bullet\circ}$ of length $(2j+1)$ on
$\mathcal{A}_{\bullet\circ}$ by doing the inverse  of the
mapping $\Phi$ shown in the right part of 
Figure~\ref{figure:WordToWordPair}. The
word $\widetilde{w}_{\bullet\circ}$ 
has the property that the sum of the weights of its letters is
equal to -1.  There is a unique word $w_{\bullet\circ}$ in the
cyclic equivalence-class of $\widetilde{w}_{\bullet\circ}$ such that
 the sum of the weights of the letters of
 any strict prefix is nonnegative. We associate to
$w_{\bullet\circ}$ the binary tree of $\cBijb$ 
obtained by doing the
inverse of the mapping $\Psi$ shown 
in Figure~\ref{figure:BicoloredTreeToWord}.

This method allows us to sample uniformly objects of $\cBijb$ in linear time and
ensures that
\begin{equation}
|\cBijb|=\frac{1}{2j+1}\binom{2j+1}{i}\binom{2i}{j}.
\end{equation}

\section{Application: counting rooted 3-connected maps}
\label{section:counting}

\subsection{Generating functions of rooted dissections}

Even if the counting formulas obtained in
Corollary~\ref{corollary:counting} are simple, it proves useful
to have an expression of the corresponding generating functions. 
Indeed, the decomposition-method
we develop is suitably handled by generating functions. 
   
Let 
$r_1(x_{\bullet},x_{\circ}):=\sum |\cBijb|x_{\bullet}^ix_{\circ}^j$
 and $r_2(x_{\bullet},x_{\circ}):=\sum |\cBijw|x_{\bullet}^ix_{\circ}^j$
be the series of black-rooted and white-rooted bicolored binary trees.
By decomposition at the root, $r_1(\xb,\xw)$ and $r_2(\xb,\xw)$ 
 are the solutions of the system:
\begin{equation}
\label{eq:sysr}
\left\{
\begin{array}{ccc}
r_1(x_{\bullet},x_{\circ})&=&x_{\bullet}\left(
1+r_2(x_{\bullet},x_{\circ})\right) ^2,  \\
r_2(x_{\bullet},x_{\circ})&=&x_{\circ}\left(
1+r_1(x_{\bullet},x_{\circ})\right) ^2. 
\end{array}
\right.
\end{equation}

Define an \emph{edge-marked bicolored binary tree} as a bicolored binary tree
with a marked inner edge. Let $\mBij$ be the set of 
edge-marked bicolored binary trees with $i$ black
nodes and $j$ white nodes. Cutting the marked edge of such a tree
yields a pair made of a black-rooted and a white-rooted binary
tree. As a consequence, the generating function counting edge-marked
bicolored binary trees is $r_1\cdot r_2$, i.e., 
$r_1\cdot r_2=\sum_{ij}|\mBij|x_{\bullet}^ix_{\circ}^j$.

Let us consider bi-rooted objects as in the proof of
Theorem~\ref{theorem:bijectionRooted}; since any
object of \nBij has $(2i-j+1)$ white leaves (connected to a black
node) 
and $(2j-i+1)$ black leaves (connected to a white node),
\[
|\cBijw|=\frac{2j-i+1}{2i-j+1}|\cBijb|.
\]
Similarly, counting in two ways  the objects of \cBijb having a marked edge yields
\[
|\mBij|=\frac{i+j-1}{2i-j+1}|\cBijb|.
\]
Thus, we have
$|\cBijb|+|\cBijw|-|\mBij|=\frac{3}{2i-j+1}|\cBijb|=|\mathcal{D}_{ij}'|$
(using~(\ref{eq:coefDij})), so that
\begin{equation}
  \label{eq:arbres}
  \sum_{i,j}|\Dij|x_{\bullet}^ix_{\circ}^j ~=~
  r_1(x_{\bullet},x_{\circ}) + r_2(x_{\bullet},x_{\circ}) -
  r_1(x_{\bullet},x_{\circ}) r_2(x_{\bullet},x_{\circ}).
\end{equation}
Substituting $x_{\bullet}$ and $x_{\circ}$ by $x$, we
obtain:
\begin{equation}
  \label{eq:arbressub}
  \sum_{n}|\Dn|x^n=2r(x)-r(x)^2,
\end{equation}
where $r(x)=x\left( 1+r(x)\right) ^2$ is the generating function of
binary trees according to the number of inner nodes.

\subsection{Generating function of rooted 3-connected maps}

\paragraph{Injection from \Q to \D}

Let us consider the mapping $\iota$ defined on rooted
quadrangulations by the removal of the root-edge and rerooting on
the next edge in counterclockwise order around the
root-vertex; $\iota$ is clearly injective, and for any quadrangulation
$Q$, $\iota(Q)$ has only quadrangular faces but the outer one, which
is hexagonal. In addition, $\iota(Q)$ 
can not have more separating 4-cycles than $Q$.
Hence the restriction of $\iota$ to \Q is an injection from \Q
to~\D, more precisely from \Qn to $\D_{n-4}$ and from \Qij to
$\D_{i-3,j-3}$. 

It is however not a bijection, since the inverse edge-adding
operation $\pi$, performed on an irreducible dissection, can create a
separating 4-cycle on the obtained quadrangulation. Precisely, given
 $D$ a rooted irreducible dissection ---with $s$ the root-vertex and
 $t$ the vertex of the hexagon opposite to $s$--- a path of length~3
between $s$ and $t$ is called a \emph{decomposition path}.  The
two paths of edges of the hexagon connecting $s$ to $t$
 are called \emph{outer decomposition paths}, and the other
ones, if any, are called \emph{inner decomposition paths} of~$D$.

Observe that inner decomposition paths of~$D$ are in one-to-one
correspondence with separating 4-cycles of the quadrangulation
$\pi(D)$ (i.e., the quadrangulation 
obtained from $D$ by adding a root-edge between $s$
and $t$ oriented out of $s$).

A rooted irreducible dissection without inner decomposition path is
said to be \emph{undecomposable}. The corresponding class is denoted
by $\mathcal{U}'$. 
The discussion on decomposition paths yields the following result.

\begin{lemma}
  \label{lemma:images}
  Denote by $\U_{n}$ the set of rooted undecomposable dissections with
  $n$ inner vertices and by $\U_{ij}$ the set of rooted undecomposable
  dissections with $i$ inner black vertices and $j$ inner white
  vertices.  Then $\U_{n-4}$ is in bijection with $\Pn$ and
  $\U_{i-3,j-3}$ is in bijection with $\Pij$.
\end{lemma}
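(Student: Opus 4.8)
The plan is to establish the bijection by composing the injection $\iota:\Q\to\D$ with the angular mapping (Theorem~\ref{theorem:tutte}) and identifying the image of $\iota$ precisely as the undecomposable dissections. First I would recall that the angular mapping gives bijections $\Pn\equiv\Qn$ and $\Pij\equiv\Qij$, so it suffices to exhibit a bijection between $\Qn$ (resp.\ $\Qij$) and $\U_{n-4}$ (resp.\ $\U_{i-3,j-3}$). The candidate for this bijection is exactly the restriction of $\iota$ to the class of rooted irreducible quadrangulations.

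The key step is to show that $\iota$ maps $\Q$ \emph{onto} $\U$, and that its inverse $\pi$ maps $\U$ back into $\Q$. Since $\iota$ is already known to be injective, and since it was observed that $\iota(\Qn)\subseteq\D_{n-4}$ and $\iota(\Qij)\subseteq\D_{i-3,j-3}$, what remains is to pin down the image. Here I would invoke the observation immediately preceding the lemma: inner decomposition paths of a rooted dissection $D$ are in one-to-one correspondence with separating $4$-cycles of the quadrangulation $\pi(D)$. Thus $\pi(D)$ is irreducible (has no separating $4$-cycle) if and only if $D$ has no inner decomposition path, i.e.\ if and only if $D\in\U$. This is precisely the characterization that makes $\pi$ a well-defined map from $\U$ into $\Q$.

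To finish, I would verify that $\pi$ and $\iota$ are mutually inverse on these restricted domains. For $Q\in\Q$, the dissection $\iota(Q)$ has no inner decomposition path (since $Q$ itself had no separating $4$-cycle, and the separating $4$-cycles of $\pi(\iota(Q))=Q$ correspond bijectively to inner decomposition paths of $\iota(Q)$), so $\iota(Q)\in\U$; conversely $\pi(\iota(Q))=Q$ and $\iota(\pi(D))=D$ follow directly from $\pi$ and $\iota$ being the edge-adding and edge-removing operations that undo one another. The parameter bookkeeping is routine: $\iota$ removes one edge (turning a quadrangular outer face into a hexagonal one) and adds no interior vertex, so the four vertices of the former outer face become inner vertices of the dissection; this accounts for the index shifts $n\mapsto n-4$ and $(i,j)\mapsto(i-3,j-3)$, the latter because the outer face of an irreducible quadrangulation contributes three vertices of one color and one of the other after rooting conventions fix the root-vertex to be black.

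The main obstacle I anticipate is the careful justification of the claimed correspondence between inner decomposition paths and separating $4$-cycles, together with the verification that no separating $4$-cycles are \emph{created} beyond those coming from decomposition paths. In other words, the delicate point is showing that adding the single root-edge between $s$ and $t$ cannot produce a separating $4$-cycle other than one closing up an inner decomposition path of $D$; this requires that every $4$-cycle of $\pi(D)$ through the new edge corresponds to a length-$3$ path from $s$ to $t$ in $D$, and that all other $4$-cycles were already non-separating faces of $D$. Once this structural claim is secured, the bijection and the parameter counts follow cleanly from the already-established angular mapping.
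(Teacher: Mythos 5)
Your proposal follows essentially the same route as the paper: restrict $\iota$ to $\mathcal{Q}'$, identify its image as $\mathcal{U}'$ via the correspondence between inner decomposition paths of $D$ and separating $4$-cycles of $\pi(D)$ (which the paper likewise records as an observation just before the lemma rather than proving in detail), and compose with the angular mapping of Theorem~\ref{theorem:tutte}. The one flaw is in your parameter bookkeeping justification: the outer face of a bicolored quadrangulation has two vertices of each color (not three and one), and the four former outer vertices remain on the hexagon rather than becoming inner; the correct derivation is simply that the $n+2$ vertices of $Q$ (resp.\ its $i$ black and $j$ white vertices) minus the $6$ hexagon vertices ($3$ of each color) leave $n-4$ inner vertices (resp.\ $i-3$ black and $j-3$ white inner vertices), so the shifts you state are right even though the reasons you give for them are not.
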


\begin{proof}
A rooted irreducible quadrangulation is mapped
 by $\iota$ to a rooted dissection
 such that the inverse edge-adding operation $\pi$
does not create a separating 4-cycle, i.e., an undecomposable dissection.
Moreover, Euler's relation ensures that the image 
of a quadrangulation with $n$ faces has $n-4$ inner vertices. By injectivity,
$\iota$ is bijective to its image, i.e., $\iota$ is a bijection
between $\Qn$ and $\U_{n-4}$; 
and a bijection between $\Qij$ and $\U_{i-3,j-3}$.
The result follows, as $\Qn$ and $\Qij$ are 
respectively in bijection with $\Pn$ and $\Pij$ 
via the angular mapping (Theorem~\ref{theorem:tutte}).
\phantom{11111111} \hfill $\qed$  
\end{proof}

Thanks to Lemma~\ref{lemma:images}, enumerating rooted 3-connected maps
reduces to enumerating rooted undecomposable dissections.

\paragraph{Decomposition of rooted irreducible dissections}

Since irreducible dissections do not have multiple edges nor cycles of
odd length, decomposition paths satisfy the following properties:

\begin{lemma}
  \label{lemma:intersectpaths}
  Let $D$ be a rooted irreducible dissection, and let $\mathcal{P}_1$ and
  $\mathcal{P}_2$ be two different decomposition paths of~$D$. Then:
  \begin{longitem}
  \item either $\mathcal{P}_1 \cap \mathcal{P}_2 =\{ s,t\}$, in which
    case $\mathcal{P}_1$ and $\mathcal{P}_2$ are said to be \emph{internally
    disjoint};
  \item or there exists one inner vertex $v$ such that $\mathcal{P}_1
    \cap \mathcal{P}_2 =\{ s\} \cup \{ t\} \cup \{ v\}$, in which case
    $\mathcal{P}_1$ and $\mathcal{P}_2$ are said to be \emph{upper} or
    \emph{lower joint} whether $v$ is adjacent to $s$
    or~$t$. 
  \end{longitem}
\end{lemma}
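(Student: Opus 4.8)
The plan is to read the structure of a length-three $s$--$t$ path directly off the two properties recalled just before the statement: $D$ has no odd cycles and no multiple edges. First I would fix a proper bicoloration of the vertices of $D$, which exists precisely because $D$ has no odd cycle. Since $s$ and $t$ are joined by the outer decomposition path, which has odd length $3$, they receive opposite colors; say $s$ is black and $t$ is white. Then the colors along \emph{any} decomposition path $s-a-b-t$ are forced: the neighbor $a$ of $s$ is white and the neighbor $b$ of $t$ is black. Thus every decomposition path has exactly two interior vertices, of prescribed and distinct colors; I will call $a$ its \emph{$s$-vertex} (white, adjacent to $s$) and $b$ its \emph{$t$-vertex} (black, adjacent to $t$).

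Next I would run the intersection analysis purely by color. Write the interior vertices of $\mathcal{P}_1$ as $a_1,b_1$ and those of $\mathcal{P}_2$ as $a_2,b_2$, with $a_k$ white and $b_k$ black. The common vertices of $\mathcal{P}_1$ and $\mathcal{P}_2$ are $s$ and $t$, together with $\{a_1,b_1\}\cap\{a_2,b_2\}$, because a length-three path has no vertices other than its two endpoints and its two interior vertices. Since a white vertex can never coincide with a black one, the only possible coincidences among the interior vertices are $a_1=a_2$ or $b_1=b_2$; the cross cases $a_1=b_2$ and $b_1=a_2$ are excluded by color alone. This already splits the situation into the three cases of the statement.

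Finally I would invoke the absence of multiple edges to finish. If both $a_1=a_2$ and $b_1=b_2$ held, the two paths would share the identical vertex sequence $s-a_1-b_1-t$; as an irreducible dissection has no multiple edges, each pair of consecutive vertices spans a unique edge, so $\mathcal{P}_1=\mathcal{P}_2$, contradicting distinctness. Hence at most one coincidence occurs: if none does, $\mathcal{P}_1\cap\mathcal{P}_2=\{s,t\}$ and the paths are internally disjoint; if $a_1=a_2=:v$, then $v$ is their common $s$-vertex and $\mathcal{P}_1\cap\mathcal{P}_2=\{s,t,v\}$ with $v$ adjacent to $s$ (upper joint); symmetrically $b_1=b_2=:v$ gives $v$ adjacent to $t$ (lower joint). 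The argument is short once the bicoloration is in place, so I expect no deep obstacle; the point deserving care is simply that both hypotheses must be deployed at exactly the right place — bipartiteness to pin down the two interior colors (which is what collapses the possible intersections and assigns $v$ to $s$ or $t$), and the absence of multiple edges to rule out the simultaneous coincidence $a_1=a_2$, $b_1=b_2$. What might superficially look as though it needs a planarity argument, namely excluding further common vertices, is in fact forced by the length-three and color constraints alone.
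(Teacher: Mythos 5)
Your proof is correct and takes essentially the approach the paper intends: the paper states this lemma with no separate proof, justifying it only by the preceding remark that irreducible dissections have no multiple edges and no odd cycles, and your argument is exactly the elaboration of that remark (bipartiteness pins down the colors of the two interior vertices of every decomposition path, which collapses the possible coincidences to $a_1=a_2$ or $b_1=b_2$ and assigns $v$ to $s$ or $t$, while simplicity excludes two distinct paths on the same vertex sequence). The one detail left implicit---that the common vertex $v$ is an \emph{inner} vertex of $D$ rather than merely an internal vertex of the paths---is also not addressed by the paper and plays no role in how the lemma is used afterwards.
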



Lemma~\ref{lemma:intersectpaths} implies in particular that two
decomposition paths can not cross each other. Hence the decomposition
paths of an irreducible dissection $D$ follow a left-to-right order, from the
outer decomposition path containing the root ---called left outer path---
to the other
outer decomposition path ---called right outer path.

\begin{lemma}
  \label{lemma:unique}
  Let $D$ be a rooted irreducible dissection, and let $\mathcal{P}_1$
  and $\mathcal{P}_2$ be two upper joint (resp. lower joint)
  decomposition paths of $D$. Then the interior of the area delimited
  by $\mathcal{P}_1$ and $\mathcal{P}_2$ consists of a unique face
  incident to $t$ (resp. to~$s$).
\end{lemma}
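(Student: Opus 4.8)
~\textbf{Setup.}
Let $\mathcal{P}_1$ and $\mathcal{P}_2$ be two upper joint decomposition paths, so by Lemma~\ref{lemma:intersectpaths} they share a common inner vertex $v$ adjacent to $s$, and $\mathcal{P}_1\cap\mathcal{P}_2=\{s,t,v\}$. Write $\mathcal{P}_1=(s,v,a_1,t)$ and $\mathcal{P}_2=(s,v,a_2,t)$, where $a_1\neq a_2$ are the two intermediate vertices on the two paths between $v$ and $t$. The plan is to show that the bounded region $R$ enclosed by $\mathcal{P}_1\cup\mathcal{P}_2$ contains no inner vertex in its interior and no chord, so that its boundary $v,a_1,t,a_2$ bounds a single quadrangular face incident to $t$. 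The lower joint case is entirely symmetric, exchanging the roles of $s$ and $t$.

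\textbf{Key steps.}
First I would record the parity/bipartiteness constraint: since the dissection is bicolored (all faces have even degree), $s,t$ receive one color and $v,a_1,a_2$ the other, and any vertex in the interior of $R$ is colored consistently. Second, I would argue that $R$ contains no inner vertex in its interior: the boundary cycle $v\,a_1\,t\,a_2$ is a $4$-cycle, and by irreducibility of $D$ this $4$-cycle must be \emph{facial} unless it is separating; a separating $4$-cycle is forbidden, so either the cycle bounds a single inner face directly, or one must rule out the degenerate possibility that $a_1=a_2$ (excluded since $\mathcal{P}_1\neq\mathcal{P}_2$ and they meet only in $\{s,t,v\}$). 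Third, I would check there is no additional edge (chord) inside $R$: a chord would either join the two color classes creating a shorter separating cycle or violate the fact that all inner faces have degree exactly $4$. Combining these, the interior of $R$ is a single quadrangular face; its vertices are $v,a_1,t,a_2$, and in particular it is incident to~$t$, as claimed.

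\textbf{Main obstacle.}
The delicate point is excluding interior vertices and chords cleanly using only irreducibility (absence of separating $4$-cycles) together with the quadrangularity of inner faces. The cycle $v\,a_1\,t\,a_2$ has length~$4$; if it enclosed anything nontrivial it would be a separating $4$-cycle, directly contradicting irreducibility. The subtlety I would watch for is the boundary case where the region $R$ is not a topological disk bounded by a simple $4$-cycle --- for instance if $\mathcal{P}_1$ and $\mathcal{P}_2$ shared more structure. But Lemma~\ref{lemma:intersectpaths} has already pinned the intersection to exactly $\{s,t,v\}$, and the no-crossing consequence noted after that lemma guarantees $R$ is a genuine disk, so the $4$-cycle argument applies without further topological worry. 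Thus the heart of the proof is simply invoking irreducibility on the enclosing $4$-cycle, and the only care needed is confirming that the two paths genuinely form a simple bounding cycle of length~$4$ incident to~$t$.
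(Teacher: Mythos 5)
Your proof is correct and follows the same route as the paper, whose entire argument is the one-line observation that irreducibility forces every $4$-cycle of $D$ to bound a face; you simply spell out that the region between the two paths is bounded by the simple $4$-cycle $v\,a_1\,t\,a_2$ (simple because Lemma~\ref{lemma:intersectpaths} pins the intersection to $\{s,t,v\}$) and then invoke that fact. The extra discussion of colors and chords is harmless but unnecessary: anything in the interior of that $4$-cycle would make it non-facial, hence separating, which irreducibility forbids.
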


\begin{proof}
  Follows from the fact that the interior of each 4-cycle of $D$ is a
  face. \hfill $\qed$
\end{proof}

\paragraph{Decomposition word of an irreducible dissection}
Let $D\in \D$ and let $\{\mathcal{P}_0, \ldots,\mathcal{P}_\ell\}$ be
the sequence of decomposition paths of $D$ ordered from left to right.
Let us consider the alphabet
$\mathcal{A}=\{s\} \cup \{t\} \cup \U$; the \emph{decomposition word}
of $D$ is the word $w= w_1 \dots w_\ell$ of length $\ell$ on
$\mathcal{A}$ such that, for any $1\leq i\leq \ell$:
if $\mathcal{P}_{i-1}$ and $\mathcal{P}_i$ are
upper joint, then $w_i=s$; if $\mathcal{P}_{i-1}$ and $\mathcal{P}_i$ are
lower joint, then $w_i=t$; if $\mathcal{P}_{i-1}$ and
$\mathcal{P}_i$ are internally disjoint, then $w_i=U$, where $U$ is
the undecomposable dissection delimited by
$\mathcal{P}_{i-1}$ and $\mathcal{P}_i$, rooted at the first edge of
$\mathcal{P}_{i-1}$ and with $s$ as root-vertex, 
see Figure~\ref{figure:decomp}.
This encoding is injective, an easy consequence of
Lemma~\ref{lemma:unique}.

\begin{figure}
  \centering\psset{unit=1.1em}
  \begin{pspicture}(-5,-4)(24,4)
    \rput(0,-3.5){$t$}\rput(0,3.5){$s$}
    \rput(0,0){\includegraphics[width=12\psunit]{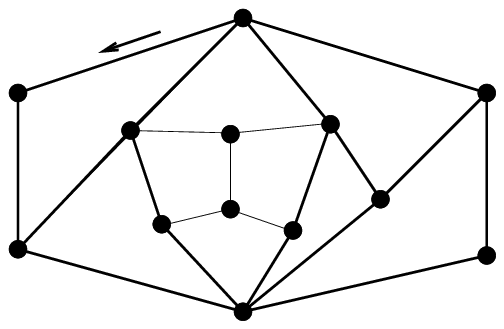}}
    \rput[l](6,0){\normalsize$\implies \; w = tsUsts,$ where $U = $} 
    \rput(21,0){\includegraphics[height=4\psunit]{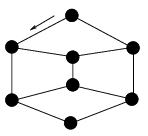}}
  \end{pspicture}
  \caption{Example of decomposition of a rooted irreducible
    dissection and of its associated decomposition word.}
  \label{figure:decomp}
\end{figure}

\paragraph{Characterization of decomposition words of elements of \D}

The fact that $D$ has no separating 4-cycle easily implies that its
decomposition word has no factor $ss$ nor $tt$, and these are the only
forbidden factors. Moreover, as a dissection has at least one inner
vertex, a decomposition word can neither be the
empty word, nor the one-letter words $s$ and~$t$, nor the two-letter words $st$
and $ts$.  It is easily seen
that all other words encode irreducible dissections of the hexagon.

This leads to the following equation linking the
generating functions $D(x)$ and $U(x)$ counting \D and~\U according to
the number of inner vertices,

\begin{equation}
  \label{eq:Dn}
  x^2D(x)+2x^2+2x+1 ~=~ \left( 1 + \frac{2x}{1-x} \right)  \cdot
  \frac{1}{1-x^2U(x)\left(1+ \frac{2x}{1-x}\right)}.
\end{equation}


Similarly, let $D(x_{\bullet},x_{\circ}):=\sum
|\Dij|x_{\bullet}^{i}x_{\circ}^{j}$ and
$U(x_{\bullet},x_{\circ}):=\sum
|\Uij|x_{\bullet}^{i}x_{\circ}^{j}$.
Then the characterization of the coding words gives
\begin{multline}\label{eq:Dij}
  x_\bullet x_\circ D(x_{\bullet},x_{\circ}) + 2x_\bullet x_\circ +x_\bullet + x_\circ + 1 \\
  =  (1+x_{\bullet}) \cdot \frac{1}{1-x_{\circ}x_{\bullet}} \cdot (1+x_{\circ}) 
  ~\cdot~
  \frac{1}{1-x_\bullet x_\circ U(x_{\bullet},x_{\circ})(1+x_{\bullet})
  \frac{1}{1-x_{\circ}x_{\bullet}}(1+x_{\circ})}. 
\end{multline}

\begin{theorem}
  Let $\P_{n}$ be the number of rooted 3-connected maps with $n$
edges and $\P_{ij}$ the number of rooted 3-connected maps with $i$
vertices and $j$ faces. Then
  \[
  \sum_n |\P_{n+2}|x^n ~=~ \frac{1-x}{1+x} ~-~ \frac{1}{1+2x+2x^2+x^2(2r(x)-r(x)^2)},
  \]
  where $r(x)=x\left( 1+r(x)\right) ^2$, and
  \begin{multline}
    \sum_{i,j} |\P_{i+2,j+2}| x_{\bullet}^i x_{\circ}^j \\ ~=~
    \frac{1-x_{\bullet}x_{\circ}}{(1+x_{\bullet})(1+x_{\circ})}
    ~-~\frac{1}{1+x_{\bullet}+x_{\circ}+2x_{\bullet}x_{\circ}+x_\bullet
      x_\circ (r_1+r_2-r_1 r_2)} ,
  \end{multline}
  where $\left\{
  \begin{array}{ccc}
    r_1(x_{\bullet},x_{\circ})&=&x_{\bullet}\left(
    1+r_2(x_{\bullet},x_{\circ})\right) ^2  \\
    r_2(x_{\bullet},x_{\circ})&=&x_{\circ}\left(
    1+r_1(x_{\bullet},x_{\circ})\right) ^2 
  \end{array}
  \right.$.
\end{theorem}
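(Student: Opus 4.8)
The plan is to reduce the enumeration of rooted 3-connected maps to that of undecomposable dissections, for which the functional equations (\ref{eq:Dn}) and (\ref{eq:Dij}) are already in hand, and then to feed in the binary-tree series supplied by (\ref{eq:arbressub}) and (\ref{eq:arbres}). First I would invoke Lemma~\ref{lemma:images}: since $\U_{n-4}$ is in bijection with $\Pn$, we have $|\P_{n+2}|=|\U_{n-2}|$, and a shift of summation index gives
\[
\sum_n |\P_{n+2}|x^n ~=~ x^2\,U(x).
\]
Likewise, since $\U_{i-3,j-3}$ is in bijection with $\Pij$, we get $|\P_{i+2,j+2}|=|\U_{i-1,j-1}|$ and hence $\sum_{i,j}|\P_{i+2,j+2}|x_{\bullet}^i x_{\circ}^j = x_{\bullet}x_{\circ}\,U(x_{\bullet},x_{\circ})$. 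So the theorem reduces to extracting $x^2U(x)$ (resp. $x_{\bullet}x_{\circ}U$) from the relations already established.

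The key observation is that (\ref{eq:Dn}) is \emph{affine} in $U(x)$. Abbreviating $A:=1+\frac{2x}{1-x}=\frac{1+x}{1-x}$ and $L:=x^2D(x)+2x^2+2x+1$, equation (\ref{eq:Dn}) reads $L = A/\bigl(1-A\,x^2U(x)\bigr)$, which rearranges to $L\bigl(1-A\,x^2U(x)\bigr)=A$ and therefore to
\[
x^2U(x) ~=~ \frac1A-\frac1L ~=~ \frac{1-x}{1+x}-\frac{1}{x^2D(x)+2x^2+2x+1}.
\]
Substituting $D(x)=2r(x)-r(x)^2$ from (\ref{eq:arbressub}) into the denominator $L$ turns it into $1+2x+2x^2+x^2\bigl(2r(x)-r(x)^2\bigr)$, which is exactly the first claimed formula, with $r(x)=x(1+r(x))^2$.

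The bivariate case is the same computation carried out verbatim. Writing $B:=(1+x_{\bullet})\frac{1}{1-x_{\circ}x_{\bullet}}(1+x_{\circ})$ and $L':=x_{\bullet}x_{\circ}D(x_{\bullet},x_{\circ})+2x_{\bullet}x_{\circ}+x_{\bullet}+x_{\circ}+1$, equation (\ref{eq:Dij}) is $L'=B/\bigl(1-B\,x_{\bullet}x_{\circ}U\bigr)$, so $x_{\bullet}x_{\circ}U = \frac1B-\frac1{L'}$, where $\frac1B=\frac{1-x_{\bullet}x_{\circ}}{(1+x_{\bullet})(1+x_{\circ})}$. Feeding $D(x_{\bullet},x_{\circ})=r_1+r_2-r_1r_2$ from (\ref{eq:arbres}) into $L'$ produces the second formula, with $r_1,r_2$ the stated system.

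I do not expect a genuine obstacle here: the substantial inputs are the decomposition relations (\ref{eq:Dn}) and (\ref{eq:Dij}), and these are \emph{linear} in the undecomposable series, so the inversion is purely algebraic. The only points demanding care are the two index shifts from Lemma~\ref{lemma:images} (which account for the prefactors $x^2$ and $x_{\bullet}x_{\circ}$) and the simplifications of $A$ and $B$ into the rational prefactors $\frac{1-x}{1+x}$ and $\frac{1-x_{\bullet}x_{\circ}}{(1+x_{\bullet})(1+x_{\circ})}$ appearing in the statement.
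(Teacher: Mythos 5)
Your proposal is correct and follows exactly the paper's route: Lemma~\ref{lemma:images} gives $\sum_n|\P_{n+2}|x^n=x^2U(x)$ (and its bivariate analogue), the affine equations (\ref{eq:Dn}) and (\ref{eq:Dij}) are inverted to express $x^2U$ and $x_\bullet x_\circ U$ in terms of $D$, and the tree series from (\ref{eq:arbressub}) and (\ref{eq:arbres}) are substituted. The only difference is that you carry out the (correct) algebraic inversion explicitly where the paper leaves it to the reader.
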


\begin{proof}
  Lemma~\ref{lemma:images} ensures that
  $\sum_n|\mathcal{P}_{n+2}'|x^n=x^2U(x)$ and, more precisely,
  $\sum_{i,j}|\mathcal{P}_{i+2,j+2}'|x_{\bullet}^ix_{\circ}^j=x_{\bullet}x_{\circ}U(x_{\bullet},x_{\circ})$.
  Moreover, Equations~(\ref{eq:Dn}) and Equation~(\ref{eq:Dij}) yield
  expressions of $x^2U(x)$ and
  $x_{\bullet}x_{\circ}U(x_{\bullet},x_{\circ})$ respectively in terms
  of $D(x)$ and $D(x_{\bullet},x_{\circ})$. In these expressions,
  replace $D(x)$ and $D(x_{\bullet},x_{\circ})$ by their respective
  expression in terms of $r$ and of $r_1$ and $r_2$, as given by
  Equations~(\ref{eq:arbres}) and~(\ref{eq:arbressub}).  \hfill $\qed$
\end{proof}

\section{Application: sampling rooted 3-connected maps}
\label{section:sampling}

\subsection{Sampling rooted 3-connected maps with $n$ edges}
\label{section:sampleaccordingtoedges}

Theorem~\ref{theorem:bijectionRooted} (first identity)
ensures that the following
algorithm samples rooted 3-connected maps with $n$
edges uniformly at random:
\begin{enumerate}
\item Sample an object $T\in\B_{n-4}$ uniformly (e.g. using parenthesis words).
\item Perform the closure of $T$ to obtain an irreducible dissection
  $D$  with $(n-4)$ vertices. Choose randomly one of the six edges of the
  hexagon of $D$ to carry the root. If $D$ is not undecomposable, then
reject and restart.
\item 
  Connect by a new edge $e$ the root-vertex of $D$ to the opposite
outer vertex. Take
  $e$ as root edge, with the same root-vertex as in $D$. This
  gives a rooted irreducible quadrangulation $Q$ with $n$ faces.
\item 
Return the rooted 3-connected map in
  $\Pn$ associated to $Q$ by the angular mapping.
\end{enumerate}

\begin{proposition}
\label{pro:randomn}
The success probability of the sampler at each trial is equal to
$|\Pn|/|\D_{n-4}|$, which satisfies
\[
\frac{|\Pn|}{|\D_{n-4}|}~\mathop{\to}_{n\to\infty}~ \frac{2^8}{3^6}.
\]
Hence, the number of rejections follows a geometric law
whose mean is asymptotically $c=3^6/2^8$.  As the
closure mapping has linear-time complexity, the sampling algorithm
has expected linear-time complexity.
\end{proposition}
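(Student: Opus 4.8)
The plan is to treat the three assertions separately, since the exact per-trial success probability is combinatorial, its limiting value is analytic, and the complexity statement then follows formally.

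First I would pin down the success probability. Step~1 draws $T$ uniformly in $\B_{n-4}$ and step~2 adds an independent uniform choice of one of the six hexagon edges to carry the root, so together they produce a uniform element of $\B_{n-4}\times\{1,\ldots,6\}$. By the first identity of Theorem~\ref{theorem:bijectionRooted}, $\B_{n-4}\times\{1,\ldots,6\}\equiv\D_{n-4}\times\{1,\ldots,n-2\}$, where the extra factor $\{1,\ldots,n-2\}$ records the marked simply oriented edge of the bi-rooted dissection; projecting this factor away is an $(n-2)$-to-one map with constant fibres, so the rooted dissection $D$ obtained after step~2 is uniform in $\D_{n-4}$. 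The trial then succeeds exactly when $D$ is undecomposable, i.e.\ $D\in\U_{n-4}$, so the per-trial success probability is $|\U_{n-4}|/|\D_{n-4}|$, which equals $|\Pn|/|\D_{n-4}|$ by Lemma~\ref{lemma:images}. The same description shows that, conditioned on acceptance, $D$ is uniform in $\U_{n-4}$, whence steps~3 and~4 return a uniform element of $\Pn$, giving correctness as well.

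Next I would obtain the limit by singularity analysis, writing $|\D_{n-4}|=[x^{n-4}]D(x)$ and $|\Pn|=|\U_{n-4}|=[x^{n-4}]U(x)$. Both series inherit their unique dominant singularity from $r(x)=x(1+r(x))^2$, located at $x=\tfrac14$ with $r(\tfrac14)=1$ and having the local expansion $r(x)=1-2\sqrt{1-4x}+2(1-4x)-2(1-4x)^{3/2}+\cdots$. Substituting into $D=2r-r^2=1-(r-1)^2$ (Equation~(\ref{eq:arbressub})) yields a $(1-4x)^{3/2}$ singularity with coefficient $D_{3/2}=8$; as a check this reproduces $|\Dn|\sim\frac{6}{\sqrt\pi}4^nn^{-5/2}$ from Corollary~\ref{corollary:counting}. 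For $U$ I would use Equation~(\ref{eq:Dn}): with $g(x)=\frac{1+x}{1-x}$ and $F(x)=x^2D(x)+2x^2+2x+1$ it rearranges to $x^2U(x)=\frac1{g(x)}-\frac1{F(x)}$. Since $1/g$ and $1/x^2$ are analytic at $x=\tfrac14$, the whole singular part of $U$ comes from $-1/F$, and the $(1-4x)^{3/2}$ coefficient of $U$ equals $16\,F_{3/2}/F(\tfrac14)^2=\tfrac{2048}{729}$, using $F(\tfrac14)=\tfrac{27}{16}$ and $F_{3/2}=\tfrac1{16}D_{3/2}=\tfrac12$. By the transfer theorem the ratio $[x^{n-4}]U/[x^{n-4}]D$ tends to $\tfrac{2048/729}{8}=\tfrac{256}{729}=\tfrac{2^8}{3^6}$.

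Finally, the trials are independent Bernoulli experiments with success probability $p_n=|\Pn|/|\D_{n-4}|$, so the number of trials until the first success is geometric with mean $1/p_n\to 3^6/2^8=c$, and the expected number of rejections is bounded by a constant. Each trial runs in linear time: drawing the tree from a balanced parenthesis word, performing the closure (linear by hypothesis), testing undecomposability by scanning for an inner decomposition path, and applying the angular mapping are all $O(n)$; multiplying this $O(n)$ cost by the $O(1)$ expected number of trials gives expected linear time. I expect the singularity analysis of $U$ to be the main obstacle, since $U$ is given only implicitly through Equation~(\ref{eq:Dn}) in terms of $D$, itself an algebraic function of $r$. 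The decisive simplification is that every rational factor that appears ($g$, hence $1/g$, and $1/x^2$) is regular at $x=\tfrac14$ and therefore merely rescales the singular coefficient without changing the $(1-4x)^{3/2}$ type; the delicate point is then to expand $r$ accurately to order $(1-4x)^{3/2}$ so as to pin down the exact constant $2^8/3^6$.
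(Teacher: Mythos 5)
Your proposal is correct, and for the heart of the matter --- the limit $2^8/3^6$ --- it takes a genuinely different route from the paper. The paper's proof is a three-line computation: it takes the closed form $|\Dn|=\frac{6(2n)!}{(n+2)!\,n!}$ from Corollary~\ref{corollary:counting}, applies Stirling's formula to get $|\D_{n-4}|\sim\frac{3}{128\sqrt{\pi}}\frac{4^n}{n^{5/2}}$, and then simply \emph{cites} Tutte's asymptotic $|\Pn|\sim\frac{2}{3^5\sqrt{\pi}}\frac{4^n}{n^{5/2}}$ to conclude. You instead derive the asymptotics of $|\Pn|=|\U_{n-4}|$ internally, by singularity analysis of Equation~(\ref{eq:Dn}): your local expansions ($r=1-2Z+2Z^2-2Z^3+\cdots$ with $Z=\sqrt{1-4x}$, hence $D_{3/2}=8$, $F(\tfrac14)=\tfrac{27}{16}$, $F_{3/2}=\tfrac12$, $U_{3/2}=\tfrac{2048}{729}$) all check out, and the ratio $\tfrac{2048/729}{8}=\tfrac{2^8}{3^6}$ recovers --- indeed reproves --- Tutte's constant $\frac{2}{3^5\sqrt{\pi}}$. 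The trade-off is clear: the paper's argument is shorter but leans on an external enumeration result, whereas yours is self-contained within the paper's own generating-function framework at the cost of invoking transfer theorems (the one small point left tacit is that $F$ does not vanish on $|x|\le\tfrac14$, which is immediate since $F$ has nonnegative coefficients, constant term $1$, and $F(\tfrac14)-1<1$). Your first paragraph, justifying via Theorem~\ref{theorem:bijectionRooted} and Lemma~\ref{lemma:images} that the sampled dissection is uniform on $\D_{n-4}$ and hence that the per-trial success probability is exactly $|\Pn|/|\D_{n-4}|$, is also more careful than the paper, which asserts this in the statement without proof.
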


\begin{proof}
According to Section~\ref{section:statementtheo},
$|\Dn|=\frac{6}{n+2}|\Bn|=\frac{6(2n)!}{(n+2)!n!}$.  Stirling
formula yields $ |\D_{n-4}|\sim \frac{3}{128 \sqrt{\pi}}
\frac{4^n}{n^{5/2}}$. Moreover, according to~\cite{Tu63},
$|\Pn|\sim \frac{2}{3^5\sqrt{\pi}}\frac{4^n}{n^{5/2}}$. This
yields the limit of $|\Pn|/|\D_{n-4}|$.
\hfill $\qed$\end{proof}

\subsection{Sampling rooted 3-connected maps with $i$ vertices and $j$ 
faces}

Similarly, Theorem~\ref{theorem:bijectionRooted} (third identity),
ensures that the following algorithm samples rooted 3-connected
maps with $i$ vertices and $j$ faces uniformly at random:
\begin{enumerate}
\item Sample an object $T\in\mathcal{B}_{i-3,j-3}^{\bullet}$ uniformly
at random. A simple method
 is described in Section~\ref{section:BicoloredTreesSample}.
\item Perform the closure of $T$ to obtain an irreducible dissection
  $D$ with $(i-3)$ inner black vertices and $(j-3)$ inner white
  vertices.  Choose randomly the root-vertex among the three black
  vertices of the hexagon. If the dissection is not undecomposable,
  then reject and restart.
\item Connect by a new edge $e$ the root-vertex of $D$ to the opposite
  outer vertex. Take $e$ as root edge, with the same root-vertex as in
  $D$. This gives a rooted irreducible quadrangulation $Q$ with $i$
  black vertices and $j$ white vertices.
\item Return the rooted 3-connected map in $\mathcal{P}_{ij}'$
  associated to $Q$ by the angular mapping.
\end{enumerate}

\begin{proposition}\label{pro:random}
  The success probability of the sampler at each trial is equal to
  $|\Pij|/|\D_{i-3,j-3}|$.  Let $\alpha \in \rbrack 1/2,2\lbrack$; if
  $i$ and $j$ are correlated by $\frac{i}{j}\to\alpha$ as
  $i\to\infty$, then
  \[
  \frac{|\Pij|}{|\D_{i-3,j-3}|} ~\sim~
  \frac{2^8}{3^6}\frac{(2-\alpha)^2(2\alpha -1)^2}{\alpha ^2} =:
  \frac{1}{c_\alpha}.  
  \]
  Hence, when $\frac{i}{j}\to \alpha$, the number
  of rejections follows a geometric law whose mean is asymptotically
  $c_{\alpha}$. 
  Under these conditions, the sampling algorithm has
  an expected linear-time complexity, the linearity factor being
  asymptotically proportional to $c_{\alpha}$.

  Moreover, in the worst case of triangulations where $j=2i-4$, the
  mean number of rejections is quadratic, so that the sampling complexity is
  cubic.
\end{proposition}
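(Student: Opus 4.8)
The plan is to establish the success probability first and then analyze its asymptotics via the explicit product-form cardinalities.

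First I would compute the success probability at each trial. The sampler produces a uniform element of $\cBijb$ with $i-3$ black and $j-3$ white nodes, performs the closure to land in $\Dij$ with $(i-3)$ inner black and $(j-3)$ inner white vertices, and then chooses one of the three black outer vertices as root. By the $(2i-j+1)$-to-$3$ correspondence of Theorem~\ref{theorem:bijectionRooted} (third identity), this procedure produces each rooted dissection in $\D_{i-3,j-3}$ with equal probability. The trial succeeds precisely when the resulting dissection is undecomposable, that is, when it lies in $\U_{i-3,j-3}$, and by Lemma~\ref{lemma:images} these are in bijection with $\Pij$. Hence the success probability is exactly $|\U_{i-3,j-3}|/|\D_{i-3,j-3}| = |\Pij|/|\D_{i-3,j-3}|$, as claimed.

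Next I would extract the asymptotics of this ratio under the regime $i/j\to\alpha$ with $\alpha\in\rbrack 1/2,2\lbrack$. For the denominator I would feed the closed form~(\ref{eq:coefDij}) for $|\Dij|$ into Stirling's approximation; for the numerator $|\Pij|$ I would invoke the known asymptotic formula~(\ref{eq:bender}) for $3$-connected planar graphs (equivalently the count of Mullin). Writing $i=\alpha j(1+o(1))$ and expanding the relevant binomial coefficients via Stirling, the dominant exponential growth rates in the two expressions cancel, leaving a ratio of subexponential (polynomial and constant) factors. Carefully tracking these yields the stated limit $\frac{2^8}{3^6}\cdot\frac{(2-\alpha)^2(2\alpha-1)^2}{\alpha^2}=1/c_\alpha$; the factors $(2-\alpha)$ and $(2\alpha-1)$ arise naturally from the arguments of the binomials $\binom{2j+1}{i}$ and $\binom{2i+1}{j}$ near their boundaries, where $i\approx 2j$ or $j\approx 2i$. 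Since this ratio is a positive constant bounded away from $0$ for fixed $\alpha$ in the open interval, the number of rejections is geometric with asymptotic mean $c_\alpha$, and combined with the linear cost of the closure the expected complexity is linear with linearity factor proportional to $c_\alpha$.

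The main obstacle is the boundary case $j=2i-4$ (triangulations), where $\alpha\to 2$ and the factor $(2-\alpha)^2$ degenerates, so that $c_\alpha$ blows up and the constant-rejection analysis above no longer applies. Here I would treat the ratio $|\Pij|/|\D_{i-3,j-3}|$ directly on the boundary line rather than through the fixed-$\alpha$ limit. The point is that when $i$ and $j$ sit exactly at the extreme ratio permitted by Euler's relation, the binomial $\binom{2j+1}{i}$ (or its counterpart) is evaluated near its edge, contributing an extra polynomial factor in $i$ rather than a constant. A refined Stirling analysis along $j=2i-4$ shows that the success probability decays like $1/i$ up to constants, so the mean number of rejections grows linearly in $i$, i.e.\ quadratically in the relevant size parameter; since each trial costs linear time, the overall sampling complexity becomes cubic. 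Making the transition from the open-interval regime to this critical line precise --- that is, identifying exactly which binomial factor degenerates and computing its contribution --- is the delicate part of the argument.
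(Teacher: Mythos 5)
Your derivation of the success probability and your treatment of the generic regime $\alpha\in\rbrack 1/2,2\lbrack$ follow exactly the route of the paper's (very terse) proof: combine the closed form~(\ref{eq:coefDij}) for $|\Dij|$ with Bender's asymptotic $|\Pij|\sim \frac1{3^5 2^2 ij}\binom{2i-2}{j+2}\binom{2j-2}{i+2}$ and apply Stirling's formula. That part is fine. Note only that with the paper's convention $\alpha=\lim i/j$, the triangulation line $j=2i-4$ gives $\alpha\to 1/2$, so the factor that degenerates is $(2\alpha-1)^2$, not $(2-\alpha)^2$; this is a harmless, symmetric slip.

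The genuine problem is in your boundary analysis. Along $j=2i-4$ the success probability decays like $1/i^2$, not $1/i$. This is quickly checked from the exact formulas: a triangulation with $n$ inner vertices has $i=n+3$ vertices and $j=2n+2$ faces, so the relevant dissections lie in $\D_{n,2n-1}$, and $|\D_{n,2n-1}|=\frac{3}{(2n+1)(4n-1)}\binom{4n-1}{n}\binom{2n+1}{2}=\Theta\bigl((256/27)^n\, n^{-1/2}\bigr)$, while $|\cT_n'|=2\frac{(4n+1)!}{(n+1)!(3n+2)!}=\Theta\bigl((256/27)^n\, n^{-5/2}\bigr)$; the ratio is $\Theta(n^{-2})$. (Heuristically this is also what the generic formula predicts, since $(2\alpha-1)^2=\bigl((2i-j)/j\bigr)^2=(4/j)^2$.) Beyond the wrong exponent, your concluding chain does not cohere: if the number of rejections were really linear in $i$, then at linear cost per trial the total expected time would be quadratic, not cubic, and ``linear in $i$'' is not ``quadratic in the relevant size parameter'' since $i$, $j$ and $n$ are all proportional here. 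The correct chain is: success probability $\Theta(1/i^2)$, hence $\Theta(i^2)$ expected rejections, hence cubic expected time. Since this is precisely the step you flag as the delicate part, it needs to be redone.
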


\begin{proof}
  These asymptotic results are easy consequences of the expression of
  $|\Dij|$ obtained in Corollary~\ref{corollary:counting} and of the
  asymptotic result  ${|\Pij|\sim \frac1{3^5 2^2
  ij}\binom{2i-2}{j+2}\binom{2j-2}{i+2}}$ given in~\cite{B87}.
\hfill $\qed$\end{proof}

\section{Application: coding 3-connected maps}
\label{section:codingAlgo}

This section introduces an algorithm, derived from the inverse of the
closure mapping, to encode a 3-connected map. Precisely, the algorithm
encodes an outer-triangular 3-connected map, but it is then easily
extended to encode any 3-connected map. Indeed, if the outer face of
$G$ is not triangular, fix three consecutive vertices $v$, $v'$ and
$v''$ incident to the outer face of $G$ and link $v$ and $v''$ by an
edge to obtain an outer-triangular 3-connected planar map
$\widetilde{G}$; the coding of $G$ is obtained as the coding of
$\widetilde{G}$ plus one bit indicating if an edge-addition has been
done.

\begin{figure}
  \def\etalon{.4\linewidth}
  \centering
  \subfigure[]{\includegraphics[width=\etalon]{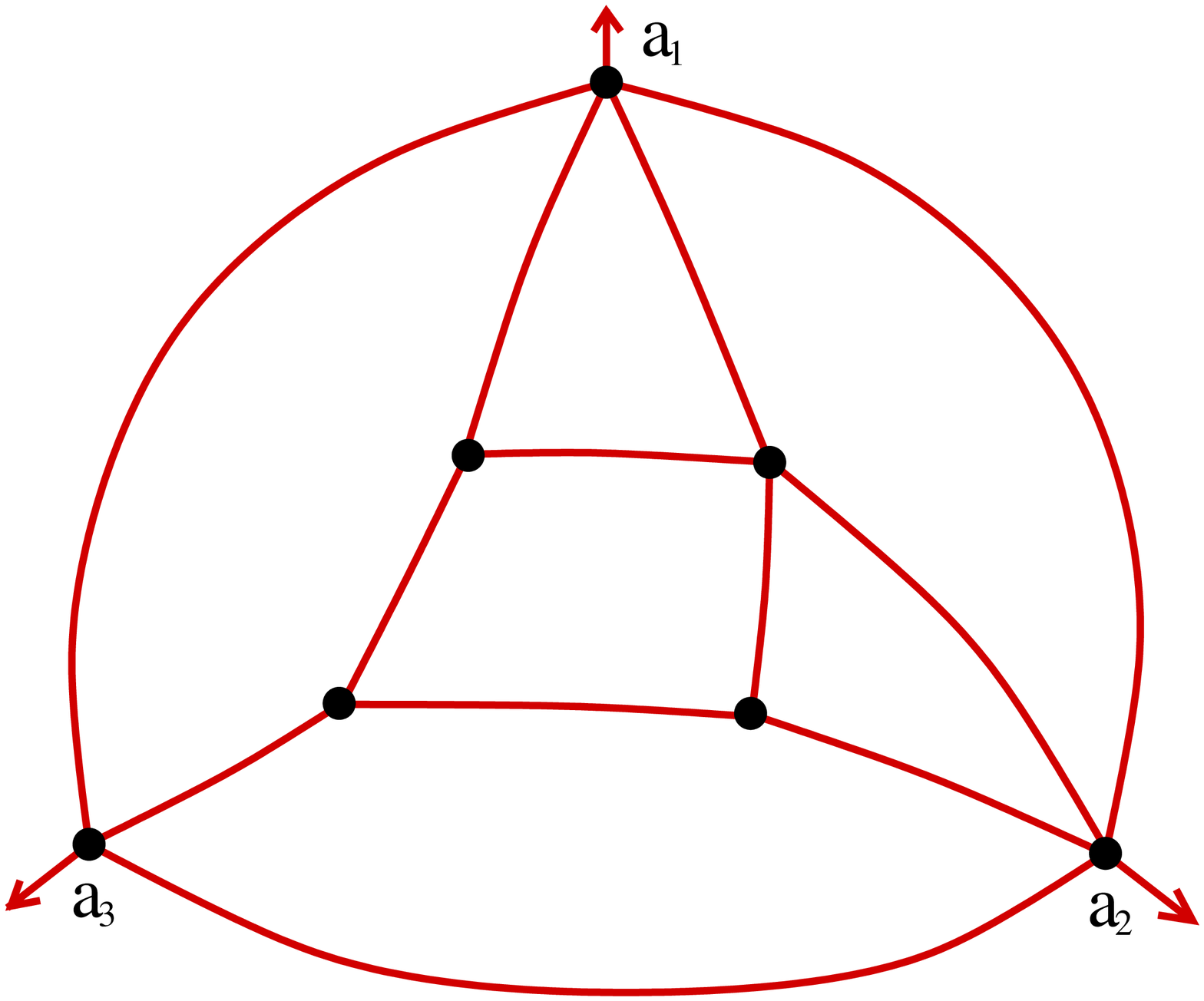}}\qquad\qquad
  \subfigure[]{\includegraphics[width=\etalon]{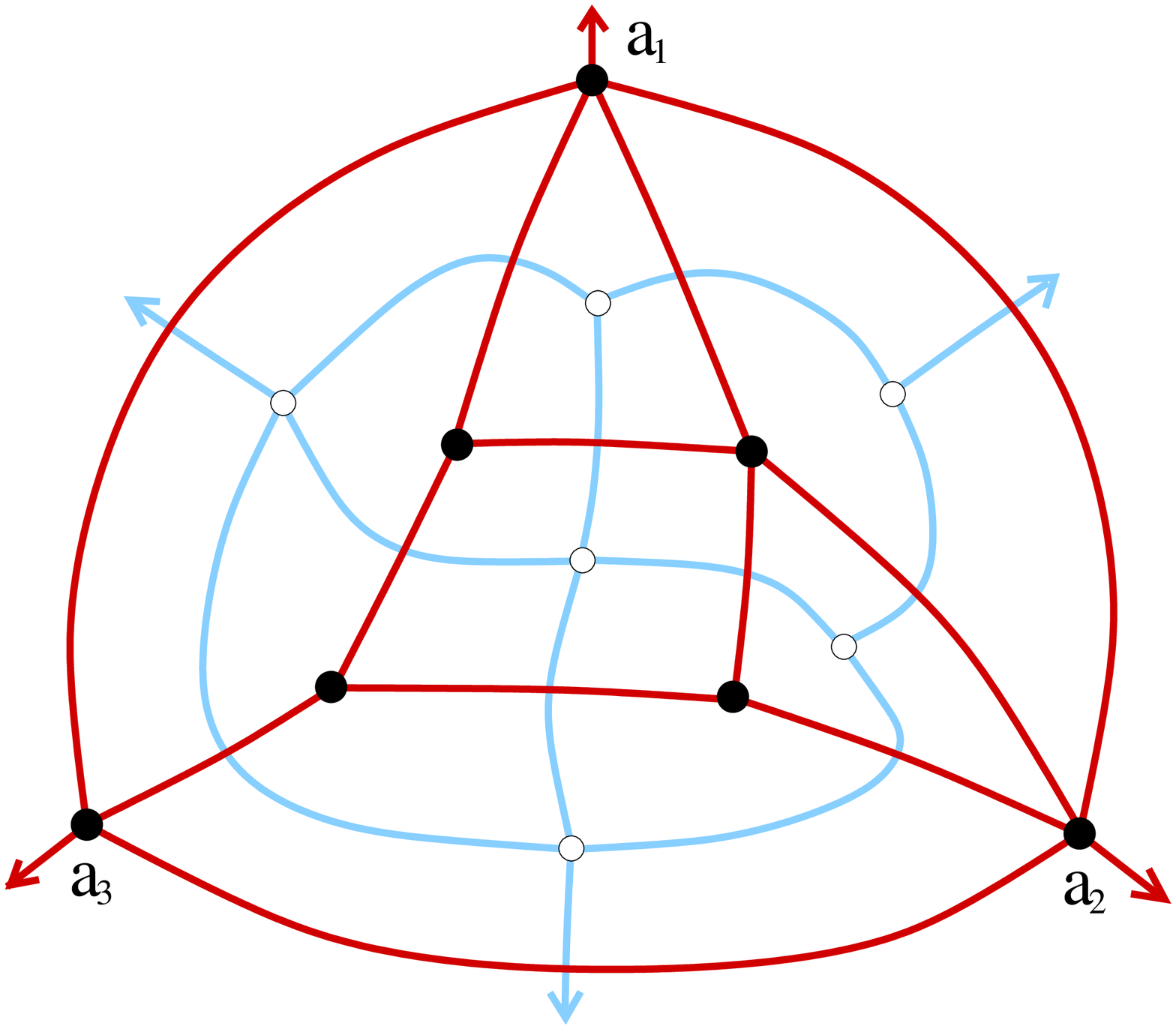}}\qquad\qquad
  \subfigure[]{\includegraphics[width=\etalon]{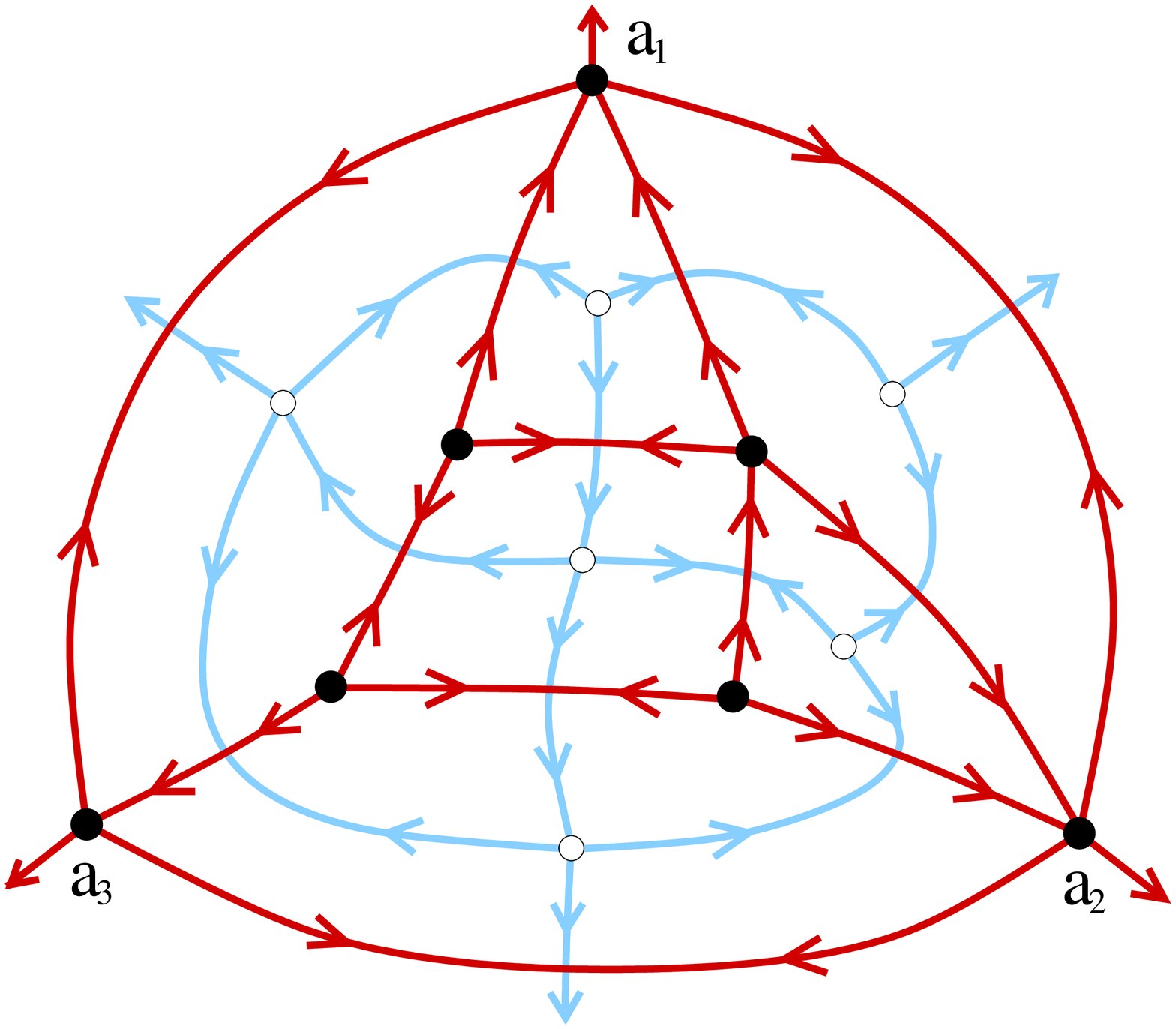}}\qquad\qquad
  \subfigure[]{\includegraphics[width=\etalon]{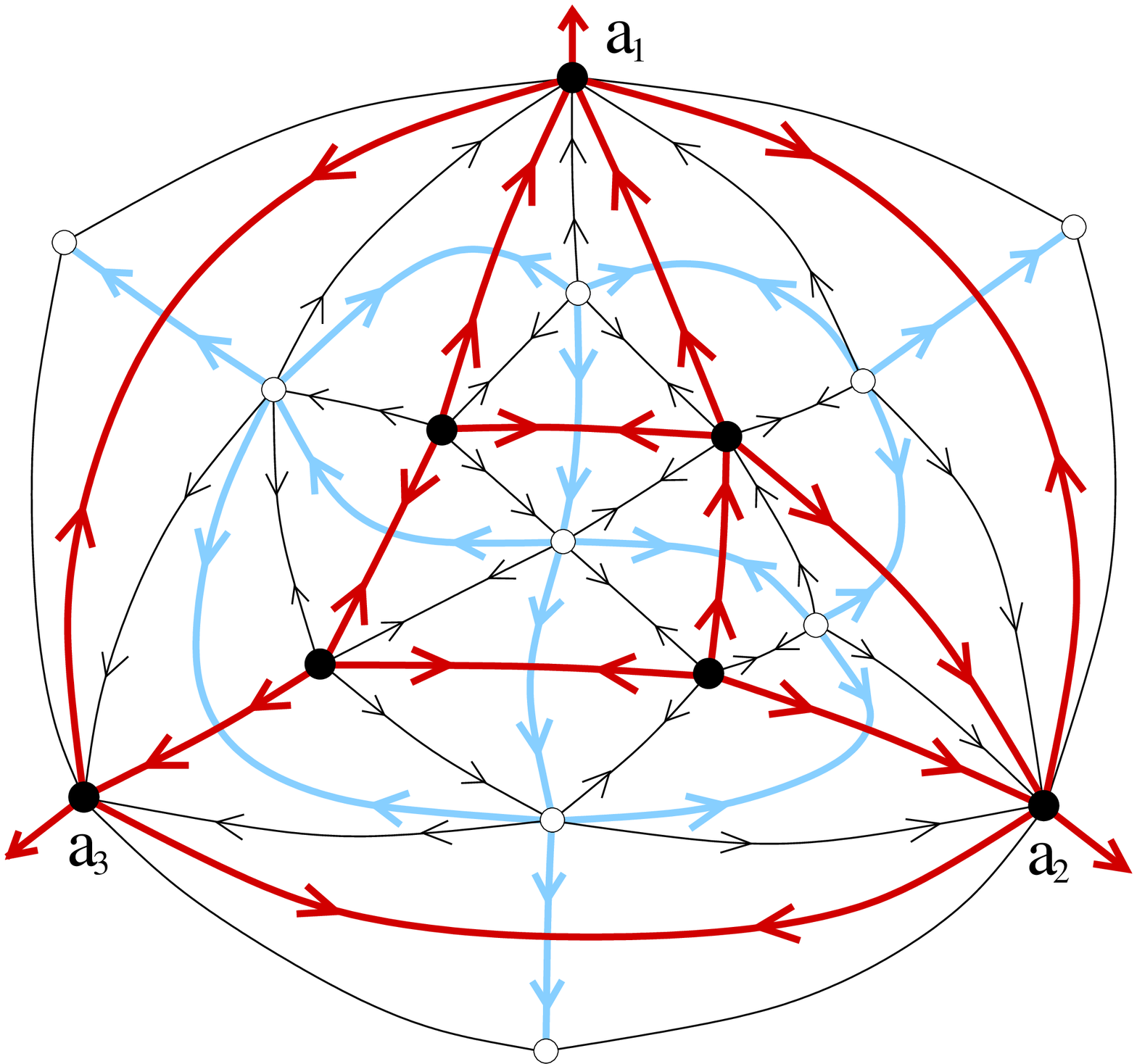}}\qquad\qquad
  \subfigure[]{\includegraphics[width=\etalon]{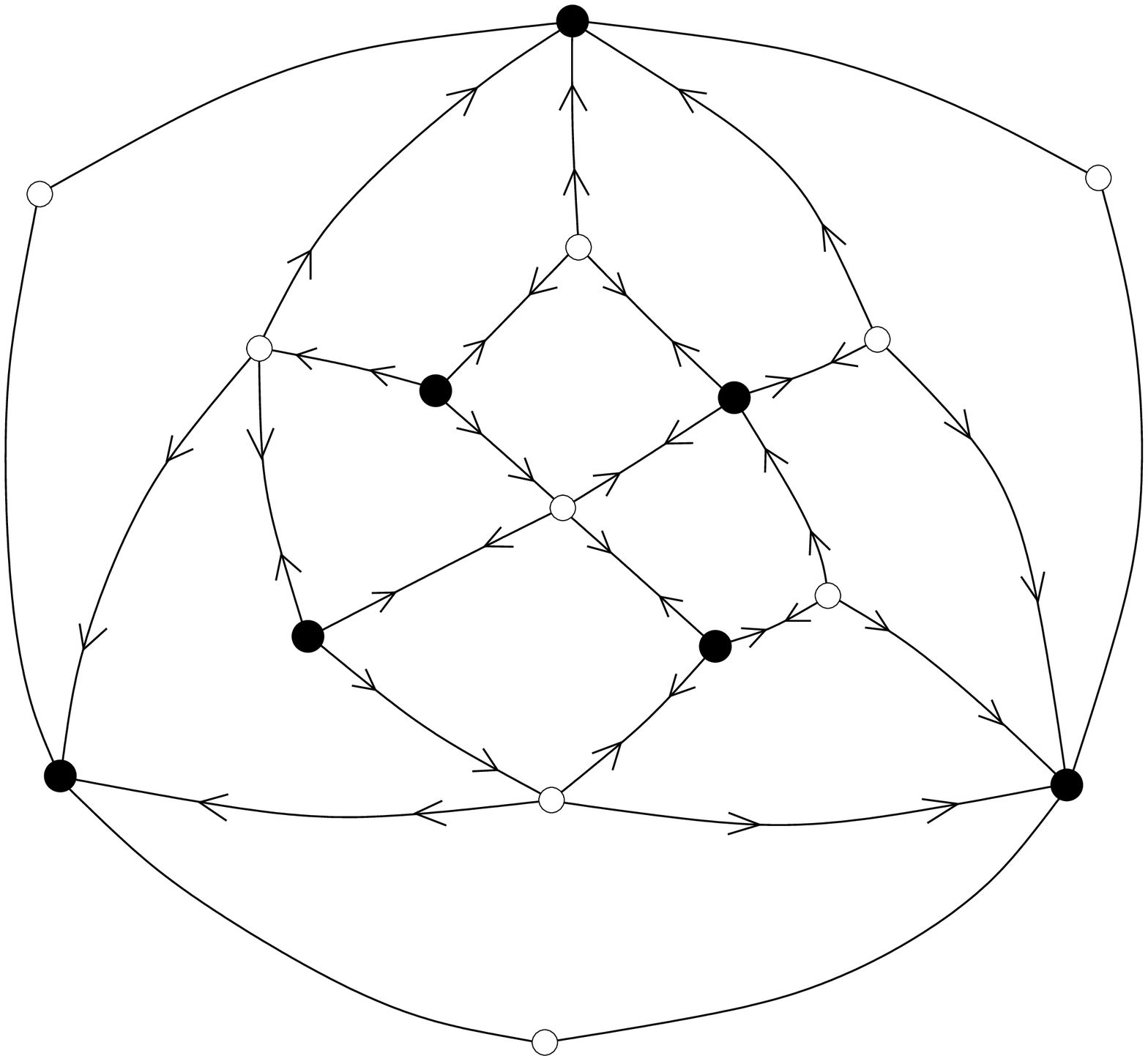}}\qquad\qquad
  \subfigure[]{\includegraphics[width=\etalon]{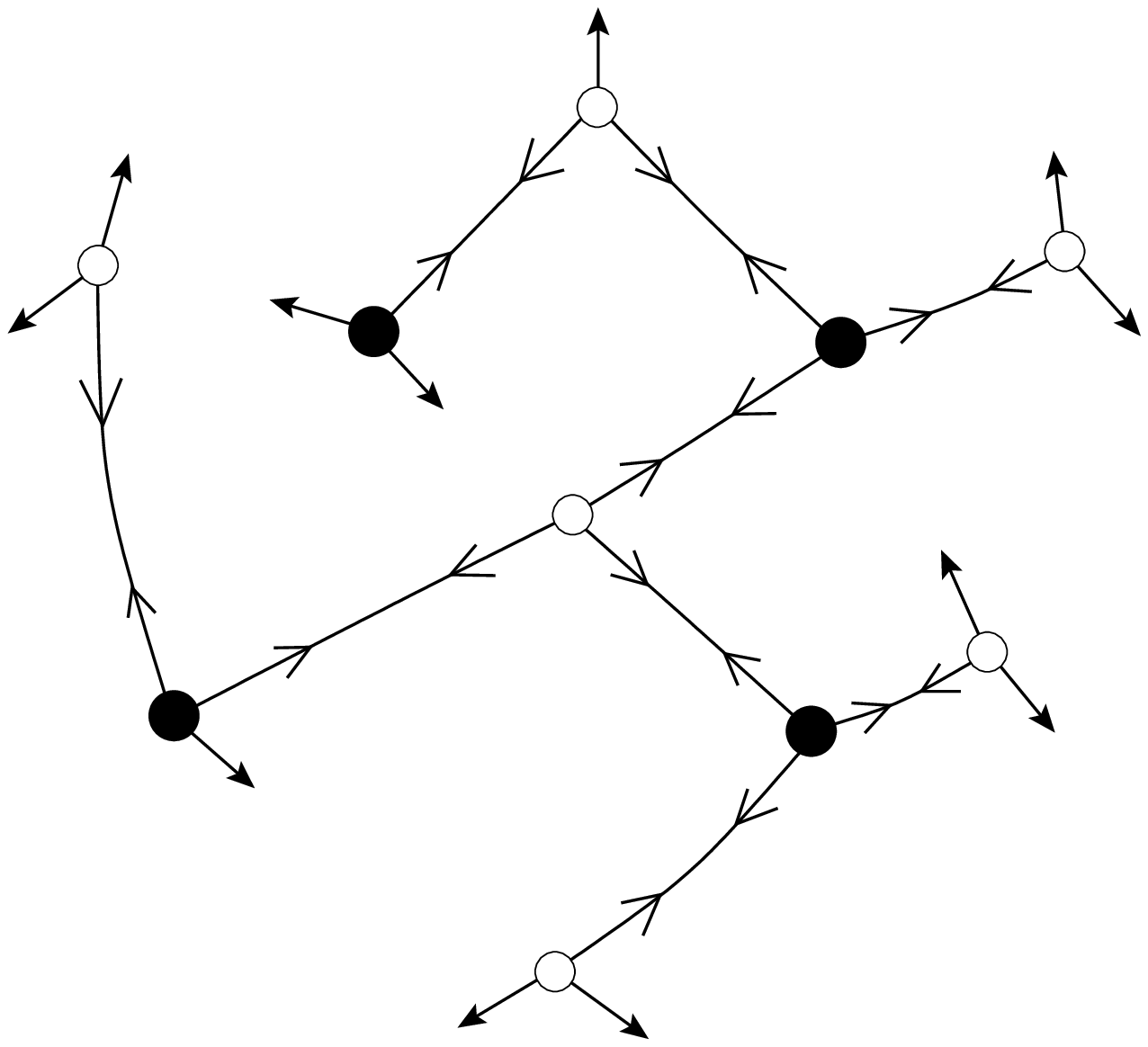}}
  \caption{Execution of the encoding algorithm on an example.}
  \label{fig:Coding}
\end{figure}

\subsection{Description of the coding algorithm}
Let $G$ be an outer-triangular 3-connected map and let $G'$ be its
derived map, as defined in Section~\ref{section:bijectionViaHexagon}. 
The coding algorithm relies on the following steps,
illustrated in Figure~\ref{fig:Coding}. 

\subsubsection{Compute a particular orientation of the derived map $G'$ (Fig.~\ref{fig:Coding}(b)-(c))}
The first step of the algorithm is to compute a specific orientation
$X_0$ of the edges of the derived map $G'$, such that $X_0$ has no
clockwise circuit, each primal or dual vertex has outdegree~3 and each
edge-vertex has outdegree~1. Such an orientation of $G'$ exists and is
unique, as we will see in Theorem~\ref{theorem:felsner}.  A linear
time algorithm to compute $X_0$ is given in
Section~\ref{section:compute}.

\subsubsection{Compute the irreducible dissection $D$ associated to $G$ (Fig.~\ref{fig:Coding}(d))}
Consider the bicolored complete irreducible dissection $D$ associated
to $G$ by the bijection presented in
Section~\ref{section:bijectionViaHexagon} (and reformulated in
Section~\ref{section:derivatedmap}), i.e., the dissection
 having the same derived map as $G$. Notice
 that $D$ has $n$ inner faces if $G$
has $n$ edges. Hence, according to Euler's relation, $D$ has
$n-2$ inner vertices. Similarly, if $G$ has $i$ vertices and $j$ inner faces,
then $D$ has $i$ black vertices and $j+3$ white vertices.

\subsubsection{Compute the tri-orientation of $D$ without clockwise circuit (Fig.~\ref{fig:Coding}(d))}
We orient each half-edge $h$ of $D$ belonging to an inner edge as follows:
$h$ is directed 
inward if its incident vertex belongs to the hexagon;
otherwise, $h$ receives the orientation of the cw-following edge of $G'$. 
As shown in
Section~\ref{section:proof} (more precisely in
Lemma~\ref{lemma:derivatedtodis}, composed with the correspondence of
Figure~\ref{figure:completedOrientations}), this process yields
the unique tri-orientation of $D$ without clockwise circuit.

\subsubsection{Open the dissection $D$ into a binary tree $T$ (Fig.~\ref{fig:Coding}(f))}
Once the tri-orientation without clockwise circuit is computed, $D$ is
opened into a binary tree $T$, by deleting outer vertices, outer edges,
and all ingoing half-edges (see Section~\ref{section:opening}).

\subsubsection{Encode the tree $T$}
First, choose an arbitrary leaf of $T$, root $T$ at this leaf, and
encode the obtained rooted binary tree using a parenthesis word 
(also called \emph{Dyck word}). The opening of a 3-connected
map with $n$ edges is a binary tree with $n-2$ inner
nodes, yielding an encoding Dyck word of length $2(n-2)$.

Similarly, the opening of a 3-connected map with $i$ vertices and $j$
inner faces is a black-rooted bicolored binary tree
with $i-3$ black nodes and $j$ white nodes. A black-rooted 
bicolored binary trees with a given number of black and
white nodes is encoded by a pair of words, as explained in
Section~\ref{section:BicoloredTreesCode}. Then the two words
can be asymptotically optimally encoded in linear time, 
according to~\cite[Lem.7]{BGH03}. 

\begin{theorem}
  The coding algorithm has linear-time complexity and is
  asymptotically optimal: the number of bits per edge of the code of
  a map in \Pn (resp. in \Pij) is asymptotically equal to the binary 
  entropy per edge, defined as  
  $\frac1{n}\log_2 (|\Pn|)$ (resp. $\frac1{i+j-2}\log_2(|\Pij|)$).
\end{theorem}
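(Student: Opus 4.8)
The plan is to establish the two claims of the theorem --- linear-time complexity and asymptotic optimality --- separately, treating the edge-count version (codes for \Pn) first and then indicating the refinements needed for the bivariate version (codes for \Pij).

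For the \textbf{complexity} part, I would argue that each of the five stages of the coding algorithm runs in linear time. The computation of the orientation $X_0$ of the derived map is linear by the algorithm of Section~\ref{section:compute} (to be established there). Reading off the tri-orientation of $D$ from $X_0$ is a local rule (each half-edge inherits the orientation of its cw-following edge, or is oriented inward if incident to the hexagon), so it costs $O(1)$ per half-edge and hence $O(n)$ overall. The opening --- deleting outer vertices, outer edges and ingoing half-edges --- is again a single linear traversal, and producing the Dyck word (or the pair of words via $\Psi$ and $\Phi$ of Section~\ref{section:BicoloredTreesCode}) is a linear-time depth-first traversal of the resulting tree. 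The only subtle point in the bivariate case is the final entropy-optimal encoding of the pair $(w_\bullet,w_\circ)$, which I would justify by invoking \cite[Lem.~7]{BGH03} for linear-time asymptotically optimal encoding of words over a fixed alphabet with prescribed letter frequencies.

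For the \textbf{optimality} part, the key observation is that the map produced by the algorithm is a genuine injective code: by Lemma~\ref{lemma:closopen} and Lemma~\ref{lemma:openclos} the opening is the inverse of the closure, so distinct maps yield distinct trees and hence distinct code words. For the edge-count version, a $3$-connected map with $n$ edges opens to a binary tree with $n-2$ inner nodes, which the folklore parenthesis encoding stores in exactly $2(n-2)$ bits. Thus the code uses $2(n-2)$ bits, giving a rate of $\frac{1}{n}\cdot 2(n-2)\to 2$ bits per edge. To see this matches the entropy, I would use the asymptotics $|\Pn|\sim \frac{2}{3^5\sqrt\pi}\frac{4^n}{n^{5/2}}$ (from Proposition~\ref{pro:randomn}, after~\cite{Tu63}), whence $\frac1n\log_2|\Pn|=\frac1n\log_2(4^n)+o(1)=2+o(1)$. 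Hence the per-edge code length and the per-edge entropy both tend to $2$, establishing optimality.

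The \textbf{main obstacle} is the bivariate (parametric) optimality claim, where the entropy $\frac{1}{i+j-2}\log_2|\Pij|$ is strictly below $2$ and must be matched exactly in the limit. Here a fixed $2n$-bit Dyck word is wasteful: the pair $(w_\bullet,w_\circ)$ lives on words of length $2j+1$ and $2i$ over the small alphabets $\cA_\bullet$ and $\cA_\circ$, with prescribed numbers of node-letters, so the information content is $\log_2\binom{2j+1}{i}\binom{2i}{j}+O(\log n)$ rather than $2(i+j)$. The plan is to encode each word at its own empirical entropy via \cite[Lem.~7]{BGH03}, obtaining code length $\log_2|\cBijb|+o(i+j)$, then transfer this to $|\Pij|$. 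For the transfer I would combine the $(2i-j+1)$-to-$3$ correspondence between \cBijb and \Dij (Theorem~\ref{theorem:bijectionRooted}) with the injection $\U_{i-3,j-3}\hookrightarrow\Dij$ and Lemma~\ref{lemma:images}, and check via the explicit formulas of Corollary~\ref{corollary:counting} and the Bender asymptotics $|\Pij|\sim\frac{1}{3^5 2^2 ij}\binom{2i-2}{j+2}\binom{2j-2}{i+2}$ that the polynomial prefactors contribute only $O(\log n)$ bits and thus vanish after division by $n$. The delicate bookkeeping is showing that the losses from rooting multiplicities, from the undecomposability restriction, and from the $O(\log n)$ encoding overheads are all $o(i+j)$, so that the normalized code length converges to the normalized logarithm of $|\Pij|$ uniformly in the range of admissible $(i,j)$.
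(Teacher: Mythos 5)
Your proposal is correct and follows essentially the same route as the paper: linear-time complexity is reduced to the linear-time orientation algorithm of Section~\ref{section:compute}, and optimality is obtained by showing that the ratio of tree counts to map counts ($|\mathcal{B}'_n|/|\mathcal{P}'_n|$, resp. $|\mathcal{B}^{\bullet}_{ij}|/|\mathcal{P}'_{ij}|$) is polynomially bounded, so that the asymptotically optimal encoding of the (bicolored) binary trees transfers to the maps. Your write-up is somewhat more explicit about the bivariate bookkeeping, but the underlying argument is the one the paper gives.
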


\begin{proof}
  It is clear that the encoding algorithm has linear-time
  complexity, provided the algorithm computing the constrained
  orientation without clockwise circuit of the derived map has
  linear-time complexity (which will be proved in
  Section~\ref{section:compute} and
  Section~\ref{section:proofCorrect}).
   
  According to Corollary~\ref{corollary:counting},
  Proposition~\ref{pro:randomn} and~\ref{pro:random},
  $|\Bn|/|\Pn|$ and $|\cBijb|/|\Pij|$ are bounded by
  fixed polynomials.  Hence, the entropy per edge of \Bn and \Pn are asymptotically equal,
  and the binary entropy per edge of \cBijb and \Pij are asymptotically equal. As the encoding of
  objects of \Bn (\cBijb) using parenthesis words is
  asymptotically optimal, the encoding of objects of \Pn
  (\Pij, respectively) is also asymptotically optimal.
\hfill $\qed$\end{proof}



\section{Proof of Theorem~\ref{THEOREM:UNIQUENESSEXISTENCE}}
\label{section:proof}
This section is devoted to the proof of 
Theorem~\ref{theorem:uniquenessexistence}, which states that each 
irreducible dissection has a unique tri-orientation without clockwise circuit. 

\subsection{$\alpha$-orientations and outline of the proof}
\label{section:alphaorientation}


\paragraph{Definition}
Let $G=(V,E)$ be a planar map. Consider a function
 $\alpha:V\rightarrow \mathbb{N}$. 
An $\alpha$-orientation of $G$ is an orientation of the edges 
of $G$ such that the outdegree of each vertex $v$ of $G$ is $\alpha(v)$. 
If an $\alpha$-orientation exists, then the function $\alpha$ is said to be 
\emph{feasible} for $G$.

\paragraph{Existence and uniqueness of $\alpha$-orientations}
The following results are proved in \cite{Fe03} (the first point had
already been proved in~\cite{Oss}):

\begin{theorem}[(\cite{Fe03})] \label{theorem:felsner}
Given a planar map $G$ and a feasible function $\alpha$, there exists
a unique $\alpha$-orientation of $G$ without clockwise circuit.  This
$\alpha$-orientation is called the minimal~\footnote{The term minimal refers
to the fact that the set of all $\alpha$-orientations
of $G$ forms a distributive lattice, the ``flip'' operation being a circuit
reversion.} $\alpha$-orientation of $G$.

Given the derived map of an outer-triangular  3-connected planar map, 
the function $\alpha_0$ such that $\alpha_0(v)=3$ for all primal and dual 
vertices and $\alpha_0(v)=1$ for all edge-vertices is a feasible function. 
\end{theorem}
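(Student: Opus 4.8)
The statement splits into two logically independent parts, and I would treat them separately. The first part---existence and uniqueness of a clockwise-circuit-free $\alpha$-orientation for every feasible $\alpha$---is the result of \cite{Fe03} (with existence already in \cite{Oss}), so my plan is only to record its short structural argument. Uniqueness is the easier half: if $X_1$ and $X_2$ were two clockwise-circuit-free $\alpha$-orientations, then the set of edges they orient oppositely, taken with the orientation of $X_1$, would have equal in- and out-degree at every vertex (both orientations realize the same $\alpha$), hence form a disjoint union of directed cycles; each such cycle is a simple closed curve that runs clockwise for exactly one of the two orientations, so it is a clockwise circuit in $X_1$ or in $X_2$, a contradiction. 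Existence I would obtain by starting from any $\alpha$-orientation and repeatedly reversing clockwise circuits into counterclockwise ones, checking that this is monotone for a suitable height function and therefore terminates; this is precisely the distributive-lattice mechanism mentioned in the footnote, and here I would simply cite \cite{Fe03}.

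The real work is the second part, the feasibility of $\alpha_0$: I must exhibit an orientation of the derived map $M'$ of an outer-triangular $3$-connected map $M$ with outdegree $3$ at every primal and dual vertex and outdegree $1$ at every edge-vertex. The first step is bookkeeping. If $M$ has $i$ vertices and $j$ inner faces, then by Euler's relation it has $i+j-1$ edges, so $M'$ has $i$ primal vertices, $j$ dual vertices and $i+j-1$ edge-vertices, together with six boundary half-edges that are pre-oriented outward. Counting the orientable ends (each of the $i+j-4$ inner edges of $M$ contributes four half-edges, each of the three outer edges contributes three, plus the six boundary half-edges) gives $4i+4j-1$ directed ends to distribute, exactly matching $\sum_v\alpha_0(v)=3(i+j)+(i+j-1)=4i+4j-1$. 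So the global balance condition holds.

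To promote this count into an actual orientation, the plan is to invoke the classical criterion for orientations with prescribed outdegrees (Hakimi): after subtracting the six fixed boundary half-edges, an orientation of the remaining genuine edges with the required residual outdegrees exists if and only if, for every set $S$ of vertices, the number $e(S)$ of genuine edges internal to $S$ is at most the residual outdegree budget $\sum_{v\in S}\alpha_0(v)$. Equivalently one models the degree-constrained orientation as an integral flow and appeals to max-flow/min-cut, so everything reduces to verifying this family of cut inequalities. Here the geometry of the derived map enters: $M'$ is a planar quadrangulation whose edges always join an edge-vertex to a primal-or-dual vertex, hence it is bipartite of girth $\ge 4$, for which $e(S)\le 2|S|-4$; combined with the fact that each edge-vertex has degree $4$ but absorbs only one outgoing unit, this is what should force $e(S)\le\sum_{v\in S}\alpha_0(v)$.

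I expect the verification of these cut inequalities to be the main obstacle, and to be exactly the point where $3$-connectivity of $M$---equivalently irreducibility of the associated dissection, i.e.\ the absence of a separating $4$-cycle---becomes indispensable, since a dense separating sub-configuration is precisely what could overload a set $S$ and break the bound; the boundary (the outer triangle and the six special half-edges) is the fiddly case to handle. A cleaner, less computational alternative that I would develop in parallel is to bypass the cut inequalities entirely by using the angular mapping with border (Theorem~\ref{thm:bijOuter}) to transport a \emph{Schnyder wood} of $M$ onto $M'$: a Schnyder wood orients and three-colours the inner edges so that every inner vertex has outdegree $3$, and reading this data on the derived map yields outdegrees $(3,3,1)$ at primal, dual and edge-vertices respectively. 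Feasibility then reduces to the known existence of Schnyder woods on $3$-connected planar maps \cite{Fe03}, the only remaining care being the treatment of the outer triangle and the six boundary half-edges.
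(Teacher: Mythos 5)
The paper does not prove this statement at all: it is imported verbatim from \cite{Fe03} (with the existence/uniqueness part already in \cite{Oss}), so there is no ``paper proof'' to match your argument against. Judged on its own terms, your reconstruction is partly complete. The uniqueness half is correct and is essentially the standard argument: the disagreement set of two $\alpha$-orientations is Eulerian at every vertex, hence contains a simple directed cycle, which is a clockwise circuit for one of the two orientations. Deferring existence-given-feasibility (termination of clockwise-circuit reversal, the lattice structure) to \cite{Fe03} is consistent with what the paper itself does.

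The weak point is the feasibility of $\alpha_0$, which is the only part of the statement specific to derived maps. Your edge/outdegree count ($4i+4j-1$ on both sides) is right, but the Hakimi/max-flow route is left open exactly where it gets hard: the bipartite girth bound $e(S)\le 2|S|-4$ alone does not imply $e(S)\le\sum_{v\in S}\alpha_0(v)$ (write $S$ as $a$ primal/dual vertices and $b$ edge-vertices; the bound gives $2a+2b-4\le 3a+b$ only when $b\le a+4$, and sets with $b>a+4$ must be excluded using the degree-$4$ structure of edge-vertices and ultimately $3$-connectivity). So that route, as written, is a plan rather than a proof. Your fallback via Schnyder woods is sound --- it is precisely the correspondence the paper invokes in Section~\ref{section:compute} --- but it again reduces to a citation. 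If you want a self-contained argument within the paper's own toolbox, note that the algorithm of Section~\ref{section:compute}, whose first correctness half (Lemma~\ref{lemma:interval} and the surrounding discussion) shows that its output gives every primal, dual and edge-vertex outdegree $3$, $3$ and $1$ respectively, already constitutes a constructive proof that $\alpha_0$ is feasible, independent of the minimality claim.
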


Theorem~\ref{theorem:felsner} ensures uniqueness of
the orientation without clockwise circuit of a graph
with prescribed outdegree for  each vertex. However, this property does
not  directly imply uniqueness in
Theorem~\ref{theorem:uniquenessexistence}, because a
tri-orientation has \emph{bi-oriented} edges.

To use Theorem~\ref{theorem:felsner}, we work with the derived map
$G'$ of an irreducible dissection~$D$, as defined in
Section~\ref{section:derivatedmap}. We have defined derived maps only
for a subset of irreducible dissections, namely for bicolored complete
irreducible dissections (recall that these are bicolored dissections
such that the 3 outer white vertices have degree~2). As a consequence,
a first step toward proving Theorem~\ref{theorem:uniquenessexistence}
is to reduce its proof to the proof of existence and uniqueness of a
so-called complete-tri-orientation (a slight adaptation of the
definition of tri-orientation) without clockwise circuit for any
bicolored complete irreducible dissection.

We prove that a complete-tri-orientation without clockwise circuit of
a bicolored complete irreducible dissection $D$ is transposed
injectively into an $\alpha_0$-orientation without clockwise circuit
of its derived map $G'$. By injectivity and by uniqueness of the
$\alpha_0$-orientation without clockwise circuit of $G'$, this implies
uniqueness of a tri-orientation without clockwise circuit for~$D$.

The final step will be to prove that an $\alpha_0$-orientation without
clockwise circuit of $G'$ is transposed into a
complete-tri-orientation without clockwise circuit of $D$. By
existence of an $\alpha_0$-orientation without clockwise circuit for
$G'$ (Theorem~\ref{theorem:felsner}), this implies the existence of a
complete-tri-orientation without clockwise circuit of~$D$.

\subsection{Reduction to the case of bicolored complete dissections}
\label{section:complete}

\paragraph{Introduction}
The aim of this section is to reduce the proof of
Theorem~\ref{theorem:uniquenessexistence} to the class of complete 
bicolored irreducible dissections. We state the following proposition where 
the term ``complete-tri-orientation'', to be defined later, is a slight 
adaptation of the notion of tri-orientation.

\begin{proposition}\label{proposition:complete}
  The existence and uniqueness of a complete-tri-orientation without
  clockwise circuit for any bicolored complete irreducible dissection
  implies the existence and uniqueness of a tri-orientation without
  clockwise circuit for any irreducible dissection, i.e., implies
  Theorem~\ref{theorem:uniquenessexistence}.
\end{proposition}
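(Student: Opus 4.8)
The direction that actually requires work is the non-trivial one: complete dissections form a subclass of irreducible dissections, so the statement of Theorem~\ref{theorem:uniquenessexistence} for all irreducible dissections would immediately give it for the complete ones. The plan is therefore to \emph{reduce} an arbitrary irreducible dissection $D$ to a complete one. First I would fix the bicoloration (there are two; choose the one in which the three outer corners we wish to trivialise are white) and recall from Lemma~\ref{proposition:triOrientationDissection} and the definition of tri-orientation that every outer vertex has outdegree~$0$, so all half-edges incident to an outer white corner are inward. This rigidity at the boundary is exactly what the reduction exploits, and it is also what forces us to pass through complete dissections: derived maps, hence the $\alpha_0$-orientation machinery of the next sections, have only been defined for complete dissections.

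The heart of the argument is a \emph{completion} operation $D\mapsto\bar D$ sending a bicolored irreducible dissection to a complete one. Whenever an outer white corner $w$ has degree larger than~$2$ it carries inner edges; the operation performs a local surgery at $w$ replacing this configuration by a shielding gadget of quadrangular faces, so that in $\bar D$ the three outer white corners have degree exactly~$2$, while the hexagonal outer boundary and the quadrangularity of the inner faces are preserved. I would verify that this surgery is purely local, that it neither creates nor destroys a separating $4$-cycle (so that irreducibility, in the sense of Theorem~\ref{thm:bijOuter}, is preserved), and that it is invertible, yielding a bijection between arbitrary irreducible dissections and complete ones carrying the recorded gadget data.

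Next I would introduce the promised \emph{complete-tri-orientation} as the natural adaptation of a tri-orientation to $\bar D$: it agrees with a tri-orientation away from the gadgets, and on each gadget the orientation is forced by the outdegree conditions together with the simply/bi-oriented dichotomy, the degree-$2$ white corners being the extreme case. The key claim is that tri-orientations of $D$ without clockwise circuit are in bijection with complete-tri-orientations of $\bar D$ without clockwise circuit. Matching the outdegree constraints across the surgery is routine bookkeeping via Lemma~\ref{proposition:triOrientationDissection}; the delicate point is the \emph{no clockwise circuit} condition. Since the surgery is confined to a neighbourhood of the boundary and the gadget edges are simply oriented with the outer face on their right, exactly as in Lemma~\ref{lemma:noclock} these edges can only belong to counterclockwise circuits. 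Hence no clockwise circuit can traverse a gadget, so a clockwise circuit of $\bar D$ is already one of $D$ and conversely. This is the step I expect to require the most care.

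Finally, granting existence and uniqueness of a complete-tri-orientation without clockwise circuit for the complete dissection $\bar D$, the bijection above transfers existence and uniqueness of a tri-orientation without clockwise circuit to $D$; since $D$ was arbitrary, this is precisely Theorem~\ref{theorem:uniquenessexistence}. The main obstacle, as stressed, is the combinatorial verification that the completion preserves irreducibility and that the induced map on orientations is a genuine bijection preserving the absence of clockwise circuits; once these are established, the reduction claimed in Proposition~\ref{proposition:complete} follows immediately.
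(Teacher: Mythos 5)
Your overall strategy coincides with the paper's: complete each irreducible dissection by a local surgery at the high-degree white corners of the hexagon (the paper adds, outside the hexagon, one covering white vertex of degree~$2$ per such corner, creating one new quadrangular face), check that irreducibility is preserved (Lemma~\ref{lemma:completedComplete}), and transfer orientations without clockwise circuit back and forth. The reduction is also split in the paper into two steps you essentially merge: first tri-orientations of the completed dissection, then complete-tri-orientations, in which the hexagon edges themselves are oriented and the black corners acquire outdegree~$3$.

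There is, however, a genuine gap at the step you yourself flag as delicate. You argue that no clockwise circuit can traverse a gadget because the new edges are ``simply oriented with the outer face on their right,'' invoking the mechanism of Lemma~\ref{lemma:noclock}, and you conclude that clockwise circuits of $D$ and of $\bar D$ coincide. But the danger does not come from the new edges: it comes from the fact that a covered white corner $v$ is an \emph{outer} vertex of $D$ (outdegree $0$, all incident half-edges inward, hence on no circuit) and becomes an \emph{inner} vertex of $\bar D$ (outdegree $3$). Two of its outgoing half-edges are forced (those on the former hexagon edges, whose other endpoints are still outer), but the third must be chosen among the half-edges that were inward in $D$ --- so the orientation at $v$ is not ``forced by the outdegree conditions,'' and a circuit of $\bar D$ through $v$ is not a circuit of $D$. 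A wrong choice of that third outgoing half-edge can create a clockwise circuit. The proofs of Lemmas~\ref{lemma:triCompleted} and~\ref{lemma:tricomplete} are devoted precisely to this point: they pick the extremal half-edge $h_3$ in the counterclockwise order around $v$ starting from the two hexagon edges, and show that any clockwise circuit entering $v$ by some $h_i$, $i\ge 4$, and leaving by $h_3$ would have to enclose the new outer face, hence traverse unoriented outer edges of the completed dissection --- a contradiction. The same issue and the same resolution recur at the black corners when passing from tri-orientations to complete-tri-orientations. Without this argument, the existence of a no-clockwise-circuit preimage for each no-clockwise-circuit tri-orientation of $D$ --- which is exactly what makes uniqueness for complete dissections imply uniqueness for all dissections --- is not established.
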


The rest of this subsection is devoted 
to the proof of Proposition~\ref{proposition:complete}. The proof
is done in two steps. First, reduce the proof of 
Theorem~\ref{theorem:uniquenessexistence} to the existence
and uniqueness of a \emph{tri-orientation} without clockwise circuit 
for any bicolored complete irreducible dissection. Then, prove that
this reduces to the existence and uniqueness of a 
\emph{complete-tri-orientation}
without clockwise circuit for any bicolored complete irreducible dissection.

\paragraph{Completion of a bicolored irreducible dissection}
For any bicolored irreducible dissection~$D$, we define its \emph{completed
dissection} $D^c$ as follows .  For each white vertex $v$ of
the hexagon, we denote by $\eleft(v)$ ($\eright(v)$) 
the outer edge starting from $v$ with the interior of the hexagon on the left
(right, respectively) and denote by $l(v)$ and $r(v)$ the neighbours of $v$
 incident to $\eleft(v)$ and to $\eright(v)$. We perform the following
operation: if $v$ has degree at least 3, a new white vertex
$v'$ is created outside of the hexagon and
 is linked to $l(v)$ and to $r(v)$ by
two new edges $\eleft(v')$ and $\eright(v')$, see Figure~\ref{figure:TriOri}.
The vertex $v'$ is said to \emph{cover} the vertex $v$.

The dissection obtained  is a bicolored dissection of the
hexagon such that the three white vertices of the hexagon have two incident
edges, see the transition between
Figure~\ref{figure:completedOrientations}(a) and
Figure~\ref{figure:completedOrientations}(b)  (ignore here the
orientation of edges).

\begin{lemma}
  \label{lemma:completedComplete}
  The completion $D^c$ of a bicolored irreducible dissection $D$ is a bicolored 
  complete irreducible dissection.
\end{lemma}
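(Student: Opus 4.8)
The plan is to verify separately the three defining properties of a bicolored complete irreducible dissection: that $D^c$ is a bicolored dissection of the hexagon by quadrangular faces, that its three outer white vertices have degree exactly $2$ (completeness), and that it is irreducible, i.e.\ has at least four faces and no separating $4$-cycle. The first two properties are essentially built into the construction; the real work is the absence of separating ($=$ non-facial) $4$-cycles.

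First I would check that $D^c$ is still a dissection of the hexagon by quadrangular faces. Each covering operation at an outer white vertex $v$ of degree $\geq 3$ removes the two outer edges $\eleft(v),\eright(v)$ from the boundary and replaces them by the two new edges $\eleft(v'),\eright(v')$; thus the outer boundary of $D^c$ again has six edges, and the covered vertex $v$ becomes an inner vertex. The new bounded face created is the quadrilateral $l(v)\,v\,r(v)\,v'$, which has degree $4$, while all faces of $D$ inside the hexagon are untouched and remain quadrangular. Coloring each new vertex $v'$ white is consistent with the bicoloration since $l(v)$ and $r(v)$ are black, so $D^c$ is properly bicolored. Completeness is then immediate: each outer white vertex of $D^c$ is either a new vertex $v'$, which has degree $2$ by construction, or an original white vertex that already had degree $2$ and was therefore not covered.

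The main obstacle is to show that $D^c$ has no separating $4$-cycle. The number of faces only grew, so the ``at least four faces'' condition is inherited from $D$. For the $4$-cycles I would split them into those avoiding the new vertices and those meeting one. A $4$-cycle avoiding all new vertices lies entirely in $D$; since the new material is attached outside the hexagon, the face it bounded in $D$ is unchanged, so such a cycle still bounds a face in $D^c$. For a $4$-cycle meeting a new vertex $v'$, I would use that $v'$ has degree $2$ with neighbours $l(v)$ and $r(v)$, so the cycle must traverse $l(v)\,v'\,r(v)$ and hence has the form $l(v)-v'-r(v)-x-l(v)$ for some common neighbour $x$ of $l(v)$ and $r(v)$. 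The case $x=v$ gives precisely the covering face. The crux is to rule out $x\neq v$: such an $x$ is necessarily a vertex of $D$ (two distinct new vertices cannot share both of $l(v),r(v)$, because the three pairs of consecutive black vertices on the boundary hexagon are distinct), and then $l(v)-v-r(v)-x-l(v)$ is a genuine $4$-cycle of $D$. Because $v$ was covered, it has degree $\geq 3$ in $D$, so at least one edge incident to $v$ points into the bounded region enclosed by this $4$-cycle on the hexagon-interior side; that region therefore is not a single face, making the cycle separating in $D$ and contradicting the irreducibility of $D$. Hence $x=v$, every $4$-cycle through a new vertex is a face, and $D^c$ is irreducible.

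I expect this last paragraph --- specifically the reduction of a hypothetical new separating $4$-cycle to a separating $4$-cycle already present in $D$ --- to be the delicate step, since it is exactly where the degree-$\geq 3$ hypothesis that triggers the covering is used and where the planar ``enclosed region'' argument must be made precise.
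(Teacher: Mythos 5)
Your proposal is correct and takes essentially the same route as the paper: a new separating $4$-cycle must pass through a degree-$2$ covering vertex $v'$, hence has the form $l(v)\,v'\,r(v)\,x$, and replacing $v'$ by the covered vertex $v$ yields a $4$-cycle of $D$ that cannot bound a face because $\deg_D(v)\geq 3$, contradicting the irreducibility of $D$ (the paper phrases this contrapositively: irreducibility forces that cycle to bound a face, forcing $\deg(v)=2$). Your extra verifications of the dissection, bicoloration and completeness properties are points the paper dismisses as holding ``by construction.''
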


\begin{proof}
  The outer white vertices of $D^c$ have degree 2 by construction.
  Hence, we just have to prove that $D^c$ is irreducible. As $D$ is
  irreducible, if a separating 4-cycle $\cC$ appears in $D^c$ when the
  completion is performed, then it must contain a white vertex $v'$ of
  the hexagon of $D^c$ added during the completion, so as to cover an
  outer white vertex $v$ of degree greater than~2. Two edges of $\cC$
  are the edges $\eleft(v')$ and $\eright(v')$ incident to $v'$ in
  $D^c$. The two other edges $\epsilon_1$ and $\epsilon_2$ of $\cC$
  form a path of length 2 connecting the vertices $l(v)$ and $r(v)$
  and passing by the interior of $D$ (otherwise, $\cC$ would enclose a
  face). As $D$ is irreducible, the 4-cycle $\mathcal{C}'$ of $D$
  consisting of the edges $\eleft(v)$, $\eright(v)$, $\epsilon_1$ and
  $\epsilon_2$ delimits a face.  Hence $\eleft(v)$ and $\eright(v)$
  are incident to the same inner face of~$D$, which implies that $v$
  has degree~2, a contradiction.  
\hfill $\qed$\end{proof}

\paragraph{Tri-orientations}
Let $D$ be a bicolored irreducible dissection and let $D^c$ be its
completed bicolored dissection. We define a mapping  $\Phi$
from the tri-orientations of $D^c$ to the tri-orientations of
$D$. Given a tri-orientation $Y$ of $D^c$, we remove the edges that
have been added to obtain $D^c$ from~$D$, erase the orientation of
the edges of the hexagon of~$D$, and orient inward all inner half-edges
incident to an outer vertex of~$D$. We obtain
thus a tri-orientation $\Phi(Y)$ of~$D$, see the
transition between Figure~\ref{figure:completedOrientations}(b)
 and Figure~\ref{figure:completedOrientations}(a).

\begin{lemma}\label{lemma:triCompleted}
  Let $Y$ be a tri-orientation of $D^c$ without clockwise
  circuit. Then the tri-orientation $\Phi(Y)$ of $D$ has no
  clockwise circuit.

  For each tri-orientation $X$ of $D$ without clockwise circuit, there
  exists a tri-orientation $Y$ of $D^c$ without clockwise
  circuit such that $\Phi(Y)=X$.
\end{lemma}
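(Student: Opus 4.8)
The plan is to localize every clockwise circuit to the interior and to control the only region where $\Phi$ (and its partial inverse) changes orientations, namely around the covered white vertices. I will use repeatedly the elementary fact that a vertex of outdegree~$0$ cannot lie on a clockwise circuit: the edge by which such a circuit leaves a vertex is bi-oriented or simply oriented away from it, and both require positive outdegree. Hence every clockwise circuit of a tri-orientation avoids all outer vertices.

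\emph{First assertion (restriction preserves the property).} Let $Y$ be a tri-orientation of $D^c$ without clockwise circuit, and suppose for contradiction that $\Phi(Y)$ had a clockwise circuit $\mathcal{C}$ in $D$. By the observation above, $\mathcal{C}$ avoids the outer vertices of $D$, so it runs only through inner vertices of $D$; these are also inner vertices of $D^c$, and the edges joining them are exactly the edges of $D$ not incident to the hexagon. Such edges are neither removed, nor re-oriented inward, nor stripped of their orientation by $\Phi$, so each edge of $\mathcal{C}$ carries the same orientation in $\Phi(Y)$ as in $Y$, and the embedding is the same. Thus $\mathcal{C}$ is already a clockwise circuit of $Y$, a contradiction.

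\emph{Second assertion (lifting without clockwise circuit).} Given $X$ on $D$ with no clockwise circuit, I would build $Y$ on $D^c$ by keeping $X$ on every edge interior to $D$ and on every edge from an inner vertex to a black hexagon vertex, by orienting each former hexagon edge $v\,l(v)$ and $v\,r(v)$ (now interior to $D^c$) outward at $v$, and, at each covered white vertex $v$, by bi-orienting exactly one of its inner edges. Any such choice yields a genuine tri-orientation $Y$ with $\Phi(Y)=X$: the outer vertices of $D^c$ keep outdegree~$0$; each covered $v$ gets outdegree~$3$ (two from the former hexagon edges, one from its bi-oriented edge); the inner vertices of $D$ keep outdegree~$3$ since their half-edges are untouched; no edge is inward at both ends; and applying $\Phi$, which re-orients $v$'s half-edges inward and erases the hexagon edges, recovers $X$. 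The whole freedom is the choice, at each $v$, of the bi-oriented inner edge, and the crux is to make this choice so that $Y$ has no clockwise circuit. I would bi-orient the inner edge of $v$ that is extreme on the $\eleft(v)$ side, i.e. the first inner edge met when turning from $v\,l(v)$ toward the interior. To verify this, let $\mathcal{C}$ be a hypothetical clockwise circuit of $Y$; it avoids the outer vertices of $D^c$. If it also avoids every covered $v$, it lives on edges interior to $D$ oriented as in $X$ and is therefore a clockwise circuit of $X$, which is impossible; so $\mathcal{C}$ passes through some covered $v$. There its outgoing edge must be outward at $v$, the two former hexagon edges being excluded (they reach outer vertices), so $\mathcal{C}$ leaves $v$ along its unique bi-oriented inner edge and enters along an inner edge oriented into $v$.

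\emph{Main obstacle.} The step I expect to require genuine care is the planarity argument that finishes the previous paragraph. Since the outer face of $D^c$ lies on the outer side of $v$, the interior of $\mathcal{C}$ sits on the hexagon-interior side, so the condition that the interior be on the right forces, in the rotational order of edges at $v$, the exit edge to lie strictly closer to $r(v)$ than the entry edge; equivalently the entry edge lies strictly closer to $l(v)$ than the exit. But the exit edge is our chosen bi-oriented edge, which is the \emph{extreme} inner edge on the $l(v)$ side, so no entry edge still closer to $l(v)$ can exist, a contradiction. Hence no clockwise circuit meets a covered vertex, and $Y$ has none. Pinning down this angular condition precisely — and thereby confirming on which of the two sides the bi-oriented edge must be chosen — is the heart of the argument; the remaining verifications are bookkeeping on outdegrees and on which orientations $\Phi$ leaves invariant.
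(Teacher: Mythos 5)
Your proposal is correct and follows essentially the same route as the paper: lift $X$ by simply orienting the two former hexagon edges out of each covered white vertex $v$ and bi-orienting the extreme inner edge on the $\eleft(v)$ side (the paper's half-edge $h_3$), then rule out clockwise circuits by the local rotational argument at $v$. Your final planarity step is just the contrapositive of the paper's (you show the interior would have to lie on the wrong side of $v$, the paper shows it would have to contain the new quadrangular face $f$ and hence outer edges), so the two arguments coincide.
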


\begin{proof}
  The first point is trivial, as the tri-orientation $\Phi(Y)$ is just
  obtained by removing some edges and some orientations of half-edges.

  For the second point, the preimage $Y$ is constructed as
  follows. Consider each white vertex $v$ of the hexagon of $D$ which
  has degree at least 3. Let $(h_1,\ldots, h_m)$ ($m\geq 3)$ be the
  series of half-edges incident to $v$ in $D$ in counter-clockwise
  order around $v$, with $h_1$ and $h_2$ belonging respectively to the
  edges $\eright(v)$ and $\eleft(v)$.  As $m\geq 3$, the vertex $v$
  gives rise to a covering vertex $v'$ with two incident edges
  $\eleft(v')$ and $\eright(v')$ such that the edges $\eleft(v)$,
  $\eright(v)$, $\eleft(v')$ and $\eright(v')$ form a new face
  $f$. The edges $\eleft(v)$ and $\eright(v)$ become inner edges of
  $D^c$ when $v'$ is added, and have thus to be directed.

  We orient the two half-edges of $\eleft(v)$ and $\eright(v)$
  respectively toward $l(v)$ and toward $r(v)$, see
  Figure~\ref{figure:TriOri}. The vertex $v$ receives thus two
  outgoing half-edges, and we have to give to $v$ a third outgoing
  half-edge. The suitable choice to avoid the appearance of a
  clockwise circuit is to orient $h_3$ outward, see
  Figure~\ref{figure:TriOri}. Indeed, assume a contrario that a simple
  clockwise circuit $\mathcal{C}$ is created. Then the circuit must
  pass by $v$. It goes into $v$ using one of the half-edges $h_{i}$
  directed toward $v$, i.e., $i\geq 4$. Moreover, it must go out of
  $v$ using the half-edge $h_3$ (indeed, if the circuit uses $h_1$ or
  $h_2$ to go out of $v$, then it reaches an outer vertex,
  which has outdegree 0).  Hence, the interior of the
  clockwise circuit $\mathcal{C}$ must contain all faces incident to
  $v$ that are on the right of $v$ when we traverse $v$ from $h_{i}$
  and go out using $h_3$. Hence, the interior of $\mathcal{C}$ must
  contain the new face $f$ of $D^c$, see
  Figure~\ref{figure:TriOri}. But $f$ is incident to outer edges
of $D^c$, hence the clockwise 
circuit $\mathcal{C}$ must pass
  by outer edges of $D^c$, which are not oriented, 
a contradiction. 
Thus, we have constructed
  a tri-orientation $Y$ of $D^c$ without clockwise circuit and such
  that $\Phi(Y)=X$. An example of this construction can be seen as the
  transition between Figure~\ref{figure:completedOrientations}(a) and
  Figure~\ref{figure:completedOrientations}(b).  
\hfill$\qed$
\end{proof}

\begin{figure}
  \centering
  \includegraphics[width=.8\linewidth]{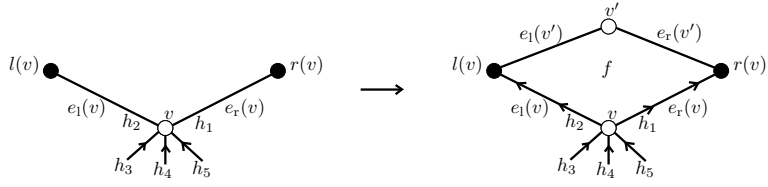}
  \caption{From a tri-orientation $X$ of $D$ without clockwise circuit,
    construction of a tri-orientation $Y$ of $D^c$
    without clockwise circuit such that $\Phi(Y)=X$.}
  \label{figure:TriOri}
\end{figure}

\begin{figure}
  \def\etalon{.3\linewidth}
  \centering
  \subfigure[]{\includegraphics[width=\etalon]{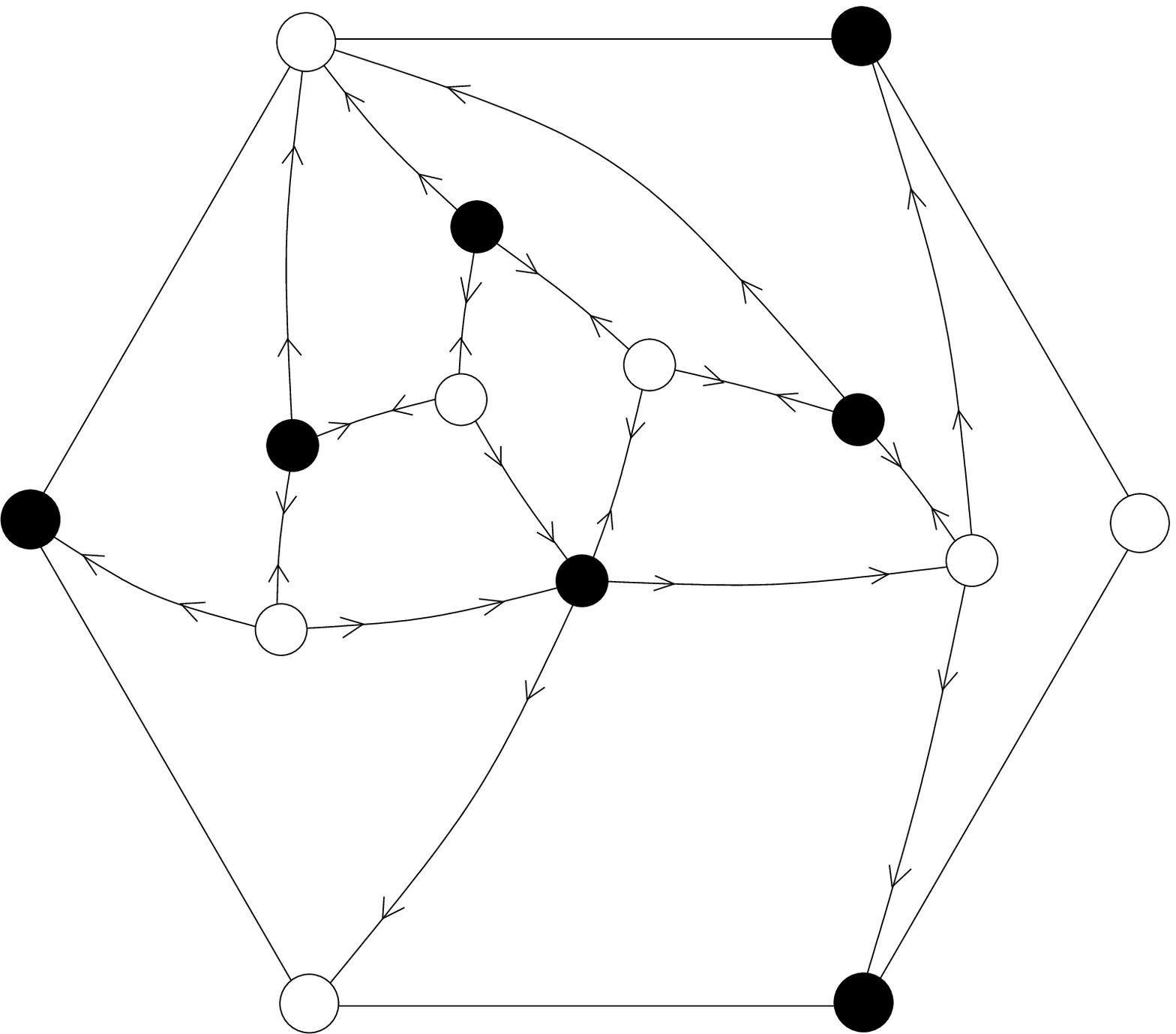}}\qquad
  \subfigure[]{\includegraphics[width=\etalon]{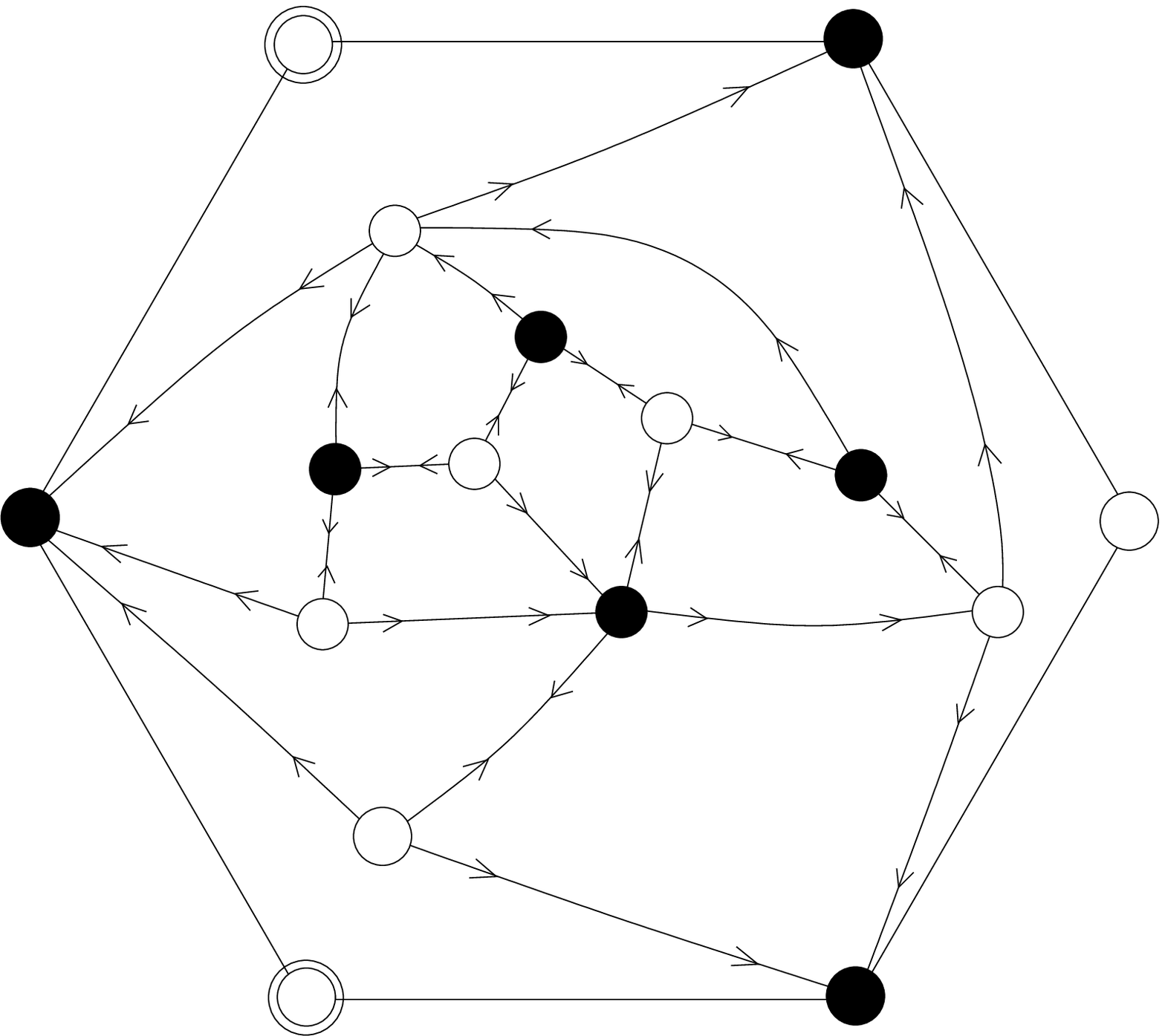}}\qquad
  \subfigure[]{\includegraphics[width=\etalon]{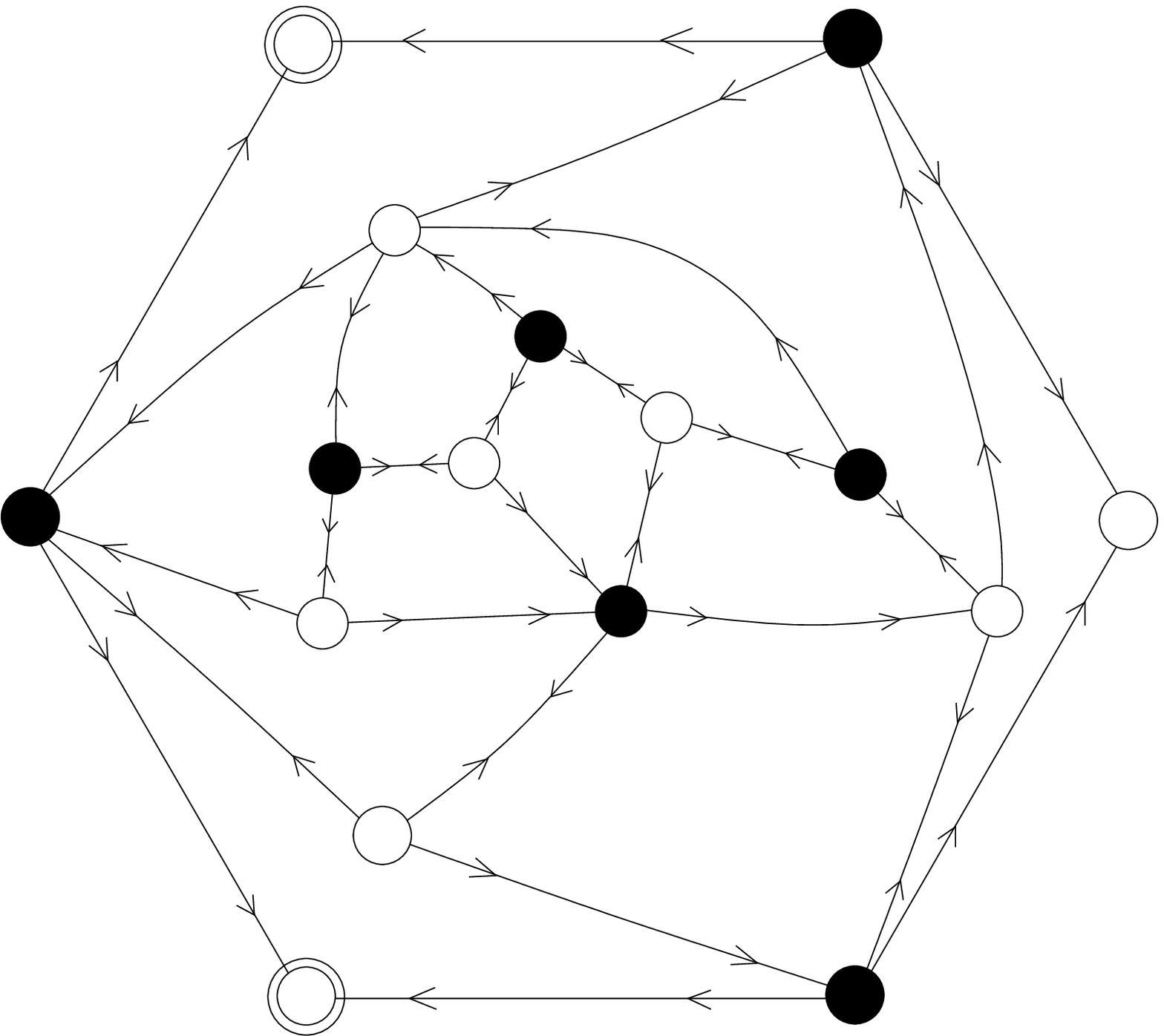}}
  \caption{A bicolored irreducible dissection $D$ endowed with a 
    tri-orientation $X$
      without clockwise circuit (Figure a). The
      associated completed dissection $D^c$ (the two added white vertices
      are surrounded) endowed with the tri-orientation $Y$ such that $\Phi(Y)=X$
      (Figure b). The dissection $D^c$ endowed with the 
      complete-tri-orientation $Z$ such that $\Psi(Z)=Y$ (Figure c).}
  \label{figure:completedOrientations}
\end{figure}

\begin{lemma} \label{proposition:existunique}
  The existence and uniqueness of a tri-orientation without clockwise
  circuit for any bicolored complete irreducible dissection implies
  the existence and uniqueness of a tri-orientation without clockwise
  circuit for any irreducible dissection, i.e., implies
  Theorem~\ref{theorem:uniquenessexistence}.
\end{lemma}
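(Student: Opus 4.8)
The plan is to deduce Theorem~\ref{theorem:uniquenessexistence} from the hypothesis by transporting the existence and uniqueness statements across the completion map $\Phi$ of Lemma~\ref{lemma:triCompleted}. Before doing so I would first reduce to the \emph{bicolored} case: since every inner face of a dissection of the hexagon by quadrangular faces has even degree, such a dissection always admits a proper $2$-coloring of its vertices, and the notion of tri-orientation (and of clockwise circuit) refers only to outdegrees and half-edge directions, hence is insensitive to the choice of coloring. So it suffices to establish existence and uniqueness of a tri-orientation without clockwise circuit for an arbitrary \emph{bicolored} irreducible dissection $D$.

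For \textbf{existence}, I would pass to the completed dissection $D^c$, which by Lemma~\ref{lemma:completedComplete} is a bicolored \emph{complete} irreducible dissection. The hypothesis then furnishes a tri-orientation $Y$ of $D^c$ without clockwise circuit, and the first part of Lemma~\ref{lemma:triCompleted} guarantees that $\Phi(Y)$ is a tri-orientation of $D$ without clockwise circuit. This settles existence.

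For \textbf{uniqueness}, suppose $X_1$ and $X_2$ are two tri-orientations of $D$ without clockwise circuit. The second part of Lemma~\ref{lemma:triCompleted} yields, for each $k\in\{1,2\}$, a tri-orientation $Y_k$ of $D^c$ without clockwise circuit with $\Phi(Y_k)=X_k$. The uniqueness half of the hypothesis, applied to the complete dissection $D^c$, forces $Y_1=Y_2$; and since $\Phi$ is a single-valued map on tri-orientations, we conclude $X_1=\Phi(Y_1)=\Phi(Y_2)=X_2$. Thus $D$ has exactly one tri-orientation without clockwise circuit, as claimed.

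The genuinely delicate content lies in the two lemmas we are allowed to invoke: Lemma~\ref{lemma:completedComplete} (that completion preserves irreducibility) and especially the surjectivity statement in Lemma~\ref{lemma:triCompleted} (that every clockwise-circuit-free tri-orientation of $D$ lifts to one of $D^c$). Given those, the only point requiring care here is to use $\Phi$ as a genuine function: it is the uniqueness of the preimage $Y$ upstairs, for the complete dissection $D^c$, that transports uniqueness downstairs, so it is essential that $\Phi$ be well defined rather than merely a relation — which is clear from its definition as the removal of the added edges and of certain half-edge orientations. I expect no further obstacle; this lemma is precisely the formal bookkeeping that welds Lemmas~\ref{lemma:completedComplete} and~\ref{lemma:triCompleted} into the first reduction step of Proposition~\ref{proposition:complete}.
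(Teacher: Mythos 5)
Your proof is correct and is exactly the argument the paper intends: the paper's own proof is a one-line appeal to Lemma~\ref{lemma:completedComplete} and Lemma~\ref{lemma:triCompleted}, and your write-up simply makes explicit the transport of existence (via the first part of Lemma~\ref{lemma:triCompleted}) and of uniqueness (via the surjectivity in its second part together with $\Phi$ being a well-defined map). The preliminary remark that any irreducible dissection can be bicolored and that tri-orientations are insensitive to the coloring is a reasonable piece of bookkeeping the paper leaves implicit.
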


\begin{proof}
  This is a clear consequence of
  Lemma~\ref{lemma:completedComplete} and
  Lemma~\ref{lemma:triCompleted}.
  \hfill $\qed$
\end{proof}

\paragraph{Complete-tri-orientations}
A \emph{complete-tri-orientation} of a bicolored complete irreducible
dissection $D$ is an orientation of the half-edges of $D$ that
satisfies the following conditions (very similar to the conditions of
a tri-orientation): all black vertices and all inner white vertices of
$D$ have outdegree 3, the three white vertices of the hexagon have
outdegree 0, and the two half-edges of an edge of $D$ can not both be
oriented inward. The difference with the definition
of tri-orientation is that the half-edges of the hexagon are oriented,
with prescribed outdegree for the outer vertices. Similarly 
as in a tri-orientation,
edges of $D$ are distinguished into simply-oriented edges and
bi-oriented edges. 

\begin{lemma}
  \label{lemma:completeTriGraph}
  Let $D\in \mathcal{D}_n$ be a bicolored complete irreducible
  dissection endowed with a complete-tri-orientation without clockwise
  circuit. Then the subgraph $T$ of $D$ consisting of the bi-oriented
  edges of $D$ is a tree incident to all vertices of $D$ except the
  three outer white vertices.
\end{lemma}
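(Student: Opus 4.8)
The plan is to mimic the counting argument of Lemma~\ref{proposition:triOrientationDissection}, adjusting it for the fact that in a complete-tri-orientation the six half-edges of the hexagon are oriented as well, and that the three outer \emph{black} vertices also carry outdegree~$3$. So the proof splits into an edge count, an acyclicity argument, and a reach argument.

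First I would pin down the relevant cardinalities via Euler's relation. A bicolored complete irreducible dissection with $n$ inner vertices has $n+2$ quadrangular inner faces and one hexagonal outer face, so summing face degrees gives $2E = 4(n+2)+6$, i.e. $E = 2n+7$ edges, and then $V = n+6$ vertices, of which $3$ are outer black, $3$ are outer white, and $n$ are inner. Since the two half-edges of any edge are never both inward, every edge is either \emph{bi-oriented} (both half-edges outward) or \emph{simply oriented}; write $r$ and $s$ for their respective numbers, so $r+s = E = 2n+7$. Counting outward half-edges in two ways, and using that all black vertices and all inner white vertices have outdegree~$3$ while the three outer white vertices have outdegree~$0$, the total outdegree equals $3(n+6)-3\cdot 3 = 3n+9 = 2r+s$. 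Solving the two linear relations yields $r = n+2$ and $s = n+5$.

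Next I would show that the subgraph $T$ formed by the bi-oriented edges is acyclic. If $T$ contained a cycle, then, every edge of that cycle being bi-oriented, the cycle would qualify as a clockwise circuit in the sense of Section~\ref{section:tri-orientations}, contradicting the hypothesis that the complete-tri-orientation has no clockwise circuit. Hence $T$ is a forest with exactly $r = n+2$ edges.

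Finally I would locate the vertices $T$ can reach. An outer white vertex has outdegree~$0$, so no half-edge incident to it is outward; as a bi-oriented edge carries an outward half-edge at each of its endpoints, no bi-oriented edge can be incident to an outer white vertex, and $T$ therefore meets only among the $n+3$ vertices that are not outer white. A forest touching at most $n+3$ vertices and having $n+2$ edges satisfies $c = p-(n+2)\le 1$ for its number of components $c$ and touched vertices $p$; being nonempty it has $c=1$, hence is a single tree with $p = n+3$. Since these are precisely the $n+3$ non-outer-white vertices of $D$, the tree $T$ is incident to every vertex of $D$ except the three outer white ones. The argument is essentially a bookkeeping variant of Lemma~\ref{proposition:triOrientationDissection}, so the only point demanding real care is the edge and outdegree count, where one must remember to include the six hexagon edges and the outdegree-$3$ contribution of the outer black vertices.
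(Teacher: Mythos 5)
Your proof is correct and follows essentially the same route as the paper: the same Euler-relation edge count giving $r+s=2n+7$, the same outdegree count $2r+s=3(n+3)$, the same acyclicity argument via the absence of clockwise circuits, and the same conclusion from the classical forest/tree characterization. The only difference is that you make explicit why no bi-oriented edge can touch an outer white vertex (outdegree~$0$ forbids an outward half-edge there), a point the paper leaves implicit.
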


\begin{proof}
  We reason similarly as in
  Lemma~\ref{proposition:triOrientationDissection}. Let $r$ and $s$ be
  the numbers of bi-oriented and simply oriented edges of~$D$. From
  Euler's relation (using the degrees of the faces of~$D$), $D$ has
  $2n+7$ edges, i.e., $r+s=2n+7$. In addition, the $n$ inner vertices
  and the three black (resp. white) vertices of the hexagon of $D$
  have outdegree 3 (resp. 0). Hence, $2r+s=3(n+3)$. Thus, $r=n+2$ and
  $s=n+5$. Hence, the subgraph $T$ has $n+2$ edges, has no cycle
  (otherwise, a clockwise circuit of $D$ would exist), and is incident
  to at most $(n+3)$ vertices, which are the inner vertices and the
  three outer black vertices of~$D$. A classical result of graph
  theory ensures that $T$ is a tree spanning these $(n+3)$ vertices.
  \hfill $\qed$
\end{proof}

\begin{lemma}
  \label{lemma:thirdbioriented}
  Let $D\in \mathcal{D}_n$ be a bicolored complete irreducible
  dissection endowed with a complete-tri-orientation $Z$ without
  clockwise circuit.  Then, for each outer black vertex $v$ of~$D$,
  the unique outgoing inner half-edge incident to $v$ belongs to a bi-oriented
  edge.
\end{lemma}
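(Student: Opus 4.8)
The plan is to read off the conclusion almost directly from the spanning-tree structure of Lemma~\ref{lemma:completeTriGraph}, after first pinning down the local orientation picture at an outer black vertex $v$.

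First I would analyze the half-edges incident to $v$. Since the hexagon alternates colors, $v$ is joined along the hexagon to exactly two outer white vertices, say $w_1$ and $w_2$, by two outer edges. As $D$ is complete, each $w_k$ has degree~$2$, and being an outer white vertex it has outdegree~$0$; hence both half-edges at $w_k$ are oriented inward. Because a complete-tri-orientation forbids an edge from having both half-edges inward, the edge $v w_k$ must carry an outward half-edge at $v$. Thus both outer edges at $v$ are \emph{simply oriented}, pointing out of $v$, and together they account for two of the three outgoing half-edges guaranteed by the fact that $v$ has outdegree~$3$. The remaining outgoing half-edge is therefore an inner one, and it is unique; this is precisely the half-edge named in the statement.

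Next I would invoke Lemma~\ref{lemma:completeTriGraph}: the bi-oriented edges of $D$ form a tree $T$ spanning every vertex except the three outer white ones. In particular $v$ is a vertex of $T$, so at least one edge incident to $v$ is bi-oriented. Now a bi-oriented edge has both of its half-edges outward, so any bi-oriented edge at $v$ must be one of the three edges whose half-edge at $v$ is outward. Two of these three are the outer edges $v w_1$ and $v w_2$, which we just saw are merely simply oriented and hence cannot be bi-oriented. The sole remaining candidate is the unique outgoing inner edge at $v$, so it must be the bi-oriented edge whose existence $T$ guarantees, which is exactly the desired conclusion.

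I do not expect a genuine obstacle here: once the spanning-tree description of the bi-oriented edges is available, the argument is a short counting/elimination among the three outgoing half-edges at $v$. The only point requiring a little care is the opening observation that the two outer edges at $v$ are both outgoing and only simply oriented, which follows cleanly from the degree-$2$, outdegree-$0$ constraint imposed on the outer white vertices by completeness.
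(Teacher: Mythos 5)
Your proof is correct and follows essentially the same route as the paper: both invoke the spanning-tree property of the bi-oriented edges (Lemma~\ref{lemma:completeTriGraph}) to obtain a bi-oriented edge at $v$, and then identify it with the unique outgoing inner half-edge by observing that the two outer edges at $v$ are only simply oriented. You merely spell out in more detail the local analysis at $v$ that the paper's one-line proof leaves implicit.
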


\begin{proof}
  The subgraph $T$ consisting of the bi-oriented edges of $D$ is a
  tree spanning all vertices of $D$ except the three outer white
  vertices. Hence, there is a bi-oriented edge $e$ incident to each
  black vertex $v$ of the hexagon and this edge consitutes the third
  outgoing edge of $v$.  
  \hfill $\qed$
\end{proof}

Let $D$ be a bicolored complete irreducible dissection and $Z$ be a
complete-tri-orientation of $D$ without clockwise circuit.  We
associate to $Z$ a tri-orientation $\Psi(Z)$ as follows: erase the
orientation of the edges of the hexagon of~$D$; for each black vertex
$v$ of the hexagon, change the orientation of the unique outgoing
inner half-edge $h$ of $v$.  According to
Lemma~\ref{lemma:thirdbioriented}, $h$ belongs to a bi-oriented edge
$e$, so that the change of orientation of $h$ turns $e$ into an edge
simply oriented toward $v$. Thus, the obtained orientation $\Psi(Z)$
is a tri-orientation.

\begin{lemma}
  \label{lemma:tricomplete}
  Let $D$ be a bicolored complete irreducible dissection.
  Let $Z$ be a complete-tri-orientation of $D$ without clockwise
  circuit. Then the tri-orientation $\Psi(Z)$ of $D$ has no
  clockwise circuit.

  For each tri-orientation $Y$ of $D$ without clockwise circuit, there
  exists a complete-tri-orientation $Z$ of $D$ without clockwise
  circuit such that $\Psi(Z)=Y$.
\end{lemma}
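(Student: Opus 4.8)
The statement has two halves, and I would handle them separately.

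For the forward implication I would transport any clockwise circuit of $\Psi(Z)$ back to $Z$. By construction $\Psi(Z)$ is a genuine tri-orientation of $D$, so \emph{every} outer vertex, black or white, has outdegree~$0$; since a clockwise circuit must leave each of its vertices along an outward half-edge (the outgoing edge is bi-oriented or simply oriented with the interior on its right, hence outward at that vertex), no outer vertex can lie on such a circuit. Thus a hypothetical clockwise circuit $\mathcal{C}$ of $\Psi(Z)$ runs through inner vertices only and uses only inner edges joining two inner vertices. But $Z$ and $\Psi(Z)$ differ solely on the half-edges of the hexagon (whose orientation is erased) and on the single inner half-edge flipped at each outer black vertex, all of which are incident to an outer vertex. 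On edges between two inner vertices the two orientations agree, so $\mathcal{C}$ is also a clockwise circuit of $Z$, contradicting the hypothesis on $Z$.

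For the converse I would build $Z$ from $Y$ explicitly. Orient each hexagon edge away from its black endpoint and toward its white endpoint, so the three outer white vertices acquire outdegree~$0$ and each outer black vertex gains two outgoing hexagon half-edges. At each outer black vertex $v$, list its incident half-edges $h_1,\dots,h_m$ in counter-clockwise order with $h_1,h_2$ on $\eright(v),\eleft(v)$ exactly as in Lemma~\ref{lemma:triCompleted}; in $Y$ all inner half-edges at $v$ point inward, and I would \emph{bi-orient} the inner edge carrying $h_3$ (the inner edge immediately following $\eleft(v)$, which exists since each outer black vertex has an incident inner edge), turning its $v$-side half-edge outward. A counting check identical to Lemma~\ref{lemma:completeTriGraph} confirms that $Z$ satisfies the outdegree constraints — each outer black vertex now has outdegree~$3$, inner outdegrees are untouched, and no edge becomes doubly inward — so $Z$ is a complete-tri-orientation. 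Moreover $\Psi$ erases the hexagon orientations and flips back precisely the unique outgoing inner half-edge at each outer black vertex, namely $h_3$; hence $\Psi(Z)=Y$ automatically, independently of which inner edge was chosen.

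The substantive point, and the step I expect to be the main obstacle, is to show that this particular $Z$ has no clockwise circuit, and here I would reuse the geometric argument of Lemma~\ref{lemma:triCompleted} almost verbatim. Suppose a simple clockwise circuit $\mathcal{C}$ exists in $Z$. It cannot pass through a white outer vertex (outdegree~$0$), nor use a hexagon edge (whose white endpoint has outdegree~$0$), so it visits only inner and outer black vertices. Since $Y$ has no clockwise circuit, $\mathcal{C}$ must traverse at least one newly bi-oriented edge and therefore pass through some outer black vertex $v$; the only outgoing inner half-edge at $v$ is $h_3$, so $\mathcal{C}$ leaves $v$ along $h_3$ and enters along some $h_i$ with $i\ge 4$. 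Because $h_3$ lies immediately next to the hexagon edges $\eleft(v),\eright(v)$ in the rotation at $v$, the interior of $\mathcal{C}$ is forced to contain the outer sector at $v$ delimited by these two edges, and in particular to enclose $\eleft(v),\eright(v)$ and their white endpoints. This is impossible, as those edges and vertices lie on the unbounded outer face, which no bounded cycle can surround. The contradiction shows $Z$ has no clockwise circuit. Pinning down the rotation order at $v$ and verifying that the enclosed sector is exactly the one touching the hexagon is the only delicate part; the remainder is bookkeeping parallel to the already-established Lemma~\ref{lemma:triCompleted}.
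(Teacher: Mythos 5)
Your proposal is correct and takes essentially the same route as the paper's proof: the identical preimage construction (hexagon edges simply oriented toward their white endpoints, the half-edge $h_3$ chosen as the third outgoing half-edge at each outer black vertex of the hexagon) followed by the same angular-sector argument, borrowed from Lemma~\ref{lemma:triCompleted}, to exclude clockwise circuits. You merely make explicit the two steps the paper leaves implicit --- the ``trivial'' first point and the adaptation of Lemma~\ref{lemma:triCompleted}'s circuit argument to the outer black vertices --- and both of your elaborations are sound.
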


\begin{proof}
  The first point is trivial. For the second point, we reason
  similarly as in Lemma~\ref{lemma:triCompleted}. For each black
  vertex $v$ of the hexagon of~$D$, let $(h_1,\ldots, h_m)$ ($m\geq
  3)$ be the sequence of half-edges of $D$ incident to $v$ in
  counter-clockwise order around $v$, with $h_1$ and $h_2$ belonging
  to the two outer edges $\eright(v)$ and $\eleft(v)$ of $D$
  that are incident to $v$. To construct the preimage $Z$ of $Y$, we
  make the edges $\eleft(v)$ and $\eright(v)$ simply oriented toward
  their incident white vertex. The third outgoing half-edge is chosen
  to be $h_3$, which is the ``leftmost'' inner half-edge of $v$. An
  argument similar as in the proof of the second point of
  Lemma~\ref{lemma:triCompleted} ensures that this choice is judicious
  to avoid the creation of a clockwise circuit. An example of this
  construction is shown in
  Figure~\ref{figure:completedOrientations}(b)-(c).  
\hfill $\qed$
\end{proof}

Finally, Proposition~\ref{proposition:complete} follows directly from
Lemma~\ref{proposition:existunique} and Lemma~\ref{lemma:tricomplete}.

Proposition~\ref{proposition:existunique} reduces the proof of
Theorem~\ref{theorem:uniquenessexistence} to proving the existence and
uniqueness of a complete-tri-orientation without cw circuit for any
bicolored complete irreducible dissection.  From now on, we will work
with these dissections.

\subsection{Transposition rules for orientations}
\label{section:derivatedMapOrientation}
Let $D$ be a bicolored complete irreducible dissection and let $G'$ be
the derived map of $D$.  We associate to a complete-tri-orientation of
$D$ an orientation of the edges of $G'$ of $D$ as follows, see
Figure~\ref{figure:localRules}: each edge $e=(v,v')$ ---with $v$ the
primal/dual vertex and $v'$ the edge-vertex--- receives the direction
of the half-edge of $D$ following $e$ in ccw order around $v$.

\begin{figure}
  \def\etalon{.3\linewidth}
  \centering
  \subfigure[]{\includegraphics[height=\etalon]{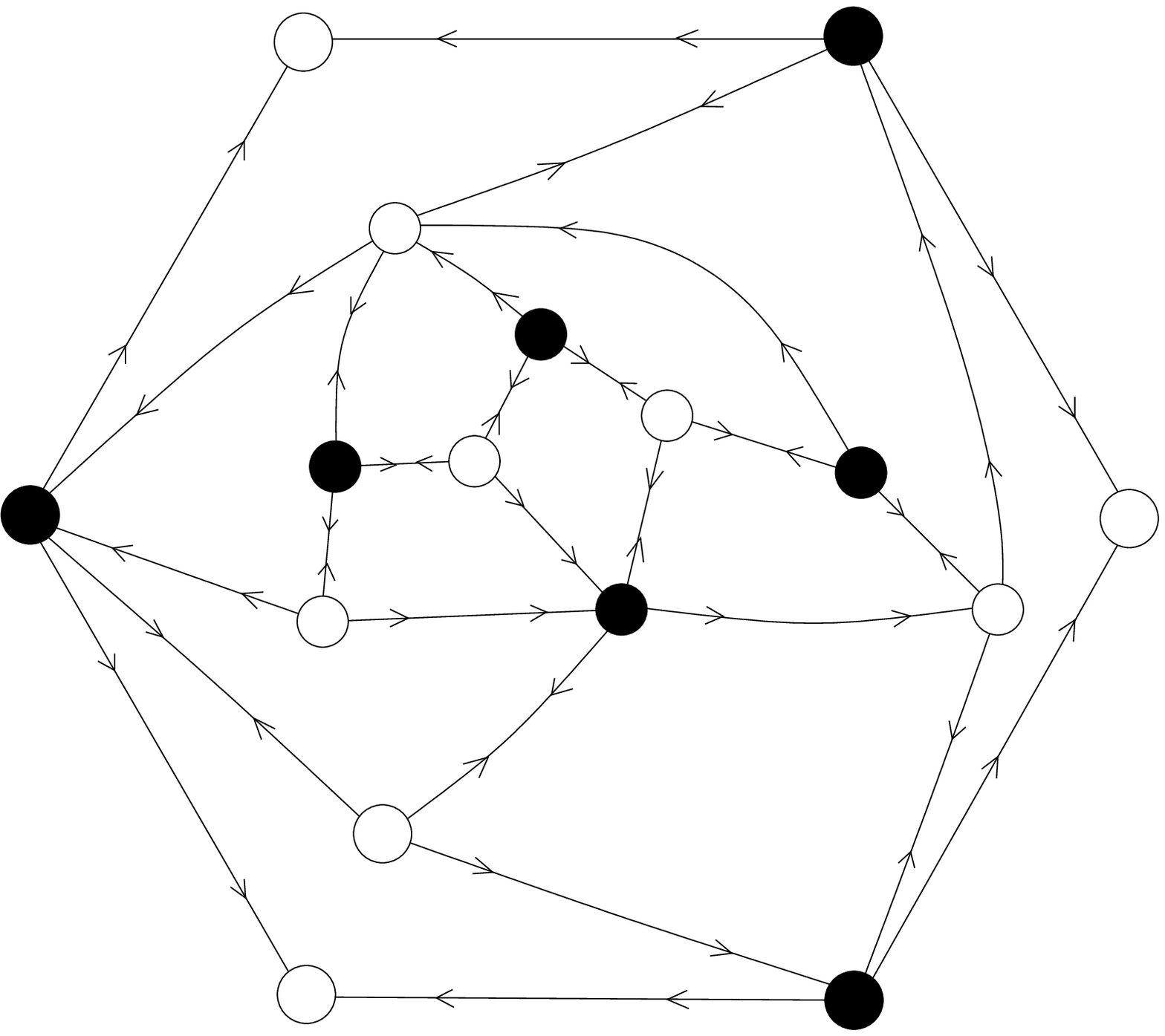}}\qquad
  \subfigure[]{\includegraphics[height=\etalon]{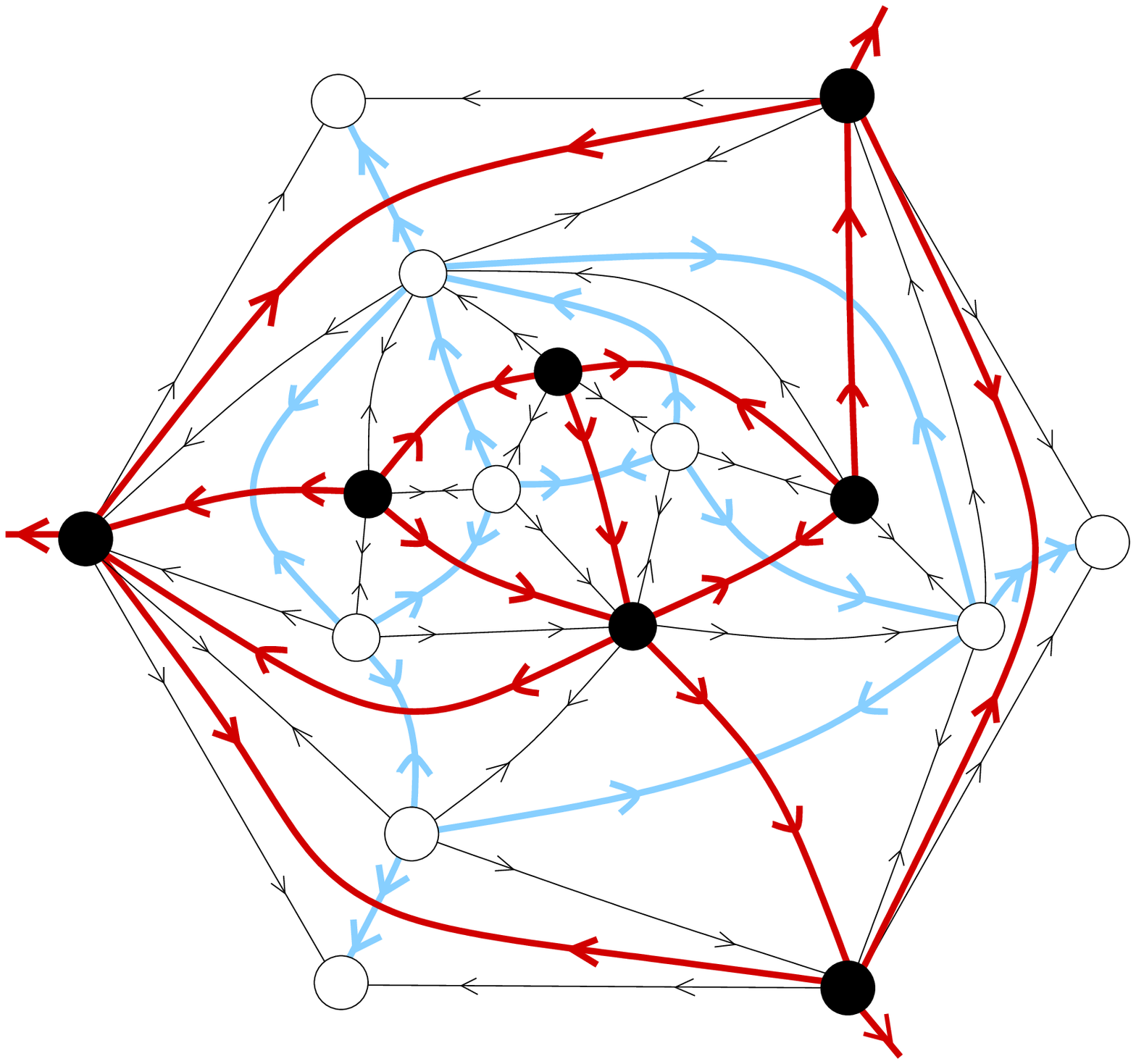}}\qquad
  \subfigure[]{\includegraphics[height=\etalon]{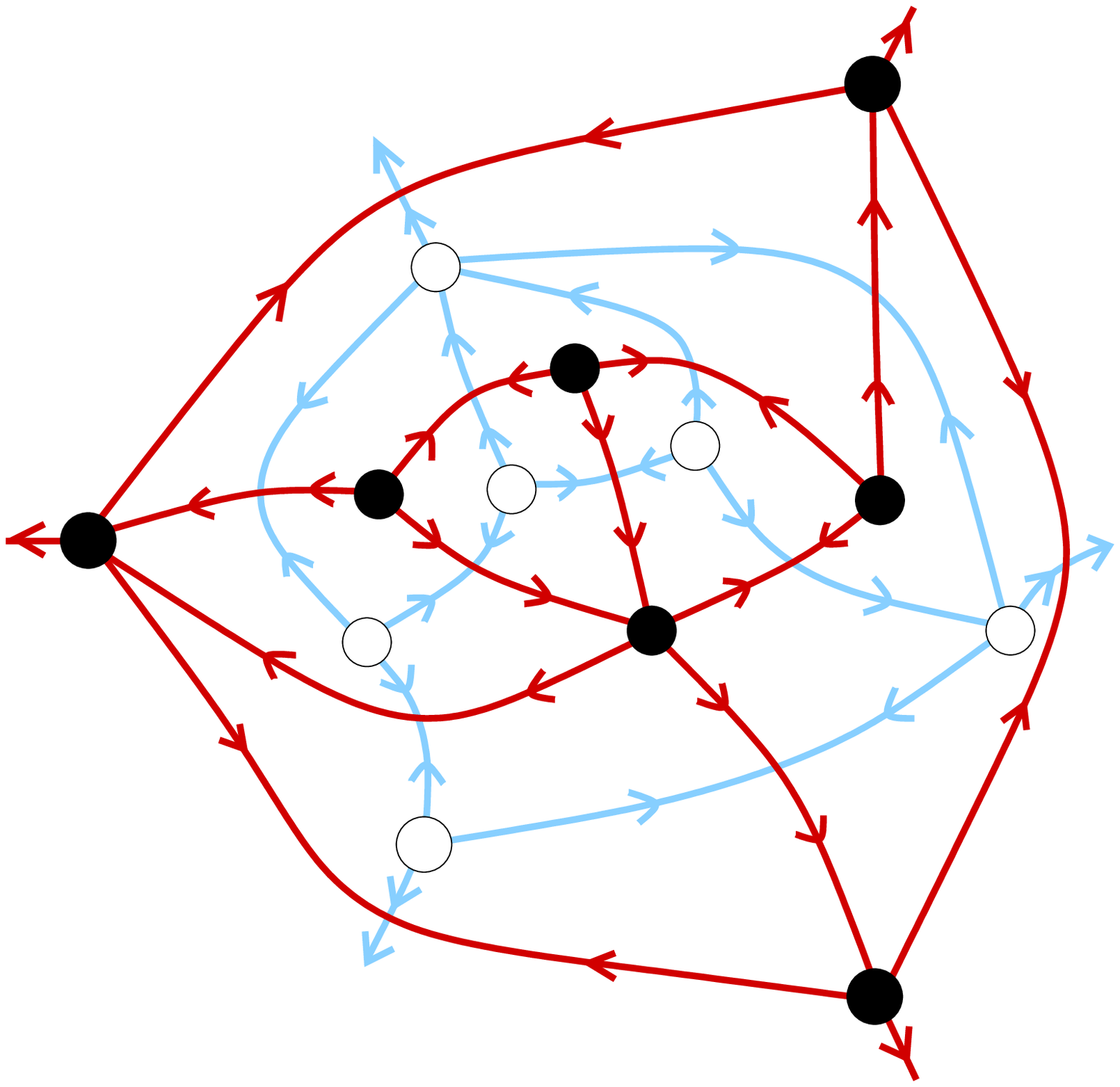}}
  \caption{The construction of the derived map of a bicolored complete
    irreducible dissection. The dissection is endowed with a
    complete-tri-orientation without clockwise circuit, and the
    derived map is endowed with the orientation obtained using the
    transposition rules for orientations.}
  \label{figure:localRules}
\end{figure}

\begin{lemma}
  \label{lemma:orientationDerivated}
  Let $D$ be a bicolored complete irreducible dissection endowed with
  a complete-tri-orientation without clockwise circuit. Then the
  orientation of the derived map $G'$ of $D$ obtained using the
  transposition rules has the following properties:
  \begin{itemize}
  \item
    each primal or dual vertex of $G'$ has outdegree 3.  
  \item
    each edge-vertex of $G'$ has outdegree 1.
  \end{itemize}
  In other words, the orientation of $G'$ obtained by applying the
  transposition rules is an $\alpha_0$-orientation.
\end{lemma}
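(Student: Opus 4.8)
The plan is to verify the two outdegree conditions locally, by examining how the transposition rule assigns directions to the edges of $G'$ incident to each type of vertex. Recall that each edge $e=(v,v')$ of $G'$, with $v$ a primal or dual vertex and $v'$ an edge-vertex, is directed according to the half-edge of $D$ that follows $e$ in counterclockwise order around $v$. The key observation is that the edges of $G'$ incident to a primal vertex $v$ are in natural bijection with the half-edges of $D$ incident to $v$: each edge of $G'$ emanating from $v$ runs along (just after, in ccw order) a corresponding half-edge of $D$ at $v$. Under this correspondence, an edge of $G'$ at $v$ is directed outward precisely when the associated half-edge of $D$ is outward. Since every primal vertex (a black vertex of $D$) has outdegree exactly $3$ in the complete-tri-orientation, the same count transfers to $G'$.

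First I would treat the primal vertices. Let $v$ be a primal (black) vertex, and list its incident half-edges of $D$ in ccw order. The derived-map edges at $v$ are indexed by these half-edges via the ``cw-following'' correspondence, so the outdegree of $v$ in $G'$ equals the number of outward half-edges of $D$ at $v$, which is $3$ by the definition of complete-tri-orientation. The argument for dual (white inner) vertices is identical, again giving outdegree $3$; the three outer white vertices have been removed in forming the derived map, so they need not be considered. Here I would be careful to confirm that the transposition rule, applied around a primal or dual vertex, indeed sets up a clean bijection between the $G'$-edges at $v$ and the $D$-half-edges at $v$, matching their orientations one-to-one — this is where a figure such as Figure~\ref{figure:localRules} does most of the work.

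The more delicate part is the edge-vertices. Each edge-vertex $v'$ of $G'$ arises from an inner face $f$ of $D$ (the crossing of the primal and dual diagonals of $f$) and has degree $4$: two edges toward the two black corners of $f$ and two toward the two white corners. I would show its outdegree is exactly $1$ by relating the four directions assigned by the transposition rule to the orientations of the four half-edges bounding $f$ in $D$. The condition that no edge of $D$ is bi-oriented inward, together with the fact that $f$ is a quadrangular face whose boundary half-edges satisfy the tri-orientation outdegree constraints, forces precisely one of the four $G'$-edges at $v'$ to point away from $v'$. I expect this local case analysis at the quadrangular face — checking that the four possible half-edge orientation patterns around $f$ each yield outdegree $1$ at $v'$ — to be the main obstacle, since it is where the ``no inward bi-orientation'' hypothesis is genuinely used and where the bookkeeping of which half-edge follows which edge in ccw order must be done with care. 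Once both conditions are established, the orientation of $G'$ has outdegree $3$ at all primal and dual vertices and outdegree $1$ at all edge-vertices, which is exactly the definition of an $\alpha_0$-orientation, completing the proof.
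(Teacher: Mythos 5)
Your treatment of the primal and dual vertices is fine and matches the paper's (which dismisses that point as trivial). The gap is in the edge-vertex part: the local case analysis you propose at a single quadrangular face cannot work, because the constraints available locally do not force exactly one inward half-edge on the boundary of $f$. The outdegree of the edge-vertex $v_f$ equals the number $n_f$ of inward half-edges of $D$ incident to $f$, and the only constraints touching these four half-edges are (i) the outdegree condition at their origin vertices, which involves other half-edges not bounding $f$, and (ii) the prohibition of edges with both half-edges inward, which pairs each of the four half-edges with a half-edge incident to the \emph{neighbouring} face, not with another half-edge of $f$. Consequently any value $n_f\in\{1,2,3,4\}$ is locally consistent --- for instance, two opposite boundary edges of $f$ both simply oriented with their $f$-side half-edge inward gives $n_f=2$ without violating anything. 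The only configuration that is genuinely excluded is $n_f=0$, and that exclusion uses the hypothesis you never invoke in this part of your argument: $n_f=0$ means that the boundary of $f$ is a clockwise circuit. The ``no inward bi-orientation'' condition, which you identify as the key ingredient, in fact plays no role here.

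The paper closes the argument globally rather than locally. Having established $n_f\geq 1$ for every inner face from the no-clockwise-circuit hypothesis, it counts half-edges via Euler's relation: a complete-tri-orientation of a dissection with $n$ inner vertices has $4n+14$ half-edges, of which $3(n+3)$ are outgoing, hence $n+5$ are ingoing; three of these are incident to the outer face, leaving $n+2$ ingoing half-edges distributed over the $n+2$ inner faces. Thus $\sum_f n_f=n+2$, and the pigeonhole principle forces $n_f=1$ for every $f$. You would need to replace your local case analysis by this (or an equivalent) global count; as written, the step ``forces precisely one of the four $G'$-edges at $v'$ to point away from $v'$'' is false as a local claim.
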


\begin{proof}
  The first point is trivial.  For the second point, let $f$ be an
  inner face of $D$ and $v_f$ the associated edge-vertex of $G'$ (we
  recall that $v_f$ is the intersection of the two diagonals of
  $f$). The transposition rules for orientation ensures that the
  outdegree of $v_f$ in $G'$ is the number $n_f$ of inward half-edges
  of $D$ incident to $f$.  Hence, to prove that each edge-vertex of
  $G'$ has outdegree~1, we have to prove that $n_f=1$ for each inner
  face $f$ of~$D$. Observe that $n_f$ is a positive number, otherwise
  the contour of $f$ would be a clockwise circuit. Let $n$ be the
  number of inner vertices of~$D$.  Euler's relation implies that $D$
  has $(n+2)$ inner faces and $(4n+14)$ half-edges.  By definition of
  a complete-tri-orientation, $3(n+3)$ half-edges are outgoing. Hence,
  $(n+5)$ half-edges are ingoing. Among these $(n+5)$ ingoing
  half-edges, exactly three are incident to the outer face (see
  Figure~\ref{figure:completedOrientations}(c)). Hence, $D$ has
  $(n+2)$ half-edges incident to an inner face, so that
  $\sum_fn_f=n+2$. As $\sum_fn_f$ is a sum of $(n+2)$ positive numbers
  adding to $(n+2)$, the pigeonhole's principle ensures that $n_f=1$
  for each inner face $f$ of~$D$.  \hfill $\qed$
\end{proof}

\subsection{Uniqueness of a tri-orientation without clockwise circuit}

The following lemma is the companion of Lemma~\ref{lemma:orientationDerivated}
and is crucial to establish the uniqueness of a tri-orientation
without clockwise circuit for any irreducible dissection.

\begin{lemma}
  \label{lemma:orientationDerivatedComplete}
  Let $D$ be a bicolored complete irreducible dissection endowed with
  a complete-tri-orientation $Z$ without clockwise circuit. Let $G'$
  be the derived map of~$D$. Then the $\alpha_0$-orientation $X$ of
  $G'$ obtained from $Z$ by the transposition rules has no clockwise
  circuit.
\end{lemma}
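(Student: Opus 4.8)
The plan is to argue by contraposition: assuming that the $\alpha_0$-orientation $X$ of $G'$ does contain a clockwise circuit $\mathcal{C}'$, I would build from it a clockwise circuit of the complete-tri-orientation $Z$ in $D$, contradicting the hypothesis that $Z$ has none. The starting point is the local dictionary underlying the transposition rules, already exploited in the proof of Lemma~\ref{lemma:orientationDerivated} (see Figure~\ref{figure:localRules}): an edge $e=(v,v_f)$ of $G'$ joining a primal or dual vertex $v$ to an edge-vertex $v_f$ is oriented from $v_f$ towards $v$ exactly when the half-edge of $D$ following $e$ in ccw order around $v$ is inward, and from $v$ towards $v_f$ exactly when that half-edge is outward. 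Combined with the fact that each edge-vertex has outdegree~$1$ (so that $n_f=1$ for every inner face $f$), this tells us precisely how a directed arc of $G'$ through an edge-vertex reads off the inward/outward status of the quadrangle edges bounding $f$.

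Since every edge of $G'$ joins an edge-vertex to a primal or dual vertex, $\mathcal{C}'$ alternates between the two kinds of vertices, so it is naturally cut into arcs, each crossing exactly one edge-vertex $v_f$. First I would turn $\mathcal{C}'$ into a closed walk $\mathcal{C}$ in $D$ by a local deformation performed face by face: whenever $\mathcal{C}'$ enters a face $f$, passes through its edge-vertex $v_f$, and leaves, the two half-diagonals it uses are replaced by a detour of length~$2$ running along the boundary of $f$, on the side prescribed by the ccw convention of the transposition rules (the outdegree-$1$ property of $v_f$ forces $\mathcal{C}'$ to leave $v_f$ along its unique outgoing edge, which fixes this side unambiguously). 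The local dictionary then guarantees that each of the two quadrangle edges used by such a detour is traversed either along its orientation when it is simply oriented, or as a bi-oriented edge; in both cases the edge is admissible in a clockwise circuit of $D$ in the sense of Section~\ref{section:tri-orientations}. Concatenating these detours over all faces met by $\mathcal{C}'$ yields a closed walk $\mathcal{C}$, all of whose edges are bi-oriented or simply oriented with a well-defined interior side.

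The delicate point --- and the one I expect to be the real obstacle --- is to check that the deformation preserves the \emph{clockwise} character, i.e. that the interior of $\mathcal{C}$ lies to the right of each of its edges, and that $\mathcal{C}$ can be taken simple. Since each detour is homotopic, within the single face $f$, to the arc of $\mathcal{C}'$ crossing $v_f$, the deformation does not change the enclosed region except inside the traversed faces, so the interior of $\mathcal{C}'$ (which lies on the right by assumption) is carried to the right of $\mathcal{C}$; the ccw convention is designed exactly so that each detour is routed on the correct side for this to hold. Handling the bi-oriented edges of $D$ (those forming the spanning tree of Lemma~\ref{lemma:completeTriGraph}) and the corners where consecutive detours meet at a primal or dual vertex requires a short case analysis on the cyclic position of the relevant half-edges, and one must finally extract a \emph{simple} clockwise circuit from the closed walk $\mathcal{C}$, for instance an innermost one. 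Once this is done, the existence of a clockwise circuit of $Z$ contradicts the hypothesis, proving that $X$ has no clockwise circuit.
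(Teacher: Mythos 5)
Your proposal follows essentially the same route as the paper's proof: assume a clockwise circuit $\cC$ of $X$, use the outdegree-$1$ property of edge-vertices together with the transposition rules to replace each passage of $\cC$ through an edge-vertex by a correctly oriented detour along the boundary of the corresponding quadrangular face on the exterior side of $\cC$, concatenate these detours into a closed oriented walk of $D$ enclosing the interior of $\cC$ on its right, and extract a simple clockwise circuit of $Z$, contradicting the hypothesis. The only slip is that the detour has length $m-1$ with $2\leq m\leq 4$ (one, two, or three sides of the quadrangle, depending on which corners the circuit enters and leaves by), not always length~$2$.
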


\begin{proof}
  Assume that $X$ has a clockwise circuit $\cC$. Each edge of $G'$
  connects an edge-vertex and a vertex of the original
  dissection~$D$. Hence, the circuit $\cC$ consists of a sequence of
  pairs $(\underline{e},\overline{e})$ of consecutive edges of $G'$
  such that $\underline{e}$ goes from a vertex $\underline{v}$ of the
  dissection toward an edge-vertex $v'$ of $G'$ and $\overline{e}$
  goes from $v'$ toward a vertex $\overline{v}$ of the dissection. Let
  $(e_1',\ldots,e_m')$ be the sequence of edges of $G'$ between
  $\underline{e}$ and $\overline{e}$ in clockwise order around $v'$,
  so that $e_1'=\underline{e}$; and $e_m'=\overline{e}$ and let
  $(v_1,\ldots,v_m)$ be their respective extremities, so that
  $v_1=\underline{v}$ and $v_m=\overline{v}$. Notice that $2\leq m\leq
  4$.

\begin{figure}
  \centering
  \scalebox{.7}{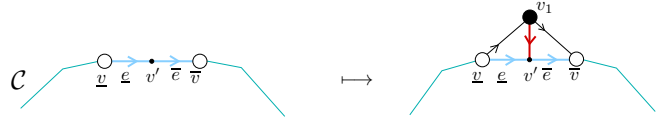}
  \caption{An oriented path of edges of the dissection can be associated
      to each pair $(\underline{e},\overline{e})$ of consecutive edges of
      $\cC$ sharing an edge-vertex.}
  \label{figure:coupeEdges}
\end{figure}

  As each edge-vertex has outdegree 1 in $X$ and as $e_m'$ is going
  out of $v'$, the edges $e_1',\ldots,e_{m-1}'$ are directed toward
  $v'$. Hence, the transposition rules for orientations ensure that the
  edges $(v_i,v_{i+1})$, for $1\leq i\leq m-1$, are all bi-oriented or
  oriented from $v_i$ to $v_{i+1}$ in the complete-tri-orientation $Z$
  of~$D$. Hence, we can go from $\underline{v}$ to $\overline{v}$
  passing by the exterior of $\cC$ and using only edges of~$D$, see
  Figure~\ref{figure:coupeEdges} for an example, where $m=3$.

  Concatenating the paths of edges of $D$ associated to each pair
  $(\underline{e},\overline{e})$ of $\cC$, we obtain a closed oriented path
  of edges of $D$ enclosing the interior of $\cC$ on its right. Clearly,
  a simple clockwise circuit can be extracted from this closed path,
  see Figure~\ref{figure:extractSimpleCycle}. As the
  complete-tri-orientation $Z$ has no clockwise circuit, this yields a
  contradiction.  
\phantom{1}\hfill $\qed$
\end{proof}

\begin{figure}
  \centering
  \scalebox{.4}{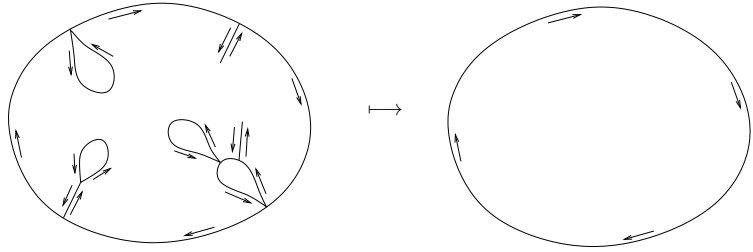}
  \caption{A simple clockwise circuit can be extracted from an oriented
    path enclosing a bounded simply connected region on its right.}
\label{figure:extractSimpleCycle}
\end{figure}

\begin{proposition}\label{proposition:uniqueness}
  Each irreducible dissection has at most one tri-orientation without
  clockwise circuit.
\end{proposition}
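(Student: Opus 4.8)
The plan is to reduce, via the machinery of Section~\ref{section:complete}, to the case of a bicolored complete irreducible dissection, and then to show that the transposition map of Section~\ref{section:derivatedMapOrientation} is injective, so that uniqueness follows from the uniqueness of the $\alpha_0$-orientation without clockwise circuit furnished by Theorem~\ref{theorem:felsner}.

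First I would set up the reduction. Given an irreducible dissection $D$, let $D^c$ be its completion. Lemma~\ref{lemma:triCompleted} provides a map $\Phi$ that sends every tri-orientation of $D^c$ without clockwise circuit to one of $D$, and whose image contains every tri-orientation of $D$ without clockwise circuit; Lemma~\ref{lemma:tricomplete} provides a map $\Psi$ from complete-tri-orientations of $D^c$ without clockwise circuit onto the tri-orientations of $D^c$ without clockwise circuit. Because both maps are onto, it is enough to bound the source of $\Psi$: if a bicolored complete irreducible dissection carries at most one complete-tri-orientation without clockwise circuit, then $D^c$ carries at most one tri-orientation without clockwise circuit, and hence so does $D$. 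This is exactly the uniqueness half of Proposition~\ref{proposition:complete}.

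Next comes the core of the argument. Fix a bicolored complete irreducible dissection $D$ with derived map $G'$, and let $Z \mapsto X$ be the transposition map. By Lemma~\ref{lemma:orientationDerivated} and Lemma~\ref{lemma:orientationDerivatedComplete}, if $Z$ is a complete-tri-orientation of $D$ without clockwise circuit, then $X$ is an $\alpha_0$-orientation of $G'$ without clockwise circuit. I would then show that $Z$ is entirely recoverable from $X$. Around any primal or dual vertex $v$, the half-edges of $D$ at $v$ and the edges of $G'$ at $v$ (the half-diagonals) alternate, so each inner half-edge of $D$ incident to $v$ is the counter-clockwise successor of a unique edge of $G'$; by the very definition of the transposition rule it therefore carries the same direction as that edge, whence $X$ prescribes the orientation of every inner half-edge of $D$. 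The remaining (outer) half-edges are forced independently of $X$: each outer white vertex has outdegree~$0$, so both its half-edges are inward, which in turn forces every hexagon edge to be simply oriented toward its white endpoint. Hence $Z$ is completely determined by $X$, i.e. the transposition map is injective.

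Assembling these pieces finishes the proof: two complete-tri-orientations $Z_1,Z_2$ of $D$ without clockwise circuit transpose to $\alpha_0$-orientations $X_1,X_2$ of $G'$ without clockwise circuit, which coincide by the uniqueness statement of Theorem~\ref{theorem:felsner}, and injectivity then yields $Z_1=Z_2$. The main obstacle is the injectivity claim: one must verify the local alternation of $D$-half-edges and $G'$-edges around each vertex and, in particular, treat the boundary with care, checking that the half-edges \emph{not} reached by the transposition rule (the hexagon half-edges and the additional outward half-edges of $G'$) are pinned down by the outdegree constraints of a complete-tri-orientation. Once this local bookkeeping is in place, the global conclusion is immediate from Theorem~\ref{theorem:felsner}.
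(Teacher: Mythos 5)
Your proposal is correct and follows essentially the same route as the paper: reduce to bicolored complete irreducible dissections via Proposition~\ref{proposition:complete}, observe that the transposition rules define an injective map into $\alpha_0$-orientations without clockwise circuit (using Lemma~\ref{lemma:orientationDerivatedComplete}), and conclude from the uniqueness statement of Theorem~\ref{theorem:felsner}. The only difference is that you spell out the injectivity (local alternation of half-edges and half-diagonals, plus the forced orientation of the hexagon edges), which the paper dismisses as clear.
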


\begin{proof}
  Let $D$ be a bicolored complete irreducible dissection and $G'$ its
  derived map. A first important remark is that the transposition
  rules for orientations clearly define an injective mapping. In
  addition, Lemma~\ref{lemma:orientationDerivatedComplete} ensures
  that the image of a complete-tri-orientation of $D$ without
  clockwise circuit is an $\alpha_0$-orientation of $G'$ without
  clockwise circuit. Hence, injectivity of the mapping and uniqueness
  of an $\alpha_0$-orientation without clockwise circuit of $G'$
  (Theorem~\ref{theorem:felsner}) ensure that $D$ has at most one
  complete-tri-orientation without clockwise circuit. Hence,
  Proposition~\ref{proposition:complete} implies that each irreducible
  dissection has at most one tri-orientation without clockwise
  circuit.  
\hfill $\qed$
\end{proof}

\subsection{Existence of a tri-orientation without clockwise circuit}
\label{section:existence}

\paragraph{Inverse of the transposition rules}
Let $D$ be a bicolored complete irreducible dissection and $G'$ its
derived map. Given an $\alpha_0$-orientation of $G'$, we
associate to this orientation an orientation of the half-edges of $D$
by performing the inverse of the transposition rules: each
half-edge $h$ of $D$ receives the orientation of 
the edge of $G'$ that follows $h$ in clockwise order around
its incident vertex, see Figure~\ref{figure:localRules}(b).

\begin{lemma}
  \label{lemma:invRulesTrans}
  Let $D$ be an irreducible dissection and $G'$ the derived map
  of~$D$, endowed with its minimal $\alpha_0$-orientation.  Then the
  inverse of the transposition rules for orientations yields a
  complete-tri-orientation of~$D$.
\end{lemma}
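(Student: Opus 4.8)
The plan is to verify directly that the half-edge orientation $Z$ of $D$ produced by the inverse transposition rules satisfies the three defining conditions of a complete-tri-orientation: (i) every black vertex and every inner white vertex has outdegree $3$; (ii) each of the three outer white vertices has outdegree $0$; and (iii) no edge of $D$ has both half-edges oriented inward. Since the construction is by design inverse to the rules of Lemma~\ref{lemma:orientationDerivated}, the first thing I would record is the underlying local bijection: around any primal or dual vertex $v$, the half-edges of $D$ at $v$ and the edges of $G'$ at $v$ (the half-diagonals, together with the dangling half-edge when $v$ is an outer vertex of $G'$) alternate in cyclic order, and the rule assigns to each half-edge $h$ at $v$ the edge $e$ of $G'$ following $h$ clockwise, declaring $h$ outgoing at $v$ exactly when $e$ is outgoing at $v$. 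Thus the rule transports the outdegree of $v$ in $G'$ to the outdegree of $v$ in $Z$ verbatim.

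Condition (i) is then immediate: as $\alpha_0(v)=3$ for every primal and dual vertex, the bijection forces $v$ to have outdegree $3$ in $Z$ (for an outer black vertex the outward-directed dangling half-edge accounts for one of the three, exactly as in $G'$). For condition (ii) I would exploit the special role of the three corner edge-vertices. At an outer white vertex $w$, which has degree $2$ in the complete dissection, sits a single inner face $f$, whose edge-vertex $v_f$ is one of the six outer vertices of $G'$ and so carries a dangling half-edge directed outward. Since $\alpha_0(v_f)=1$, this dangling half-edge is the \emph{unique} outgoing edge of $v_f$, so every half-diagonal at $v_f$ is incoming; translating through the inverse rule, the half-edges of $D$ at the two outer black neighbours of $w$ bordering $f$ become outgoing, while the two hexagon half-edges carried by $w$ itself, which are matched to no edge of $G'$ as $w$ has been deleted, are completed inward, giving $w$ outdegree $0$ and making its two incident edges simply oriented toward their black endpoints.

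The heart of the argument is condition (iii) for an inner edge $e=(v,u)$ of $D$ with adjacent inner faces $f$ and $f'$, where $f$ (resp. $f'$) is the face to the right of the half-edge of $e$ at $v$ (resp. at $u$). Suppose for contradiction that both half-edges of $e$ are inward. Under the inverse rule the half-edge at $v$ corresponds to the half-diagonal $v\,v_f$ and the one at $u$ to $u\,v_{f'}$, so both being inward means $v\,v_f$ is oriented $v_f\to v$ and $u\,v_{f'}$ is oriented $v_{f'}\to u$. Each edge-vertex has outdegree exactly $1$, so these are the unique outgoing edges of $v_f$ and $v_{f'}$; hence the remaining half-diagonals $u\,v_f$ and $v\,v_{f'}$ are incoming at their edge-vertices, i.e. oriented $u\to v_f$ and $v\to v_{f'}$. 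Then $v\to v_{f'}\to u\to v_f\to v$ is a directed $4$-cycle of $G'$ encircling $e$, and a direct check of the planar positions of $v,v_{f'},u,v_f$ around $e$ shows it winds clockwise — contradicting the fact that the minimal $\alpha_0$-orientation has no clockwise circuit (Theorem~\ref{theorem:felsner}). No inner edge is therefore doubly inward, and combined with the boundary case settled in condition (ii) this establishes (iii).

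The routine parts are the local bijection and condition (i); the step I expect to require the most care is the boundary bookkeeping behind condition (ii), namely matching the dangling half-edges at the three corner edge-vertices and the hexagon half-edges at the outer black vertices against the three deleted outer white vertices, and checking that the forced orientations propagate consistently around each corner. The contradiction in condition (iii) rests cleanly on the two ingredients $\alpha_0(v_f)=1$ and the absence of clockwise circuits, so the only genuinely geometric point to pin down there is the orientation (clockwise, not counterclockwise) of the $4$-cycle surrounding a hypothetical doubly-inward edge.
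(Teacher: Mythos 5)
Your proof is correct and follows essentially the same route as the paper's: outdegrees are transported verbatim by the local correspondence between half-edges of $D$ and edges of $G'$ (with the boundary conventions at the deleted outer white vertices), and a doubly-inward edge $e$ would force the quadrilateral $v\to v_{f'}\to u\to v_f\to v$ --- which is exactly the boundary of the face of $G'$ containing $e$ --- to be a clockwise circuit, contradicting minimality. The paper states both steps more tersely (deferring the bookkeeping to its figures), but the argument is the same.
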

 
\begin{proof}
  The inverse of the transposition rules is clearly such that a vertex
  has the same outdegree in the orientation of $D$ as in the
  $\alpha_0$-orientation of $G'$. Hence, each vertex of $D$ has
  outdegree 3 except the 3 outer white vertices that have outdegree~0,
  see Figure~\ref{figure:localRules}(b).

  To prove that the orientation of $D$ is a complete-tri-orientation,
  it remains to show that the two half-edges of an edge $e$ of $D$ can
  not both be oriented inward. Assume a contrario that there exists
  such an edge $e$.  The transposition rules for orientation and the
  fact that each edge-vertex of $G'$ has outdegree 1 imply that the
  boundary of the face $f_e$ of $G'$ associated to $e$ is a clockwise
  circuit, see Figure~\ref{figure:inverseTri}. This yields a
  contradiction with the minimality of the $\alpha_0$-orientation.
  \hfill $\qed$
\end{proof}

\begin{figure}
  \centering
  \scalebox{.4}{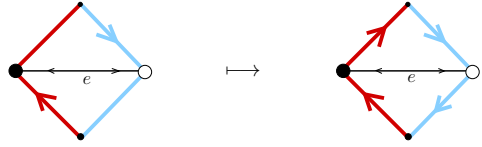}
  \caption{The case where the two half-edges of $e$ are oriented
    inward implies that the boundary of the associated face of $G'$
    is a clockwise circuit.}
    \label{figure:inverseTri}
\end{figure}


\begin{lemma}\label{lemma:derivatedtodis}
  Let $D$ be a bicolored complete irreducible dissection and let $G'$
  be its derived map.  Then the complete-tri-orientation of $D$
  associated with the minimal $\alpha_0$-orientation of $G'$ has no cw
  circuit.
\end{lemma}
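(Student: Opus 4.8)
The plan is to argue by contradiction, mirroring the proof of Lemma~\ref{lemma:orientationDerivatedComplete} but exchanging the roles of $D$ and $G'$. Write $X$ for the minimal $\alpha_0$-orientation of $G'$ and $Z$ for the orientation of the half-edges of $D$ obtained from $X$ by the inverse transposition rules. By Lemma~\ref{lemma:invRulesTrans} we already know that $Z$ is a complete-tri-orientation of $D$; moreover, since the transposition rules and their inverse are mutually inverse local operations, applying the direct transposition rules to $Z$ recovers $X$. Suppose, for contradiction, that $Z$ has a clockwise circuit $\mathcal{C}$ in $D$. The goal is to manufacture from $\mathcal{C}$ a clockwise circuit of $X$ in $G'$, which is impossible since $X$, being minimal, has none (Theorem~\ref{theorem:felsner}).

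First I would record two local facts, both immediate from the transposition rules. (i) For a directed edge $v\to v'$ of $\mathcal{C}$ (either bi-oriented, or simply oriented with the interior of $\mathcal{C}$ on its right), let $f$ be the inner face of $D$ lying on the right of this edge; then the derived-map edge joining the tail $v$ to the edge-vertex $v_f$ is oriented from $v$ into $v_f$, because, read counter-clockwise around $v$, the half-edge following that derived edge is exactly the outgoing half-edge $v\to v'$. (ii) Each edge-vertex $v_f$ has outdegree exactly $1$ in $X$ (Lemma~\ref{lemma:orientationDerivated}), so once a walk enters $v_f$ it is forced to leave along the unique outgoing derived edge.

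Using (i)--(ii) I would trace a closed walk in $G'$ hugging $\mathcal{C}$ from its interior side. Since $G'$ is bipartite between primal/dual vertices and edge-vertices, the walk alternates between vertices of $\mathcal{C}$ and edge-vertices of the interior faces bordering $\mathcal{C}$: following $\mathcal{C}$, one inserts for each boundary arc along a face $f$ the edge-vertex $v_f$, entering $v_f$ at the start of the arc (legitimate by (i)) and leaving along its forced outgoing edge (by (ii)). Concatenating these pieces yields a closed oriented walk of $G'$ enclosing the interior of $\mathcal{C}$ on its right, and a simple clockwise circuit of $X$ can then be extracted exactly as in Lemma~\ref{lemma:orientationDerivatedComplete} (Figure~\ref{figure:extractSimpleCycle}), contradicting the minimality of $X$.

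The main obstacle is precisely the orientation bookkeeping at the turns of $\mathcal{C}$, namely checking that the walk really is clockwise and really closes up. The delicate point is that the forced outgoing edge of an interior edge-vertex $v_f$ in (ii) need not point back to the next vertex of $\mathcal{C}$: it points to the corner of $f$ carrying the unique inward boundary half-edge, which may lie strictly inside $\mathcal{C}$. One must therefore verify, by the same case analysis on the number $2\le m\le 4$ of derived edges around an edge-vertex used in Lemma~\ref{lemma:orientationDerivatedComplete}, that these forced detours remain within the region bounded by $\mathcal{C}$ and reconnect consistently, so that the assembled walk is a genuine clockwise closed walk. Once this local analysis is in place, the contradiction with minimality is immediate and Lemma~\ref{lemma:derivatedtodis} follows.
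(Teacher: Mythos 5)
Your starting observation is the same as the paper's: for each vertex $v$ of the clockwise circuit $\mathcal{C}$ of $Z$, the derived edge $e_v$ into the face of $D$ lying inside $\mathcal{C}$ on the right of the outgoing circuit half-edge is itself directed out of $v$ in $X$. But the construction you build on this does not go through, and you have in fact put your finger on exactly the point where it fails without resolving it. The closed walk you want to assemble is forced only at edge-vertices (outdegree $1$); at a primal or dual vertex the outdegree is $3$ and nothing dictates which outgoing edge to take. When the unique outgoing edge of an edge-vertex $v_f$ points to a corner $w$ of $f$ lying strictly inside $\mathcal{C}$ --- which does happen, since that corner is the origin of the unique inward boundary half-edge of $f$ and need not lie on $\mathcal{C}$ --- your walk is stranded at $w$ with three outgoing choices and no local rule that keeps it ``hugging'' $\mathcal{C}$ or guarantees it ever returns to the boundary. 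The case analysis on $2\le m\le 4$ borrowed from Lemma~\ref{lemma:orientationDerivatedComplete} cannot be transplanted: that analysis works precisely because, in that direction, the pivot vertices of the detours are edge-vertices, whose outdegree $1$ forces every other incident edge of $G'$ to be inward, so the detour in $D$ is completely determined; there is no analogous forcing at primal or dual vertices, and this is why the two directions of the transposition are not symmetric.

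The paper closes this gap with a genuinely global argument that your sketch is missing. It invokes Felsner's straight paths: from $e_{v_0}$ one follows a simple oriented path $\mathcal{P}_{v_0}$ of $X$ which must exit the interior of $\mathcal{C}$ and hence hit $\mathcal{C}$ at some $v_1\neq v_0$; because $e_{v_1}$ is the most counterclockwise outgoing edge of $v_1$ inside $\mathcal{C}$, the next straight path starts inside the region already cut off, and an induction shows that the landing points $v_1,v_2,\ldots$ progress strictly along $\mathcal{C}$, so some straight path must eventually hit the union of the previously traced paths and close up a clockwise circuit of $X$, contradicting minimality. Some argument of this kind --- one that tolerates arbitrarily deep excursions into the interior and supplies a termination measure --- is required; a purely local walk along the inner side of $\mathcal{C}$ cannot be made to work.
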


\begin{proof}
  Let $X$ be the minimal $\alpha_0$-orientation of $G'$ and let $Z$ be
  the associated complete-tri-orientation of~$D$.  Assume that $Z$ has
  a clockwise circuit $\mathcal{C}$. For each vertex $v$ on
  $\mathcal{C}$, we denote by $h_v$ the half-edge of $\mathcal{C}$
  starting from $v$ with the interior of $\mathcal{C}$ on its
  right, and we denote by $e_v$ the edge of $G'$ that follows $h_v$ in
  clockwise order around $v$.  As $\mathcal{C}$ is a clockwise circuit
  for $Z$, $h_v$ is going out of $v$.  Hence, by definition of the
  transposition rules, $e_v$ is going out of $v$. Observe that, in the
  interior of $\mathcal{C}$, $e_v$ is the most counter-clockwise edge
  of $G'$ incident to $v$.

  We use this observation to build iteratively a clockwise circuit of
  $X$, yielding a contradiction.  First we state the following result
  proved in~\cite{Fe03}: ``for each vertex $v\in G'$ there exists a
  simple oriented path $\cP_v$ in $G'$, called the \emph{straight
    path} of $v$, which starts at $v$ and ends at a vertex incident to
  the outer face of $G'$".  Let $v_0$ be a vertex on $\mathcal{C}$,
  and $\mathcal{P}_{v_0}$ be the straight path starting at $e_{v_0}$
  for the orientation $X$.  Then $\mathcal{P}_{v_0}$ has to reach
  $\mathcal{C}$ at a vertex $v_1$ different from $v_0$.  Denote by
  $P_1$ the part of $\mathcal{P}_{v_0}$ between $v_0$ and $v_1$, by
  $\Lambda_1$ the part of the clockwise circuit $\mathcal{C}$ between
  $v_1$ and $v_0$, and by $\mathcal{C}_1$ the cycle enclosed by the
  concatenation of $P_1$ and $\Lambda_1$.  Let $\mathcal{P}_{v_1}$ be
  the straight path starting at $e_{v_1}$. The fact that $e_{v_1}$ is
  the most counterclockwise incident edge of $v_1$ in the interior of
  $\mathcal{C}$ ensures that $\mathcal{P}_{v_1}$ starts in the
  interior of $\mathcal{C}_1$.  Then, the path $\mathcal{P}_{v_1}$ has
  to reach $\mathcal{C}_1$ at a vertex $v_2\neq v_1$.  We denote by
  $P_2$ the part of the path $\mathcal{P}_{v_1}$ between $v_1$ and
  $v_2$.  If $v_2$ belongs to $P_1$, then the concatenation of the
  part of $P_1$ between $v_2$ and $v_1$ and of the part of $P_2$
  between $v_1$ and $v_2$ is a clockwise circuit, a contradiction.
  Hence, $v_2$ is on $\Lambda_1$ strictly between $v_1$ and $v_0$.  We
  denote by $\overline{P}_2$ the concatenation of $P_1$ and $P_2$, and
  by $\Lambda_2$ the part of $\mathcal{C}$ going from $v_2$ to
  $v_0$. As $v_2$ is strictly between $v_1$ and $v_0$, $\Lambda_2$ is
  strictly included in $\Lambda_1$.  Finally, we denote by
  $\mathcal{C}_2$ the cycle made of the concatenation of
  $\overline{P}_2$ and $\Lambda_2$. Hence, similarly as for the path
  $\mathcal{P}_{v_1}$, the straight path $\mathcal{P}_{v_2}$ starting
  at $e_{v_2}$ must start in the interior of $\mathcal{C}_2$.

  Then we continue iteratively, see Figure~\ref{figure:cyclepaths}.
  At each step $k$, we consider the straight path
  $\mathcal{P}_{v_{k}}$ starting at $e_{v_{k}}$. This path starts in
  the interior of the cycle $\mathcal{C}_{k}$, and reaches
  $\mathcal{C}_{k}$ at another vertex $v_{k+1}$. This vertex $v_{k+1}$
  can not belong to $\overline{P}_{k}:=P_1\cup\ldots\cup P_{k}$,
  otherwise a clockwise circuit of $X$ would be created. Hence,
  $v_{k+1}$ is on $\mathcal{C}$ strictly between $v_{k}$ and
  $v_{0}$. In particular the path $\Lambda_{k+1}$ going from $v_{k+1}$
  to $v_{0}$ on $\mathcal{C}$, is strictly included in the path
  $\Lambda_{k}$ going from $v_{k}$ to $v_{0}$ on $\mathcal{C}$, i.e.,
  $\Lambda_k$ shrinks strictly at each step. Thus, there must be a
  step $k_0$ when $\mathcal{P}_{v_{k_0}}$ reaches $\mathcal{C}_{k_0}$
  at a vertex on $\overline{P}_{k_0}$, creating a clockwise circuit of
  $X$, a contradiction.  
\hfill $\qed$
\end{proof}

\begin{figure}
  \centering
  \scalebox{.8}{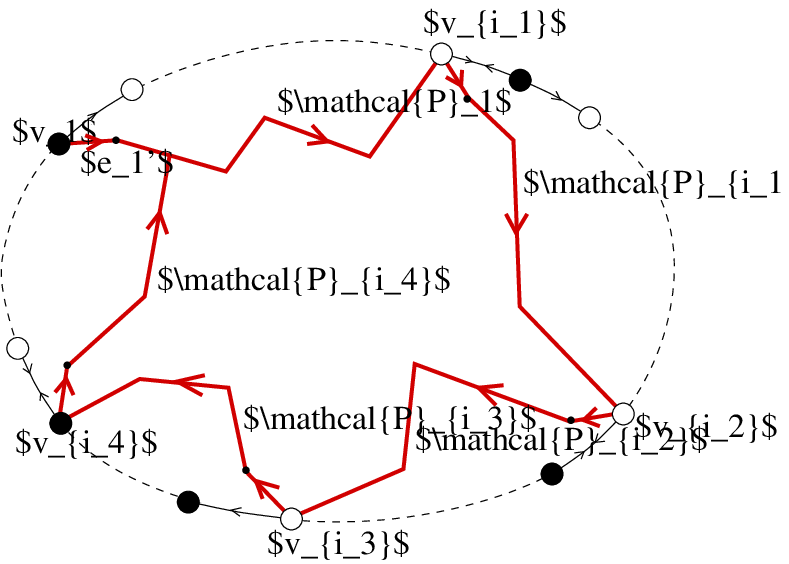}
  \caption{The presence of a clockwise circuit in $Z$ implies the
    presence of a clockwise circuit in $X$.}
  \label{figure:cyclepaths}
\end{figure} 

\begin{proposition}
  \label{propo:existsTri}
  For each irreducible dissection, there exists a tri-orientation without 
  clockwise circuit.
\end{proposition}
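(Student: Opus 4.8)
The plan is to produce the required tri-orientation as the pull-back of the minimal $\alpha_0$-orientation of a derived map, and then to transport its existence back along the reductions of Section~\ref{section:complete}. Since uniqueness has already been settled in Proposition~\ref{proposition:uniqueness}, only existence remains, and by the reduction of Proposition~\ref{proposition:complete} it is enough to exhibit, for every bicolored complete irreducible dissection, a complete-tri-orientation without clockwise circuit. So I would start from an arbitrary irreducible dissection $D$, fix a bicoloring of its vertices (available because all faces have even degree, and the tri-orientation is independent of this choice), and reduce to the complete case by passing to the completion $D^c$, which is a bicolored complete irreducible dissection by Lemma~\ref{lemma:completedComplete}.

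The construction on $D^c$ then proceeds as follows. Let $G'$ be the derived map of $D^c$. By Theorem~\ref{theorem:felsner} the function $\alpha_0$ (outdegree $3$ on primal and dual vertices, outdegree $1$ on edge-vertices) is feasible, so $G'$ admits its minimal $\alpha_0$-orientation $X$, that is, an $\alpha_0$-orientation without clockwise circuit. This is the only point where genuinely new existence input is needed; every remaining step merely translates between $G'$ and $D^c$. Applying the inverse transposition rules to $X$ yields, by Lemma~\ref{lemma:invRulesTrans}, a complete-tri-orientation $Z$ of $D^c$, and by Lemma~\ref{lemma:derivatedtodis} this $Z$ has no clockwise circuit.

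It remains to climb back up the reduction. The first part of Lemma~\ref{lemma:tricomplete} turns $Z$ into a tri-orientation $\Psi(Z)$ of $D^c$ without clockwise circuit, and the first part of Lemma~\ref{lemma:triCompleted} turns this into a tri-orientation $\Phi(\Psi(Z))$ of the original dissection $D$ without clockwise circuit; this is precisely the existence direction packaged in Lemma~\ref{proposition:existunique}. The resulting orientation is the desired tri-orientation, which completes the argument.

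The proof is therefore pure assembly: no computation is required, only the correct chaining of already-proved statements. The real obstacle of the existence half has in fact been surmounted earlier, in Lemma~\ref{lemma:derivatedtodis}, whose straight-path argument shows that a clockwise circuit of $Z$ would force one in the minimal orientation $X$. That delicate step, together with the feasibility of $\alpha_0$ supplied by Theorem~\ref{theorem:felsner}, is exactly what lets the present proposition follow so cleanly, and it is where I would concentrate all the attention if the supporting lemmas were not already available.
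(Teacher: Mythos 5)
Your proposal is correct and follows essentially the same route as the paper: existence of the minimal $\alpha_0$-orientation of the derived map (Theorem~\ref{theorem:felsner}), pull-back via the inverse transposition rules (Lemmas~\ref{lemma:invRulesTrans} and~\ref{lemma:derivatedtodis}), then transport back through the completion reduction, which the paper packages as Proposition~\ref{proposition:complete} and you simply unpack into Lemmas~\ref{lemma:tricomplete} and~\ref{lemma:triCompleted}. You also correctly identify Lemma~\ref{lemma:derivatedtodis} as the step carrying the real content.
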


\begin{proof}
  Lemma~\ref{lemma:derivatedtodis} ensures that each bicolored
  complete irreducible dissection $D$ has a complete-tri-orientation
  $Z$ without clockwise circuit; and
  Proposition~\ref{proposition:complete} ensures that the existence of
  a complete-tri-orientation without clockwise circuit for any
  bicolored complete irreducible dissection implies the existence of a
  tri-orientation without clockwise circuit for any irreducible
  dissection.  
\hfill $\qed$
\end{proof} 

Finally, Theorem~\ref{theorem:uniquenessexistence} follows from
Proposition~\ref{proposition:uniqueness} and
Proposition~\ref{propo:existsTri}.


\newcommand{\gs}[1]{\marginpar{\begin{raggedleft}#1\end{raggedleft}}}

\section{Computing the minimal
  $\alpha_0$-orientation of a derived map} 
\label{section:compute}
We describe in this section a linear-time algorithm to compute the
minimal $\alpha_0$-orientation of the derived map of an
outer-triangular 3-connected plane graph. This result is crucial for
the encoding algorithm of Section~\ref{section:codingAlgo} to have
linear time complexity (see the transition between
Figure~\ref{fig:Coding}(b) and Figure~\ref{fig:Coding}(c)).

 As discussed in \cite{Fe03}, given a 3-connected map $G$ and its
 derived map $G'$, an $\alpha_0$-orientations of $G'$ corresponds to a
 so-called \emph{Schnyder wood} of $G$. These Schnyder woods of
 3-connected maps are the right generalisations of Schnyder woods of
 triangulations \cite{S90}.  Quite naturally, our algorithm is a
 generalization of the algorithm to compute the minimal Schnyder wood
 of a triangulation \cite{Bre02}.  The ideas for the extension to
 3-connected maps have already been introduced by \cite{Ka96}
 and~\cite{DiTa}. The algorithm of~\cite{DiTa} outputs a Schnyder
 wood of a 3-connected map; which can be subsequently made minimal by
 iterated circuit reversions with a linear overall complexity, as
 easily follows from ideas presented in~\cite{Khu93}.  Our algorithm
 relies on similar principles, suitably modified so as to ouput
 \emph{directly} the minimal Schnyder wood (i.e., the Schnyder wood
 associated with the minimal $\alpha_0$-orientation), also
in linear time.  In itself our
 algorithm for 3-connected maps is only slightly more involved than
 the algorithm for triangulations, as opposed to the correctness
 proof, which is much harder (see the discussion at the beginning of
 Section~\ref{section:proofCorrect}). Because of this we give a rather
 proof-oriented description of the algorithm.

Our algorithm is also of independent interest in connection with
Schnyder woods, and it has applications in the context of graph
drawing. Indeed, the minimal Schnyder wood orientation is also a key
ingredient for the straight-line drawing algorithm presented in
\cite{BFM04}. This algorithm relies on operations of edge-deletion,
embedding of the obtained graph, and then embedding of the deleted
edges. The grid size is guaranteed to be bounded by $(n-2)\times(n-2)$
---equalling at least Schnyder's algorithm~\cite{S90}--- provided the
Schnyder wood used is the one associated to the minimal
$\alpha_0$-orientation. An implementation of this drawing algorithm
including our orientation algorithm has been made available by
Bonichon in \cite{taxiplan}.

\subsection{Principle of the algorithm}

Let $G$ be an outer-triangular 3-connected planar graph and let $G'$
be its derived map and $G^*$ its dual map.  We denote by $a_1$, $a_2$
and $a_3$ the outer vertices of $G$ in clockwise order. We describe
here a linear-time iterative algorithm to compute the minimal
$\alpha_0$-orientation of $G'$. The idea is to maintain a simple cycle
of edges of $G$; at each step $k$, the cycle, denoted by
$\mathcal{C}_k$, is shrinked by choosing a so-called \emph{eligible}
vertex $v$ on $\mathcal{C}_k$, and by removing from the interior of
$\mathcal{C}_k$ all faces incident to $v$. The eligible vertex is
always different from $a_2$ and $a_3$, so that the edge $(a_2,a_3)$,
called \emph{base-edge}, is always on $\mathcal{C}_k$.  The edges of
$G'$ ceasing to be on $\mathcal{C}_k$ or in the interior of
$\mathcal{C}_k$ are oriented so that the following invariants remain
satisfied.

\smallskip

\noindent\emph{Orientation invariants:}
\begin{longitem}
\item
  For each edge $e$ of $G$ outside $\mathcal{C}_k$, the 4 edges of
  $G'$ incident to the edge-vertex $v_e$ associated to $e$ have been
  oriented at a step $j< k$ and $v_e$ has outdegree~1.
\item
  All other edges of $G'$ are not yet oriented.
\end{longitem}
Moreover, the edges that correspond to half-edges of $G$ also
receive a label in $\{1,2,3\}$, so that the following invariants
for labels remain satisfied: 

\smallskip

\noindent\emph{Labelling invariants:}
\begin{longitem}
\item At each step $k$, every vertex $v$ of $G$ outside of $\mathcal{C}_k$
  has one outgoing half-edge for each label 1, 2 and 3 and these outgoing
  edges appear in clockwise order around $v$. In addition, all edges
  between the outgoing edges with labels $i$ and $i+1$ are incoming with
  label $i-1$, see Figure~\ref{figure:invariantslabelling}(a).
\item Let $v$ be a vertex of $G$ on $\mathcal{C}_k$ having at least
  one incident edge of $G'$ outside of $G_k$. Then exactly one of
  these edges, denoted by $e_1'$, is going out of $v$. In addition it
  has label~1. The edges of $G'$ incident to $v$ and between $e_1'$
  and its left neighbour on $\mathcal{C}_k$ are incoming with label~2;
  and the edges incident to $v$ in $G'$ between $e_1'$ and its right
  neighbour on $\mathcal{C}_k$ are incoming with label~3, see
  Figure~\ref{figure:invariantslabelling}(b).
\item For each edge $e$ of $G$ outside of $G_k$, let $e'$ be the unique
  outgoing edge of its associated edge-vertex $v_e$. Two cases can occur:
  \begin{longitem}
  \item If $e'$ is an half-edge of $G$ then the two edges of $G'$
    incident to $v_e$ and forming the edge $e$ are identically
    labelled. This corresponds to the case where $e$ is ``simply
oriented''.
  \item If $e'$ is an half-edge of $G^*$, we denote by $1\leq i\leq 3$ the
    label of the edge of $G'$ following $e'$ in clockwise order
    around $v_e$. Then the edge of $G'$ following $e'$ in
    counter-clockwise order around $v_e$ is labelled $i+1$, see
    Figure~\ref{figure:invariantslabelling}(c). This corresponds to the case
where $e$ is ``bi-oriented''.
  \end{longitem}
\end{longitem}

Actually, the labels are not needed to compute the orientation, 
but they will be very useful to prove
that the algorithm outputs the minimal $\alpha_0$-orientation. These
labels are in fact the ones of the Schnyder woods of $G$, as discussed
in~\cite{Fe03}.

In the following, we write $G_k$ for the submap of $G$ obtained by
removing all vertices and edges outside of $\mathcal{C}_k$ (at step $k$).
In addition, we order the vertices of $\mathcal{C}_k$ from left to 
right according to the order induced by the path 
$\mathcal{C}_k\backslash\{a_2,a_3\}$, 
with $a_3$ as left extremity and $a_2$ as right extremity.
In other words, a vertex $v\in\mathcal{C}_k$ is on the left of
a vertex $v'\in\mathcal{C}_k$ if the path of $\mathcal{C}_k$
going from $v$ to $v'$ without passing by the edge $(a_2,a_3)$ 
has the interior of $\mathcal{C}_k$ on its right. 

\begin{figure}
  \def\etalon{.18\linewidth}
  \centering
  \subfigure[]{\includegraphics[height=\etalon]{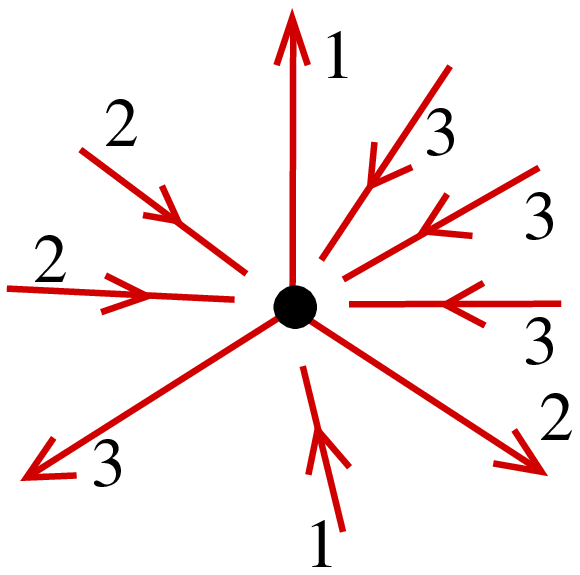}}\qquad\qquad
  \subfigure[]{\includegraphics[height=\etalon]{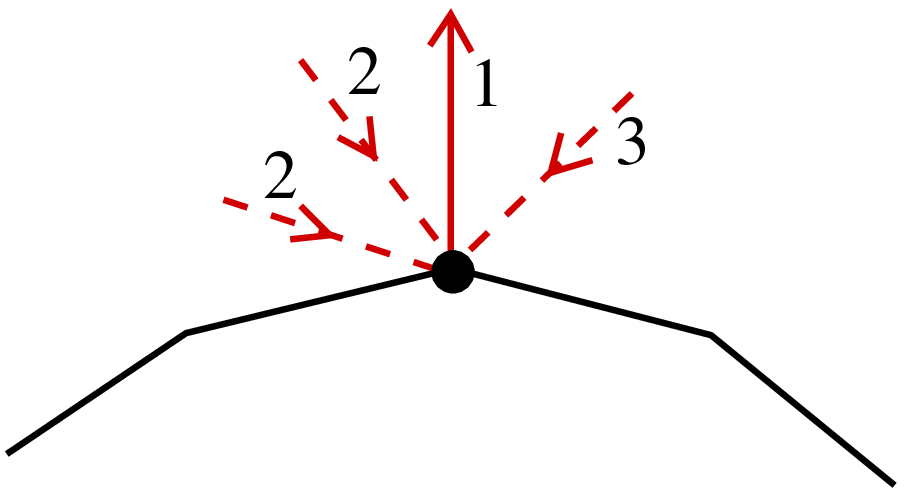}}\qquad\qquad
  \subfigure[]{\includegraphics[height=\etalon]{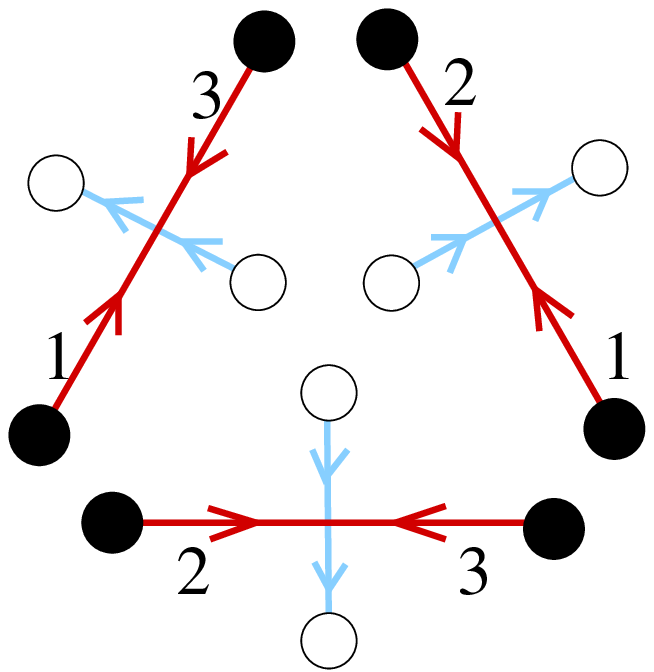}}
    \caption{The invariants for the labels of the half-edges of
      $G$ maintained during the algorithm.}
    \label{figure:invariantslabelling}
\end{figure}

\subsection{Description of the main iteration}

Let us now describe the $k$-{th} step of the algorithm, during which
the cycle $\mathcal{C}_k$ is shrinked so that the invariants for
orientation and labelling remain satisfied. The description
requires some definitions.

\paragraph{Definitions}
A vertex of $\mathcal{C}_k$ is said to be \emph{active} if it is incident to
at least one edge of $G\backslash G_k$. Otherwise, the vertex is
\emph{passive}. By convention, before the first step of the algorithm, the
vertex $a_1$ is considered as active and its incident half-edge
directed toward the outer face is labelled~1.

For each pair of vertices $(v_1,v_2)$ of $\mathcal{C}_k$ ---with
$v_1$ is on the left of $v_2$---, the path on 
$\mathcal{C}_k$ going from $v_1$
to $v_2$ without passing by the edge $(a_2,a_3)$ is denoted by
$\lbrack v_1,v_2 \rbrack $. We also write $\rbrack v_1, v_2
\lbrack$ for $\lbrack v_1,v_2 \rbrack$ deprived from the endvertices
$v_1$ and $v_2$. 

A pair $( v_1,v_2)$ of vertices of $\mathcal{C}_k$ is
\emph{separating} if there exists an inner face $f$ of $G_k$ such that
$v_1$ and $v_2$ are incident to $f$ but the edges of $\lbrack v_1,v_2
\rbrack $ are not all incident to $f$. Such a face is called a
\emph{separating face} and the triple $(v_1,v_2,f)$ is called a
\emph{separator}. The (closed) area delimited by the path $\lbrack
v_1,v_2 \rbrack $ and by the path of edges of $f$ going from $v_1$ to
$v_2$ with the interior of $f$ on its right is called the
\emph{separated area} of $(v_1,v_2,f)$ and is 
denoted by $\mathrm{Sep}(v_1,v_2,f)$.

A vertex $v$ on $\mathcal{C}_k$ is said to be \emph{blocked} if it 
belongs to a separating pair. It is easily checked that a vertex is blocked
iff it is
incident to a separating face of $G_k$. In particular, a non blocked
vertex does not belong to any separating pair of vertices. By
convention, the vertices $a_2$ and $a_3$ are always considered as
blocked.
A vertex $v$ on $\mathcal{C}_k$ is \emph{eligible} if it is
active and not blocked.

Finally, for each vertex $v$ of $\mathcal{C}_k$, we define its
\emph{left-connection vertex} $\mathrm{left}(v)$ as the leftmost vertex on
$\mathcal{C}_k$ such that the vertices of $\rbrack \mathrm{left}(v),v \lbrack
$ all have degree 2 in $G_k$. The path $\lbrack
\mathrm{left}(v),v \rbrack $ is called the \emph{left-chain} of $v$ and the
first edge of $\lbrack \mathrm{left}(v),v \rbrack $ is called the
\emph{left-connection edge} of $v$. Similarly, we define the
\emph{right-connection vertex}, the \emph{right-chain}, and the
\emph{right-connection edge} of $v$.
Notice that all vertices of $\rbrack\mathrm{left}(v),v
\lbrack $ and of $\rbrack v,\mathrm{right}(v) \lbrack$ are active,
as each vertex of a 3-connected graph has degree at least 3.

\paragraph{Operations at step $k$}

First, we choose the rightmost eligible vertex of $\mathcal{C}_k$ and
we call $v^{(k)}$ this vertex. (We will prove in
Lemma~\ref{lemma:terminates} that there always exists an eligible
vertex on $\mathcal{C}_k$ as long as $G_k$ is not reduced to the edge
$(a_2,a_3)$.) Notice that this eligible vertex can not be $a_2$ nor
$a_3$ because $a_2$ and $a_3$ are blocked.

We denote by $f_1,\ldots ,f_m$ the bounded 
faces of $G_k$ incident to $v^{(k)}$ from right to left, and by
$e_1,\ldots ,e_{m+1}$ the edges of $G_k$ incident to $v^{(k)}$ from right to 
left. Hence, for each $1\leq i\leq m$, $f_i$ corresponds to the sector between 
$e_i$ and $e_{i+1}$. 

An important remark is that the right-chain of
$v^{(k)}$ is reduced to one edge. Indeed, if there exists a vertex $v$ in
$\rbrack v^{(k)},\mathrm{right}(v^{(k)})\lbrack$, then $v$ is active, as
discussed above. In addition, $v$ is incident  to only one inner face of
$G_k$, namely $f_1$. As $f_1$ is incident to $v^{(k)}$ and as
$v^{(k)}$ is non blocked, $f_1$ is not separating. Hence $v$ is not
blocked. Thus $v$ is eligible and is on the right of $v^{(k)}$, in
contradiction with the fact that $v^{(k)}$ is the rightmost eligible
vertex on $\mathcal{C}_k$.

We label and orient the edges of $G'$ incident to
the edge-vertices on the left-chain of $v^{(k)}$ and
on the edges $e_1,\ldots e_m$, see Figure~\ref{figure:stepk}:

\begin{figure}
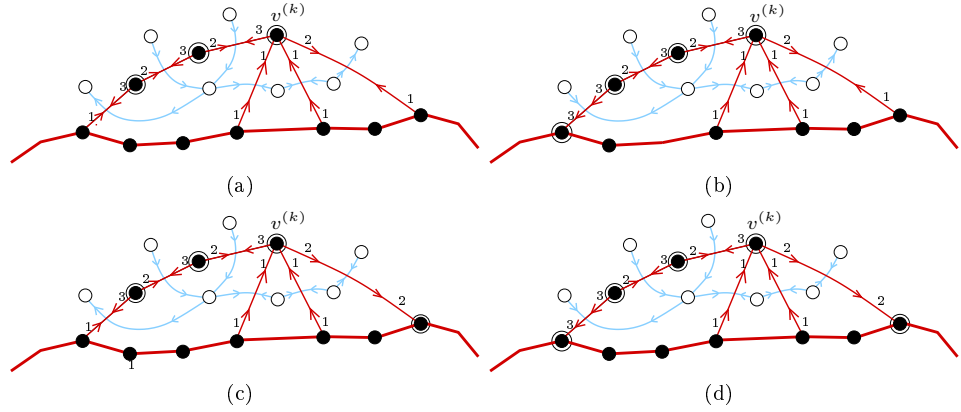

  \centering
  \subfigure[]{{\input{Figures/operationstepk-a.pstex_t}}}\
  \subfigure[]{{\input{Figures/operationstepk-b.pstex_t}}}\
  \subfigure[]{{\input{Figures/operationstepk-c.pstex_t}}}\
  \subfigure[]{{\input{Figures/operationstepk-d.pstex_t}}}
  \caption{The operations performed at step $k$ of the algorithm, whether
    $\mathrm{left}(v^{(k)})$ and $\mathrm{right}(v^{(k)})$ 
    are passive-passive (Fig. a) or
    active-passive (Fig. b) or passive-active (Fig. c) or active-active (Fig. d).
    Active vertices are surrounded.}
  \label{figure:stepk}
\end{figure}

\begin{longitem}
\item \textbf{Inner edges}: For each edge $e_i$ with $2\leq i \leq m$,
  we denote by $v_{e_i}$ the corresponding edge-vertex of $G'$. Orient
  the two edges of $G'$ forming $e_i$ toward $v^{(k)}$ and give label
  1 to these two edges. Orient the two other incident edges of
  $v_{e_i}$ toward $v_{e_i}$, so that $v_{e_i}$ has outdegree~1.
\item \textbf{Left-chain}: For each edge $e$ of the left-chain of $v^{(k)}$
---traversed from $v^{(k)}$ to $\mathrm{left}(v^{(k)})$---
different from the left-connection edge, bi-orient $e$ and give label 3 (resp.
label 2) to the first (resp. second) traversed half-edge. 
 Choose the unique outgoing
  edge of the edge-vertex $v_e$ associated to $e$ to be the
  edge going out of $e$ toward the
  interior of $\mathcal{C}_k$
\item \textbf{Left-connection edge}: If $\mathrm{left}(v^{(k)})$ is passive,
  bi-orient the left-connection edge $e$ of $v^{(k)}$, give label
1 to the half-edge incident to $\mathrm{left}(v^{(k)})$ and
label 3 to the other half-edge, and 
choose the unique outgoing edge of the edge-vertex $v_e$ to be the
  edge going out of $v_e$ toward the exterior of $\mathcal{C}_k$.  
If $\mathrm{left}(v^{(k)})$ is
  active, label 3 and orient toward $\mathrm{left}(v^{(k)})$ the two edges
  of $G'$ forming $e$, and orient the two dual edges incident to
  $v_e$ toward~$v_e$.
\item \textbf{Right-connection edge}: The edge $e_1$,
  which is the right-connection edge of $v^{(k)}$, is treated symmetrically
as the left-connection edge. If $\mathrm{right}(v^{(k)})$ is passive,
  bi-orient $e_1$, give label
1 to the half-edge incident to $\mathrm{right}(v^{(k)})$ and
label 2 to the other half-edge, and 
choose the unique outgoing edge of the edge-vertex $v_{e_1}$ to be the
  edge going out of $v_{e_1}$ toward the exterior of $\mathcal{C}_k$.  
If $\mathrm{right}(v^{(k)})$ is
  active, label 2 and orient toward $\mathrm{right}(v^{(k)})$ the two edges
  of $G'$ forming $e_1$, and orient the two dual edges incident to
  $v_{e_1}$ toward~$v_{e_1}$.
\end{longitem}

After these operations, all faces incident to $v^{(k)}$ are removed from the
interior of $\mathcal{C}_k$, producing a (shrinked) cycle $\mathcal{C}_{k+1}$.
As $a_2$ and $a_3$ are blocked on $\mathcal{C}_k$, 
$\mathcal{C}_{k+1}$ still contains the edge $(a_2,a_3)$.
In addition, if $\mathcal{C}_{k+1}$ is not reduced to $(a_2,a_3)$,
the property of 3-connectivity of $G$ and the fact that the chosen 
vertex $v^{(k)}$ is
not incident to any separating face easily ensure that
$\mathcal{C}_{k+1}$ is a simple cycle,
i.e., it does not contain any separating vertex.

It is also easy to get convinced from
Figure~\ref{figure:invariantslabelling} and Figure~\ref{figure:stepk}
that the operations performed at step $k$ maintain the 
invariants of orientation and labelling.

The purpose of the next two lemmas is to prove that the algorithm 
terminates.

\begin{lemma}\label{lemma:existsvertex}
  Let $(v_1,v_2,f)$ be a separator on $\mathcal{C}_k$. Then there exists an
  eligible vertex in $\rbrack v_1,v_2\lbrack$.
\end{lemma}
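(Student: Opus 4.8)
The plan is to reduce to a separator of minimal separated area and then show that every internal vertex of its top boundary is eligible. First I would replace the given separator by one, still denoted $(v_1,v_2,f)$, whose separated area is minimal among all separators $(v_1',v_2',f')$ with $\mathrm{Sep}(v_1',v_2',f')\subseteq\mathrm{Sep}(v_1,v_2,f)$; such a minimal one exists since $G_k$ is finite, and any eligible vertex found in its smaller interval $\rbrack v_1,v_2\lbrack$ automatically lies in the interval of the original separator. Write $S=\mathrm{Sep}(v_1,v_2,f)$, let $P=\lbrack v_1,v_2\rbrack$ be its top boundary on $\mathcal{C}_k$, and let $Q\subseteq\partial f$ be its bottom boundary, so that $\partial S=P\cup Q$.

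The key geometric observation I would establish is a confinement property: since an internal vertex $u\in\rbrack v_1,v_2\lbrack$ lies on $\mathcal{C}_k$, every face of $G_k$ incident to $u$ lies inside $S$, the only possible exception being $f$ itself. Indeed, a face incident to $u$ cannot cross the top boundary $P$ (the region above $P$ is outside $G_k$), and a face crossing the bottom boundary $Q$ would have to coincide with the face $f$ on the other side of $Q$. This gives the non-blocked half of the argument. Suppose some internal $u$ is blocked, hence incident to a separating face $g$. If $g\neq f$, then $g\subseteq S$, so $g$ meets $\mathcal{C}_k$ only along $P$ and its separating pair lies in $\lbrack v_1,v_2\rbrack$, producing a separator whose separated area is strictly contained in $S$ --- contradicting minimality. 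The case $g=f$ is disposed of directly: then $u$ lies on $f$, and since $\lbrack v_1,v_2\rbrack$ is not entirely incident to $f$, at least one of $(v_1,u,f)$ or $(u,v_2,f)$ is itself a separator with strictly smaller separated area. Hence every vertex of $\rbrack v_1,v_2\lbrack$ is non-blocked.

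It then remains to produce an active vertex in $\rbrack v_1,v_2\lbrack$, and here I would invoke $3$-connectivity. Suppose, for contradiction, that every internal vertex of $P$ is passive; this also covers the a priori possibility that $\rbrack v_1,v_2\lbrack$ is empty. A passive internal vertex has no edge leaving $\mathcal{C}_k$, so no edge of $G$ crosses $P$ except at $v_1$ or $v_2$; and no edge of $G$ crosses $Q$ at all, since the other side of $Q$ is the empty face $f$. Consequently the vertices of $G$ lying in $S$ other than $v_1,v_2$ form a nonempty set $W$ (it contains an internal vertex of $P$, or, when $\rbrack v_1,v_2\lbrack$ is empty, an internal vertex of $Q$, which is nonempty because the edge $v_1v_2$ is not incident to $f$), and every edge from $W$ to the rest of $G$ must pass through $v_1$ or $v_2$. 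Since $f$ and its far boundary arc guarantee vertices outside $S$, this makes $\{v_1,v_2\}$ a $2$-cut of $G$, contradicting $3$-connectivity. Therefore $\rbrack v_1,v_2\lbrack$ is nonempty and contains an active vertex, which by the previous paragraph is also non-blocked, hence eligible.

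The step I expect to be the main obstacle is the non-blocked half, namely turning a blocked internal vertex into a strictly smaller separator: the whole argument hinges on the planar confinement claim that faces incident to internal vertices of $P$ remain inside $S$ (apart from $f$), which is precisely what keeps separators from crossing the boundary and makes the minimality well-founded. The activeness step, by contrast, is a clean $2$-cut argument once the description $\partial S=P\cup Q$ is in hand.
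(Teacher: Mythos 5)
Your proof is correct and follows essentially the same route as the paper: pass to a separator that is minimal for inclusion of separated areas, derive an active vertex in the open interval from $3$-connectivity (otherwise $\{v_1,v_2\}$ would be a $2$-cut), and use minimality to rule out blocking. The only differences are cosmetic — you establish non-blockedness for all internal vertices before invoking activeness, whereas the paper finds the active vertex first, and you treat the case where the blocking face is $f$ itself explicitly, a case the paper's one-line minimality argument passes over silently.
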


\begin{proof}
Consider the (non empty) set of separators whose separated area is
included or equal to the separated area of $(v_1,v_2,f)$, and let
$(v_1',v_2',f')$ be such a separator minimal w.r.t. the inclusion of
the separated areas.  Observe that $v_1'$ and $v_2'$ are in $\lbrack
v_1,v_2\rbrack$.

Assume that no vertex of $\rbrack v_1',v_2' \lbrack$ is active. Then
the removal of $v_1'$ and $v_2'$ disconnects
$\mathrm{Sep}(v_1',v_2',f)$ from $G\backslash
\mathrm{Sep}(v_1',v_2',f)$. This is in contradiction with
3-connectivity of $G$, because these two sets are easily proved to
contain at least one vertex different from $v_1'$ and $v_2'$.

Hence, there exists an active vertex $v$ in $\rbrack
v_1',v_2'\lbrack$, also in $\rbrack v_1,v_2\lbrack$. If $v$ was
incident to a separating face, this face would be included in the
separated area of $(v_1',v_2',f')$, which is impossible by minimality
of $(v_1',v_2',f')$. Hence, the active vertex $v$ is not blocked,
i.e., is eligible.  
\hfill $\qed$
\end{proof}

\begin{lemma} \label{lemma:terminates}
  As long as $\mathcal{C}_k$ is not reduced to $(a_2,a_3)$, there
  exists an eligible vertex on $\mathcal{C}_k$.
\end{lemma}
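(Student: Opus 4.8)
The plan is to combine Lemma~\ref{lemma:existsvertex} with a short connectivity argument, splitting on whether $\mathcal{C}_k$ carries a separator. If some separator $(v_1,v_2,f)$ exists on $\mathcal{C}_k$, then Lemma~\ref{lemma:existsvertex} immediately yields an eligible vertex in $\rbrack v_1,v_2\lbrack$, and there is nothing left to prove. Hence the only case requiring work is the complementary one, in which $\mathcal{C}_k$ carries no separator, i.e. no vertex of $\mathcal{C}_k$ other than $a_2$ and $a_3$ is blocked. In that situation every active vertex distinct from $a_2,a_3$ is, by definition, eligible, so I would reduce the statement to the single claim that $\mathcal{C}_k$ has at least one active vertex different from $a_2$ and $a_3$.

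To establish that claim I would argue by 3-connectivity. Since $\mathcal{C}_k$ is a simple cycle containing the base-edge $(a_2,a_3)$ but is not reduced to it, the path $\mathcal{C}_k\setminus\{a_2,a_3\}$ has at least one internal vertex $w$, which lies in $G_k$. Let $R=G\setminus G_k$ be the already-processed region; for $k\geq 1$ it is non-empty, since $a_1$ is peeled off at the first step and thereafter lies in $R$ together with the faces removed so far. By definition the active vertices of $\mathcal{C}_k$ are exactly those incident to an edge of $R$, and any path in $G$ from an interior vertex of $R$ (such as $a_1$) to the passive vertex $w$ must cross $\mathcal{C}_k$ through such an active vertex. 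Thus, if $a_2$ and $a_3$ were the only active vertices, the pair $\{a_2,a_3\}$ would separate $a_1$ from $w$, contradicting the 3-connectivity of $G$. Therefore an active vertex different from $a_2,a_3$ exists, and it is eligible. The base case $k=0$ is handled directly by the convention that $a_1$ is active: since $a_1\neq a_2,a_3$, it is eligible unless it is blocked, in which case the separator case above applies.

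The main obstacle is making this connectivity step airtight: one must check that the active vertices genuinely form a vertex cut separating the interior of $R$ from the still-unprocessed part of $G_k$, so that assuming only $a_2,a_3$ are active produces a bona fide two-element cut with both sides non-empty. This is precisely where $a_1\in R$, hence $k\geq 1$, is needed, which is why the case $k=0$ must be treated separately through the convention on $a_1$. Verifying in the planar embedding that $w$ and $a_1$ fall on opposite sides of the cut and that every crossing of $\mathcal{C}_k$ occurs at an active vertex is routine but should be done with care; once it is in place, both branches of the dichotomy close, the second directly via Lemma~\ref{lemma:existsvertex}.
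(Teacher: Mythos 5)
Your proof is correct and follows essentially the same route as the paper: the same dichotomy on whether $\mathcal{C}_k$ carries a separator, with Lemma~\ref{lemma:existsvertex} handling the separating case and a 3-connectivity argument (the pair $\{a_2,a_3\}$ would otherwise be a two-element cut separating $G_k\setminus\{a_2,a_3\}$ from $G\setminus G_k$) handling the other, plus the convention that $a_1$ is active for the first step. No substantive difference from the paper's argument.
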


\begin{proof}
  Assume that there exists no separating pair of vertices on
  $\mathcal{C}_k$. In this case, an active vertex on $\mathcal{C}_k$
  different from $a_2$ and $a_3$ is eligible. Hence we just have to
  prove the existence of such a vertex.  At the first step of the
  algorithm, there exists an active vertex on $\mathcal{C}_1\backslash
  \{ a_2,a_3\}$ because $a_1$ is active by convention. At any other
  step, there exists an active vertex on $\mathcal{C}_k\backslash \{
  a_2,a_3\}$, otherwise the removal of $a_2$ and $a_3$ would
  disconnect $G_k \backslash \{a_2,a_3\}$ from $G\backslash G_k$, in
  contradiction with the 3-connectivity of $G$.

  If there exists at least one separator $(v_1,v_2,f)$,
  Lemma~\ref{lemma:existsvertex} ensures that there exists an eligible
  vertex $v$ in $\rbrack v_1, v_2\lbrack$.
\hfill $\qed$
\end{proof}

\paragraph{Last step of the algorithm}
Lemma~\ref{lemma:terminates} implies that, at the end of the iterations,
only the edge $e=(a_2,a_3)$ remains. To complete the orientation,
bi-orient $e$ and label 3 (resp. label 2) the half-edge of $e$
whose origin is $a_2$ (resp. $a_3$); the outgoing edge of the edge-vertex
$v_e$ (associated to $e$) is chosen to be the edge going out of $v_e$
toward the outer face. We also label
respectively 2 and 3 the half-edges incident to $a_2$ and $a_3$ and
directed toward the outer face.

Figure~\ref{figure:exampleAlgoWoods} illustrates the execution of the
algorithm on an example, where the edges of $\mathcal{C}_k$
are black and bolder. In addition, the active vertices are surrounded
and the rightmost eligible vertex $v^{(k)}$ is doubly surrounded.

\begin{figure}
  \centering 
  \includegraphics[width=.9\linewidth]{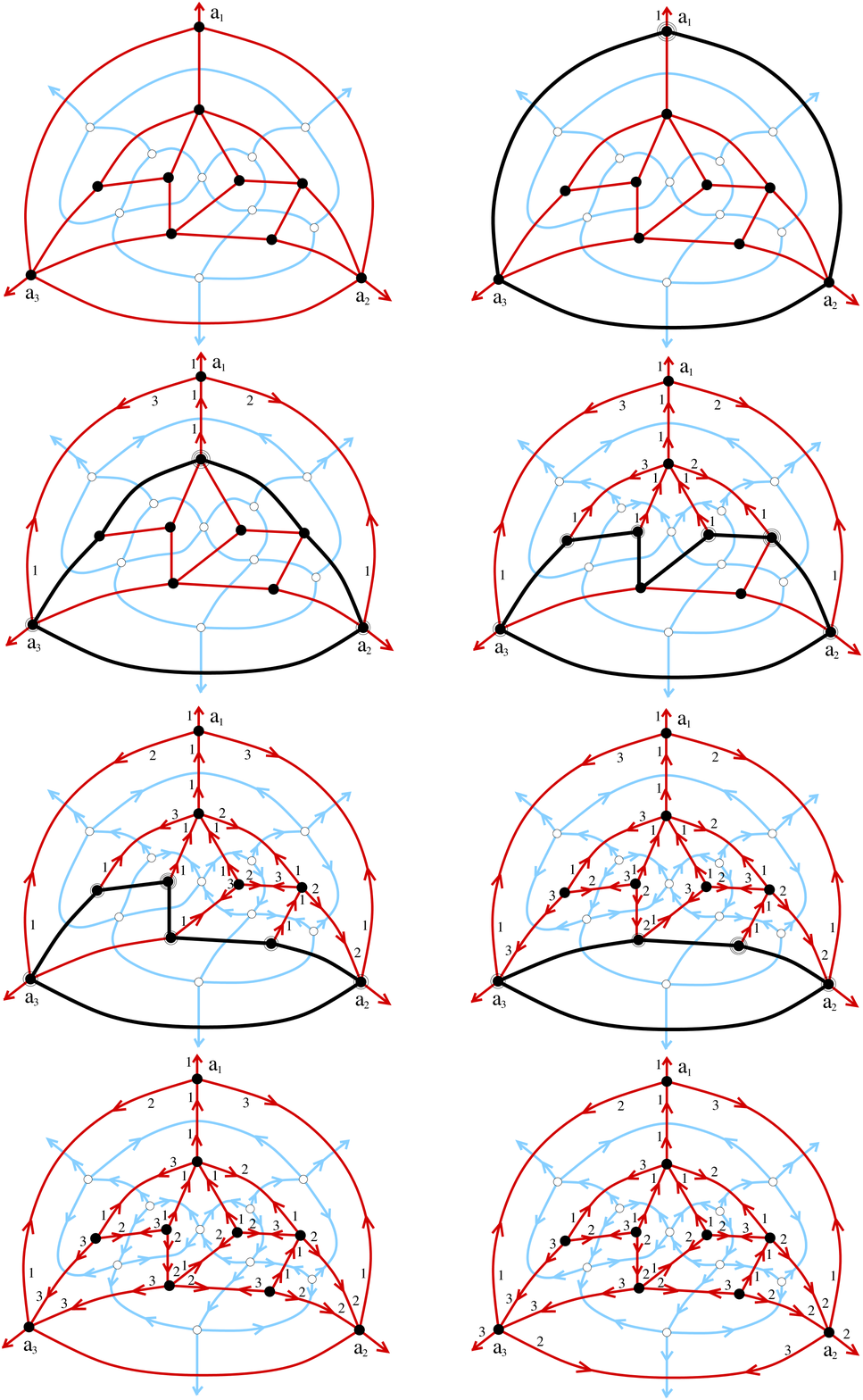}
  \caption{The execution of the algorithm of orientation on an example.}
  \label{figure:exampleAlgoWoods}
\end{figure}

\begin{theorem} \label{theorem:computesOrientation}
\label{THEOREM:COMPUTESORIENTATION}
  The algorithm outputs the minimal $\alpha_0$-orientation of the
  derived map.
\end{theorem}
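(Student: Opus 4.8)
The plan is to split the statement into two independent assertions and then combine them through Felsner's uniqueness result (Theorem~\ref{theorem:felsner}). First I would check that the orientation produced at termination is genuinely an $\alpha_0$-orientation; second I would show that this orientation has no clockwise circuit. Since Theorem~\ref{theorem:felsner} guarantees that the $\alpha_0$-orientation without clockwise circuit is unique and is precisely the minimal one, these two facts together identify the output with the minimal $\alpha_0$-orientation. Termination itself is already secured by Lemmas~\ref{lemma:existsvertex} and~\ref{lemma:terminates}, which ensure that an eligible vertex exists as long as $\mathcal{C}_k$ has not collapsed to the base-edge $(a_2,a_3)$; so the iteration does reach the final configuration in which every edge of $G'$ has been oriented.

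For the first assertion I would argue purely by bookkeeping on the invariants. At termination the cycle has shrunk to $(a_2,a_3)$, so every edge $e$ of $G$ has left the interior at some step and the \emph{orientation invariants} apply to all of them; together with the explicit handling of the base-edge in the last step, this forces each edge-vertex of $G'$ to have outdegree exactly $1$. Dually, the first two \emph{labelling invariants} state that every conquered vertex of $G$ acquires one outgoing half-edge of each label $1,2,3$; reading this through the transposition between half-edges of $G$, half-edges of $G^*$ and edges of $G'$ (third labelling invariant) shows that each primal and each dual vertex ends with outdegree $3$. Hence the output satisfies $\alpha_0(v)=3$ on primal and dual vertices and $\alpha_0(v)=1$ on edge-vertices, i.e.\ it is an $\alpha_0$-orientation.

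The core of the proof is the second assertion, absence of a clockwise circuit, and this is where I expect the real difficulty. I would proceed by induction on the steps, maintaining as invariant that the sub-orientation carried by the already-conquered region (the edges of $G'$ oriented up to step $k$) contains no clockwise circuit. Equivalently, one can phrase a \emph{last edge} argument: assuming a clockwise circuit in the final orientation, assign to each of its edges the step at which it was oriented, and focus on the conquering of the vertex $v^{(k)}$ realizing the latest such step. The edges oriented at step $k$ are exactly those around $v^{(k)}$ (the inner edges $e_i$ oriented toward $v^{(k)}$, with their edge-vertices given outdegree $1$), those along the left-chain (bi-oriented, each edge-vertex pushing its unique outgoing edge toward the interior), and the two connection edges (oriented outward when the connection vertex is passive). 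The task is to check, case by case according to the active/passive status of $\mathrm{left}(v^{(k)})$ and $\mathrm{right}(v^{(k)})$ as in Figure~\ref{figure:stepk}, that none of these local patterns can close a clockwise circuit: a circuit forced through $v^{(k)}$ would have to leave along one of its three labelled outgoing edges and re-enter from the not-yet-conquered side, but the rightmost-eligible choice of $v^{(k)}$ together with the fact that $v^{(k)}$ is incident to no separating face precludes exactly this.

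The main obstacle is that a clockwise circuit in the derived map $G'$ may weave freely among primal vertices, dual vertices and edge-vertices, so the purely local orientation rules do not visibly forbid a \emph{global} clockwise cycle; the planar topology of the shrinking cycle $\mathcal{C}_k$ must be used to convert the local picture into a global contradiction. I would tie the local analysis to the geometry through the labels, which are the Schnyder-wood colours of $G$: the three colour classes form trees directed toward the outer vertices $a_1,a_2,a_3$, and the rightmost-eligible rule is precisely what keeps each colour counterclockwise-closed, so that a clockwise circuit would contradict the existence of Felsner's straight paths already invoked in the proof of Lemma~\ref{lemma:derivatedtodis}. Making this correspondence rigorous---proving that the labelling invariants persist as a genuine Schnyder wood and that minimality follows from them---is the substantial part, and is what warrants a dedicated correctness section.
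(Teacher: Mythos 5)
Your top-level decomposition is exactly the paper's: show the output is an $\alpha_0$-orientation, show it has no clockwise circuit, and conclude by the uniqueness part of Theorem~\ref{theorem:felsner}. But both halves are thinner than they need to be, and the second half has a genuine gap.

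On the first assertion, the invariants give you outdegree~$3$ for primal vertices and outdegree~$1$ for edge-vertices essentially for free, but they say nothing direct about dual vertices. The paper has to prove that each face $f$ of $G$ contributes exactly three outgoing edges to its dual vertex: two come from the extremal edges of the ``upper path'' removed at the step when $f$ is conquered, and the third requires a separate structural result (Lemma~\ref{lemma:interval}) describing how the lower path of $f$, lying between two consecutive active vertices of $\mathcal{C}_{k+1}$, is eventually oriented and labelled. Your phrase ``reading this through the transposition \ldots\ shows that each dual vertex ends with outdegree~$3$'' skips this entirely; it is not a consequence of the stated invariants alone.

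On the second assertion, the real gap is that your ``last edge'' induction never supplies a mechanism by which the \emph{rightmost}-eligible choice is used. This is essential: as the paper's remark after the theorem points out, the same algorithm with an arbitrary choice of eligible vertex at each step can produce \emph{any} $\alpha_0$-orientation, including ones with clockwise circuits, so every local invariant you maintain is equally true of those runs. A correct proof must therefore extract a positional consequence of rightmost-ness. The paper does this by (i) reducing, via Felsner's classification of essential circuits of $\alpha_0$-orientations and Lemma~\ref{lemma:contourfaces}, to the finite list of minimal configurations of Figure~\ref{figure:configuration} --- without this reduction your proposed case analysis over arbitrarily long weaving circuits does not close --- and (ii) proving Lemmas~\ref{lemma:P12}, \ref{lemma:P32} and~\ref{lemma:P31}, which pin down where certain vertices determined by the labels around the circuit must sit on $\mathcal{C}_k$ relative to $v^{(k)}$ (e.g.\ ``$v_1$ is on the left of $v^{(k)}$''), yielding a contradiction with the forced label pattern. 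Your appeal to straight paths and to colour classes being ``counterclockwise-closed'' is not a substitute: the straight-path argument of Lemma~\ref{lemma:derivatedtodis} presupposes minimality of the $\alpha_0$-orientation, which is precisely what is to be proved here.
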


Section~\ref{section:proofCorrect} is dedicated to the proof
of this theorem.

\paragraph{Remark}
As stated in Theorem~\ref{theorem:computesOrientation}, 
our orientation algorithm outputs a
particular $\alpha_0$-orientation, namely the minimal one. 
The absence of clockwise circuit is due to the fact that among all
eligible vertices, the rightmost one is chosen at each step. The algorithm
is easily adapted to other choices of eligible vertices: the only
difference is that the right-connection chain of the chosen eligible vertex
might not be reduced to an edge, in which case it must be dealt with
in a symmetric way as the left-connection chain (that is, 2 becomes 3
and left becomes right in the description of edge labelling and
orientation). This yields a  ``generic'' algorithm that can produce any
$\alpha_0$-orientations of $G'$. Indeed, given a particular 
$\alpha_0$-orientation $X$ of $G'$, it is easy to compute
 a scenario (i.e., a suitable choice of the
eligible vertex at each step) that outputs $X$. Such a scenario corresponds
to a so-called canonical ordering for treating the vertices, 
see~\cite{Ka96}.


\paragraph{Implementation}
Following~\cite{Ka96} (see also~\cite{Bre02} for the case of
triangulations), an efficient implementation is obtained by
maintaining, for each vertex $v\in\cC_k$, the number $s(v)$ of
separating faces incident to $v$. Thus, a vertex is blocked iff
$s(v)>0$. Notice that a face $f$ is separating iff the numbers $v(f)$
and $e(f)$ of vertices and edges (except $(a_2,a_3)$) of $f$ belonging
to $\cC_k$ satisfy $v(f)>e(f)+1$. Thus, it is easy to test if a face
is separating, so that the parameters $s(f)$ are also easily
maintained. The data structure we use is the half-edge structure,
which allows us to navigate efficiently on the graph.  The
\emph{pointer} is initially on $a_1$, which is the rightmost eligible
vertex at the first step. During the execution, once the vertex
$v^{(k)}$ is treated, the pointer is moved to $v$ the right neighbour
of $v^{(k)}$ on $\cC_k$. The crucial point is that, if $v$ is blocked,
then no vertex on the right of $v$ can be eligible (because of the
nested structure of separating faces). Thus, in this case, the pointer
is moved to the left until an eligible vertex is encountered. Notice
also that $v$ is active after $v^{(k)}$ is treated. Thus, if $v$ is
not blocked, then $v$ is eligible at step $k+1$. In this case, the
nested structure of separating faces ensures that the rightmost
eligible vertex at step $k+1$, if not $v$, is either the
right-connection vertex $r(v)$ of $v$, or the left neighbour of $r(v)$
on $\cC_{k+1}$ (in the case where $r(v)$ is not eligible).  Notice
that, in the case where $v$ is not blocked, the pointer is moved to
the right but the edges traversed will be immediately treated (i.e.,
removed from $\cC_{k+1}$) at step $k+1$.  This ensures that an edge
can be traversed at most twice by the pointer: once from right to left
and subsequently once from left to right. Thus, the complexity is
linear.

\section{Proof of Theorem~\ref{theorem:computesOrientation}}
\label{section:proofCorrect}

Let $G$ be an outer-triangular 3-connected map, and let $X_0$ be the
orientation of the derived map $G'$ computed by the orientation
algorithm. This section is dedicated to proving that $X_0$ is the
minimal $\alpha_0$-orientation of $G'$.

Our proof is inspired by the proof by~\citeN{Bre02} that ensures
that, for a triangulation, the choice of the rightmost eligible vertex
at each step yields the Schnyder woods without clockwise circuit. The
argument is the following: the presence of a clockwise circuit implies
the presence of an ``inclusion-minimal'' clockwise circuit which is,
in the case of a triangulation, a 3-cycle $(x,y,z)$. Then the
clockwise orientation of $(x,y,z)$ determines unambiguously (up to
rotation) the labels of the 3 edges of $(x,y,z)$. These labels
determine an order of treatment of the 3 vertices $x$, $y$ and $z$
that is not compatible with the fact that the eligible vertex chosen
at each step is the rightmost one.

In the general case of 3-connected maps, which we consider here, the
proof is more involved but follows the same lines. This time there is
a finite set of minimal patterns (for a triangulation this set is
restricted to the triangle), such that a minimal clockwise circuit
$\mathcal{C}$ in the orientation $X_0$ of the derived map $G'$ can
only correspond to one of these patterns (the list is shown in
Figure~\ref{figure:configuration}). A common characteristic is that
the presence of a clockwise circuit $\mathcal{C}$ for each of these
patterns implies the presence of three paths $\mathcal{P}_1$,
$\mathcal{P}_2$, $\mathcal{P}_3$ of edges of $G$ whose concatenation
forms a simple cycle in $G$ (in the case of a triangulation, the three
paths are reduced to one edge). In addition, the fact that
$\mathcal{C}$ is clockwise determines unambiguously the labels and
orientations of the edges of $\mathcal{P}_1$, $\mathcal{P}_2$ and
$\mathcal{P}_3$. Writing $v_1$, $v_2$ and $v_3$ for the respective
origins of these three paths, our proof (as in the case of
triangulations, but with quite an amount of technical details) relies
on the fact that the labels of $\mathcal{P}_1$, $\mathcal{P}_2$,
$\mathcal{P}_3$ imply an order for processing $\{v_1, v_2, v_3\}$ that is
not compatible with the fact that the eligible vertex chosen at each
step is the rightmost one.

\subsection{The algorithm outputs an $\alpha_0$-orientation}

By construction of the orientation, each primal vertex of the derived
map $G'$ has one outgoing edge in each label 1, 2 and 3, hence it has
outdegree~3. By construction also, each edge-vertex of $G'$ has
outdegree~1. Hence, to prove that $X_0$ is an $\alpha_0$-orientation,
it just remains to prove that each dual vertex of $G'$ has outdegree 3
in~$X_0$.

Let $f$ be an inner face of $G$ and $v_f$ the corresponding
dual vertex in $G^*$. Let $k$ be the step during which $f$ is
merged with the outer face of $G$. At this step, a sequence of
consecutive edges of $f$ has been removed. This path of removed consecutive 
edges is called the \emph{upper
path} of $f$. The path of edges of $f$ that are not in the upper path
of $f$ is called the \emph{lower path} of $f$. 
By construction of the orientation (see
Figure~\ref{figure:stepk}), exactly two edges of $G'$ connecting $v_f$
to an edge-vertex of the upper path of $f$ are going out of $v_f$:
these are the edge-vertices corresponding to the 
two extremal edges of the upper path.

Hence it just remains to prove that exactly one edge of $G'$
connecting $v_f$ to an edge-vertex of the lower path of $f$ is going
out of $v_f$. First, observe that the lower path $P$ of $f$ is a non
empty path of edges on $\mathcal{C}_{k+1}$, such that the two
extremities $v_l$ and $v_r$ of the path are active and all vertices of
$\rbrack v_l, v_r \lbrack$ are passive on $\mathcal{C}_{k+1}$, see
Figure~\ref{figure:stepk}. The fact that exactly one edge of $G'$
connecting $v_f$ to an edge-vertex of $P$ is going out of $v_f$ is a
direct consequence of the following lemma, see
Figure~\ref{figure:lowerpath}.

\begin{figure}
  \centering
  \scalebox{.4}{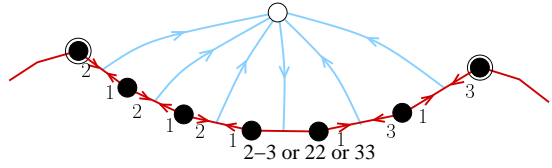}
  \caption{The dual vertex of a face $f$ has one outgoing edge
    connected to the lower path of $f$.}
  \label{figure:lowerpath}
\end{figure}
 
\begin{lemma}\label{lemma:interval}
  At a step $k$ of the algorithm, let $v_1$ and $v_2$ be two active
  vertices on $\mathcal{C}_k$ such that all vertices of $\rbrack v_1,v_2
  \lbrack$ are passive. Then the path $\lbrack v_1,v_2 \rbrack$ on
  $\mathcal{C}_k$ is partitioned into
  \begin{longitem}
  \item a (possibly empty) path $\lbrack v_1,v \rbrack$ whose edges
    are bi-oriented in the finally computed orientation $X_0$, the
    left half-edge having label 2 and the right half-edge label~1,
  \item an edge $e=\lbrack v,v' \rbrack$ either simply oriented with
    label 2 from $v$ to $v'$, or simply oriented with label 3 from
    $v'$ to $v$, or bi-oriented, with label 2 on the half-edge
    incident to $v$ and label~3 on the half-edge incident to~$v'$,
  \item a (possibly empty) path $\lbrack v', v_2 \rbrack$ such that,
    each edge of $\lbrack v', v_2 \rbrack$ is bi-oriented, with label
    1 on the left half-edge and label 3 on the right half-edge.
  \end{longitem}
\end{lemma}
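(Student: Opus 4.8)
The plan is to prove the statement by induction on the number $p$ of (passive) vertices lying in $\rbrack v_1,v_2\lbrack$, tracking how the edges of $\lbrack v_1,v_2\rbrack$ acquire their orientation and labels as the cycle shrinks. The guiding observation is that each of the three pieces in the claimed decomposition corresponds to a specific rule in the step-$k$ operations (Figure~\ref{figure:stepk}): a bi-oriented edge with labels $2,1$ read from left to right is exactly a \emph{right}-connection edge whose right-connection vertex is passive; a bi-oriented edge with labels $1,3$ is exactly a \emph{left}-connection edge whose left-connection vertex is passive; and the three possibilities for the middle edge are precisely a right-connection edge to an \emph{active} vertex (simply oriented, label $2$), a left-connection edge to an active vertex (simply oriented, label $3$), and a genuine left-chain edge (bi-oriented, labels $2,3$). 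So the whole content of the lemma is that, as the algorithm runs, the path ``unzips'' from its two active ends inward, laying down $(2,1)$-edges starting from $v_1$ and $(1,3)$-edges starting from $v_2$, until a single edge between two active vertices remains.

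Concretely, I would first treat the base case $p=0$, where $\lbrack v_1,v_2\rbrack$ is a single edge between two active vertices; it is oriented exactly when $v_1$ or $v_2$ is removed (as its own right-, resp.\ left-, connection edge, or as a left-chain edge of a vertex further to the right), which by the step-$k$ rules forces one of the three middle-edge forms. For $p\ge 1$, I would look at the first future step $j\ge k$ at which some edge of $\lbrack v_1,v_2\rbrack$ is oriented. An interior edge $(w_i,w_{i+1})$ has both endpoints passive, hence it is neither incident to the (active) processed vertex nor a chain edge, since chain-interior vertices have degree~$2$ while a passive vertex has degree $\ge 3$ in the current map; therefore the first oriented path edge is necessarily one of the two \emph{end} edges. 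Say it is $(w_p,v_2)$: then it is oriented either because $v_2$ itself is processed, or because $v_2$ is a degree-$2$ vertex swept into the left-chain of a vertex to its right. In both cases $w_p$, being passive, is the left-connection vertex, so the rule bi-orients $(w_p,v_2)$ with label~$1$ at $w_p$ and label~$3$ at $v_2$, exactly the $(1,3)$ right-part pattern. After this step $v_2$ has left the cycle and $w_p$ has become active, while $w_1,\dots,w_{p-1}$ are still passive and the edges of $\lbrack v_1,w_p\rbrack$ are still unoriented; applying the induction hypothesis to $\lbrack v_1,w_p\rbrack$ and appending the $(1,3)$-edge $(w_p,v_2)$ to its right part yields the decomposition of $\lbrack v_1,v_2\rbrack$. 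The left-end case is symmetric and prepends a $(2,1)$-edge to the left part.

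The main obstacle, and the point demanding care, is the claim used above that \emph{no interior vertex $w_i$ can become active before an adjacent path edge is oriented}; without it, an interior vertex could be activated ``in the middle'', splitting the path into two independent pieces and producing two middle edges, contradicting the single decomposition asserted. This is exactly where the rightmost-eligible, hence non-blocked, choice enters. A vertex $w_i$ turns active only when a removed face is incident to it, and if that face is removed while processing an eligible vertex $v^{(j)}$ it must be an inner face of $G_j$ incident to the two cycle vertices $v^{(j)}$ and $w_i$. Since $v^{(j)}$ is eligible it is incident to no separating face, so this face cannot meet $\mathcal{C}_j$ at the non-consecutive vertex $w_i$ unless the portion of $\mathcal{C}_j$ between them lies entirely on it; tracing this out (using the very definition of separating face) shows that $w_i$ would then be the left- or right-connection vertex of $v^{(j)}$, in which case the corresponding connection edge -- an edge of our path adjacent to $w_i$ -- is oriented at that very step. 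Hence interior vertices are activated only through the orientation of an incident end edge, which is what makes the peeling well defined and confines the unique ``middle edge'' to the place where the two unzipping fronts meet.

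Finally I would assemble the pieces: each reduction contributes one $(2,1)$-edge on the left or one $(1,3)$-edge on the right and strictly decreases $p$, so after finitely many steps we reach the base case, whose single edge is the middle edge of the global decomposition. This yields the asserted partition of $\lbrack v_1,v_2\rbrack$ into a $(2,1)$-labelled bi-oriented prefix, one middle edge in one of the three stated forms, and a $(1,3)$-labelled bi-oriented suffix, independently of the actual order in which the algorithm performs the closures.
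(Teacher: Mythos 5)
Your proof is correct and follows essentially the same route as the paper's: an induction on the length of $\lbrack v_1,v_2\rbrack$ (you index by the number of interior passive vertices, the paper by the number of edges), with the same base-case analysis and the same peeling of a bi-oriented $2$--$1$ or $1$--$3$ edge from whichever active endpoint is processed first. Your third paragraph spells out, via the eligibility/separating-face argument, the stability claim that the paper disposes of in one sentence (``no vertex of $\lbrack v_1,v_2\rbrack$ can be treated as long as none of $v_1$ or $v_2$ is treated''), which is a welcome elaboration but not a different method.
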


\begin{proof}
  The proof is by induction on the length $L$ of $\lbrack v_1, v_2
  \rbrack$. Assume that $L=1$. Then $\lbrack v_1,v_2\rbrack$ is
  reduced to an edge. If $v_1$ is removed at an earlier step than
  $v_2$, then the edge $(v_1,v_2)$ is simply oriented with label 2
  from $v_1$ to $v_2$. If $v_2$ is removed at an earlier step than
  $v_1$, then the edge $(v_1,v_2)$ is simply oriented with label 3
  from $v_2$ to $v_1$. If $v_1$ and $v_2$ are removed at the same
  step, then $(v_1,v_2)$ is bi-oriented, with label 2 on $v_1$'s side
  and label 3 on $v_2$'s side, see Figure~\ref{figure:stepk}.

  Assume that $L>1$. Observe that the outer path $\lbrack
  v_1,v_2\rbrack$ remains unchanged as long as none of $v_1$ or $v_2$
  is removed. This remark follows from the fact that all vertices of
  $\rbrack v_1,v_2 \lbrack$ are passive, so that no vertex of $\lbrack
  v_1, v_2 \rbrack$ can be treated as long as none of $v_1$ or $v_2$
  is treated.

  Then, two cases can arise: if $v_1$ is removed before $v_2$, the
  right neighbour $v$ of $v_1$ becomes active and the edge $(v_1,v)$
  is bi-oriented, with label 2 on $v_1$'s side and label 1 on $v$'s
  side, see Figure~\ref{figure:stepk}. Similarly if $v_2$ is removed
  before $v_1$, the left neighour $v$ of $v_2$ becomes active and the
  edge $(v,v_2)$ is bi-oriented with label 3 on $v_2$'s side and label
  1 on $v$'s side.

  The result follows by induction on $L$, with a recursive call to the
  path $\lbrack v,v_2\rbrack$ in the first case and to the path
  $\lbrack v_1,v\rbrack$ in the second case.  
\hfill $\qed$
\end{proof}


\subsection{The algorithm outputs the minimal $\alpha_0$-orientation
of the derived map}

\subsubsection{Definitions and preliminary lemmas}
\paragraph{Maximal bilabelled paths}
Let $v$ be a vertex of $G$. For \mbox{$1\leq i\leq 3$}, the $i$-path
of $v$ is the unique path $P_v^{i}=(v_0,\ldots,v_m)$ of edges of $G$
starting at $v$ and such that each edge $(v_p,v_{p+1})$ is the
outgoing edge of $v_p$ with label $i$ (i.e., the edge of $G$
containing the outgoing half-edge of $v_p$ with label $i$).
Acyclicity properties of Schnyder woods ensure that $P_v^i$ ends at
the outer vertex $a_i$, see~\cite{Fe03}.  For $1\leq i\leq 3$ and
$1\leq j\leq 3$ with $i\neq j$, we define the \emph{maximal $i-j$
  path} starting at $v$ as follows. Let $l\leq m$ be the maximal index
such that the subpath $(v_0,\ldots,v_l)$ of $P_v^i$ only consists of
bi-oriented edges with labels $i-j$.  Then the maximal $i-j$ path
starting at $v$ is defined to be the path $(v_0,\ldots,v_l)$ and is
denoted by $P_v^{i-j}$.

At a step $k\geq 2$, let $v^{(k)}$ be the chosen vertex, i.e., the
rightmost eligible vertex on $\mathcal{C}_k$.  First, observe that
there exists an active vertex on the right of $v^{(k)}$. Indeed, the
rightmost vertex $a_2$ is active as soon as $k\geq 2$. In addition
$a_2$ is non eligible on $\mathcal{C}_k$ because it is blocked, so
that $a_2$ is different from $v^{(k)}$. Hence, $a_2$ is an active
vertex on the right of $v^{(k)}$.

We define the \emph{next active vertex on the right} of $v^{(k)}$ as
the unique vertex $v$ on the right of $v^{(k)}$ on $\mathcal{C}_k$
such that all vertices of $\rbrack v^{(k)}, v\lbrack$ are passive.

\begin{figure}
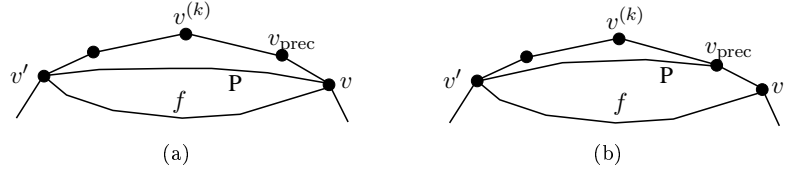

  \centering
  \subfigure[]{\scalebox{.85}{\input{Figures/sep-a.pstex_t}}}\qquad\qquad
  \subfigure[]{\scalebox{.85}{\input{Figures/sep-b.pstex_t}}}
  \caption{The two possible configurations related to the 
    next active vertex on the right of $v^{(k)}$.}
  \label{fig:sep}
\end{figure}

\begin{lemma} \label{lemma:P12}
  At a step $k\geq 2$, let $v^{(k)}$ be the chosen vertex. Let $v$ be
  the next active vertex on the right of $v^{(k)}$. Let
  $v_{\mathrm{prec}}$ be the left neighbour of $v$ on
  $\mathcal{C}_k$. Then, in the orientation $X_0$ finally computed,
  each edge of $\lbrack v^{(k)},v_{\mathrm{prec}} \rbrack$ is
  bi-oriented, with label 2 on its left side and label 1 on its right
  side. The edge $e=(v_{\mathrm{prec}},v)$ is either simply oriented
  with label 2 from $v_{\mathrm{prec}}$ to $v$ or bi-oriented, with
  label 2 on $v_{\mathrm{prec}}$'s side and label 3 on $v$'s side. In
  other words, $P_{v^{(k)}}^{2-1}=\lbrack
  v^{(k)},v_{\mathrm{prec}}\rbrack$ and the outgoing edge of
  $v_{\mathrm{prec}}$ with label 2 is $(v_{\mathrm{prec}},v)$.
\end{lemma}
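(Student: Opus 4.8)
The plan is to deduce the statement from Lemma~\ref{lemma:interval}. First I would observe that $(v^{(k)},v)$ is a pair of active vertices of $\mathcal{C}_k$ whose interior $\rbrack v^{(k)},v\lbrack$ consists only of passive vertices, so Lemma~\ref{lemma:interval} applies verbatim: the path $\lbrack v^{(k)},v\rbrack$ splits into a (possibly empty) \emph{first part} of bi-oriented edges (label $2$ on the left, label $1$ on the right), a \emph{middle edge}, and a (possibly empty) \emph{third part} of bi-oriented edges (label $1$ on the left, label $3$ on the right). Comparing with the claim, proving the lemma amounts to showing three things: that the first part starts at $v^{(k)}$, that the third part is \emph{empty} (so that the middle edge is exactly $(v_{\mathrm{prec}},v)$), and that the middle edge is not of the excluded type ``simply oriented with label $3$ from $v$ to $v_{\mathrm{prec}}$''.

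All three reduce to a single combinatorial fact, which I will call the \emph{Key Claim}: among the vertices of $\lbrack v^{(k)},v\rbrack$, the right endpoint $v$ is the \emph{last} one to be merged into the outer face by the algorithm. Granting this, I would re-run the inductive argument in the proof of Lemma~\ref{lemma:interval}: at the top level $v^{(k)}$ is removed at the current step~$k$ (it is the chosen vertex) while $v$ survives, so the recursion takes the ``left endpoint removed first'' branch; and since by the Key Claim $v$ is removed after \emph{every} other vertex of the segment, this branch is taken at every recursive level. Hence the first part exhausts $\lbrack v^{(k)},v_{\mathrm{prec}}\rbrack$, the third part is never created, and the base case on the last edge $(v_{\mathrm{prec}},v)$ is reached with $v_{\mathrm{prec}}$ removed no later than $v$, which is precisely the ``label $2$ from $v_{\mathrm{prec}}$ to $v$, or bi-oriented $2/3$'' alternative, excluding the label-$3$-from-$v$ case.

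The main obstacle is therefore the Key Claim, and here the hypothesis that $v^{(k)}$ is the \emph{rightmost eligible} vertex is essential. Since $v$ is active and lies strictly to the right of $v^{(k)}$, it cannot be eligible, hence it is \emph{blocked}, i.e.\ incident to a separating face $f$ at step~$k$. I would then show that $v$ remains blocked at every step between $k$ and the removal of $v_{\mathrm{prec}}$, so that $v$ is never selected as the (rightmost eligible) chosen vertex before the segment to its left has been consumed. Concretely, I would track a separator $(\cdot,\cdot,f)$ witnessing that $v$ is blocked and argue, using the nested structure of separating faces exploited in the implementation and in Lemma~\ref{lemma:existsvertex}, that its separated area contains the still-unremoved vertices of $\lbrack v^{(k)},v_{\mathrm{prec}}\rbrack$; such a separator can only disappear once that area has been emptied, i.e.\ once $v_{\mathrm{prec}}$ has become the immediate neighbour of $v$ across the unique remaining face. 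Thus $v$ stays blocked exactly as long as a segment vertex survives, which is the Key Claim. This persistence-of-blocking argument, rather than any of the orientation bookkeeping, is the delicate point.

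Finally, the ``in other words'' reformulation follows for free: by definition the maximal $2$–$1$ path $P_{v^{(k)}}^{2-1}$ follows outgoing label-$2$ half-edges along bi-oriented $2/1$ edges, which are exactly the edges of the first part, and it must stop at $v_{\mathrm{prec}}$ because $(v_{\mathrm{prec}},v)$ is either simply oriented (not bi-oriented) or bi-oriented with labels $2/3$ rather than $2/1$. In either case the unique outgoing label-$2$ edge of $v_{\mathrm{prec}}$ is $(v_{\mathrm{prec}},v)$, completing the identification.
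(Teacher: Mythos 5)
Your overall strategy is sound and rests on the same two pillars as the paper's proof: Lemma~\ref{lemma:interval} to get the three-part structure of $\lbrack v^{(k)},v\rbrack$, and the fact (via Lemma~\ref{lemma:existsvertex} and the rightmost-eligible rule) that $v$ is blocked by a separator $(v',v,f)$ with $v'$ strictly to the left of $v^{(k)}$. Where you diverge is in the packaging: the paper never proves anything about the removal order of the whole segment; it only pins down the type of the single edge $(v_{\mathrm{prec}},v)$, by a case split on whether the first edge of the boundary path of $f$ issued from $v$ is $(v,v_{\mathrm{prec}})$ or not, and then lets Lemma~\ref{lemma:interval} do the rest (once the last edge is known not to be bi-oriented $1$--$3$ nor simply oriented $3$ toward $v_{\mathrm{prec}}$, it must be the ``middle edge'' of the partition, which forces the first part to be all of $\lbrack v^{(k)},v_{\mathrm{prec}}\rbrack$). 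Your Key Claim is a stronger intermediate statement, but it is true and does imply the lemma, so the reduction itself is fine.

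The weak point is the justification of the Key Claim. The assertion that ``$v$ stays blocked exactly as long as a segment vertex survives'' is false precisely in the paper's second case, namely when $(v,v_{\mathrm{prec}})$ is itself an edge of the separating face $f$: there $v_{\mathrm{prec}}$ lies on the lower boundary path of $f$, so it is \emph{not} removed while $f$ is being emptied; instead $f$ ceases to be separating (hence $v$ becomes unblocked) while $v_{\mathrm{prec}}$ is still present as a degree-$2$ vertex, and the two are then swallowed together as part of a left-chain (producing the bi-oriented $2$--$3$ outcome), or $v_{\mathrm{prec}}$ is even treated one step ahead of $v$ as a chosen vertex. So ``blocked until the area is emptied'' cannot be taken literally --- the vertices on the boundary path of $f$ are never removed before $f$ itself is merged --- and repairing the statement to ``$v$ is never removed strictly before a surviving segment vertex'' forces you to analyse the simultaneous-removal step through the left-chain mechanism of Figure~\ref{figure:stepk}. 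That analysis is exactly the case distinction the paper performs, so your route does not actually avoid it; you should make the two cases (is $v_{\mathrm{prec}}$ on the boundary of $f$ or strictly above it?) explicit rather than absorbing them into a single persistence claim.
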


\begin{proof}
  To prove this lemma, using the result of Lemma~\ref{lemma:interval},
  we just have to prove that $(v_{\mathrm{prec}},v)$ is neither
  bi-oriented with label 1 on $v_{\mathrm{prec}}$'s side and label 3
  on $v$'s side, nor simply oriented with label 3 from $v$ to
  $v_{\mathrm{prec}}$, see Figure~\ref{figure:lowerpath}.

  First, as the active vertex $v$ is on the right of
  $v^{(k)}$, it can not be eligible, so that $v$ is blocked.
As a consequence there exists a vertex $v'$ and a face $f$ such that
  $(v,v',f)$ is a separator. Lemma~\ref{lemma:existsvertex}
  ensures that there exists an eligible vertex in $\rbrack v',v
  \lbrack$. Hence the vertex $v'$ is on the left of $v^{(k)}$ on
  $\mathcal{C}_k$, otherwise $v^{(k)}$ would not be the rightmost
  eligible vertex. Let $P$ be the path on the boundary of $f$ 
going from $v$ to $v'$
  with $f$ on its left. Two cases can arise: 
\begin{longenum}
\item the first edge of $P$ is different from $(v,v_{\mathrm{prec}})$, so
  that $v_{\mathrm{prec}}$ is above $P$, see Figure~\ref{fig:sep}(a).
  Clearly, $v$ remains blocked as long as all vertices above $P$ have
  not been treated. Hence, $v_{\mathrm{prec}}$ will be treated at an
  earlier step that $v$.  As $v$ is active, it implies (see
  Figure~\ref{figure:stepk}) that $(v_{\mathrm{prec}},v)$ is simply
  oriented with label 2 from $v_{\mathrm{prec}}$ to $v$.
\item the first edge of $P$ is $(v,v_{\mathrm{prec}})$, see
  Figure~\ref{fig:sep}(b).  Observe that $v_{\mathrm{prec}}$ can not
  be equal to $v'$.  Indeed $v$ is on the right of $v^{(k)}$, so that
  $v_{\mathrm{prec}}$ is on the right or equal to $v^{(k)}$, whereas
  $v'$ is on the left of $v^{(k)}$. Hence, $P$ has length greater
  than~1. As a consequence, when $f$ will cease to be separating,
  $v_{\mathrm{prec}}$ will only be incident to
  $f$. Figure~\ref{figure:stepk} ensures that, when such a vertex is
  treated, the edge connecting this vertex to its right neighbour is
  always bi-oriented and bi-labelled 2-3, which concludes the proof. 
  \hfill $\qed$
\end{longenum}
\end{proof}

\begin{figure}
  \centering
  \scalebox{.85}{\input{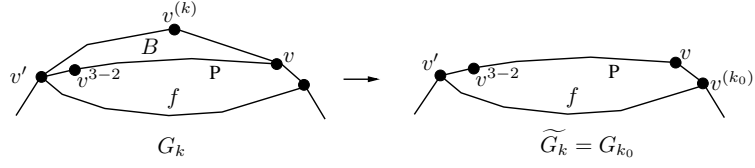}}
  \caption{The path between $v$ and $v^{3-2}$ will consist of
    bi-oriented edges bilabelled 3-2.}
  \label{fig:path}
\end{figure}

\begin{lemma} \label{lemma:P32}
  At a step $k\geq 2$, let $v^{(k)}$ be the rightmost eligible vertex and 
 $v$
  the next active vertex on the right of $v^{(k)}$. Let $v^{3-2}$ be
  the extremity of $P_{v}^{3-2}$ in $X_0$ and $e$ the outgoing
  edge of $v^{3-2}$ with label 3. If $e$ is bi-oriented, it is
bi-labelled 3-1 and we
  define
  $v_1=v^{3-2}$. Otherwise $e$ is simply oriented, we define $v_1$ as
  the extremity of $e$.

  Then $v_1$ belongs to $\mathcal{C}_k$ and is on the left of $v^{(k)}$.
\end{lemma}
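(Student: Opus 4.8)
The plan is to locate $v_1$ by tracing the maximal path $P_v^{3-2}$ leftward and showing that it exits onto the current cycle to the left of $v^{(k)}$. First I would reuse the opening of the proof of Lemma~\ref{lemma:P12}: since $v$ is active and lies to the right of the rightmost eligible vertex $v^{(k)}$, it is blocked, so there is a separator $(v',v,f)$, and Lemma~\ref{lemma:existsvertex} forces $v'$ to lie strictly to the left of $v^{(k)}$ on $\mathcal{C}_k$ (otherwise an eligible vertex of $\rbrack v',v\lbrack$ would sit to the right of $v^{(k)}$). Among all separating faces incident to $v$ I would take $f$ minimal, so that the separated area $\mathrm{Sep}(v',v,f)$ contains no smaller separator. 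The geometric target is then $v'$: the claim will be that $P_v^{3-2}$ runs leftward and its continuation reaches exactly $v'$ (or its cycle-neighbour), which already lies on $\mathcal{C}_k$ to the left of $v^{(k)}$.

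Next I would pin down the direction and shape of $P_v^{3-2}$. By the labelling convention the $3$-path heads toward the corner $a_3$, i.e.\ leftward, and by the left-chain rule of the main iteration (Figure~\ref{figure:stepk}) together with Lemma~\ref{lemma:interval}, a bi-oriented edge labelled $3$--$2$ always carries label $2$ on its left half-edge and label $3$ on its right half-edge; hence the outgoing label-$3$ half-edge of the right endpoint of such an edge points left, and $P_v^{3-2}$ is indeed a leftward path. I would then consider the step $k_0\ge k$ at which the face $f$ is finally absorbed---equivalently the step at which $v$ ceases to be blocked by $f$---and argue that the edges of $P_v^{3-2}$ are precisely the left-chain edges produced at that step, which run along the upper boundary of $\mathrm{Sep}(v',v,f)$ toward $v'$. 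The maximality of $P_v^{3-2}$ then forces it to stop exactly at the left-connection edge $e$, whose inner endpoint is $v^{3-2}$ and whose outer endpoint is the left-connection vertex.

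Finally I would read off $v_1$ from the left-connection-edge rule of the iteration: $e$ is bi-oriented (necessarily $3$--$1$) precisely when the connection vertex is passive, and then $v_1=v^{3-2}$; otherwise $e$ is simply oriented with label $3$ and $v_1$ is its outer endpoint. In the active case the connection vertex is $v'$, which lies on $\mathcal{C}_k$ strictly to the left of $v^{(k)}$, so $v_1=v'$ and we are done; in the passive case I would use the minimality of $f$ to show that $v^{3-2}$ is the cycle vertex adjacent to $v'$ along $\lbrack v',v^{(k)}\rbrack$, so again $v_1\in\mathcal{C}_k$ and lies left of $v^{(k)}$. The main obstacle is exactly this last control: showing that the leftward $3$--$2$ path tracks the boundary of the separated region and terminates on the \emph{original} cycle $\mathcal{C}_k$ to the left of $v^{(k)}$, rather than on an interior vertex exposed only after step $k$. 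This is where the minimality of the separator must be combined carefully with the orientation and labelling invariants, exactly as suggested by Figure~\ref{fig:path}.
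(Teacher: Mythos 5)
Your overall strategy --- exhibit a separator $(v',v,f)$ involving $v$, follow the algorithm until its separated area has been cleared, and identify $P_v^{3-2}$ with the left-chain created at that moment --- is the same as the paper's, but you have chosen the wrong separator. The paper orders the separators $(v'',v,\cdot)$ involving $v$ by inclusion of their separated areas (this is a total order, the partners $v''$ appearing from left to right as the areas shrink) and takes the \emph{maximal} one: its separated area $B$ contains every other separating face incident to $v$, so $v$ first becomes eligible exactly at the step $k_0$ at which $B$ has been entirely removed, and at that step the left-connection vertex of the treated vertex $v^{(k_0)}$ is precisely the maximal $v'$; this is what makes the $3$--$2$ left-chain, and hence $P_v^{3-2}$, run all the way to the right neighbour of that $v'$. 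If you instead take the \emph{minimal} separator, clearing its separated area does \emph{not} unblock $v$ --- the larger separating faces incident to $v$ are still present --- so the step at which your $f$ is ``finally absorbed'' is not the step at which $v$ is treated, the left-chain created at that step need not involve $v$ at all, and when $v$ belongs to several nested separators the path $P_v^{3-2}$ extends past your minimal $v'$ toward the maximal one. The ``geometric target'' you announce is therefore incorrect in general, and the identification of $v_1$ with (a neighbour of) the minimal $v'$ does not hold, even though that vertex happens to lie on the correct side of $v^{(k)}$.

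Independently of this, the two facts you defer as ``the main obstacle'' are exactly the content of the proof and are left unestablished: (i) that between step $k$ and step $k_0$ only vertices of the separated area $B$ are removed, so that $G_{k_0}=G_k\setminus B$, the face $f$ ceases to be separating at step $k_0$ while all other faces of $G_{k_0}$ keep their separating status, and the rightmost eligible vertex at step $k_0$ is $v$ or a vertex of $f$ just to its right; and (ii) that the left-connection vertex of $v^{(k_0)}$ is exactly $v'$, and that (when $v'$ is passive) the right neighbour of $v'$ on $\mathcal{C}_{k_0}$ coincides with its right neighbour on the \emph{original} cycle $\mathcal{C}_k$, which is what guarantees $v_1\in\mathcal{C}_k$ rather than $v_1$ being a vertex exposed only after step $k$. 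Without these two steps the conclusion of the lemma is not reached.
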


\begin{proof}
  First, observe that each vertex $v''$ such that the pair $\{ v'',v\}$
  is separating is on the left of $v^{(k)}$, otherwise,
  Lemma~\ref{lemma:existsvertex} ensures that there exists an eligible
  vertex in $\rbrack v'',v\lbrack$, 
in contradiction with the fact that $v^{(k)}$ is
  the rightmost eligible vertex.

  Observe also that the set $\mathcal{S}$ 
of separators $(v'',v,f)$ involving $v$ and
  endowed with the inclusion-relation for the separated areas 
   is not only a partial order but
  a total order. In particular, for two separators $(v_1'',v,f_1)$ and
  $(v_2'',v,f_2)$, if $v_1''$ is on the left of $v_2''$, then the
  separated area of $(v_2'',v,f_1)$ is strictly included in the
  separated area of $(v_1'',v,f_2)$. In addition, $\mathcal{S}$ is non
  empty because $v$ is the next active vertex on the right of
  $v^{(k)}$, hence $v$ is blocked.

  Let $(v',v,f)$ be the maximal separator for the totally ordered
  set $\mathcal{S}$.  
Then the separated area of $(v',v,f)$ contains all separating
  faces incident to $v$ except $f$. Let $P$ be the path of edges on the 
boundary of
  $f$ going from $v$ to $v'$ with the interior of $f$ on its left, and let
  $B$ be the separated area of $(v',v,f)$. Let
  $\widetilde{G_k}$ be the submap of $G$ obtained by removing $B$ from
  $G_k$, and let $\widetilde{\mathcal{C}_k}$ be the boundary of
  $\widetilde{G_k}$.

  We claim that $f$ is not separating in $\widetilde{G_k}$. Otherwise,
  there would exist a vertex $v_2$ on the right of $v$ such that
  $(v,v_2,f)$ is a separator or there would exist a vertex $v_3$ on
  the left of $v'$ such that $(v_3,v',f)$ is a separator: the first
  case is in contradiction with the fact that all separators $\{
  v,v_2\}$ involving $v$ are such that $v$ is on the right of
  $v_2$. The second case is in contradiction with the fact that
  $(v',v,f)$ is the maximal separator involving $v$.

  We claim that only vertices of $B$ will be removed from step $k$ on,
  until all vertices of $B$ are removed. Indeed, all separating faces
  incident to vertices on the right of $v$ are faces of
  $\widetilde{G_k}$, hence they will remain separating as long as not
  all vertices of $B$ are removed. As all vertices on the right of $v$
  are either blocked or passive, it is easy to see inductively that
  all these vertices will keep the same status until all vertices of
  $B$ are removed.

  Let $k_0$ be the first step where all vertices of $B$ have been
  removed. Then $G_{k_0}=\widetilde{G_k}$. Hence $f$ is not separating
  anymore on $\mathcal{C}_{k_0}$, but all other faces of
  $\widetilde{G_k}$ that are separating at step $k$ are still
  separating at step $k_0$. We have seen that the separating faces incident to
  $v$ at step $k$ are the face $f$ and faces in $B$. In addition, all
  faces of $G_{k_0}$, except $f$, have kept their separating-status
  between step $k$ and step $k_0$. Hence $v$ is eligible on
  $\mathcal{C}_{k_0}$, and the rightmost eligible vertex $v^{(k_0)}$
  at step $k_0$ is a vertex incident to $f$. It is either $v$ or a
  vertex of $f$ on the right of $v$ (on $\mathcal{C}_{k_0}$)
  such that $\lbrack v, v^{(k_0)}\rbrack$ only consists of edges
  incident to $f$ (otherwise $f$ would be separating), 
see Figure~\ref{fig:path},
where $v^{(k_0)}$ is the right neighbour of $v$.

  Moreover, the left-connection vertex of $v^{(k_0)}$ is
  $v'$. Otherwise there would be a vertex of $f$ on
  $\widetilde{\mathcal{C}_k}$ and on the left of $v'$. 
  This vertex would also be on $\mathcal{C}_k$ (because only
  vertices of $B$ are removed to obtain $\widetilde{G_k}$ from
  $G_k$), in contradiction with the fact that $(v',v,f)$ is the
  maximal separator of $\mathcal{C}_k$ involving $v$.

  Then two cases can arise whether $v'$ is passive or active
  on $\mathcal{C}_{k_0}$: 
\begin{longenum}
\item $v'$ is passive on $\mathcal{C}_{k_0}$. Then $v'$ is not
  incident to any edge of $G\backslash G_{k_0}$. In particular $v'$ is
  not incident to any edge of $B\backslash G_{k_0}$.  Hence the right
  neighbour of $v'$ on $\mathcal{C}_{k_0}$ and on $\mathcal{C}_k$ are
  the same vertex, that is, the vertex $v_1$ preceding $v'$ on $P$.
  Observe that $v_1$ is on the left of $v^{(k)}$ on $\mathcal{C}_k$,
  indeed, $v_1$ can not be equal to $v^{(k)}$ at step $k$ because
  $v_1$ is incident to $f$, which is separating at this step. By
  definition of $v_1$ and by construction of the orientation (see
  Figure~\ref{figure:stepk}), $P_{v^{(k_0)}}^{3-2}$ is equal to
  $[v_1,v^{(k_0)}]$ taken from right to left, and $(v_1,v')$ is
  bi-oriented bi-labelled $3-1$ from $v_1$ to $v'$. As $v\in \lbrack
  v_1, v^{(k_0)}\rbrack $ at step $k_0$, $[v,v^{(k_0)}]\subseteq
  [v_1,v^{(k_0)}]$, so that $P_{v}^{3-2}$ is equal to $[v,v^{(k_0)}]$
  taken from right to left. As $(v_1,v')$ is bi-oriented bi-labelled
  $3-1$ from $v_1$ to $v'$, this concludes the proof for the first
  case (i.e., $v_1=v^{3-2}$).
\item  $v'$ is active on $\mathcal{C}_{k_0}$. In this case,
  upon taking $v_1$ to be the vertex $v'$, a similar argument as for
  the previous paragraph applies: indeed $v_1$ is a vertex on
  $\mathcal{C}_k$ on the left of $v^{(k)}$, and $P_{v}^{3-2}$ is the
    path on $\mathcal{C}_{k_0}$ going from $v$ to the right
  neighbour of $v_1$ on $\mathcal{C}_{k_0}$, and the edge connecting
  the right neighbour of $v_1$ to $v_1$ is simply oriented with label
  3 toward $v_1$ (see Figure~\ref{figure:stepk}).
\hfill $\qed$
\end{longenum}
\end{proof}

\begin{lemma}\label{lemma:bordercycle}
  The vertices $a_1$, $a_2$ and $a_3$ can not belong to any clockwise circuit.
\end{lemma}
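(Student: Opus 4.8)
The plan is to read off the local structure of the computed orientation $X_0$ around the three outer vertices directly from the first and last steps of the algorithm, and then to observe that this structure traps any putative circuit at an outer edge-vertex. I would not need the "clockwise" hypothesis at all: the argument will show that $a_1,a_2,a_3$ lie on no directed circuit whatsoever.

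First I would record what the algorithm does to the three outer edges. At the first step the rightmost eligible vertex is $a_1$, whose left- and right-neighbours on $\mathcal{C}_1$ are the passive vertices $a_3$ and $a_2$. By the treatment of the left- and right-connection edges in the passive case, both $(a_1,a_3)$ and $(a_1,a_2)$ are bi-oriented with the half-edge incident to $a_1$ outgoing, and the unique outgoing edge of each of the two associated edge-vertices $v_{13}$ and $v_{12}$ is chosen \emph{toward the exterior of $\mathcal{C}_1$}, i.e.\ toward the outer face. At the last step the remaining edge $(a_2,a_3)$ is bi-oriented and the outgoing edge of its edge-vertex $v_{23}$ is likewise directed toward the outer face. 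In the derived map this outer-face direction is exactly the additional (dangling) half-edge carried by each outer vertex of $G'$; hence the unique outgoing edge of each of $v_{12},v_{13},v_{23}$ is a dangling half-edge, which has no second endpoint.

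Next I would pin down the outgoing edges of $a_1,a_2,a_3$ themselves. Each outer vertex carries its own dangling half-edge (outgoing), and by the bi-orientations just established its two incident outer edges are directed from $a_i$ toward the corresponding outer edge-vertices. Since $X_0$ is an $\alpha_0$-orientation (Theorem~\ref{theorem:felsner}), every primal vertex has outdegree exactly $3$; having exhibited three outgoing edges at each $a_i$ (the dangling half-edge and the two outer edges), I conclude that all remaining incident edges of $a_i$, namely those running to interior edge-vertices, must be incoming. Thus the only non-dangling outgoing edges of each $a_i$ lead into $\{v_{12},v_{13},v_{23}\}$. The conclusion is then immediate: a directed circuit cannot traverse a dangling half-edge, so no circuit meets $v_{12},v_{13}$ or $v_{23}$ (their sole outgoing edge being dangling); and a circuit through some $a_i$ would have to leave $a_i$ along a non-dangling outgoing edge, thereby entering one of these three edge-vertices, a contradiction. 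Hence none of $a_1,a_2,a_3$ lies on a (clockwise) circuit.

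I expect the only delicate point to be the bookkeeping around $a_2$ and $a_3$, which -- unlike $a_1$ -- are never chosen as the eligible vertex, so their incident edges are oriented piecemeal across many steps rather than all at once. Invoking the outdegree-$3$ property of the $\alpha_0$-orientation is precisely what lets me sidestep this step-by-step bookkeeping and conclude uniformly that all interior edges at $a_2$ and $a_3$ are incoming, once the three outgoing edges have been identified; the rest of the proof is then purely local and combinatorial.
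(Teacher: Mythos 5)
Your proof is correct and follows essentially the same route as the paper's: both identify the three outgoing edges at each outer vertex (the dangling half-edge plus the two bi-oriented outer edges leading to outer edge-vertices whose unique outgoing edge points into the outer face), so that every directed path leaving $a_i$ dies immediately. The only cosmetic differences are that you use the outdegree-$3$ count to dispose of $a_2,a_3$ where the paper just says the cases are identical, and that the outdegree-$3$ fact should be credited to the construction of $X_0$ (the preceding subsection showing $X_0$ is an $\alpha_0$-orientation) rather than to Theorem~\ref{theorem:felsner}.
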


\begin{proof}
  Let us consider $a_1$ (the cases of $a_2$ and $a_3$ can be dealt
  with identically). The outgoing edge of $a_1$ with label 1 is
  directed toward the outer face. The outgoing edges of $a_1$ with
  labels 2 and 3 connect respectively $a_1$ to two edge-vertices whose
  unique outgoing edge is directed toward the outer face. Hence
  each directed path starting at $a_1$ finishes immediately in the
  outer face.
\hfill $\qed$
\end{proof}

\subsubsection{Possible configurations for a minimal clockwise circuit of $X_0$}

\begin{lemma}
  \label{lemma:contourfaces}
  Let $f$ be an inner face of $G'$. Then the boundary of $f$ is not a
  clockwise circuit in $X_0$.
\end{lemma}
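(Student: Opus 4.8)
The plan is to use the two facts already in hand: every inner face of the derived map $G'$ is a quadrilateral, and (by the previous subsection) $X_0$ is an $\alpha_0$-orientation, so every edge-vertex has outdegree $1$. Recall the bipartite structure of $G'$: each edge joins an edge-vertex to a primal or a dual vertex. Hence the boundary of an inner face $f$ is a $4$-cycle $p$--$a$--$d$--$b$, where $p$ is primal, $d$ is dual, and $a,b$ are the two edge-vertices; the edges $pa,pb$ are primal (half-edges of $G$) while $ad,db$ are dual (half-edges of $G^{*}$). First I would dispose of the faces incident to an outer vertex $a_{1},a_{2},a_{3}$: by Lemma~\ref{lemma:bordercycle} these vertices lie on no clockwise circuit, so no such face boundary can be clockwise. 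It remains to treat a generic inner face, which I now assume, for contradiction, to have a clockwise boundary $\mathcal{C}$.

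Next I would pin down the orientations forced on $\mathcal{C}$. Traversing $\mathcal{C}$ with the interior of $f$ on the right, at each edge-vertex exactly one incident boundary edge is outgoing, and since that edge-vertex has total outdegree $1$ this is its \emph{unique} outgoing edge. A short case check on the two traversal directions shows that, up to exchanging the roles of $a$ and $b$, the clockwise boundary must read $p\to a\to d\to b\to p$: the outgoing edge of $a$ is the dual edge $ad$ and the outgoing edge of $b$ is the primal edge $bp$. By the labelling invariant for edge-vertices (third labelling invariant, cases (b) and (a) respectively; see Figure~\ref{figure:invariantslabelling}(c)), this means the edge $e_{a}$ of $G$ carried by $a$ is bi-oriented while the edge $e_{b}$ carried by $b$ is simply oriented, and it fixes the labels of the primal edges $pa$ and $pb$.

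Finally I would derive the contradiction from the Schnyder structure recorded by the labels. The half-edge $pa$ is outgoing at $p$ with some label $\mu$, while $pb$ is incoming at $p$; since $pa$ and $pb$ are consecutive in the rotation at $p$ and bound the corner $f$, the first labelling invariant (out-edges labelled $1,2,3$ clockwise, in-edges of the intermediate sector labelled one less) forces the label of $pb$, and the simple-orientation of $e_{b}$ propagates this to the other half-edge of $e_{b}$. Reading the symmetric constraint at the dual vertex $d$ --- where $ad$ is incoming and $db$ outgoing, again consecutive and bounding $f$ --- via the same invariants (equivalently, via the rotation scheme of Figure~\ref{figure:invariantslabelling} and the cyclic label shift of \cite{Fe03}) yields a second relation between the same labels, and the two relations are incompatible. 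This contradiction proves that the boundary of $f$ is not clockwise. I expect the main obstacle to be exactly this last bookkeeping: matching the planar orientation conventions (clockwise traversal versus rotation order, and which of $a,b$ turns which way) and verifying that the forced labels around $p$ and $d$ genuinely clash; I would cross-check every sign against Figure~\ref{figure:stepk} and Figure~\ref{figure:invariantslabelling}. Since a face boundary is the smallest candidate circuit, this lemma also serves as the base case for the subsequent analysis of the possible shapes of a minimal clockwise circuit.
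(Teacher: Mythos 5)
Your setup is right and matches the paper's: the boundary of an inner face of $G'$ is a quadrangle with one primal vertex, one dual vertex and two edge-vertices; if it were a clockwise circuit, then since each edge-vertex has outdegree~1 the labelling invariant of Figure~\ref{figure:invariantslabelling}(c) forces the primal edge leaving the primal vertex $v$ to be bi-oriented with labels $i$ and $i+1$, and the other primal edge of the face to be simply oriented toward $v$ with label $i-1$. Up to that point you are reproducing the first half of the paper's argument.

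The final step, however, has a genuine gap: there is no purely local contradiction to be had from the labelling invariants at the primal and dual vertices. The configuration you arrive at (an outgoing bi-oriented edge labelled $i$/$i{+}1$ at $v$, immediately followed in clockwise order by an incoming edge labelled $i-1$) is exactly what invariant (a) prescribes for an incoming edge in the sector between the out-edges $i$ and $i+1$, and the symmetric check at the dual vertex is equally consistent. This had to be so: every $\alpha_0$-orientation of $G'$ corresponds to a valid Schnyder wood satisfying these local rules, and the set of $\alpha_0$-orientations is a non-trivial distributive lattice whose covering relation is precisely the reversal of a clockwise face boundary (Theorem~\ref{theorem:felsner} and the footnote there). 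So a non-minimal $\alpha_0$-orientation \emph{does} have clockwise face circuits while still satisfying all the local invariants; no amount of label bookkeeping around $p$ and $d$ can rule them out. What the lemma really asserts is a property of the particular orientation $X_0$ produced by always choosing the \emph{rightmost} eligible vertex, and the paper's proof必 uses that dynamic information: for each of the three cases $i=1,2,3$ it identifies the step $k$ at which the relevant vertex is treated, shows that vertex must be the chosen $v^{(k)}$, and then invokes Lemma~\ref{lemma:P12} or Lemma~\ref{lemma:P32} (which encode consequences of the rightmost choice) to contradict the forced label of $e_{\mathrm{next}}$. Your proof would need to be completed along those lines; as written, the claimed clash of relations does not exist.
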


\begin{proof}
  Assume that the contour of $f$ is a clockwise circuit. We recall that
  the contour of $f$ has two edge-vertices, one dual vertex, and one
  primal vertex $v$. Let $i$ be the label of the edge $e'$ of $f$
  going out of $v$. The edge $e'$ is the first half-edge of an edge
  $e$ of $G$. We denote by $v_e$ the edge-vertex of $G'$ associated to
  $e$ and by $v'$ the vertex of $G$ such that $e=(v,v')$. As the
  contour of $f$ is a clockwise circuit, the unique outgoing edge of
  $v_e$ follows the edge $(v_e,v)$ in ccw order
  around $v_e$. Hence, according to
  Figure~\ref{figure:invariantslabelling}(c), the edge $e$ is
  bi-oriented and the second half-edge of $e$ has label $i+1$. We
  denote by $e_{\mathrm{next}}$ the edge of $G$ following $e$ in clockwise
  order around $v$. The edge $e_{\mathrm{next}}'$ of $G'$ following $e'$ in
  clockwise order around $v$ is the edge of $f$ directed toward
  $v$. Hence, the rules of labelling 
(Figure~\ref{figure:invariantslabelling}(a)) ensure that $e_{\mathrm{next}}'$ has label
  $i-1$. As $e_{\mathrm{next}}'$ is the second half-edge of $e_{\mathrm{next}}$, this
  ensures that $e_{\mathrm{next}}$ is simply oriented with label $i-1$ toward~$v$.

\begin{figure}
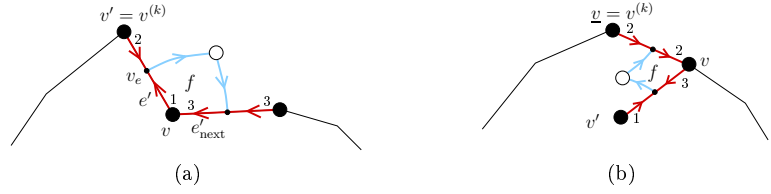

  \centering
  \subfigure[]{\scalebox{.6}{\input{Figures/contourface-a.pstex_t}}}
  \qquad\qquad
  \subfigure[]{\scalebox{.6}{\input{Figures/contourface-b.pstex_t}}}
  \caption{Configuration of a face $f$ of $G'$ whose boundary is a
    clockwise circuit and such that the outgoing edge of the
    unique primal vertex of $f$ has label 1 (Fig. a) and label 3
    (Fig. b).}
  \label{figure:contourface}
\end{figure}

  We now deal separately with the three possible cases $i=1,2,3$:
  
\begin{longitem}
\item Case $i=1$: The edge $e$ is bi-labelled 1-2 from $v$ to $v'$ and
  $e_{\mathrm{next}}$ is simply oriented with label 3 toward $v$, see
  Figure~\ref{figure:contourface}(a). Let $k$ be the step of the
  algorithm during which the vertex $v'$ is treated.
  Figure~\ref{figure:stepk} ensures that, if $v'$ is not equal to the
  rightmost eligible vertex $v^{(k)}$, then the outgoing edge with
  label 2 of $v'$ is bi-oriented with label 3 on the other half-edge,
  which is not the case here. Hence $v'=v^{(k)}$.

  In addition, as $(v',v)$ is bi-labelled 2-1 from $v'$ to $v$, the
  vertex $v$ is passive on $\mathcal{C}_k$. Hence, writing
  $e_{v\rightarrow}$ for the edge of $\mathcal{C}_k$ whose left
  extremity is $v$, there is no edge of $G\backslash G_k$ between $e$
  and $e_{v\rightarrow}$ in clockwise order around $v$, so that
  $e_{v\rightarrow}=e_{\mathrm{next}}$.

  We claim that $k\geq 2$.  Otherwise $v'$ would be equal to $a_1$. As
  $e=(v,v')$ is bi-labelled 1-2 from $v$ to $v'$, $v$ would be equal
  to $a_2$. But according to Lemma~\ref{lemma:bordercycle}, $a_2$ can
  not belong to any clockwise circuit.

  Hence $k\geq 2$ and we can use Lemma~\ref{lemma:P12}. In particular,
  this lemma ensures that $e_{v\rightarrow}$ is the outgoing
  edge of $v$ with label 2. We obtain here a contradiction with the
  fact that $e_{\mathrm{next}}$ is going toward $v$ with label 3 and
  $e_{v\rightarrow}=e_{\mathrm{next}}$.

\item Case $i=2$: The edge $e$ is bi-labelled 2-3 from $v$ to $v'$ and
  $e_{\mathrm{next}}$ is simply oriented with label 1 toward~$v$. Let
  $k$ be the step during which $v$ is treated. By construction of the
  orientation (see Figure~\ref{figure:stepk}), at step $k$ the vertex
  $v$ belongs to $\rbrack \mathrm{left}(v^{(k)}),v^{(k)}\lbrack$ and
  $e_{\mathrm{next}}$ is the outgoing edge of $v$ with label 3. This
  is in contradiction with the fact that $e_{\mathrm{next}}$ is simply
  oriented toward $v$ with label~1.

\item Case $i=3$: The edge $e$ is bi-labelled 3-1 from $v$ to $v'$ and
  $e_{\mathrm{next}}$ is simply oriented with label 2 toward $v$, see
  Figure~\ref{figure:contourface}(b). Let $\underline{v}$ be the
  origin of $e_{\mathrm{next}}$ and let $k$ be the step during which
  $\underline{v}$ is removed from $G_k$. As $e_{\mathrm{next}}$ is
  simply oriented with label 2 from $\underline{v}$ to $v$, we have
  $\underline{v}=v^{(k)}$ and $v=\mathrm{right}(v^{(k)})$.
  Lemma~\ref{lemma:P12} ensures that $v$ is the next active vertex on
  the right of $v^{(k)}$ on $\mathcal{C}_k$. In addition, $k\geq 2$,
  otherwise $v^{(k)}=a_1$, in contradiction with the fact that the
  outgoing edge of $a_1$ with label 2 is bi-oriented. Hence, we can
  use Lemma~\ref{lemma:P32}: here, the next active vertex on the right
  of $v^{(k)}$ is $v$ and the path $P_v^{3-2}$ is empty because the
  outgoing edge with label 3 of $v$ is bi-labelled 3-1. Hence the
  vertex denoted by $v_1$ in the statement of Lemma~\ref{lemma:P32} is
  here $v$.  Lemma~\ref{lemma:P32} ensures that $v$ is a vertex of
  $\mathcal{C}_k$ on the left of $v^{(k)}$, in contradiction with the
  fact that $v$ is the right neighbour of $v^{(k)}$ on
  $\mathcal{C}_k$.  
\hfill $\qed$
\end{longitem}
\end{proof}

\begin{lemma}[\cite{Fe03}]
  The possible configurations of an essential circuit of $X_0$ are
  illustrated in Figure~\ref{figure:configuration}, where $n_e^{(3)}$
  (resp. $n_e^{(4)}$) denotes the numbers of edge-vertices on the
  circuit that have respectively 3 (resp. 4) incident edges on or
  inside the circuit.
\end{lemma}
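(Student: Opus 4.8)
The plan is to read off the possible shapes of an essential (that is, inclusion-minimal clockwise) circuit $\mathcal{C}$ from two structural facts about the derived map $G'$: every edge of $G'$ joins an edge-vertex to a primal or dual vertex, and every bounded face of $G'$ is a quadrilateral of the form primal--edge-vertex--dual--edge-vertex (as already used in Lemma~\ref{lemma:contourfaces}). Consequently $\mathcal{C}$ alternates between edge-vertices and primal/dual vertices, so it has length $2\ell$ with exactly $\ell$ edge-vertices and $\ell$ primal/dual vertices. At each edge-vertex $v_e$ of $\mathcal{C}$ the outgoing circuit edge must be the unique outgoing edge of $v_e$ (edge-vertices have outdegree~$1$ in an $\alpha_0$-orientation), so the two non-circuit edges of $v_e$ are both incoming and lie either inside or outside $\mathcal{C}$. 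This is precisely the dichotomy measured by $n_e^{(3)}$ and $n_e^{(4)}$: an edge-vertex is of type $n_e^{(4)}$ when both extra edges point into the interior, and of type $n_e^{(3)}$ when exactly one does.

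Next I would pin down the interior by a counting argument on the closed disk $R$ bounded by $\mathcal{C}$. Writing $a$ and $b$ for the numbers of strictly interior primal/dual and edge-vertices and $F$ for the number of bounded faces of $G'$ inside $\mathcal{C}$, the degree-$4$ identity $4F = 2E_{\mathrm{int}} + 2\ell$ gives $E = 2F+\ell$ edges in $R$, and Euler's relation $V-E+F=1$ for the disk yields
\[
F = a + b + \ell - 1 .
\]
On the other hand, summing outdegrees inside $R$ and using that interior primal/dual vertices contribute $3$, interior edge-vertices contribute $1$, and every boundary vertex contributes its (clockwise) circuit edge plus its out-edges aimed into the interior, one gets $E = 3a + b + 2\ell + \Sigma$, where $\Sigma \ge 0$ is the total number of interior-directed out-edges of the primal/dual vertices of $\mathcal{C}$. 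Combining the two expressions for $E$ produces the single balance relation
\[
\Sigma = b - a + \ell - 2 .
\]

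Finally I would invoke essentiality. Because $\mathcal{C}$ is inclusion-minimal, no face of $G'$ strictly inside it has a clockwise boundary --- the case where $\mathcal{C}$ is itself a face boundary is exactly Lemma~\ref{lemma:contourfaces}, and the interior contains no clockwise circuit at all. Feeding this acyclicity into the balance relation, together with the fact that each interior edge-vertex still has outdegree~$1$ and so cannot absorb the interior, bounds $a$, $b$ and $\Sigma$ and forces the interior region to be one of finitely many rigid quadrangular tiles; enumerating them yields precisely the patterns of Figure~\ref{figure:configuration}, indexed by the pair $(n_e^{(3)},n_e^{(4)})$. Theorem~\ref{theorem:felsner} guarantees that the ambient $\alpha_0$-orientation without clockwise circuit exists, so these patterns are the only obstructions one must subsequently eliminate, which is the task of Lemmas~\ref{lemma:P12} and~\ref{lemma:P32}.

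The step I expect to be the real obstacle is exactly this passage from the numerical balance to an honest finite classification. The counting alone does not isolate the configurations: one must use planarity and the absence of interior clockwise circuits to show that the interior-directed edges of the boundary primal/dual vertices, the interior edge-vertices, and the chords they induce can assemble only into the finitely many rigid shapes drawn in Figure~\ref{figure:configuration}, and to check that each listed shape is genuinely realizable. This geometric core is where the argument of \cite{Fe03} does its work; the alternation and outdegree bookkeeping above only set the stage.
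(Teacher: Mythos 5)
Your setup is sound: the alternation of edge-vertices with primal/dual vertices along the circuit, the observation that the circuit edge leaving an edge-vertex is its unique out-edge, the Euler/degree count leading to $\Sigma = b-a+\ell-2$, and the remark that the face-boundary case is disposed of by Lemma~\ref{lemma:contourfaces} are all correct. But there is a genuine gap, and it is exactly the one you flag yourself: nothing in your argument bounds $\ell$, $a$, $b$ or $\Sigma$, so the balance relation admits infinitely many solutions and does not by itself yield a finite list of configurations. The two facts that actually do the work --- and which the paper imports verbatim from Lemma~17 of \cite{Fe03} --- are never established (or even stated) in your proposal: (i) an essential circuit has no edge in its interior whose origin lies on the circuit, which forces $\Sigma=0$; and (ii) every edge-vertex on an essential circuit that is not a face boundary has one or two of its non-circuit edges strictly inside the circuit. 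Point (ii) in particular rules out the case where both non-circuit edges of an edge-vertex lie outside (geometrically possible when the circuit turns at that edge-vertex), a third alternative you silently elide when you declare the dichotomy between $n_e^{(3)}$ and $n_e^{(4)}$. It is (i) and (ii), combined with the quadrangular face structure of the derived map, that bound the length of the circuit to $6$, $8$, $10$ or $12$ and pin the interior down to the patterns of Figure~\ref{figure:configuration}; the Euler balance alone does not.

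The paper's own proof is deliberately short: it cites these two statements from \cite{Fe03}, observes that they immediately give the list of configurations, and invokes Lemma~\ref{lemma:contourfaces} to exclude the length-$4$ case. If you intend your argument to be self-contained, you must prove (i) and (ii), which requires the precise notion of essential circuit and the lattice/flip structure of $\alpha_0$-orientations; if you intend to defer to \cite{Fe03}, you should cite these specific facts rather than a generic ``geometric core'', because your counting relation is not the route by which the classification is actually obtained.
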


\begin{proof}
  \citeN[Lem.17]{Fe03} shows that an essential circuit $\cC$ of an
  $\alpha_0$-orientation has no edge in its interior whose origin is
  on $\cC$.  In addition, if $\cC$ is not the boundary of a face, he
  shows that all edge-vertices have either one incident edge or two
  incident edges inside $\cC$, which implies that the length of $\cC$
  is 6, 8, 10, or~12.  The only possible configurations are those
  listed in Figure~\ref{figure:configuration}. As $X_0$ has no
  clockwise circuit of length 4 according to
  Lemma~\ref{lemma:contourfaces}, this concludes the proof.
  \hfill$\qed$
\end{proof}

\begin{figure}
  \def\etalon{.3\linewidth}
  \centering
  \subfigure[${}^{\displaystyle n_e^{(3)}=3}_{\displaystyle n_e^{(4)}=0}$]{%
   \includegraphics[height=\etalon]{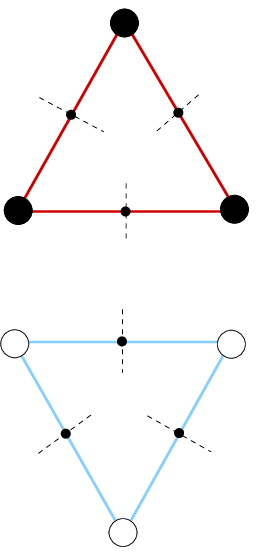}}%
  \qquad\quad
  \subfigure[${}^{\displaystyle n_e^{(3)}=2}_{\displaystyle n_e^{(4)}=2}$]{%
    \includegraphics[height=\etalon]{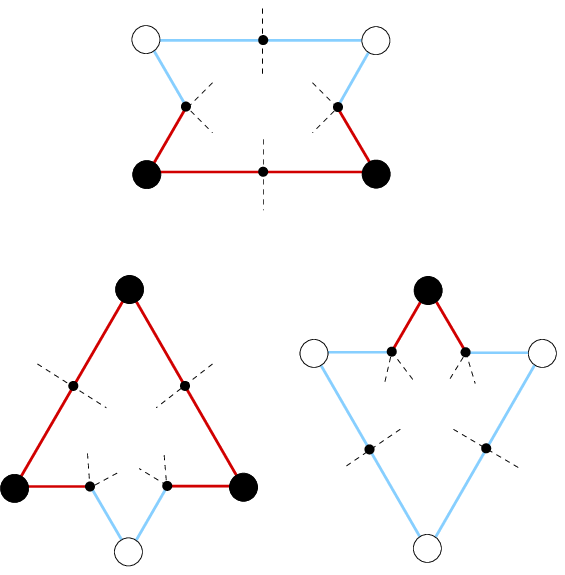}}\qquad\quad
  \subfigure[${}^{\displaystyle n_e^{(3)}=1}_{\displaystyle n_e^{(4)}=4}$]{%
    \includegraphics[height=\etalon]{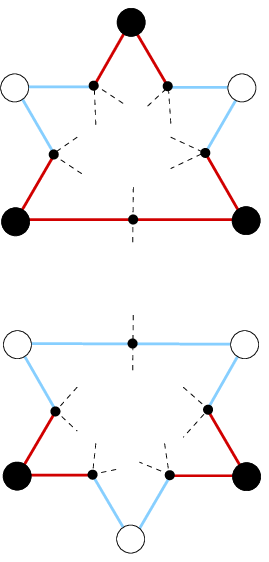}}\qquad\quad
  \subfigure[${}^{\displaystyle n_e^{(3)}=0}_{\displaystyle n_e^{(4)}=6}$]{%
    \includegraphics[height=\etalon]{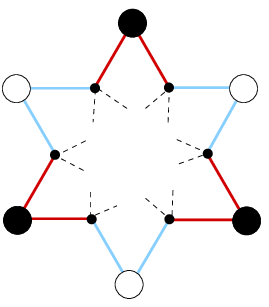}}
  \caption{The possible configurations for a minimal clockwise circuit of $X_0$.}
  \label{figure:configuration}
\end{figure}

\subsubsection{No configuration of Figure~\ref{figure:configuration} can be 
 a clockwise circuit in $X_0$}

We have restricted the number of possible configurations for a
clockwise circuit of $X_0$ to the list represented in
Figure~\ref{figure:configuration}. In this section, we describe a
method ensuring that the presence of a clockwise circuit for each
configuration of Figure~\ref{figure:configuration} yields a
contradiction.
The method relies on Lemma~\ref{lemma:P12}, Lemma~\ref{lemma:P32}, and
on the following lemma:

\begin{lemma}
  \label{lemma:P31} At a step $k$, let $v$ and $v'$ be two vertices on
  $\mathcal{C}_k$ such that $v$ is on the left of $v'$. Assume that
  there exists a path $P=(v_0,\ldots,v_l)$ of edges of $G$ such that
  $v_0=v$, $v_l=v'$, and for each $0\leq i\leq l-1$, the edge
  $(v_i,v_{i+1})$ is the outgoing edge of $v_i$ with label 1 in
  $X_0$. Then $P=\lbrack v,v'\rbrack$ on $\mathcal{C}_k$ and all edges
  of $P$ are bi-oriented bilabelled~1-3.
\end{lemma}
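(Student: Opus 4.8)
The plan is to prove the statement by induction along the path $P$, establishing the slightly stronger claim that for every $i$ with $0\le i\le l-1$ the vertex $v_i$ lies on $\mathcal{C}_k$ inside the arc $[v,v']$, the edge $(v_i,v_{i+1})$ is a boundary edge of $\mathcal{C}_k$ joining $v_i$ to its right neighbour, and this edge is bi-oriented and bilabelled $1$-$3$ (label $1$ on $v_i$'s side, label $3$ on $v_{i+1}$'s side). The base case $v_0=v\in\mathcal{C}_k$ is given; since $v'=v_l\in\mathcal{C}_k$ and the arc $[v,v']$ is finite, once the inductive step is in place the path is forced to march rightward along the boundary, and because $P$ is prescribed to terminate at $v'$, it coincides with $[v,v']$, which is exactly the conclusion.

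For the inductive step I would fix $i$ and consider the step $k_i\ge k$ at which the vertex $v_i$ (on $\mathcal{C}_k$ by induction) is finally removed from the cycle. At that step $v_i$ plays one of the roles described in the main iteration (Figure~\ref{figure:stepk}): chosen vertex, inner neighbour, interior chain vertex, or left-/right-connection vertex. Inspecting the labelling produced in each case shows that the \emph{only} role for which the outgoing half-edge of $v_i$ with label $1$ runs rightward along the cycle is that of a passive left-connection vertex $\mathrm{left}(v^{(k_i)})$; in every other case the label-$1$ outgoing half-edge of $v_i$ is either the simply oriented inner edge pointing to an already removed vertex, or the $2$-$1$ edge whose label-$1$ side points leftward (cf.\ Figure~\ref{figure:invariantslabelling} and Figure~\ref{figure:stepk}). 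Hence, \emph{once we know} that the label-$1$ outgoing edge of $v_i$ stays on $\mathcal{C}_k$ and goes toward $v'$, it must be the bi-oriented $1$-$3$ left-connection edge, and $v_{i+1}$ is the right neighbour of $v_i$; this is precisely the $1$-$3$ regime already catalogued in Lemma~\ref{lemma:interval}. One must also check that the right neighbour of $v_i$ on $\mathcal{C}_{k_i}$ is its right neighbour on $\mathcal{C}_k$, which holds because $v_i$ is not removed between steps $k$ and $k_i$ and the edge in question, once created, survives on every intermediate cycle until one of its endpoints is processed.

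The genuine obstacle is the clause italicised above: showing the label-$1$ path cannot leave $\mathcal{C}_k$ at all, i.e.\ that each $v_{i+1}$ is forced to lie on $\mathcal{C}_k$. I would combine two facts. First, a label-$1$ path can never descend into the interior $G_k$: by the acyclicity (straight-path) property of Schnyder woods recalled from~\cite{Fe03}, the full $1$-path $P_{v_i}^{1}$ is simple and terminates at the outer vertex $a_1$, which lies in the exterior of $\mathcal{C}_k$ for $k\ge 2$, so the edge $(v_i,v_{i+1})$ points toward the exterior side, never strictly inward. Second, a strict excursion into the exterior is impossible: since $P$ is a prefix of the simple path $P_v^{1}$ and must reach $v'\in\mathcal{C}_k$, a planarity argument on the closed curve formed by $P$ together with the boundary arc $[v,v']$—using that $\mathcal{C}_k$ separates $G_k$ from the already-processed exterior—shows that a prefix ending on $\mathcal{C}_k$ cannot cross strictly to the exterior and come back. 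With interior and exterior excursions both excluded, the edge $(v_i,v_{i+1})$ is a boundary edge of $\mathcal{C}_k$, which is exactly the hypothesis needed to run the case analysis of the previous paragraph and close the induction. Lemmas~\ref{lemma:P12} and~\ref{lemma:P32} would be invoked to control the configuration near the active endpoints of the relevant intervals, where the $2$-$1$ and $3$-$2$ regimes meet the $1$-$3$ stretch.
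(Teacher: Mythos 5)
Your overall skeleton (the path must stay on $\mathcal{C}_k$, march rightward, and consist of bi-oriented $1$-$3$ connection edges) matches the paper's, but the two justifications you give for the crucial claim that the label-$1$ path cannot leave $\mathcal{C}_k$ are both invalid, and this is precisely where the real content of the lemma lies. For the ``no inward excursion'' half, you argue that since $P_{v_i}^{1}$ is simple and ends at $a_1$, which is exterior to $\mathcal{C}_k$, the edge $(v_i,v_{i+1})$ cannot point strictly inward; this does not follow --- a simple path from a boundary vertex to an exterior vertex can perfectly well begin by entering the interior and exit later. For the ``no outward excursion'' half, you appeal to planarity of the closed curve formed by $P$ and the arc $\lbrack v,v'\rbrack$; but planarity does not forbid a simple path from leaving the cycle into the already-processed region and re-entering $\mathcal{C}_k$ at another vertex. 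The missing idea, which the paper's proof turns on, is a \emph{temporal monotonicity} property of the construction: by inspection of Figure~\ref{figure:stepk}, the endpoint of the outgoing label-$1$ edge of any vertex $w$ is removed at a strictly earlier step than $w$ itself. Hence if the path ever reaches a vertex of $G\backslash G_k$ it can never return to $G_k$; since the label-$1$ edge of an \emph{active} vertex of $\mathcal{C}_k$ points into $G\backslash G_k$, and $P$ must terminate at $v'\in\mathcal{C}_k$, none of $v_0,\ldots,v_{l-1}$ can be active. They are therefore all passive, and for a passive vertex $w$ the label-$1$ edge is assigned at the first step at which one of its two cycle-neighbours is removed, at which moment it is necessarily one of the two boundary edges $(w,w_l)$ (bilabelled $1$-$2$) or $(w,w_r)$ (bilabelled $1$-$3$). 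This single argument disposes of interior and exterior excursions at once; your proposal never establishes it.

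There is also a bookkeeping error in your inductive step: you examine ``the step $k_i$ at which $v_i$ is finally removed'' and conclude that $v_i$ must there be a \emph{passive left-connection vertex}. But a passive left-connection vertex is precisely one that is \emph{not} removed at that step (it stays on $\mathcal{C}_{k_i+1}$ and merely becomes active), whereas every vertex actually removed at a step is active at that moment, with its label-$1$ edge already assigned earlier and pointing outside the current region. The correct moment to examine is the step at which the label-$1$ half-edge of $v_i$ is \emph{assigned}, i.e.\ the first step at which a cycle-neighbour of $v_i$ is removed. Finally, the appeal to Lemmas~\ref{lemma:P12} and~\ref{lemma:P32} is unnecessary here (the paper does not use them in this proof); the exclusion of the leftward $1$-$2$ alternative follows simply from the simplicity of the $1$-path together with the fact that $v$ lies to the left of $v'$ and that $a_2,a_3$ are never passive.
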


\begin{proof}
  Proving that $P=\lbrack v,v'\rbrack$ comes down to proving that all
  edges of $P$ are on $\mathcal{C}_k$.
  By construction of the orientation (see Figure~\ref{figure:stepk}),
  for each vertex $w$ of $G$, the extremity $w_1\in G$ of the outgoing
  edge of $w$ with label 1 is removed at an earlier step than $w$.
  Moreover, a vertex in $G\backslash G_k$ is removed at a step
  $j<k$. Hence, if $w$ is in $G\backslash G_k$, then $w_1$ is also in
  $G\backslash G_k$.  Hence, if $P$ passes by a vertex outside of
  $G_k$, it can not reach $\mathcal{C}_k$ again. By definition of an
  active vertex of $\mathcal{C}_k$, the extremity of its outgoing edge
  with label 1 is a vertex of $G\backslash G_k$. Hence none of the
  vertices $v_0,\ldots v_{l-1}$ can be active, otherwise $P$ would
  pass by a vertex outside of $G_k$ and could not reach
  $\mathcal{C}_k$ again.

  Hence, all vertices of $\mathcal{C}_k$ encountered by $P$ before
  reaching $v'$ are passive. It just remains to prove that the
  outgoing edge with label 1 of each passive vertex of $\mathcal{C}_k$
  is an edge of $\mathcal{C}_k$ and will be bi-oriented and bilabelled
  1-3 in~$X_0$.

  Let $w$ be a passive vertex of $\mathcal{C}_k$ and let $w_{l}$ and
  $w_{r}$ be respectively the left and the right neighbour of $w$ on
  $\mathcal{C}_k$. We claim that the outgoing edge of $w$ with label 1
  is the edge $(w,w_{l})$ if $w_{l}$ will be removed before $w_{r}$
  and is the edge $(w,w_{r})$ if $w_{r}$ will be removed before
  $w_{l}$. Indeed, as long as none of $w_{l}$ or $w_{r}$ is removed,
  $w$ remains passive and keeps $w_{l}$ and $w_{r}$ as left and right
  neighbour. Let $k_0$ be the first step where $w_l$ or $w_r$ is
  removed. By construction of the orientation, two vertices $v_1$ and
  $v_2$ on the boundary of $\mathcal {C}_{k_0}$ such that $\rbrack
  v_1,v_2\lbrack$ contains a passive vertex can not be removed at the
  same step. Hence, at step $k_0$, either $w_l$ or $w_r$ is
  removed. Assume that the removed vertex at step $k_0$ is
  $w_l$. Then, at step $k_0$, $(w, w_l)$ is given a bi-orientation and
  receives label 1 on $w$'s side and label 2 on $w_l$'s side, see
  Figure~\ref{figure:stepk}. Similarly, if the removed vertex is $w_r$
  then, at step $k_0$, $(w,w_r)$ is bi-orientated and receives label 1
  on $w$'s side and label 3 on $w_r$'s side.

  Finally, it is easy to see that only this second case can happen in
  the path $P$, because the starting vertex of $P$ is on the left of
  the end vertex of $P$ on~$\mathcal{C}_k$.
\hfill $\qed$\end{proof}

\begin{lemma}
  \label{lemma:none}
  None of the configurations of Figure~\ref{figure:configuration} can
  be the boundary of a clockwise circuit in $X_0$.
\end{lemma}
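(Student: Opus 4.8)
The plan is to suppose that some clockwise circuit $\mathcal{C}$ of $X_0$ is present and realizes one of the four configurations of Figure~\ref{figure:configuration}, and to derive a contradiction from the rightmost-eligible-vertex rule. First I would translate $\mathcal{C}$, which lives in the derived map $G'$, into a closed walk in $G$ itself. Since every edge-vertex of $G'$ has outdegree~$1$ in an $\alpha_0$-orientation, each pair of consecutive edges of $\mathcal{C}$ sharing an edge-vertex is the image, under the transposition rules, of a short oriented path of edges of $G$ (exactly the local picture of Figure~\ref{figure:coupeEdges}); concatenating these pieces produces a closed oriented walk $W$ through the primal vertices of $\mathcal{C}$. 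Reading off the labels via the labelling invariants of Figure~\ref{figure:invariantslabelling}, and using Lemma~\ref{lemma:contourfaces} to discard the length-$4$ (face-boundary) case, $W$ decomposes into exactly three maximal monochromatic directed paths $\mathcal{P}_1,\mathcal{P}_2,\mathcal{P}_3$ meeting at three corner primal vertices $v_1,v_2,v_3$, traversed in clockwise order. The four configurations differ only in how many edge-vertices on $\mathcal{C}$ carry four incident inner edges (the $n_e^{(4)}$ of Figure~\ref{figure:configuration}); these correspond to bi-oriented edges through which a path $\mathcal{P}_i$ merely passes, so they lengthen the paths but do not alter the corner structure.

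Next I would pin down the labels carried by the three paths. Because $\mathcal{C}$ is clockwise, the labelling invariants force the paths to carry, up to a cyclic rotation of $\{1,2,3\}$, the labels making $\mathcal{P}_1$ a label-$1$ path, $\mathcal{P}_2$ a $2$-$1$ path and $\mathcal{P}_3$ a $3$-$2$ path in the sense of Lemmas~\ref{lemma:P31}, \ref{lemma:P12} and~\ref{lemma:P32}. To use these lemmas I would then localize the argument in time: among the three corners, consider the one the algorithm selects first as the chosen vertex, and let $k$ be that step, so that $v^{(k)}$ is a corner while the other two corners, together with all three paths, still lie on or inside $\mathcal{C}_k$ (a corner cannot have been consumed earlier without the circuit having already been touched, and Lemma~\ref{lemma:bordercycle} rules out the degenerate possibility that a corner is one of $a_1,a_2,a_3$).

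Finally I would read each lemma as a strict left/right relation on $\mathcal{C}_k$ and chain them around the triangle. Lemma~\ref{lemma:P31} identifies the label-$1$ path with a left-to-right stretch of $\mathcal{C}_k$; Lemma~\ref{lemma:P12} places the $2$-$1$ path issuing from the chosen corner to its right, up to the next active vertex; and Lemma~\ref{lemma:P32} forces the far end of the $3$-$2$ path to lie strictly to the left of the chosen corner. Going once around $v_1\to v_2\to v_3\to v_1$, these three strict relations compose to $v_1$ strictly left of $v_2$ strictly left of $v_3$ strictly left of $v_1$ on $\mathcal{C}_k$, which is impossible; this contradiction holds uniformly across the four configurations, since the extra $4$-valent edge-vertices only insert bi-oriented stretches inside the $\mathcal{P}_i$ and leave the endpoint ordering untouched. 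The main obstacle I anticipate is precisely this bookkeeping: matching, configuration by configuration and for each cyclic choice of which corner is processed first, each monochromatic path to the correct one of Lemmas~\ref{lemma:P31}, \ref{lemma:P12}, \ref{lemma:P32}, reading the labels consistently from the clockwise orientation, and ensuring that each application yields a \emph{strict} inequality so that the three relations genuinely close into an impossible cycle.
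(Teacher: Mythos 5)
Your overall strategy is the one the paper itself follows: translate the clockwise circuit of the derived map into labelled oriented paths of $G$ via the labelling invariants, discard the face-boundary case with Lemma~\ref{lemma:contourfaces} and the outer vertices with Lemma~\ref{lemma:bordercycle}, and then play Lemmas~\ref{lemma:P12}, \ref{lemma:P32} and~\ref{lemma:P31} against the rightmost-eligible-vertex rule to reach a contradiction. The observation that the $4$-valent edge-vertices only insert bi-oriented stretches inside the paths is also correct and is exactly why the lemmas are phrased in terms of maximal bilabelled paths.

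There is, however, one concrete step in your plan that would fail as stated: you propose to let $k$ be the step at which the \emph{first-processed} corner is removed and to treat that corner as the chosen vertex $v^{(k)}$. Being removed at step $k$ does not make a vertex the chosen vertex: at each step the algorithm removes $v^{(k)}$ \emph{together with} all vertices of its left-chain, and Lemmas~\ref{lemma:P12} and~\ref{lemma:P32} are stated only for the chosen vertex itself. If your first-processed corner happens to lie on a left-chain, neither lemma applies to it. The paper resolves this differently: it does not pick the chronologically first corner, but reads off from the (already determined) label pattern which vertex \emph{must} coincide with $v^{(k)}$ at the step it is removed --- e.g.\ a vertex whose outgoing label-$2$ edge is simply oriented cannot be a left-chain vertex (those get a bi-oriented $2$--$3$ edge, cf.\ Figure~\ref{figure:stepk}), so it must be the chosen vertex. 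Relatedly, the free parameter in the labelling (which of the three labels the outgoing edge of the distinguished primal vertex carries) produces three sub-cases per configuration that are \emph{not} symmetric under cyclic relabelling, and the paper's contradictions differ across them: in one case the clash is positional (an endpoint forced both left and right of $v^{(k)}$), in another it is that Lemma~\ref{lemma:P31} forces a path to be entirely bi-oriented and bilabelled $1$--$3$ while the configuration exhibits an edge bilabelled $1$--$2$ or simply oriented. So your single uniform ``three strict inequalities close into an impossible cycle'' argument would need to be replaced, case by case, by these label-specific contradictions; the bookkeeping you defer is precisely where the proof lives.
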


\begin{proof}
  We take here the example of the third configuration of the case $\{
  n_e^{(3)}=2,n_e^{(4)}=2\}$ of Figure~\ref{figure:configuration} and
  show why this configuration can not be a clockwise circuit in
  $X_0$. Let $\cC$ be a clockwise circuit corresponding to such a
  configuration. Then $\cC$ contains two successive dual edges $e_1^*$
  and $e_2^*$ ---in counter-clockwise order around $\cC$--- and a
  unique primal vertex which we denote by~$v_{\cC}$.
  Let $M'$ be the submap of $G'$ obtained by removing all edges and
  vertices outside of $\cC$. Let $M$ be the submap of $G$ obtained by
  keeping only the edges whose associated edge-vertex belongs to $M'$
  and by keeping the vertices incident to these edges.
  As $\cC$ is an essential circuit, no edge inside $\cC$ has its
  origin on $\cC$, see~\cite[Lem.17]{Fe03}.  The rules of labelling
  (see Figure~\ref{figure:invariantslabelling}), the fact that all
  edge-vertices have outdegree~1, and the fact that no edge goes from
  a vertex of $\cC$ toward the interior of $\cC$ determine
  unambiguously the labels and orientations of all the edges on the
  boundary of $M$ in $X_0$, up to the label of the outgoing edge of
  $v_{\cC}$ on $\cC$. Figures~\ref{figure:cases}(a),
  ~\ref{figure:cases}(b) and~\ref{figure:cases}(c) represent the
  respective configurations when the label of the outgoing edge of
  $v_{\cC}$ on $\cC$ is 1, 2 or~3.

  \begin{figure}
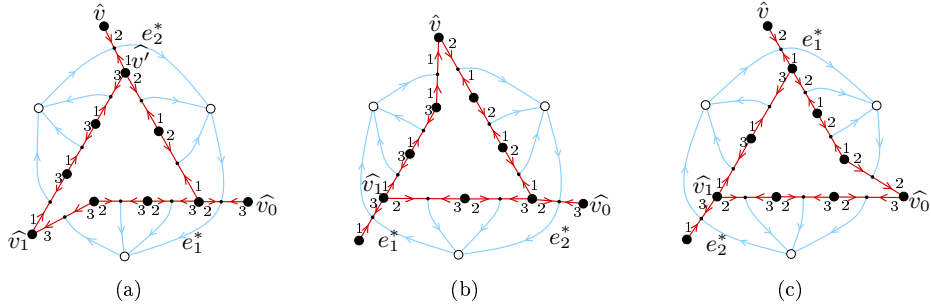

    \centering
    \subfigure[]{\input{Figures/cases2a.pstex_t}}\qquad\qquad
    \subfigure[]{\input{Figures/cases2b.pstex_t}}\qquad\qquad
    \subfigure[]{\input{Figures/cases2c.pstex_t}}
    \caption{The 3 possible cases for the boundary of the map $M$
      associated to the third configuration of the case $\{
        n_e^{(3)}=2,n_e^{(4)}=2\}$ in
        Figure~\ref{figure:configuration}.}
    \label{figure:cases}
  \end{figure} 
  
  First, we deal with the case of Figure~\ref{figure:cases}(a). Let
  $\hat{v}$ (resp. $\widehat{v_0}$) be the primal vertex outside of
  $\cC$ and adjacent to the edge-vertex associated to $e_2^*$
  (resp. $e_1^*$). Let $\widehat{v'}$ be the primal vertex inside of
  $\cC$ and adjacent to the edge-vertex associated to $e_2^*$. Let $k$
  be the step at which $\hat{v}$ is treated.
  As already explained in preceding proofs (for example in the proof
  of Lemma~\ref{lemma:contourfaces}), it is easy to see that $k\geq 2$
  and that $\hat{v}$ is the chosen vertex $v^{(k)}$.  Hence we can use
  Lemma~\ref{lemma:P12} and Lemma~\ref{lemma:P32}.
  Lemma~\ref{lemma:P12} and the configuration of
  Figure~\ref{figure:cases}(a) ensure that $\widehat{v'}$ is the right
  neighbour of $\hat{v}$ on $\mathcal{C}_k$ and that $\widehat{v_0}$
  is the next active vertex on the right of $\hat{v}$ on
  $\mathcal{C}_k$. Moreover, the configuration of
  Figure~\ref{figure:cases}(a) ensures that $\widehat{v_1}$
  corresponds to the vertex $v_1$ in the statement of
  Lemma~\ref{lemma:P32}. Hence Lemma~\ref{lemma:P32} ensures that
  $\widehat{v_1}$ is on $\mathcal{C}_k$ on the left of $\hat{v}$. We
  see on Figure~\ref{figure:cases}(a) that there is an oriented path
  $P$ going from $\widehat{v_1}$ to $\widehat{v}$ such that each edge
  of the path is leaving with label~1. Lemma~\ref{lemma:P31} ensures
  that all edges of $P$ are bilabelled 1-3, in contradiction with the
  fact that $(\widehat{v'},v)$ is bilabelled~1-2.

  We deal with the case of Figure~\ref{figure:cases}(b) similarly. We
  define $\hat{v}:=v_{\cC}$ and denote by $\widehat{v_0}$ the primal
  vertex outside of $\cC$ and adjacent to the edge-vertex associated
  to $e_2^*$. We denote by $\widehat{v_1}$ the primal vertex inside of
  $\cC$ and adjacent to the edge-vertex associated to~$e_1^*$. Let $k$
  be the step where $\hat{v}$ is removed. Then it is easy to see that
  $k\geq 2$ and $\hat{v}=v^{(k)}$. Hence we can use
  Lemma~\ref{lemma:P12} and Lemma~\ref{lemma:P32}.
  Lemma~\ref{lemma:P12} and the configuration of
  Figure~\ref{figure:cases}(b) ensure that $\widehat{v_0}$ is the next
  active vertex on the right of $\hat{v}$ on $\mathcal{C}_k$. We see
  on Figure~\ref{figure:cases}(b) that the vertex $\widehat{v_1}$
  corresponds to the vertex $v_1$ in the statement of
  Lemma~\ref{lemma:P32}. Hence, Lemma~\ref{lemma:P32} ensures that
  $\widehat{v_1}$ is on $\mathcal{C}_k$ on the left of $\hat{v}$. We
  see on Figure~\ref{figure:cases}(b) that there exists an oriented
  path $P$ going from $\widehat{v_1}$ to $\hat{v}$ such that each edge
  of $P$ leaves with label 1; but the last edge of $P$ is simply
  oriented, in contradiction with Lemma~\ref{lemma:P31}.

  The case of Figure~\ref{figure:cases}(c) can be treated
  similarly, as well as all configurations of
  Figure~\ref{figure:configuration}.  \hfill $\qed$
\end{proof}

Finally, Theorem~\ref{theorem:computesOrientation} follows from
Lemma~\ref{lemma:none} and from the fact that all possible
configurations for a clockwise circuit of $X_0$ are listed in
Figure~\ref{figure:configuration}.


\begin{acks}
  N. Bonichon and L. Castelli Aleardi are thanked for fruitful
  discussions.
\end{acks}

\bibliography{soda}
\end{document}